\documentclass[11pt]{amsart}
\usepackage{enumerate}
\usepackage[T1]{fontenc}
\usepackage{lmodern}
\usepackage{geometry}
\usepackage{mathrsfs}
  \usepackage{graphicx}
 \usepackage{tikz}
 \usetikzlibrary{arrows.meta}
 \usepackage{pgfplots}
 \usepackage{xcolor}
\usetikzlibrary{positioning}
 \usepgfplotslibrary{fillbetween}
 \usetikzlibrary{arrows.meta,calc,positioning}
 \pgfplotsset{compat=1.18}

\usepackage{graphicx}
\usepackage{amssymb}
\usepackage{epstopdf}
\usepackage{geometry} 
\usepackage{graphicx}
\usepackage{amsmath} 
\numberwithin{equation}{section}
\usepackage{commath}
\usepackage[numbered,framed]{matlab-prettifier}
\usepackage{changepage}
\usepackage[normalem]{ulem}
\usepackage{bbm}
\usepackage{subfig}
\usepackage{float}
\usepackage{url}
\usepackage{enumitem}
\usepackage{listings}
\usepackage{mathtools}
\usepackage{amsthm}
\usepackage{aliascnt}
\usepackage[disable]{todonotes} 
\usepackage{hyperref}
\usepackage{cleveref}

\DeclareMathOperator{\interior}{int}

\newcommand{\ubar}{\bar u}
\newcommand{\vbar}{\bar v}
\newcommand{\hmean}{\bar h}
\newcommand{\Ebar}{\bar E}
\newcommand{\Qbar}{\bar Q}

\newcommand{\Qcal}{\mathcal{Q}}

\newcommand{\SoneL}{\mathbb{R}/L\mathbb{Z}}

\newcommand{\E}{\mathbb{E}}

\DeclareMathOperator{\rank}{rank}

\theoremstyle{plain}

\newtheorem{theorem}{Theorem}[section]

\newaliascnt{lemma}{theorem}
\newtheorem{lemma}[lemma]{Lemma}
\aliascntresetthe{lemma}

\newaliascnt{corollary}{theorem}
\newtheorem{corollary}[corollary]{Corollary}
\aliascntresetthe{corollary}

\newaliascnt{proposition}{theorem}
\newtheorem{proposition}[proposition]{Proposition}
\aliascntresetthe{proposition}
\newtheorem{openquestion}[theorem]{Open Question}
\newtheorem{conjecture}{Conjecture}

\newtheorem*{theorem*}{Theorem}

\theoremstyle{remark}
\newtheorem*{remark}{Remark}

\theoremstyle{definition}

\newaliascnt{definition}{theorem}
\newtheorem{definition}[definition]{Definition}
\aliascntresetthe{definition}

\crefname{theorem}{theorem}{theorems}
\Crefname{theorem}{Theorem}{Theorems}

\crefname{lemma}{lemma}{lemmas}
\Crefname{lemma}{Lemma}{Lemmas}

\crefname{corollary}{corollary}{corollaries}
\Crefname{corollary}{Corollary}{Corollaries}

\crefname{proposition}{proposition}{propositions}
\Crefname{proposition}{Proposition}{Propositions}

\crefname{definition}{definition}{definitions}
\Crefname{definition}{Definition}{Definitions}

\allowdisplaybreaks
\newcommand{\R}{\mathbb{R}}

\newcommand{\one}{\mathbf{1}}
\newcommand{\Lip}{\operatorname{Lip}}
\newcommand{\dist}{\operatorname{dist}}

\title[Convex Integration for 1-D Hyperbolic Problems]{The computational ansatz for convex integration of hyperbolic systems and a resolution of the Strong Trace Conjecture}
\author[Sam G. Krupa]{Sam G.  Krupa}
\address[Sam G. Krupa]{\newline Département de mathématiques et applications \newline École normale supérieure, Université PSL, CNRS\newline 45 rue d'Ulm - F 75230 PARIS cedex 05 \newline France}
\email{sam.krupa@ens.fr}

\thanks{The author utilized ChatGPT 5.6 Sol Pro, ChatGPT 5.6 Sol Ultra, and ChatGPT 6 Astra Pro. The author checked and assumes responsibility for the final content. The work of the  author is funded by the European Union through the project “Quantitative Stability and Regularity of Large
Data for Conservation Laws.” Views and opinions expressed are however those of the author(s) only and do not necessarily reflect those of the European Union or European Research Executive Agency (REA). Neither the European Union nor the granting authority can be held responsible for them. Part of this work was completed during the scientific trimester ``Mathematical Developments in Geophysical Fluid Dynamics'', which was held at Institut Henri Poincaré from April 13 to July 10, 2026. The author acknowledges support of the Institut Henri Poincaré (UAR 839 CNRS-Sorbonne Université), and LabEx CARMIN (ANR-10-LABX-59-01). Part of this work was also completed during the workshop ``Fluid Dynamics, Singularities, and AI-Driven Discovery'' held at the Speinshart Center for AI and SuperTech (Speinshart, Germany), a.k.a. \emph{HighTechAbbey}, from July 27 to 31, 2026.}
\date{\today}                                           
\begin{document}
\keywords{Conservation laws, one space dimension, strict hyperbolicity, entropy conditions, entropy solutions, convex integration, differential inclusion,  Strong Trace Property, non-uniqueness}
\subjclass[2020]{Primary 35L65; Secondary  76N10, 76N15, 35A02, 35Q35, 35L45, 35D30, 35L03, 34A60}
\begin{abstract}
In this paper, we consider $2\times2$ hyperbolic systems of conservation laws in one spatial dimension. We use the computational ansatz introduced in the hyperbolic theory by the author and Székelyhidi to study the constitutive set corresponding to the PDE. The rank-one convex geometry of this set relates to non-uniqueness and the existence of low-regularity solutions. Through a computer-assisted search, we find a pressure law $p$ such that the $p$-system with this pressure law, and its natural strictly convex entropy, verifies all of the conditions necessary for the large data $L^2$ stability and the technique of ``$a$-contraction with shifts'' and thus we have uniqueness of certain Riemann solutions in the class of solutions verifying the Strong Trace Property. At the same time, the constitutive set contains a $T_6$ configuration and we use it to construct non-unique solutions without the Strong Trace Property. This resolves the question of sharpness of strong traces. We also present proofs which show \emph{nonexistence} of $T_\infty$ structures for all genuinely nonlinear systems and \emph{nonexistence} of $T_N$ structures for all $N$ for the $p$-system with $p''>0$, thus blocking these routes towards convex integration.
\end{abstract}
\maketitle
\tableofcontents

\section{Introduction}

We consider one-dimensional systems of two conservation laws of the form
\begin{equation}\label{eq:system}
\begin{cases}
   U_t + F(U)_x = 0, \qquad (x,t)\in \mathbb{R}\times (0,\infty), \\
   U(x,0)=U^0(x)
\end{cases}
,
\end{equation}
where $U^0=U^0(x)\in\mathcal{V}\subset\mathbb{R}^2$ is the initial data,

$$
    U=U(x,t)\in \mathcal{V}\subset \mathbb{R}^2
$$

is the unknown state and

$$
    F\colon \mathcal{V}\to \mathbb{R}^2
$$

is a sufficiently smooth flux. Throughout, we assume that the state space
\(\mathcal{V}\) is a nonempty open and convex subset of \(\mathbb{R}^2\).

The system \eqref{eq:system} is assumed to be \emph{strictly hyperbolic} on
\(\mathcal{V}\). More precisely, for every \(U\in\mathcal{V}\), the Jacobian
matrix

$$
    DF(U)
$$

has two distinct real eigenvalues

$$
    \lambda_1(U)<\lambda_2(U).
$$

We denote by \(r_1(U),r_2(U)\in\mathbb{R}^2\) corresponding right
eigenvectors, so that
\begin{equation}\label{eq:eigenvectors}
DF(U)r_k(U)=\lambda_k(U)r_k(U),
\qquad k=1,2.
\end{equation}
Thus the system possesses two distinct characteristic families throughout
the state space.

We further assume that both characteristic families are
\emph{genuinely nonlinear}. Namely,
\begin{equation}\label{eq:genuine-nonlinearity}
\nabla\lambda_k(U)\cdot r_k(U)\neq 0,
\qquad U\in\mathcal{V},
\qquad k=1,2.
\end{equation}
This condition expresses the nondegeneracy of the variation of each
characteristic speed along its corresponding characteristic direction.
In particular, it is the structural condition underlying the formation
of the classical shock and rarefaction waves associated with each
characteristic family.

Since solutions of \eqref{eq:system} naturally develop discontinuities
even from smooth initial data, it is necessary to work with weak
solutions. A locally integrable function

$$
    U\colon \mathbb{R}\times[0,\infty)\to\mathcal{V}
$$

is a weak solution of \eqref{eq:system} with initial data
\(U^0\colon\mathbb{R}\to\mathcal{V}\) if
\begin{equation}\label{eq:weak-formulation}
\int_0^\infty\int_{\mathbb{R}}
\left(
U\cdot\varphi_t
+
F(U)\cdot\varphi_x
\right)
\,dx\,dt
+
\int_{\mathbb{R}} U^0(x)\cdot\varphi(x,0)\,dx
=
0
\end{equation}
for every test function
\(\varphi\in C_c^\infty(\mathbb{R}\times[0,\infty);\mathbb{R}^2)\).
As is well known, weak solutions are generally not unique, and an
additional admissibility criterion is required in order to select the
physically relevant solution.

Thus, we also assume that the system admits a strictly convex entropy. More
precisely, let

$$
    \eta\colon\mathcal{V}\to\mathbb{R}
$$

be a sufficiently smooth function such that
\begin{equation}\label{eq:strict-convexity-entropy}
D^2\eta(U)>0,
\qquad U\in\mathcal{V},
\end{equation}
in the sense of positive definite matrices. Associated with \(\eta\) is
an entropy flux

$$
    q\colon\mathcal{V}\to\mathbb{R}
$$

satisfying the compatibility relation
\begin{equation}\label{eq:entropy-compatibility}
\nabla q(U)
=
DF(U)^{\mathsf T}\nabla\eta(U),
\qquad U\in\mathcal{V}.
\end{equation}
\textbf{Notation:} Throughout, vectors and scalar gradients are written as column vectors. For a
smooth map \(H\), the notation \(DH(U)\) denotes its differential (Jacobian)
at \(U\), and \(DH(U)[V]\) denotes the action of that differential on a
direction \(V\). When the differential is represented by its Jacobian matrix,
we also use the ordinary matrix--vector product \(DH(U)V\). Thus \(DF(U)\) is
the flux Jacobian, whereas \(\nabla\eta(U)\) and \(\nabla q(U)\) are column
gradients.
\vspace{.09in}

The pair \((\eta,q)\) is referred to as an
\emph{entropy--entropy-flux pair} for \eqref{eq:system}.

For every smooth solution \(U\) of \eqref{eq:system}, the compatibility
condition \eqref{eq:entropy-compatibility} yields the additional
conservation law
\begin{equation}\label{eq:entropy-conservation}
\eta(U)_t+q(U)_x=0.
\end{equation}
For discontinuous weak solutions, the physically relevant admissibility
condition is instead expressed through the entropy inequality
\begin{equation}\label{eq:entropy-inequality}
\eta(U)_t+q(U)_x\leq 0
\end{equation}
in the sense of distributions. Equivalently, for every nonnegative test
function
\(\phi\in C_c^\infty(\mathbb{R}\times[0,\infty))\),
\begin{equation}
\int_0^\infty\int_{\mathbb{R}}
\left(
\eta(U)\phi_t
+
q(U)\phi_x
\right)
\,dx\,dt + \int_\mathbb{R}\eta(U(x,0))\phi(x,0)\,dx
\geq 0.
\end{equation}
A weak solution satisfying \eqref{eq:entropy-inequality} will be called
an entropy solution with respect to the entropy pair \((\eta,q)\). Systems of conservation laws with more than two conserved quantities will admit at most one entropy, entropy-flux pair, including important physical systems like the non-isentropic Euler equations (for integrability conditions on the entropy, see \cite[p.~13-14]{dafermos_big_book} and \cite[p.~54-55]{dafermos_big_book}). Systems with two conserved quantities have large families of entropies, but we restrict ourselves in this work to only one entropy as a model for systems with additional conserved quantities.

The strict convexity of \(\eta\), together
with strict hyperbolicity and genuine nonlinearity, provides the basic
structural framework in which we study the geometry and dynamics of the
system \eqref{eq:system}.

\subsection{$L^1$ and $L^2$ theories}

In the $L^1$ theory, where distances between solutions are measured in $L^1$, for the most part solutions with finite but possibly large total variation can be studied (see \cite{ lewicka2004well,lewicka2001stability,MR1375345}). One work we wish to highlight is the recent $L^1$-based study of some of the Glimm-Lax solutions with infinite total variation \cite{2025arXiv250500420B}.

On the other hand, in the $L^2$-based theory, where distances between solutions are measured in $L^2$, solutions with infinite total variation can be studied \cite{MR3519973,serre_vasseur,2017arXiv170905610K,MR4487515,MR4667839,CHENG2025113599,Leger2011_original,CKV2}. This includes recent work which shows that in fact  bounded variation solutions are unique and \emph{quantitatively} stable in a large class of solutions with possibly infinite total variation \cite{ChenFaileKrupa2025Holder,2025arXiv250916432C}. As a result of this work, we now know that all Glimm-Lax solutions with initial data in $H^s$ for any $s>0$ are in fact unique \cite{ChengFaileKrupa2026}. The method of proof in the $L^2$ theory is ``$a$-contraction with shifts.'' In a nutshell, the idea is that shocks are difficult to measure in $L^2$, so we relax the notion of stability in $L^2$ by allowing discontinuities to move with artificial velocities (``shifts''). To ensure stability, and often even contraction, in $L^2$, it is provably necessary to introduce a weight on the spatial domain given by a positive function $(x,t)\mapsto a(x,t)$ (hence ``$a$-contraction''). We refer to \cite{MR3519973} for more details.

\subsection{Strong Trace Property}
The $L^2$ theory requires a regularity assumption on solutions that is usually referred to in the literature as the \emph{Strong Trace Property}, which we define below. This is the \emph{key boundary} between solutions we can show uniqueness for and solutions for which uniqueness is more uncertain. The Strong Trace Property essentially eliminates extreme oscillations such as $\sin(1/x)$. All solutions with finite total variation, or solutions from the Glimm-Lax class \cite{MR0265767} are known to verify the Strong Trace Property. However, in general it is not known if solutions to conservation laws must satisfy the Strong Trace Property. In particular, it is currently unknown if  solutions originating from the compensated compactness method verify the Strong Trace Property (however, see some related works on this question \cite{ancona2025liouville,talamini2026strong,Golding2023}). 

The Strong Trace Property is needed for the construction and analysis of the shift functions in the ``$a$-contraction with shifts'' method. Without the Strong Trace Property, it is unknown how to properly introduce artificial velocities at shocks in order to maintain some form of stability.
\subsection{Strong Trace Conjecture}
For 15 years, it has been a completely open question if the Strong Trace Property is truly an essential condition in the $L^2$ theory. Over 15 years, the condition has not been able to be relaxed.  To summarize the situation, we formulate the ``Strong Trace Conjecture'' 

\begin{conjecture}[Strong Trace Conjecture]There is a strictly hyperbolic system of conservation laws in one spatial dimension, endowed with a strictly convex entropy $\eta$ and associated entropy-flux $q$ such that for some initial data $U^0$:
\begin{itemize}
\item there is at least one solution $\bar{U}$ verifying the Strong Trace Property and this solution is unique in the class of solutions which are in $L^\infty$ and have the Strong Trace Property,
\end{itemize}
\textbf{and}
\begin{itemize}
\item this solution $\bar{U}$ is non-unique in the class of solutions which are in $L^\infty$ but may not verify the Strong Trace Property.
\end{itemize}
Thus, it is \emph{precisely} the Strong Trace Property which is responsible for the uniqueness.
\end{conjecture}

In fact, we will prove this conjecture is true. We will consider a particular case of \eqref{eq:system}, namely the $p$-system (see below, \eqref{eq:p-system}).

\subsection{$p$-system}
We will mostly focus on the $p$-system, which is the most famous example of a system of conservation laws. It is isentropic gas dynamics in the Lagrangian coordinate.

Let $I\subset (0,\infty)$ be an open interval, and let
$p\in C^3(I)$ satisfy
\begin{equation}
    p'(v)<0,
    \qquad
    p''(v)>0,
    \qquad v\in I.
    \label{eq:p-system-pressure-assumptions}
\end{equation}
We consider the one-dimensional $p$-system in Lagrangian coordinates,
\begin{equation}
    \begin{cases}
        \partial_t v-\partial_x u=0,\\[1mm]
        \partial_t u+\partial_x p(v)=0,
    \end{cases}
    \label{eq:p-system}
\end{equation}
where $v$ is the specific volume and $u$ is the velocity.  In vector
form, with $U=(v,u)^{\mathsf T}$,
\begin{equation}
    \partial_t U+\partial_x F(U)=0,
    \qquad
    F(U)=\begin{pmatrix}-u\\ p(v)\end{pmatrix}.
    \label{eq:p-system-vector-form}
\end{equation}
The characteristic speeds are
\begin{equation}
    \lambda_1(U)=-c(v),
    \qquad
    \lambda_2(U)=c(v),
    \qquad
    c(v):=\sqrt{-p'(v)}.
    \label{eq:p-system-characteristic-speeds}
\end{equation}

Fix an arbitrary reference point $v_\ast\in I$ and define
\begin{equation}
    \Pi(v):=-\int_{v_\ast}^{v}p(s)\,ds.
\end{equation}
The natural entropy for \eqref{eq:p-system} is
\begin{equation}
    \eta(U):=\frac{u^2}{2}+\Pi(v)
    =\frac{u^2}{2}-\int_{v_\ast}^{v}p(s)\,ds.
    \label{eq:p-system-natural-entropy}
\end{equation}
Its Hessian is
\begin{equation}
    D^2\eta(U)
    =\begin{pmatrix}
        -p'(v)&0\\
        0&1
    \end{pmatrix},
\end{equation}
so $\eta$ is strictly convex under the assumption $p'<0$.  The
corresponding entropy flux is
\begin{equation}
    q(U)=u\,p(v).
\end{equation}

For the convention in which $U$ is the left state, the Lax-admissible
$k$-shock curve issuing from $U$ is denoted by $S_U^k$, and it is
parameterized by the specific-volume coordinate $v$.

Both curves are extended to their base point by
\begin{equation}
    S_U^1(v_U)=S_U^2(v_U)=U.
\end{equation}
The corresponding shock speeds are denoted by $\sigma_U^k(v)$. We refer to \Cref{Hugoniot_facts} for more details.

\subsection{Relative entropy}
Given our entropy, entropy-flux pair $\eta$, $q$ and states
$(U,V)\in\mathcal{V}\times\mathcal{V}$ we define the relative quantities
\begin{equation}
\label{eq:relative-quantities}
\begin{aligned}
\eta(U\mid V)
&:=
\eta(U)-\eta(V)-\nabla\eta(V)\cdot(U-V),\\
q(U;V)
&:=
q(U)-q(V)
-\nabla\eta(V)\cdot\bigl(F(U)-F(V)\bigr).
\end{aligned}
\end{equation}
For fixed $V$ these quantities $(\eta(\cdot\mid V),q(\cdot;V))$ then constitute an entropy, entropy-flux pair
for the system \eqref{eq:system}. We recall the following Lemma.

\begin{lemma}[Lemma 1 of \cite{Leger2011}]\label{l2control}
Assume $\mathcal{V}$ is bounded and its closure does not include vacuum.  There exists a constant
$C^*>0$ such that for all $(U,V)\in\mathcal{V}\times \mathcal{V}$ we have
\begin{equation}
\frac{1}{C^*}|U-V|^2
\leq
\eta(U\mid V)
\leq
C^*|U-V|^2.
\label{eq:relative-entropy-L2-equivalence}
\end{equation}
\end{lemma}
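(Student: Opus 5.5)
The approach is a second-order Taylor expansion with integral remainder, combined with uniform bounds on the Hessian of $\eta$ on a compact convex set. Since $\eta$ is $C^2$ and $\mathcal{V}$ is convex, for $(U,V)\in\mathcal{V}\times\mathcal{V}$ the segment $V+s(U-V)$, $s\in[0,1]$, lies in $\mathcal{V}$. Writing out the definition \eqref{eq:relative-quantities} and expanding gives
\begin{equation*}
\eta(U\mid V)=\int_0^1(1-s)\,(U-V)^{\mathsf T}D^2\eta\bigl(V+s(U-V)\bigr)(U-V)\,ds .
\end{equation*}
This reduces both inequalities in \eqref{eq:relative-entropy-L2-equivalence} to two-sided bounds on the quadratic form of $D^2\eta$ along the segment.

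Next I would control the Hessian uniformly. Let $K:=\overline{\mathcal{V}}$. It is compact because $\mathcal{V}$ is bounded, and by hypothesis it contains no vacuum state. I assume $\eta$ extends as a $C^2$ function to a neighborhood of $K$ with $D^2\eta>0$ on $K$; for the $p$-system this means $p$ is $C^1$ with $p'<0$ on the closed $v$-projection, which excludes $v=0$. Then the smallest and largest eigenvalues of $D^2\eta(W)$ are continuous in $W\in K$. By compactness they attain a minimum $m>0$ and a maximum $M<\infty$, so
\begin{equation*}
m|\xi|^2\le \xi^{\mathsf T}D^2\eta(W)\xi\le M|\xi|^2
\qquad\text{for all } W\in K,\ \xi\in\mathbb{R}^2 .
\end{equation*}
For the $p$-system this is explicit: $m=\min\bigl(1,\min_K(-p')\bigr)$ and $M=\max\bigl(1,\max_K(-p')\bigr)$.

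Inserting this into the integral formula and using $\int_0^1(1-s)\,ds=\tfrac12$ gives
\begin{equation*}
\tfrac{m}{2}|U-V|^2\le\eta(U\mid V)\le\tfrac{M}{2}|U-V|^2 .
\end{equation*}
Taking $C^*:=\max\bigl(2/m,\,M/2\bigr)$ yields \eqref{eq:relative-entropy-L2-equivalence}.

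The only delicate point is the lower bound. Positive definiteness of $D^2\eta$ holds pointwise on the open set $\mathcal{V}$, but a uniform constant $m>0$ requires exactly the no-vacuum and boundedness hypothesis, so that compactness applies. Degeneracy, such as $-p'(v)\to0$ or $\infty$ as $v\to\infty$ or as $v\to0$, happens only at points excluded from $K$.
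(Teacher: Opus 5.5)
Your proposal is correct and follows essentially the same route as the standard proof behind the cited Lemma~1 of \cite{Leger2011}; the paper itself does not reprove it. That route is the integral form of the second-order Taylor remainder along the segment from $V$ to $U$, which lies in $\mathcal{V}$ by convexity, combined with uniform upper and lower eigenvalue bounds for $D^2\eta$ on the compact, vacuum-free closure $\overline{\mathcal{V}}$. For the $p$-system these bounds read $0<\min(-p')\le\max(-p')<\infty$ on the closed $v$-range, and your constant $C^*=\max(2/m,M/2)$ is right.
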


This Lemma states that controlling integrals of our relative entropy
is equivalent to controlling the $L^2$ difference of $U$ and $V$.

\subsection{Main theorem}
We now introduce the precise definition of the Strong Trace Property:

\begin{definition}[Strong Trace Property \cite{Leger2011}]\label{strong_trace_def}
Let $U\in L^\infty(\mathbb{R}\times[0,\infty))$. We say that $U$
verifies the \emph{Strong Trace Property} if for any Lipschitz
continuous curve $h:[0,\infty)\to\mathbb{R}$, there exist two bounded
functions $U_-,U_+\in L^\infty([0,\infty))$ such that, for any $T>0$,
\[
\begin{aligned}
\lim_{n\to\infty}
\int_0^T
\sup_{y\in(0,1/n)}
\left|U(h(t)+y,t)-U_+(t)\right|\,dt
&=0,\\
\lim_{n\to\infty}
\int_0^T
\sup_{y\in(-1/n,0)}
\left|U(h(t)+y,t)-U_-(t)\right|\,dt
&=0.
\end{aligned}
\]
\end{definition}

For convenience, we will use the notation
\[
U_+(t):=U(h(t)+,t),
\qquad
U_-(t):=U(h(t)-,t).
\]

With this definition in hand, we can define the class of solutions which are typically considered in the $L^2$ theory:

\begin{equation}
\mathcal{S}_{\mathrm{weak}}
:=
\left\{
U\in L^\infty(\mathbb{R}\times[0,T);\mathcal{V})
:
\begin{array}{l}
U \text{ weak solution to \eqref{eq:system}, as well as \eqref{eq:entropy-inequality},}\\
\text{verifying \Cref{strong_trace_def}}
\end{array}
\right\},
\label{eq:S-weak}
\end{equation}
for any sufficiently large but bounded state space $\mathcal{V}$ whose closure does not contain vacuum and for some $T>0$.

This is a large class with no assumption on the total variation at the initial time $t=0$ or later times $t>0$. The total variation may in fact be infinite, and examples of such solutions in $\mathcal{S}_{\mathrm{weak}}$ are given by the Glimm-Lax theory \cite{MR0265767}. 

We can now state precisely our main result:

\begin{theorem}[Main theorem -- uniqueness with the Strong Trace Property, non-uniqueness otherwise]\label{main_theorem}

Fix $R,T>0$.
Then there exists a smooth, strictly decreasing function $p\colon (0,\infty)\to\mathbb{R}$, $U_L\in\mathcal{V}$, and $U_R\in\mathcal{V}$, such that for the $p$-system \eqref{eq:p-system} with the pressure law $p$, and Riemann problem initial data
\begin{equation}\label{main_theorem_Riemann}
   U^0_{\text{Riemann}}=\begin{cases*}
                    U_L & if  $x < 0$  \\
                     U_R & if $x\ge 0$,
                 \end{cases*}
\end{equation}  
we have the following dichotomy:

\begin{itemize}
    \item  There are infinitely (!) many distinct weak solutions with initial data $U^0_{\text{Riemann}}$, and each of these non-unique solutions takes values in $\mathcal{V}$ and verifies the entropy inequality \eqref{eq:entropy-inequality} for the natural entropy, entropy-flux pair.

\item At the same time,
the classical self-similar solution with initial data $U^0_{\text{Riemann}}$, which we denote $U_{\text{self-similar}}$, is unique and stable in the class $\mathcal{S}_{\mathrm{weak}}$ and moreover verifies the following stability estimate against all weak solutions $U$ in  $\mathcal{S}_{\mathrm{weak}}$:

\begin{align}\label{eq:main-stability-estimate}
     \boxed{
    \bigl\|U(\cdot,\tau)-U_{\text{self-similar}}(\cdot,\tau)\bigr\|_{L^2((-R,R))}
    \leq
    K
    \bigl\|U(\cdot,0)-U_{\text{self-similar}}(\cdot,0)\bigr\|_{L^2((-R-s\tau,R+s\tau))}^{1/2}
    },
\end{align}
for all $\tau\in[0,T]$ and for constants $K,s>0$ (where $s$ is the speed of information).
\end{itemize}

\end{theorem}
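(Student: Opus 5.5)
The proof splits along the dichotomy of \Cref{main_theorem}, and both halves are driven by one computer-assisted choice of pressure law. \textbf{Step 1} is to fix $p$, $U_L$, $U_R$. I would search over a finite-dimensional family of pressure laws, for instance piecewise-polynomial perturbations of $p(v)=v^{-\gamma}$ glued smoothly to a globally convex, decreasing law outside a compact window. On each candidate, two groups of conditions are verified with interval arithmetic.

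The first group consists of the structural hypotheses of the $a$-contraction with shifts theory: $p'<0$ and $p''>0$ on the relevant window (this gives strict hyperbolicity and genuine nonlinearity), together with the monotonicity and Liu-type conditions along the Hugoniot curves $S^k_U$ recalled in \Cref{Hugoniot_facts}. In particular, the shock speed $\sigma^k_U(v)$ should be monotone in $v$, and the relative entropy $\eta(S^k_U(v)\mid U)$ should increase along the curve.

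The second group is the existence of a $T_6$ configuration. This is six matrices in the constitutive set
\[
K=\Bigl\{\bigl(U,F(U),\eta(U),q'\bigr): q'\le q(U)\Bigr\},
\]
lifted to the first-order formulation of the system plus the entropy inequality. They must be pairwise rank-one incompatible, and there must be rank-one connections to auxiliary points $P_i$ with $P_{i+1}\in[P_i,Z_i]$, all cyclically consistent. The condition $p''>0$ is kept, which is compatible with the later nonexistence result only because that result rules out $T_N$ for the \emph{full} constitutive set of $p$-systems with a convex pressure globally. I expect the design to hinge on $T_6$ living in the relaxed set that includes the entropy-dissipation slack. This is the point I am least sure of and would check first.

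\textbf{Step 2 (non-uniqueness).} From the $T_6$ I would build infinitely many solutions by the standard Kirchheim/Székelyhidi scheme adapted to one space dimension. The rank-one convex hull of the $T_6$ contains an open set $\mathcal U$ in the appropriate sense, after perturbing the configuration to get stability. Choose $U_L$, $U_R$ so that the Riemann problem admits a subsolution: a fan with constant states $U_L$ and $U_R$ and a wedge $\{\sigma_1 t< x<\sigma_2 t\}$ on which the state sits in $\mathcal U$. Rank-one jumps across the wedge boundaries must be compatible with Rankine–Hugoniot and the entropy inequality, which is arranged in Step 1 as part of the search.

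In the wedge, iteratively add laminates along the $T_6$ rank-one directions. A Baire-category argument in the closure of subsolutions, or explicit convex integration with exhaustion, then yields infinitely many exact solutions in $K$. These solutions take values in the $T_6$ states, so they lie in $\mathcal V$, and they satisfy the entropy inequality by construction. Their oscillation concentrates at points, so they necessarily fail \Cref{strong_trace_def}, as they must.

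\textbf{Step 3 (uniqueness and stability).} Take $U_{\text{self-similar}}$ to be the Lax solution, chosen for instance as a single shock or a shock–rarefaction pair. Then apply the $a$-contraction with shifts machinery: Leger–Vasseur, the Krupa–Vasseur weights, and the Chen–Krupa–Vasseur quantitative framework for $2\times2$ systems. Its hypotheses are exactly the verified Hugoniot conditions of Step 1.

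For $U\in\mathcal S_{\mathrm{weak}}$, the Strong Trace Property allows shift curves $h(t)$ to be defined through Filippov flows. The weighted relative entropy $\int a\,\eta(U\mid U_{\text{self-similar}}(\cdot-h))\,dx$ is then nonincreasing on the cone $(-R-s\tau,R+s\tau)$. \Cref{l2control} converts this into $L^2$ control with shifts. Finally, the shift is controlled by $|h(\tau)-\sigma\tau|\lesssim \|U^0-U^0_{\text{Riemann}}\|_{L^2}$, and because a shift of a jump costs only an $L^2$ amount proportional to the square root of its displacement, this gives the $1/2$ power in \eqref{eq:main-stability-estimate}.

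The main obstacle is Step 1: finding and rigorously certifying a $p$ that satisfies both the contraction hypotheses and the $T_6$ inclusion at the same time.
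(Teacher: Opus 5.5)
There is a genuine gap, and it is exactly the point you flagged. You keep $p''>0$ on the window where the six states live, and you try to escape the nonexistence result by placing the $T_6$ in a relaxed set with flux slack $q'\le q(U)$. Both halves of this fail.

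First, the nonexistence proof (\Cref{thm:no-TN}) is local in $v$. It only uses that $\phi(x)=p(v_*)-p(v_*+x)$ is increasing and strictly concave for $0\le x\le\max_j v_j-v_*$, i.e.\ $p'<0$ and $p''>0$ on $[\min_j v_j,\max_j v_j]$. So convexity on your window already excludes every $T_N$ from $K_{F,\eta,q}$, whatever $p$ does elsewhere.

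Second, the slack set does not encode the entropy inequality. If $(\psi_3)_x=\eta(U)$ and $(\psi_3)_t=-q'$ with $q'\le q(U)$, then $\partial_t\eta(U)+\partial_x q(U)=\partial_x\bigl(q(U)-q'\bigr)$. This is the $x$-derivative of a nonnegative function and has no sign. Putting the slack in the $\eta$-entry instead gives a $t$-derivative of a one-signed function, with the same problem. Hence the solutions of your Step 2 need not satisfy \eqref{eq:entropy-inequality}, and the claim that they are entropic ``by construction'' is unfounded.

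The missing idea is to give up convexity where the $T_6$ lives and recover the $a$-contraction structure only in the far field. In the paper, $p$ satisfies only $p'<0$ and is non-convex on the core. The $T_6$ lies in the \emph{equality} set $K_{F,\eta,q}$, obtained from \eqref{symmetry_cond}, \eqref{q_cond} and the interpolation inequalities \eqref{inequalities_for_convexity_extension_lemma} via \Cref{convex_function_lemma}.

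Entropy is dissipated only on the two interfaces $x=\pm st$ of the fan $G(A)$, $P$, $G(B)$. There $\mathcal P_L=-s\mathscr D-\mathscr J$ and $\mathcal P_R=-s\mathscr D+\mathscr J$ become negative, because $\mathscr D(b)\sim\frac{h_\infty-\bar h}{2}\,b$ as the outer volume $b\to\infty$ in a convex tail of $p$. This also pins the Riemann data and makes the self-similar solution a $1$-shock/$2$-shock pair, rather than a freely chosen Lax solution.

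Consequently the system is not genuinely nonlinear on $\mathcal V$. Your Step 3 therefore cannot invoke the global hypotheses of the $a$-contraction theory against arbitrary $U\in\mathcal S_{\mathrm{weak}}$, whose values may enter the core. The paper replaces those hypotheses by localization:
\begin{itemize}
\item \Cref{size_pi} confines the relevant states near the base states.
\item \Cref{prop:4.2} needs \eqref{need1}--\eqref{need2} only there.
\item \Cref{lem:remote-endpoint-control} shows that every entropic shock with an endpoint near $v=b$ is strictly Lax, using only $p'<0$ in the core.
\end{itemize}

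Finally, a single $T_6$ has no open rank-one hull. You need the transversality of \Cref{transverse_check_lemma}. Since $K_{F,\eta,q}$ lies in the hyperplane $X_{2,1}=X_{1,2}$, you also need a convex integration theorem with a linear constraint, such as \Cref{kim_theorem}.
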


\subsection{Important consequences of the main theorem}

At a high level, our method of proof is convex integration, which is a way of constructing highly oscillatory and non-unique solutions to PDEs.

Some remarks on our result: 

\subsubsection{Loss of predictive power} Our result shows that hyperbolic systems in one spatial dimension, including the Euler equations, lose their predictive power below the Strong Trace Property regularity-threshold. In other words, given fixed initial data, there is no guarantee of a unique solution even if we restrict to the smaller class of entropy-admissible solutions \eqref{eq:entropy-inequality}. This gives the analogue of the result \cite{MR3352460}, where non-uniqueness of low-regularity Riemann solutions is shown for the isentropic Euler equations in two spatial dimensions. Similar to our work, the work \cite{MR3352460} considers the pressure law as an extra degree of freedom in their search for convex integration solutions.

Crucially, our result is \emph{significantly sharper} than the result \cite{MR3352460}: while the Chiodaroli result shows \emph{non-uniqueness} in a class of very low-regularity solutions, they give a \emph{uniqueness} criteria which requires both self-similarity and BV-regularity. On the other hand, our uniqueness result depends \emph{only on the Strong Trace Property}, which is a regularity hypothesis substantially weaker than BV (or $\text{BV}_{\text{loc}}$). Notice for example that the function $x\mapsto\sqrt{x}\sin(1/x)$ is not in BV but verifies the Strong Trace Property.

\subsubsection{1-D convex integration} Moreover, we wish to point out that our work is the first work to be able to convex integrate a one dimensional hyperbolic system with a strictly convex entropy. This is in comparison with the works \cite{2024arXiv240702927C} and \cite{Krupa2026NonUniqueness}, where a strictly convex entropy condition is omitted.

In particular, by using the change of variables \cite{wagnergasdynamics}, which holds also for weak solutions and also transforms the entropy inequality \eqref{eq:entropy-inequality}, we have constructed \emph{entropy-admissible one-dimensional convex integration solutions for the 2-D compressible isentropic Euler equations}.

\subsubsection{Infinite-BV perturbations of the Riemann problem}
We can construct small $L^\infty$ (not BV !) perturbations of the Riemann data considered in \Cref{main_theorem}. We can use the Glimm-Lax existence theory of solutions with small $L^\infty$ data \cite{MR0265767} to make a perturbation away from the jump in the Riemann data, and then for some small but positive time the Glimm-Lax solution does not interact with the discontinuity in the Riemann solution. Remark that our pressure law is strictly convex in a neighborhood of the Riemann states so the Glimm-Lax theory applies.  The Glimm-Lax solutions also verify the Strong Trace Property because they instantaneously enter $\text{BV}_{\text{loc}}$. This  shows the existence of a non-BV solution, which is stable against the Riemann problem (because it has the strong traces). At the same time, the convex integration solutions exist and are non-unique.

\subsection{Additional remarks}
The crucial observation made in
\cite{MR2008346} is that the system \eqref{eq:system} with equality in \eqref{eq:entropy-inequality} for a given
entropy/entropy-flux pair $(\eta,q)$ can be equivalently restated
as a first order differential inclusion of the type
\[
D\psi(x,t)\in K_{F,\eta,q}
\qquad \text{a.e. }(x,t),
\]
where $K_{F,\eta,q}$ is a given two dimensional surface
embedded in the space of matrices $\mathbb{R}^{3\times 2}$ (see below
in \Cref{solutions_by_convex_integration_section} and \eqref{manifold_K}). 

The rank-one convex geometry of the set $K_{F,\eta,q}$ is closely related to the compactness of the system \eqref{eq:system} as well as the uniqueness and regularity of solutions to the system \eqref{eq:system}. This is explained in detail in the works \cite{MR2008346,MR3995052,2019arXiv190905938L}. In particular, we can begin to understand the rank-one convex geometry of $K_{F,\eta,q}$ by trying to find or eliminate certain special 2-point
configurations (rank-one connections) as well as special $N$-point
configurations, $N\geq 4$ ($T_N$-configurations)---see below in
\Cref{T_N_section} and \Cref{def:ordered-TN}. The existence of such configurations may lead to non-unique and low regularity solutions.

In this work, we continue our computational study of the set $K_{F,\eta,q}$. In the previous works \cite{MR3995052,KrupaSzekelyhidi2024T4,doi:10.1142/S021919972250081X}, the rank-one convex geometry of $K_{F,\eta,q}$ was constrained: under certain conditions on $F$, $\eta$, and $q$, the possibility of rank-one connections or $T_4$ or $T_5$ configurations in the set $K_{F,\eta,q}$ was eliminated.

On the other hand, we now use both a computational search of solutions to the equations defining a $T_N$ configuration (see \Cref{matlab_lemma}) as well as a careful computer-assisted study of the manifold of $T_N$ configurations to find a delicate situation where for a particular pressure law $p$, the set $K_{F,\eta,q}$ corresponding to the $p$-system with its natural entropy and entropy-flux pair admits a family of $T_6$ configurations. Moreover, even though the system is not genuinely nonlinear, in other words $p''$ may change sign, we can find particular Riemann-type initial data for which the theory of ``$a$-contraction with shifts'' will apply.

Our result uses computer-assisted proof and in particular the computational ansatz introduced to the hyperbolic theory by the author \cite{Krupa2026NonUniqueness} and later used by the author and Székelyhidi to resolve the question of uniqueness for the compressible vortex sheet \cite{KrupaSzekelyhidi2025Contact}. See \Cref{T_N_existence_section}.

\subsection{Further questions on rank-one geometry}

Our work involves questions about the global rank-one convex geometry of the constitutive set $K_{F,\eta,q}$. The focus of this work is what are the global, or local properties of the conservation law \eqref{eq:system} which allow for nontrivial rank-one geometries. This has been a focus of many earlier works \cite{MR2008346,doi:10.1142/S021919972250081X,2019arXiv190905938L,KrupaSzekelyhidi2024T4}. In this work, we discover conditions which allow for $T_N$ configurations to live in $K_{F,\eta,q}$ in the case of the $p$-system. Our pressure law is not genuinely nonlinear, however we still meet all of the requirements for the $a$-contraction theory. We complement our results with two proofs, completed mostly by ChatGPT 5.6 Sol Pro and ChatGPT 5.6 Sol Ultra, which show (a) the non-existence of $T_N$ configurations for all $N$ for the $p$-system when $p''>0$ (ensuring genuine nonlinearity) (see \Cref{sec:no-TN-p-system}) and also (b) the nonexistence of so-called $T_\infty$ configurations whenever the flux is genuinely nonlinear -- this result applies to all hyperbolic systems and not only the $p$-system. See \Cref{sec:no-Tinfty}.

The theorems are:

\begin{theorem}[No $T_N$ for genuinely nonlinear $p$-system (\Cref{sec:no-TN-p-system}]
The constitutive set for the $p$-system \eqref{eq:p-system} in the strictly hyperbolic, genuinely nonlinear setting, does not contain a $T_N$ configuration for any $N$.
\end{theorem}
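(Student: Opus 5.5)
The plan is to work directly with the algebraic equations defining a $T_N$ configuration in the constitutive set $K_{F,\eta,q}$ of \eqref{manifold_K}, and to derive a contradiction from a quantity that strictly increases around the cycle. I take \Cref{def:ordered-TN} as the standard form: points $A_i\in K$, base points $P_i$, rank-one matrices $C_i$ and $\kappa_i>1$ with $P_{i+1}=P_i+C_i$ and $A_i=P_i+\kappa_iC_i$, indices taken mod $N$.

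\textbf{Setting up the algebra.} For the $p$-system, a point of $K$ is the matrix with rows $(v,u)$, $(-u,p(v))$, $(\eta(U),q(U))$, arranged to fit the convention of \eqref{manifold_K}. Writing each rank-one matrix as $C_i=w_i\otimes(1,-\sigma_i)$ with $w_i\in\mathbb{R}^3$, the defining relations become, for each $i$, three scalar ``Rankine--Hugoniot type'' identities between the state $U_i$ (the projection of $A_i$) and the $U$-part of $P_i$:
\begin{align*}
F(U_i)-G_i &= \sigma_i\,(U_i-X_i),\\
q(U_i)-Q_i &= \sigma_i\,(\eta(U_i)-e_i).
\end{align*}
Here $P_i=(X_i,G_i;e_i,Q_i)$. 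The closing condition $\sum_i C_i=0$ is a linear constraint on the $w_i$. I will first eliminate the $G_i,Q_i,e_i$ using $P_{i+1}-P_i=C_i$. This leaves a closed recursion in the $U$-coordinates: the first two rows form a $T_N$-type cycle for the nonlinear wave set $\{(U,-F(U))\}$, and the third row is an extra scalar constraint per $i$.

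\textbf{Identifying a monotone quantity.} The key point is that the third row says the entropy pair satisfies the same jump relation, with the same speed $\sigma_i$, as the conserved quantities. Combining the three rows through $\nabla\eta(U_i)$, I expect each $i$ to produce an identity of the form
\[
\eta(X_i\mid U_i)\text{-type term}\;=\;\text{(a quantity linear in the cycle data)}.
\]
This is in the spirit of the relative-entropy quantities \eqref{eq:relative-quantities}. For the $p$-system, $\eta(\cdot\mid\cdot)$ is explicit, namely $\tfrac12(u-u')^2$ plus the Bregman divergence of $\Pi$. Moreover, the entropy dissipation along any Hugoniot-type chord is the integral of $p''$ against a positive weight. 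So when $p''>0$, the dissipation has a strict sign, which is determined by the ordering of the $v$-values and the sign of $\sigma_i$.

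\textbf{Deriving the contradiction.} Next I will show that the equality constraints force an ordering along the cycle: in each family (the sign of $\sigma_i$) the specific volumes $v$ of successive base points move strictly monotonically. Since the cycle closes, $P_{N+1}=P_1$, summing the resulting strict inequalities around the cycle gives $0>0$. I would proceed in two steps:
\begin{itemize}
\item First treat the degenerate cases by the known absence of rank-one connections (\cite{MR3995052,doi:10.1142/S021919972250081X}): some $\kappa_i C_i$ connecting two points of $K$, or $\sigma_i$ coinciding with a characteristic speed.
\item Then treat the generic case with the summation argument above.
\end{itemize}

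\textbf{Main obstacle.} The hard step is finding the right combination of the three rows: a telescoping Lyapunov-type function $\Phi(P)$, affine along each rank-one segment up to a strictly signed defect, whose defect is controlled by $p''>0$. My first attempt will be $\Phi$ equal to the entropy row minus a suitable null-Lagrangian (a $2\times2$ minor built from the first two rows). I will check by direct computation that, along each leg $P_i\to A_i$, the defect equals $\int p''$ against a nonnegative kernel. If that fails, the fallback is the laminate viewpoint: use commutation of the minors for the finite-order laminate generated by the $T_N$, together with the strict sign of entropy dissipation, to force its support to be a single point, contradicting $N\ge 2$ distinct $A_i$.
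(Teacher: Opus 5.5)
\textbf{There is a genuine gap.} The step you call the main obstacle is the whole proof, and the mechanism you propose for it cannot work as described.

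First, your per-leg relations are not Rankine--Hugoniot relations between two states. The base point $P_i$ is not in $K_{F,\eta,q}$: its entries $G_i,e_i,Q_i$ are not values of $F,\eta,q$ at any state. They are determined only globally, through $P_i=\sum_j\mu^{(i)}_jA_j$. So Lax's dissipation formula, and the sign it inherits from $p''>0$, gives you nothing leg by leg. For the same reason, the absence of rank-one connections in $K$ disposes of no ``degenerate case'': every rank-one segment of a $T_N$ joins a point of $K$ to a point outside $K$.

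Second, your candidate $\Phi$ (entropy row minus a $2\times2$ minor) is rank-one affine. It is exactly affine on each segment $[P_i,A_i]$, so there is no defect to sign. It also commutes with the barycenters, $\Phi(P_i)=\sum_j\mu^{(i)}_j\Phi(A_j)$. Hence
$\Phi(P_{i+1})-\Phi(P_i)=\kappa_i^{-1}\sum_j\mu^{(i)}_j\bigl(\Phi(A_i)-\Phi(A_j)\bigr)\le0$
at any index minimizing $\Phi(A_i)$. So no such $\Phi$ can be strictly monotone on every leg, whatever $p$ is.

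Third, family-wise monotonicity of $v$ would not close the argument either. The $1$- and $2$-legs can move $v$ in opposite directions, and nothing in your per-leg relations forces such monotonicity anyway.

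\textbf{The missing idea} is to argue at one extremal vertex rather than around the cycle. Take $i$ with $v_i$ minimal and use the strictly positive weights with $P_i=\E[X]$. Left multiplication by the unimodular matrix $L_i$ with rows $(1,0,0)$, $(0,1,0)$, $(p(v_i),-u_i,1)$ turns $X_j-X_i$ into the matrix with rows $(x_j,y_j)$, $(y_j,z_j)$, $(\tfrac12y_j^2+H(x_j),\,y_jz_j)$. Here $x_j=v_j-v_i\ge0$, $y_j=u_j-u_i$, $z_j=\phi(x_j)=p(v_i)-p(v_j)$ and $H(x)=\int_0^x\phi$. The last row is exactly $(\eta(U_j\mid U_i),-q(U_j;U_i))$.

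The average of these matrices is $L_i(P_i-X_i)$, which has rank one. When the $v_j$ are not all equal, $A=\E[x]>0$, and this gives $\E[y]=sA$, $\E[z]=s^2A$ and $\E[yz]=s(\tfrac12\E[y^2]+\E[H(x)])$. Three inequalities then combine into a contradiction:
\begin{itemize}
\item $\E[(z-sy)^2]\ge0$ gives $2\E[z]\E[H(x)]\le A\E[z^2]$;
\item Chebyshev's inequality for the weights $\mu_jx_j/A$, with $\phi(x)/x$ decreasing and $\phi$ increasing, gives $A\E[z^2]\le\E[z]\E[xz]$;
\item the strict chord inequality $x\phi(x)<2H(x)$, which is where $p''>0$ enters, gives $\E[xz]<2\E[H(x)]$.
\end{itemize}
The constant-$v$ case is handled separately: choose $u_i$ minimal and use the vanishing upper $2\times2$ minor of $P_i-X_i$. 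Your laminate fallback points in this direction, but without the extremal choice of vertex and these moment inequalities it is not yet an argument.
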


\begin{theorem}[No $T_\infty$ for any genuinely nonlinear system (\Cref{sec:no-Tinfty})]
The constitutive set for the general hyperbolic system \eqref{eq:system} in the strictly hyperbolic, genuinely nonlinear setting, does not contain a $T_\infty$ configuration.
\end{theorem}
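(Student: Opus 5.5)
We will argue by contradiction. Suppose $K_{F,\eta,q}$ contains a $T_\infty$ configuration: rank-one segments $[P_k,A_k]$ with $A_k\in K_{F,\eta,q}$ and $P_{k+1}\in(P_k,A_k)$, where the $P_k$ converge to a limit $P_\infty$. The plan is to localize near an accumulation point, blow up, and show that the limiting local geometry cannot carry such a structure under genuine nonlinearity \eqref{eq:genuine-nonlinearity}. We do not use the $p$-system structure; we only use that $K_{F,\eta,q}$ is the smooth two-dimensional graph
\[
U\mapsto \bigl(U,F(U),\eta(U),q(U)\bigr)
\]
(arranged as a $3\times2$ matrix), together with strict hyperbolicity.

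\emph{Step 1 (localization).} Since the configuration is bounded and $\mathcal{V}$ may be taken with compact closure, we pass to a subsequence along which $A_k\to A_\ast=\Phi(U_\ast)\in K_{F,\eta,q}$. We then show that the rank-one increments $P_{k+1}-P_k$ and the gaps $A_k-P_k$ tend to zero. For the first, convergence of $P_k$ suffices. For the second, we use that a non-degenerate rank-one direction $A_k-P_k$ must stay in the rank-one cone while $P_{k+1}=P_k+\lambda_k(A_k-P_k)$, and we exclude $\lambda_k\to0$ with $|A_k-P_k|\not\to0$ via the elementary fact that rank-one connections between two distinct points of $K_{F,\eta,q}$ are impossible. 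Indeed, a rank-one connection between $\Phi(U)$ and $\Phi(V)$ means that $(U,V)$ satisfy Rankine--Hugoniot with \emph{zero} entropy dissipation. By genuine nonlinearity this dissipation is of exact order $|U-V|^3$ for small shocks, and it is nonzero for all shocks in the relevant region by the standard argument of \cite{MR3995052}. The conclusion of this step is that the whole tail of the configuration lies in an arbitrarily small neighborhood of $A_\ast$.

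\emph{Step 2 (blow-up and linearization).} We rescale by $\varepsilon_k:=\sup_{j\ge k}|A_j-A_\ast|$ and Taylor expand $\Phi$ at $U_\ast$. The rank-one condition for $A_j-P_j$ reads $\det$-conditions on $2\times2$ minors. To first order these force the tangent directions $D\Phi(U_\ast)[W]$ to lie on the rank-one cone. By strict hyperbolicity this happens only for $W\parallel r_1(U_\ast)$ or $W\parallel r_2(U_\ast)$: the tangent plane meets the cone in exactly these two lines. A staircase confined, at leading order, to a plane containing only two rank-one lines must degenerate into a one-dimensional lamination along those two lines. The nontrivial cycling that a $T_\infty$ needs is then produced only at \emph{second and third order}.

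\emph{Step 3 (contradiction via entropy dissipation).} At third order, the obstruction is the scalar quantity given by the $(\eta,q)$-row minor paired with the $(U,F)$-rows. This is precisely the small-shock entropy dissipation, whose leading coefficient is a nonzero multiple of $(\nabla\lambda_k\cdot r_k)(U_\ast)$. We intend to sum this quantity telescopically along the staircase, using that the $(\eta,q)$ row is affine along each rank-one segment while $P_k\to P_\infty$. One side of the resulting identity is a convergent telescoping sum that must vanish in the limit. The other side is a sum of terms of one strict sign of size $\gtrsim|A_k-P_k|^3$, since all small waves of family $k$ dissipate with a sign fixed by genuine nonlinearity. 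This gives the contradiction.

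I expect Step 2 to be the main obstacle, specifically making the blow-up uniform when the tail mixes segments along both characteristic families with wildly different scales. My first attempt would be to split the indices according to which family $r_1$ or $r_2$ the rescaled direction $A_j-P_j$ approaches. I would then run the Step 3 telescoping separately on each family, using the fact that the $T_\infty$ condition prevents the process from switching families infinitely often without the corresponding $P$-increments summing to a non-vanishing amount.
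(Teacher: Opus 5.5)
There is a genuine gap, and it starts with the object you are ruling out. In this paper a $T_\infty$ configuration means Iqbal's closed evolute (\Cref{thm:main_t_infty}). One takes a $C^4$ closed curve $\alpha$ with $\rank\dot\alpha\equiv1$ and a function $\rho>0$, and the theorem says that $C=\alpha+\rho\dot\alpha$ cannot be a regular embedded closed curve lying in $K_{F,\eta,q}$.

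Your infinite staircase $P_{k+1}\in(P_k,A_k)$, $P_k\to P_\infty$, is a different object. As you formulated it, the nonexistence claim is false: take $A_k\equiv A\in K_{F,\eta,q}$ with $\rank(A-P_1)=1$ and $P_{k+1}=\tfrac12(P_k+A)$. Nothing in Steps 1--3 excludes this, and there is no dissipation to bound below by $|A_k-P_k|^3$, since only one state of $K_{F,\eta,q}$ is ever used. So your argument would prove something false unless you add a nondegeneracy hypothesis you have not formulated.

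Step 1 has a separate problem. It rests on the claim that no two distinct points of $K_{F,\eta,q}$ are rank-one connected. Genuine nonlinearity only gives nonzero dissipation, of order $|U-V|^3$, for \emph{small} shocks. Excluding large zero-dissipation shocks needs a global hypothesis such as the structural Liu condition, and the paper's introduction even mentions purported genuinely nonlinear systems with rank-one connections.

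Steps 2--3 never produce an actual identity. That the $(\eta,q)$ row is affine along a rank-one segment is true of every row along every segment, so it carries no information. You also never use strict convexity of $\eta$ or embeddedness, and both are indispensable in the paper. The paper's local identities hold equally for immersed evolutes, where the question is left open, so the obstruction is global. A localize-and-blow-up strategy therefore has nothing to work with.

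The missing idea is an exact differential identity followed by integration around the loop. Write $\alpha=(a\mid b)$, $a=(a_s,a_3)$, $C=G(U)$, $\xi=\dot a_s$, and $n=\nabla\eta(U)$.

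\emph{The identity.} The rank-one condition gives $\dot b=\mu\dot a$. Differentiating $B=b+\rho\dot b$ gives $\dot B=\mu\dot A+\rho\dot\mu\,\dot a$. Comparing the third component with the first two through $\nabla q=DF^{\mathsf T}\nabla\eta$ yields $\rho\dot\mu\,(\dot a_3-n\cdot\xi)=0$.

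\emph{Where genuine nonlinearity enters.} It is used only here. If $\dot\mu\equiv0$ on an interval, then $\dot U$ is an eigenvector of $DF(U)$ with constant eigenvalue $-\mu$, which forces $\nabla\lambda_k\cdot r_k=0$. Hence $\dot a_3=n\cdot\xi$ everywhere. Differentiating $U=a_s+\rho\xi$ and $\eta(U)=a_3+\rho\dot a_3$ then gives $\dot n\cdot\xi=0$.

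\emph{Where strict convexity and embeddedness enter.} Strict convexity gives $0<\dot U^{\mathsf T}D^2\eta(U)\dot U=\dot n\cdot\dot U=\rho\,\dot n\cdot\dot\xi$. So $\xi$ is a nonvanishing normal to the plane curve $n(t)$, and the curvature of $n$ has one strict sign. Since $G$ and $\nabla\eta$ are injective, embeddedness makes $n(\SoneL)$ a strictly convex oval, with $\xi$ consistently inward or consistently outward.

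\emph{The contradiction.} For $z$ in the interior of the oval, $\int_0^L(n-z)\cdot\xi\,dt\neq0$. But closedness of $a_s$ and $a_3$ gives $\int_0^L\xi\,dt=0$ and $\int_0^L n\cdot\xi\,dt=\int_0^L\dot a_3\,dt=0$, which is a contradiction.
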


Loosely speaking $T_\infty$ configurations are the continuous analogues of the discrete $T_N$ structures (see \cite{Iqbal2000}).

\vspace{.09in}

We conclude with one final remark: the work \cite{KrupaSzekelyhidi2024T4} shows that global conditions such as the \emph{structural Liu entropy condition} can rule out rank-one connections \emph{as well as} $T_4$ configurations for general classes of  hyperbolic systems. Genuine  nonlinearity is a pointwise differentiability condition, while the structural Liu entropy condition is the global analogue of genuine nonlinearity. 

On the other hand, very recent results from ChatGPT 5.6 Sol Pro and ChatGPT 5.6 Sol Ultra have given arguments which purport to show the existence of genuinely nonlinear hyperbolic systems which admit both rank-one connections as well as non-degenerate $T_4$ configurations. This is closely related to Question 1.6 in \cite{KrupaSzekelyhidi2024T4}. These systems are very special constructions and are not physical, and the arguments only can show genuine nonlinearity on a very, very narrow rectangle. It is also not clear if these very  artificial systems are amenable to $a$-contraction. We have not checked these AI-assisted arguments yet for correctness, but in the interest of the community we include them in the GitHub repository.

\subsubsection{Open questions}
We end with the following open questions:

\Cref{main_theorem} shows that with simple Riemann initial data, the Strong Trace Property is necessary to ensure uniqueness of solutions. But what about if we consider initial data with lower regularity?

\begin{openquestion} Does there exist a hyperbolic system of conservation laws in one spatial dimension for which under natural assumptions, some initial data yield unique solutions and some initial data yield non-unique solutions -- even when all solutions under consideration have the same degree of regularity at positive times?
\end{openquestion}

This closely relates to Bressan's Open Problem $\#6$ \cite{Bressan2024OneDimensional}. It also closely connects to the research program initiated in \cite{2024arXiv240702927C} to construct non-unique solutions to 1-D systems via convex integration \emph{which are also globally continuous}. These solutions automatically verify the Strong Trace Property by virtue of being continuous. However, they do not verify all of the conditions needed for $a$-contraction to apply (namely, they lack a strictly convex entropy functional).

The convex integration solutions we construct in \Cref{main_theorem} are very low regularity. In fact, they are only $L^\infty$ and nothing more. So, it raises the natural question:

\begin{openquestion}
    Is the Strong Trace Property truly the \emph{precise} boundary between uniqueness and non-uniqueness? Or are there weaker assumptions that can also ensure uniqueness?
\end{openquestion}

In the interesting recent works \cite{talamini2026strong,Golding2023}, the authors are able to show that various trace conditions hold for $2\times2$ hyperbolic systems. However, the traces found do not imply the Strong Trace Property. It is unknown if these different trace conditions  suffice for the $a$-contraction theory. We remark that these works \cite{talamini2026strong,Golding2023} use the special fact that there is an infinite family of entropies available in the $2\times2$ case. We also wish to cite the related work on a  Liouville-type theorem for genuinely nonlinear systems \cite{ancona2025liouville}.

\subsection{Outline of the paper}
In \Cref{T_N_section} we introduce rank-one convexity and $T_N$ configurations. In \Cref{solutions_by_convex_integration_section} we introduce convex integration. In \Cref{sec:TN-tangent-space} we study the tangent space to the manifold of $T_N$ configurations. In \Cref{T_N_existence_section} we find a particular $T_N$ configuration (with $N=6$) which verifies various algebraic properties. In \Cref{extension_section}, we show that we can find a particular pressure law $p$, verifying various important properties, for which the constitutive set $K_{F,\eta,q}$ with this $p$ contains the $T_6$ we found earlier.  In \Cref{perturbative_section} we show that we can always perturb our pressure law $p$ to make sure we are in a non-degenerate situation and can thus perform convex integration. In \Cref{sec:pos} we prove the stability estimate \eqref{eq:main-stability-estimate} which holds under the Strong Trace Property. On the other hand, in \Cref{sec:neg} we prove the non-uniqueness stated in \Cref{main_theorem}.  In \Cref{sec:no-TN-p-system} we show nonexistence of $T_N$ for all $N$ for the  $p$-system with $p''>0$. In \Cref{sec:no-Tinfty} we show nonexistence of $T_\infty$ configurations for all genuinely nonlinear $2\times2$ systems.

\textbf{Additional notation.} For a matrix $A\in\mathbb{R}^{p\times q}$, we denote its entry in the $i$th row and $j$th column by $A_{i,j}$.

\vspace{.07in}

\textbf{Statement on AI Use}

The author used ChatGPT 5.6 Sol Pro and ChatGPT 5.6 Sol Ultra as well as ChatGPT 6 Astra Pro. The proofs of nonexistence of $T_N$ for the $p$-system and proof of nonexistence of $T_\infty$ for general classes of systems are almost entirely from these Large Language Models, with minor editing and integration with the rest of the manuscript. The prompting essentially consisted of uploading relevant papers including some of the author's own as well as various earlier trials on related problems. 

The main push in this paper is the resolution of what we call the ``Strong Trace Conjecture'' which has been a major open problem in the theory of conservation laws. The key idea was to use the localization argument inherent in $a$-contraction theory (see \Cref{size_pi}, and \cite{MR3519973}) as well as the proof technique developed during the author's thesis work \cite{move_entire_solution_system}, and combine these ideas with the fact that  $T_N$ configurations and convex integration can allow for large displacements in state space, outside of the localized $a$-contraction region. Large Language Models helped here, in particular with the dissipation estimates. However, most of this work was started before ChatGPT 5.6 Sol was released and we relied heavily on the computational ansatz and numerical search used in \cite{KrupaSzekelyhidi2025Contact,Krupa2026NonUniqueness}.

\vspace{.07in}

\textbf{GitHub repository.} Our proof involves symbolic MATLAB code available on the GitHub repository 
\url{https://github.com/sammykrupa/StrongTraceConjecture}

\section{$T_N$ configurations}\label{T_N_section}

We are interested in a certain type of convexity for a certain set corresponding to the conservation law \eqref{eq:system}. If this set is twisted enough, i.e. has a large enough convex hull, under a certain notion of convexity, then this will give enough room for (nonunique) solutions to exist which oscillate back and forth rapidly -- creating very rough solutions to \eqref{eq:system}. This is in fact a general principle which can be used to study nonlinear partial differential equations (see \cite{MR2008346} and \cite{MR1983780}). In order to explain this in detail, in this section we recall the relevant definitions and results regarding \emph{rank-one convexity}. A function $f\colon\mathbb{R}^{m\times n}\to\mathbb{R}$ is rank-one convex if $f$ is convex along each rank-one line. The \emph{rank-one convex hull} of a set of matrices is defined by separation with rank-one convex functions, as follows. For a compact set $K\subset \mathbb{R}^{m\times n}$, we define
\begin{align}
K^{\text{rc}}\coloneqq \big\{X\in\mathbb{R}^{m\times n} : f(X)\leq \sup_{K} f \mbox{ for all } f\colon \mathbb{R}^{m\times n}\to\mathbb{R} \mbox{ rank-one convex}\big\},
\end{align}
and for general sets
\begin{align}
E^{\text{rc}}\coloneqq \bigcup_{K\subset E \text{ compact}} K^{\text{rc}}.
\end{align}

The objects dual to rank-one convex functions are a subclass of probability measures supported on $\mathbb{R}^{m\times n}$ called \emph{laminates}. That is, a probability measure $\nu$ on the space of $m\times n$ matrices is a laminate if
\begin{align}
\langle \nu,f \rangle \geq f(\bar{\nu}) \mbox{ for all rank-one convex }f\colon \mathbb{R}^{m\times n}\to\mathbb{R},
\end{align}
where $\bar{\nu}$ denotes the barycenter of the measure $\nu$. The set of barycenters of laminates with support in a fixed compact set $K$ is exactly the rank-one convex hull $K^{\text{rc}}$.

It is possible for the rank-one convex hull of a set $K$ to be nontrivial (in other words, strictly larger than $K$), even if $K$ contains no rank-one connections, i.e. $\rank(X-Y)>1$ for any two distinct $X,Y\in K$. This fact has been observed independently by a number of authors in different contexts (see for example \cite{MR2624766,MR852476,MR1283802,MR1320538,MR1106125}). The rank-one convex hull of a set $K$ will be nontrivial if it contains points $\{X_1,\ldots,X_N\}$ verifying the following cyclic structure:

\subsection{Ordered \texorpdfstring{$T_N$}{T-N}-configurations}

Set
\begin{equation}
    \mathbb{M}:=\mathbb{R}^{3\times 2},
    \qquad
    \mathcal{R}_1:=\{C\in\mathbb{M}:\rank C=1\}.
\end{equation}
Thus $\mathcal{R}_1$ denotes the nonzero rank-one manifold in
$\mathbb{M}$.

\begin{definition}[Ordered $T_N$-configuration]\label{def:ordered-TN}
Fix $N\geq 4$.  An ordered $N$-tuple
\begin{equation}
    \mathbf{X}=(X_1,\ldots,X_N)\in\mathbb{M}^N
\end{equation}
is an ordered $T_N$-configuration if there exist
\begin{equation}
    P\in\mathbb{M},
    \qquad
    C_i\in\mathcal{R}_1,
    \qquad
    \kappa_i>1,
    \qquad i=1,\ldots,N,
\end{equation}
such that
\begin{equation}
    \sum_{i=1}^N C_i=0
    \label{eq:TN-closure}
\end{equation}
and
\begin{equation}
    X_i
    =P+\sum_{j=1}^{i-1}C_j+\kappa_i C_i,
    \qquad i=1,\ldots,N.
    \label{eq:TN-parametrization}
\end{equation}
We call
\begin{equation}
    \theta=(P,C_1,\ldots,C_N,\kappa_1,\ldots,\kappa_N)
\end{equation}
an ordered $T_N$-parameterization of $\mathbf{X}$.
\end{definition}

Some definitions additionally require
\begin{equation}
    \rank(X_i-X_j)=2,
    \qquad i\neq j,
    \label{eq:no-rank-one-connections}
\end{equation}
so that no two members of the configuration are rank-one connected.

\begin{figure}[tb]
      \includegraphics[width=.7\textwidth]{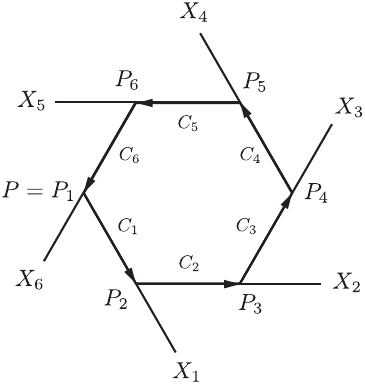}\hspace{.8in}
  \caption{A schematic of a $T_6$ configuration. }\label{example_T6}
\end{figure}

For example a $T_6$ configuration can be represented as in \Cref{example_T6}. 

Introducing the intermediate points
\begin{equation}\label{eq:TN-intermediate-points}
    P_1:=P,
    \qquad
    P_i
    :=
    P+\sum_{j=1}^{i-1}C_j,
    \qquad i=2,\ldots,N,
\end{equation}
one may equivalently write
\begin{equation}\label{eq:TN-cyclic-form}
    X_i=P_i+\kappa_i C_i,
    \qquad
    P_{i+1}=P_i+C_i,
\end{equation}
where the indices are understood cyclically, so that
\(P_{N+1}=P_1\). In particular, the points \(P_i\) form a closed polygon
whose edges are rank-one directions, whereas the matrices \(X_i\) lie
beyond the corresponding vertices along those directions.

The following well-known result shows that if a set $K$ contains such a $T_N$ configuration, then $K$ will have a nontrivial rank-one convex hull -- the striking fact is that the cyclic structure of the $T_N$ configuration adds points to the rank-one convex hull.

\begin{lemma}\label{folklore_lemma}
Let $\{X_1,\ldots,X_N\}$ be a $T_N$ configuration, and for $i=1,\ldots,N$ let $P_i=P+C_1+\cdots+C_{i-1}$ (so that $P_1=P$). Then
\begin{align}
\{P_1,\ldots,P_N\} \subset \{X_1,\ldots,X_N\}^{\text{rc}}.
\end{align}
In particular, for each $k=1,\ldots,N$ there exist numbers $\mu_{i}^{(k)}\in(0,1)$ such that the probability measures
\begin{align}
\mu^{(k)}=\sum_{i=1}^N \mu_{i}^{(k)}\delta_{X_i}
\end{align}
are laminates with barycenter $\bar{\mu}^{(k)}=P_k$.
\end{lemma}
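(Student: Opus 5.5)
The plan is to build the laminates explicitly through the basic splitting step, following the standard argument (e.g. Székelyhidi's treatment of $T_N$). Since $X_k = P_k + \kappa_k C_k$ and $P_{k+1} = P_k + C_k$, the points $P_k$, $P_{k+1}$, $X_k$ all lie on the rank-one line $P_k + tC_k$, with $t=0$, $t=1$, $t=\kappa_k$ respectively. Because $\kappa_k>1$, $P_{k+1}$ sits strictly between $P_k$ and $X_k$:
\[
P_{k+1}=\Bigl(1-\tfrac{1}{\kappa_k}\Bigr)P_k+\tfrac{1}{\kappa_k}X_k .
\]
Writing $\lambda_k:=1-1/\kappa_k\in(0,1)$, this says that $\lambda_k\delta_{P_k}+(1-\lambda_k)\delta_{X_k}$ is a laminate of first order (a rank-one splitting) with barycenter $P_{k+1}$.

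\textbf{Iterating around the cycle.} Starting from $\delta_{P_k}$, I split $P_k$ into $P_{k-1}$ and $X_{k-1}$, then split $P_{k-1}$ into $P_{k-2}$ and $X_{k-2}$, and so on, with indices taken cyclically. Each splitting is along a rank-one segment, so it preserves the laminate property. After $N$ steps the point $P_k$ itself reappears, carrying weight $\Lambda:=\prod_{i=1}^N\lambda_i<1$. This gives the fixed-point identity
\[
\delta_{P_k}\ \to\ \Lambda\,\delta_{P_k}+\sum_{i}c^{(k)}_i\delta_{X_i},\qquad c^{(k)}_i>0 .
\]
Here each $c^{(k)}_i$ is $(1-\lambda_i)$ times the product of the $\lambda$'s along the path from $P_k$ back to $P_{i+1}$. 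I then repeat this step $m$ times on the residual mass at $P_k$ and let $m\to\infty$. The geometric series gives
\[
\mu^{(k)}=\frac{1}{1-\Lambda}\sum_i c^{(k)}_i\,\delta_{X_i},
\]
whose weights $\mu^{(k)}_i$ lie in $(0,1)$: they are positive and sum to $1$, and since $N\ge 4$ each is strictly less than $1$.

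\textbf{Verifying the laminate property.} I would use the dual definition. For each $m$ the finite-order measure $\nu_m$ satisfies $\langle\nu_m,f\rangle\ge f(P_k)$ for every rank-one convex $f$. This follows by induction on the splittings, using only convexity of $f$ along rank-one lines. Now $\nu_m=\Lambda^m\delta_{P_k}+(1-\Lambda^m)\mu^{(k)}$, and rank-one convex functions are finite, hence continuous. Letting $m\to\infty$ gives $\langle\mu^{(k)},f\rangle\ge f(P_k)$. Linearity of the barycenter identities shows $\bar\mu^{(k)}=P_k$. Since $\mu^{(k)}$ is supported in $\{X_1,\ldots,X_N\}$, the characterization of $K^{\mathrm{rc}}$ as the set of barycenters gives $P_k\in\{X_i\}^{\mathrm{rc}}$. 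More directly, $f(P_k)\le\sum_i\mu^{(k)}_if(X_i)\le\max_i f(X_i)$.

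\textbf{Main obstacle.} The only delicate step is the passage to the limit. The measures $\nu_m$ have mass $\Lambda^m$ at $P_k$, which is not in the target set, so $\mu^{(k)}$ is not itself a finite-order laminate. This is handled by the dual inequality together with continuity of $f$. It is cleaner still to solve the linear fixed-point relation for $\langle\mu^{(k)},f\rangle$ directly and divide by $1-\Lambda>0$, which works precisely because $\kappa_i>1$ forces $\Lambda<1$.
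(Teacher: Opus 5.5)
Your argument is correct. The paper gives no proof of its own: it only says the lemma ``can be verified readily.'' What you wrote is the standard verification it has in mind. The identity $P_{k+1}=(1-\kappa_k^{-1})P_k+\kappa_k^{-1}X_k$ places $P_{k+1}$ on the rank-one segment between $P_k$ and $X_k$. Iterating it once around the cycle gives
\[
f(P_k)\le \Lambda f(P_k)+\sum_i c^{(k)}_i f(X_i)
\]
for every rank-one convex $f$. Dividing by $1-\Lambda>0$ yields both the laminate inequality in the paper's dual definition and the bound $f(P_k)\le\max_i f(X_i)$, which places $P_k$ in the rank-one convex hull.

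Two small remarks. First, the closure condition $\sum_i C_i=0$ is exactly what gives $P_{N+1}=P_1$, so say explicitly that this is where it enters when you take indices cyclically. Second, the passage $m\to\infty$ and the appeal to continuity of $f$ are unnecessary. The measures $\nu_m$ all charge the same fixed point $P_k$, and in any case the one-cycle inequality already finishes the argument, so your ``cleaner'' fixed-point variant is the whole proof. Likewise, strict positivity of the $c^{(k)}_i$ together with $N\ge 2$ already gives $\mu^{(k)}_i\in(0,1)$; $N\ge 4$ is not needed.
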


\Cref{folklore_lemma} can be verified readily; we do not include the proof.

It is well known that $T_N$ configurations form locally a manifold in the space of ordered $N$-tuples of matrices. See for example \cite[Section 4.2]{MR1983780}, \cite[Proposition 4.26]{hab_thesis}, and \cite[Lemma 2]{MR2048569}. This is explained in more detail in \Cref{sec:TN-tangent-space}.

\section{Solutions by Convex Integration}\label{solutions_by_convex_integration_section}
 
 For a given flux $F$, entropy $\eta$, and entropy flux $q$, following \cite{MR2008346}, we consider stream functions $\psi\colon\mathbb{R}^2\to\mathbb{R}^3$ such that
 \begin{equation}
 \begin{aligned}\label{streaming}
 (u_1,-F_1(U))&=((\psi_1)_x,(\psi_1)_t)\\
 (u_2,-F_2(U))&=((\psi_2)_x,(\psi_2)_t)\\
 (\eta(U),-q(U))&=((\psi_3)_x,(\psi_3)_t),
 \end{aligned}
 \end{equation}
 where $U=(u_1,u_2)^{\mathsf T}$, $\psi=(\psi_1,\psi_2,\psi_3)^{\mathsf T}$, and $F=(F_1,F_2)^{\mathsf T}$. If $\psi$ verifies \eqref{streaming}, then the function $U\colon\mathbb{R}^2\to\mathbb{R}^2$ defined by $U\coloneqq ((\psi_1)_x,(\psi_2)_x)^{\mathsf T}$ verifies $\partial_t U+\partial_x F(U)=0$ and \eqref{eq:entropy-conservation} in the sense of distributions. This follows from the commutativity of distributional mixed derivatives. In particular, \eqref{eq:entropy-conservation} is satisfied as an exact equality.
 
 This motivates us to define, for a given flux $F$, entropy $\eta$, and entropy flux $q$, the 2-dimensional manifold
\begin{align}\label{manifold_K}
K_{F,\eta,q}
&\coloneqq
\bigl\{G(U):U=(u_1,u_2)^{\mathsf T}\in\mathcal{V}\bigr\}
\subset\mathbb{R}^{3\times2},
\end{align}
where 
\begin{equation}\label{eq:constitutive-surface}
    G(U)
    \coloneqq
    \begin{pmatrix}
        u_1 & -F_1(U)\\
        u_2 & -F_2(U)\\
        \eta(U) & -q(U)
    \end{pmatrix}
\end{equation}
and where we again write $F$ in terms of its components $F=(F_1,F_2)^{\mathsf T}$.

Then, we can rewrite \eqref{streaming} as the differential inclusion
\begin{align}
D\psi \in K_{F,\eta,q}.
\end{align}

In this paper, we will construct such stream functions $\psi$. They will be only Lipschitz, so their derivatives, which are solutions to \eqref{eq:system}, will be only $L^\infty$.

To run the convex integration scheme, it will be necessary to find a large number of $T_N$ configurations in the constitutive set $K_{F,\eta,q}$; in fact we will need to have enough $T_N$ configurations to actually make open sets. The idea will be to find one $T_N$ configuration, and then use a transversality argument to produce a submanifold of $T_N$ configurations living in $K_{F,\eta,q}$. This argument will involve a non-degeneracy condition (Condition (C), introduced below).

To explain this, we cite    \Cref{prop:parameter-manifold,thm:TN-tangent-space}, which says that, locally around an ordered $N$-tuple
\begin{align}
z^0=(Z_1^0,\ldots,Z_N^0)\in(\mathbb{R}^{3\times2})^N,
\end{align}
which is a $T_N$ configuration, there exists a smooth manifold of (ordered) $N$-tuples, $\mathcal{M}_N$, consisting of $T_N$ configurations. Moreover $\dim\mathcal{M}_N=5N$. Let
\begin{align}
\mathcal{K}_{F,\eta,q}
\coloneqq
(K_{F,\eta,q})^N
\subset
(\mathbb{R}^{3\times2})^N.
\end{align}
Thus $\mathcal{K}_{F,\eta,q}$ is a $2N$-dimensional smooth manifold. Define the maps $\pi_k,\phi_k\colon \mathcal{M}_N\to\mathbb{R}^{3\times2}$ by
\begin{align}\label{projection_maps}
\pi_k(Z_1,\ldots,Z_N)=P_k \hspace{.3in} \mbox{ and }\hspace{.3in} \phi_k(Z_1,\ldots,Z_N)=Z_k
\end{align}
for $k=1,\ldots,N$, where $P_k$ is as in \Cref{folklore_lemma}.

Now, we recall some basic facts regarding transversality (for more, see for example \cite{MR0348781}): suppose two smooth manifolds $\mathcal{M}$ and $\mathcal{K}$ embedded in $\mathbb{R}^d$ intersect at a point $z$. The intersection is \emph{transversal} if the tangent spaces at the point $z$ verify
\begin{align}
T_z\mathcal{M}+T_z\mathcal{K}=\mathbb{R}^d.
\end{align}
As a direct consequence of the implicit function theorem, if the manifolds $\mathcal{M}$ and $\mathcal{K}$ intersect transversely, then locally their intersection is a smooth manifold. Further, $\dim(\mathcal{M}\cap\mathcal{K})=\dim\mathcal{M}+\dim\mathcal{K}-d$.

Then, in our case, if $\mathcal{M}_N$ and $\mathcal{K}_{F,\eta,q}$ intersect transversely, then their intersection is a manifold of dimension $N$. Thus, for $N\geq 6$, we can expect that generically the map $\pi_k$ restricted to $\mathcal{M}_N\cap\mathcal{K}_{F,\eta,q}$ is a submersion. I.e., the image of $\pi_k$ of the intersection is an open set. Remark also that by \Cref{folklore_lemma} this image is contained in the rank-one convex hull of $K_{F,\eta,q}$.

We now give the precise condition on this genericity:
\begin{definition}[Condition (C)]\label{condition_c_def}
Suppose $F$, $\eta$, and $q$ are given, and let
\begin{equation}
z^0=(Z_1^0,\ldots,Z_N^0)\in
\mathcal{M}_N\cap\mathcal{K}_{F,\eta,q}
\end{equation}
be an ordered $T_N$ configuration, where $\mathcal{M}_N$ is the local manifold of ordered $T_N$ configurations given by \Cref{prop:parameter-manifold,thm:TN-tangent-space}. If $\mathcal{M}_N$ and $\mathcal{K}_{F,\eta,q}$ intersect transversely at $z^0$, and if, for each $k=1,\ldots,N$, the restriction
\begin{equation}
\pi_k\big|_{\mathcal{M}_N\cap\mathcal{K}_{F,\eta,q}}
\end{equation}
is a submersion at $z^0$, then the triple $(F,\eta,q)$ is said to satisfy Condition~(C) at $z^0$.
\end{definition}

Then, our main theorem (\Cref{main_theorem}) will follow from:

\begin{proposition}\label{main_prop}

Assume $F,\eta,q\in C^2$, with $F\colon \mathcal{V}\to\mathbb{R}^2$ and $\eta,q\colon \mathcal{V}\to\mathbb{R}$. Let
\begin{equation}
z^0=(Z_1^0,\ldots,Z_N^0)\in\mathcal{K}_{F,\eta,q}
\end{equation}
be an ordered $T_N$ configuration at which $(F,\eta,q)$ satisfies Condition~(C). Let $P_0\in\{Z_1^0,\ldots,Z_N^0\}^{\text{rc}}$. Consider an open set $\Omega\subset\mathbb{R}\times[0,\infty)$. For any $\delta>0$ there exists a Lipschitz map
\[
w\colon \Omega\subset \mathbb{R}\times[0,\infty)\to\mathbb{R}^3\]
with the following properties:
\begin{enumerate}
    \item[(a)]
    \begin{align}
Dw(x,t)\in K_{F,\eta,q}\cap \Big(\cup_{k=1}^N B_\delta(Z_k^0)\Big) \hspace{.3in} \text{a.e. in }\Omega;
\end{align}

    \item[(b)]
    In particular, the map
    \[
    U:=\bigl((Dw)_{1,1},(Dw)_{2,1}\bigr)^{\mathsf T}
    \]
    is a solution to the conservation law \eqref{eq:system}, \eqref{eq:entropy-conservation};

    \item[(c)]
    and
     \[
    w(x,t)=P_0\begin{bsmallmatrix},
x\\t
\end{bsmallmatrix}
    \qquad\text{on }\partial\Omega,
    \]
    \[
    |w(x,t)-P_0\begin{bsmallmatrix},
x\\t
\end{bsmallmatrix}|<\delta
    \qquad\text{in }\Omega.
    \]
\end{enumerate}
\end{proposition}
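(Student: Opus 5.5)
This is the standard convex integration scheme for differential inclusions with $T_N$ configurations, following Müller–Šverák, Székelyhidi, and Kirchheim's treatment. I would run it through an in-approximation by open sets.

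\textbf{Step 1: build the in-approximation.} By Condition~(C) and the implicit function theorem, the intersection $\mathcal{M}_N\cap\mathcal{K}_{F,\eta,q}$ is, near $z^0$, a smooth $N$-dimensional manifold. Each $\pi_k$ restricted to it is a submersion at $z^0$. Hence, for every small $r>0$, the set
\[
U_r:=\bigcup_{k=1}^N \pi_k\bigl(\mathcal{M}_N\cap\mathcal{K}_{F,\eta,q}\cap B_r(z^0)\bigr)
\]
contains nonempty open sets $\mathcal{O}_k\ni P_k^0$. For every $z$ in this intersection, the laminates of \Cref{folklore_lemma} give $\pi_k(z)\in\{\phi_1(z),\ldots,\phi_N(z)\}^{\mathrm{rc}}$, with $\phi_j(z)\in K_{F,\eta,q}\cap B_{\delta}(Z_j^0)$ once $r$ is small.

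Next I would connect $P_0$ to these open sets. Since $P_0$ lies in the rank-one convex hull, I would use the structure of the $T_N$ (the segments $[P_i,X_i]$ and the laminates of \Cref{folklore_lemma}) to write $P_0$ as the barycenter of a finite-order laminate supported on $\bigcup_k\mathcal{O}_k\cup\bigcup_k B_\delta(Z_k^0)$. Strictly speaking one needs $P_0$ in a "stable" part of the hull. I expect to reduce to $P_0$ on the polygon edges, which is what \Cref{folklore_lemma} provides, and otherwise perturb.

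\textbf{Step 2: the iteration.} I would start with $w_0(x,t)=P_0\binom{x}{t}$. At each stage, $Dw_j$ is piecewise constant on a countable family of open subdomains, with values either in an $\mathcal{O}_k$ or already in $K\cap B_\delta(Z_k^0)$. On each subdomain with value $A\in\mathcal{O}_k$, I pick $z$ with $\pi_k(z)=A$ and use the corresponding finite-order laminate supported on $\{\phi_i(z)\}$. The standard lemma that realizes a laminate of finite order by piecewise affine Lipschitz maps (Müller–Šverák, Lemma 3.2 type, built from simple rank-one laminations with cutoffs) then replaces $w_j$ on that subdomain by a piecewise affine map that agrees on the boundary, stays uniformly within $2^{-j}\delta$, and has gradient in small neighbourhoods of the $\phi_i(z)$.

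The key point is that $\phi_i(z)$ lies on $K$ itself, while the gradients produced are only near it. The errors land near $\phi_i(z)$ in a region that again lies in some $\mathcal{O}_{k'}$ or close to it. Here I would use the openness of the $\mathcal{O}_k$ near the $X_i$ points, via $X_i=P_{i+1}+(\kappa_i-1)C_i$ re-laminated. I would also arrange that the measure of the set where $\dist(Dw_j,K)>\varepsilon_j$ decays geometrically.

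\textbf{Step 3: the limit.} The maps $w_j$ are uniformly Lipschitz and converge uniformly. Strong $L^1$ convergence of $Dw_j$ comes from modifications on sets of geometrically decreasing measure. This yields (a) and (c), and (b) follows from the stream-function identities \eqref{streaming}.

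\textbf{Main obstacle.} The hardest step is the stability (openness) of the in-approximation: showing that gradients produced near $\phi_i(z)$, which are not exactly in $K$, can themselves be re-represented as barycenters of nearby $T_N$ configurations in $K$. This is exactly what the submersion part of Condition~(C) is for. I would first try to prove that $\bigcup_k\mathcal{O}_k$ is an open neighbourhood of the segments $[P_k,X_k)$ up to small error, and that the measure of the "bad" set shrinks by a fixed factor each step.
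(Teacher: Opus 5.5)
The paper gives no proof of this proposition. It defers to the in-approximation argument of \cite{MR1983780,MR2048569}, and your outline has that architecture. There is, however, a genuine gap: the one non-routine step of that argument is missing, and the mechanism you propose for it does not work.

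\textbf{The gap.} The sets $\mathcal{O}_k$ produced by the submersion part of Condition~(C) are small neighbourhoods of the polygon vertices $P_k^0$. They are not neighbourhoods of the points $Z_i^0\in K_{F,\eta,q}$. So when you realize the splitting $P_i(z)\to(\phi_i(z),P_{i+1}(z))$ by a piecewise affine map, the gradients you produce near $\phi_i(z)$ but off $K_{F,\eta,q}$ lie in no $\mathcal{O}_{k'}$, and your iteration has no rule for them. Even if they did lie in some $\mathcal{O}_{k'}$, re-expanding them through a full $T_N$ laminate would move their mass by an amount of order one at every step, so $Dw_j$ could not converge to $K_{F,\eta,q}$.

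For the same reason you cannot show that $\bigcup_k\mathcal{O}_k$ contains a neighbourhood of the arms $[P_k,X_k)$ near their far ends: Condition~(C) only gives openness at the polygon vertices. What is needed is an in-approximation that concentrates at the tips. A natural choice, with $\epsilon_j\downarrow0$, is
\[
U_j=\bigcup_{i=1}^N\bigl\{P_i(z)+sC_i(z):\ z\in\mathcal{M}_N\cap\mathcal{K}_{F,\eta,q}\ \text{near } z^0,\ \kappa_i(z)-\epsilon_j<s<\kappa_i(z)-\epsilon_{j+1}\bigr\}.
\]

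The inclusion $U_j\subset U_{j+1}^{\mathrm{rc}}$ is then elementary. Such a point lies on the rank-one segment between $P_{i+1}(z)$ and a point farther out on the same arm. Moreover, $P_{i+1}(z)$ lies in the hull of the shortened configuration $\bigl(P_l(z)+(\kappa_l(z)-\epsilon)C_l(z)\bigr)_l$, which is again a $T_N$ configuration with the same polygon.

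The real work is proving that $U_j$ is open. The differential of $(z,s)\mapsto P_i(z)+sC_i(z)$ is $(1-s)D\pi_i+sD\pi_{i+1}$ on $T_z(\mathcal{M}_N\cap\mathcal{K}_{F,\eta,q})$, together with the direction $C_i(z)$.
\begin{itemize}
\item At $s=0$ and $s=1$ it is onto, by Condition~(C).
\item At $s=\kappa_i(z)$ its image lies in $T_{\phi_i(z)}K_{F,\eta,q}+\mathbb{R}C_i(z)$, which has dimension at most $3$.
\end{itemize}
So the map degenerates exactly at the tips. You must prove that it remains a submersion for $s$ slightly below $\kappa_i(z)$, uniformly for $z$ near $z^0$, or obtain this openness by another stability argument. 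Your ``main obstacle'' paragraph names this step but does not resolve it.

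\textbf{Smaller corrections.}
\begin{itemize}
\item There is no finite-order laminate supported on $\{\phi_1(z),\ldots,\phi_N(z)\}$ with barycenter $\pi_k(z)$ when, as for the configuration of \Cref{matlab_lemma}, the vertices have no rank-one connections. In that case their lamination convex hull is the vertex set itself. Any finite truncation of the cyclic splitting leaves residual mass at $\pi_k(z)$. This is harmless, since $\pi_k(z)\in\mathcal{O}_k$, but it must be stated.
\item Strong convergence of $Dw_j$ does not come from modifying only a shrinking set: in a tip-based scheme every gradient is modified at every step. It comes instead from the modification being $O(\epsilon_j)$ off a set of measure $O(\epsilon_j)$, or from the mollification argument in the in-approximation theorem.
\item You may not perturb $P_0$, because the affine boundary datum is prescribed. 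Your scheme covers $P_0\in U_1^{\mathrm{rc}}$, for instance $P_0=P_1^0$, which is the only case used in \Cref{sec:neg}.
\end{itemize}
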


The proof of \Cref{main_prop} is nearly identical to the similar proofs in \cite{MR2048569,MR1983780}. We do not reproduce the proof here.

We then need to make sure to satisfy the hypotheses in \Cref{main_prop}. We do so for a $T_N$ configuration, with $N=6$.  In \Cref{T_N_existence_section} we give a method for finding an explicit $T_N$ configuration. In \Cref{extension_section}, we give algebraic conditions which, when verified by a $T_N$ configuration, allow for the construction of an $F,\eta$, and $q$ (in the context of \Cref{main_theorem}) such that $K_{F,\eta,q}$ contains the $T_N$ configuration. Then, in \Cref{perturbative_section}, we show that it is  possible to perturb the $F,\eta$, and $q$ from \Cref{extension_section} such that we are in the non-degenerate situation of Condition (C). 

\vspace{.05in}

\section{Tangent space to the manifold of $T_N$ configurations}\label{sec:TN-tangent-space}

As it will prove useful later, in this section we give an explicit construction of a candidate spanning family for the tangent space of the manifold of $T_N$ configurations, at a fixed ``point'' on this manifold which corresponds to a $T_N$ configuration with parameterization $(P,(C_i),(\kappa_i))$.

 Given a parameterization of a \(T_N\)
(the \(P\), the \(C_i=a_i\otimes b_i\), the \(\kappa_i\)),
we also have symbolic MATLAB code which encapsulates the  tangent-space description. It takes as input the  \(P\), \(\{a_i,b_i\}\), and
\(\{\kappa_i\}\), then returns the full \(5N\)-vector basis.

Since
\eqref{eq:no-rank-one-connections} is an open condition, it does not change
the tangent-space calculation at a configuration for which it holds.
Likewise, the inequalities $\kappa_i>1$ are open and produce no linearized
constraints.

We expect our computational analysis of the manifold of $T_N$ configurations will help also with future work on the rank-one convex geometry of hyperbolic PDE.

\subsection{The rank-one tangent spaces}

For vectors $a\in\mathbb{R}^3$ and $b\in\mathbb{R}^2$, we use the
convention
\begin{equation}
    a\otimes b:=ab^{\mathsf T}\in\mathbb{M}.
\end{equation}

\begin{lemma}[Tangent space to the rank-one manifold]
\label{lem:rank-one-tangent}
Let $C=a\otimes b\in\mathcal{R}_1$, where $a\neq 0$ and $b\neq 0$.
Then
\begin{equation}
    T_C\mathcal{R}_1
    =
    \left\{
        \alpha\otimes b+a\otimes\beta:
        \alpha\in\mathbb{R}^3,
        \ \beta\in\mathbb{R}^2
    \right\}.
    \label{eq:rank-one-tangent}
\end{equation}
In particular,
\begin{equation}
    \dim T_C\mathcal{R}_1=4.
\end{equation}
\end{lemma}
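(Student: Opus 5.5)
The plan is to realize $\mathcal{R}_1$ as the image of the smooth map
\[
\Phi\colon(\mathbb{R}^3\setminus\{0\})\times(\mathbb{R}^2\setminus\{0\})\to\mathbb{M},\qquad \Phi(a,b)=a\otimes b,
\]
compute the image of its differential, and then show that this image has dimension $4$ and coincides with the tangent space of $\mathcal{R}_1$ at $C$.

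\textbf{Step 1 (differential of $\Phi$).} By the product rule,
\[
D\Phi(a,b)[\alpha,\beta]=\alpha\otimes b+a\otimes\beta .
\]
So the right-hand side of \eqref{eq:rank-one-tangent} is exactly the range of $D\Phi(a,b)$. Each element of this range is the velocity $\frac{d}{ds}\big|_{s=0}(a+s\alpha)\otimes(b+s\beta)$ of a curve lying in $\mathcal{R}_1$, since rank one is preserved for small $s$. Hence the range is contained in $T_C\mathcal{R}_1$.

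\textbf{Step 2 (dimension count).} The kernel of $D\Phi(a,b)$ is found from $\alpha\otimes b=-a\otimes\beta$. Apply both sides to a vector $w$ with $b\cdot w\neq0$. This gives $\alpha=-\frac{\beta\cdot w}{b\cdot w}\,a$, so $\alpha=\lambda a$ for some scalar $\lambda$, and then $a\otimes(\lambda b+\beta)=0$ forces $\beta=-\lambda b$. The kernel is therefore the line $\{(\lambda a,-\lambda b)\}$, which reflects the scaling redundancy $a\otimes b=(ta)\otimes(t^{-1}b)$. It follows that the range has dimension $5-1=4$.

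\textbf{Step 3 (manifold structure and equality).} To conclude, I need $\mathcal{R}_1$ to be a $4$-dimensional embedded submanifold near $C$; the tangent space then has dimension $4$ and contains the range from Step 1, so the two must be equal. I would verify the submanifold claim with the standard local chart.

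1. After permuting rows and columns, assume $C_{1,1}\neq0$.
2. Nearby matrices $M$ have rank one exactly when the three $2\times2$ minors containing the $(1,1)$ entry vanish. These minors determine the other entries by $M_{i,2}=M_{i,1}M_{1,2}/M_{1,1}$ for $i=2,3$.
3. This exhibits $\mathcal{R}_1$ locally as a graph over the four free coordinates $(M_{1,1},M_{2,1},M_{3,1},M_{1,2})$.

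Alternatively, one can note that the map $(M_{1,1},M_{2,1},M_{3,1},M_{1,2})\mapsto$ the three minors has surjective differential, and apply the regular value theorem.

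The only mildly delicate point is Step 3, that is, ensuring the abstract tangent space is no larger than the range of $D\Phi$. The explicit graph chart handles this directly, so I expect no genuine obstacle.
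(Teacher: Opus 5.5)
Your argument is correct and is the paper's own proof: you use the same parametrization $(a,b)\mapsto a\otimes b$, the same differential $\alpha\otimes b+a\otimes\beta$, and the same one-dimensional scaling kernel $\{(\lambda a,-\lambda b)\}$ to get rank $4$, and your Step 3 adds a justification the paper only asserts, namely that $\mathcal{R}_1$ is a $4$-dimensional embedded submanifold near $C$, which forces the $4$-dimensional range to equal $T_C\mathcal{R}_1$. There is one slip in Step 3: a $3\times 2$ matrix has only two $2\times 2$ minors through the $(1,1)$ entry (rows $\{1,2\}$ and $\{1,3\}$), which matches the codimension $(3-1)(2-1)=2$ and your two formulas for $M_{2,2},M_{3,2}$; your regular-value alternative should therefore use these two minors as functions of all six entries, because the map to all three minors has differential of rank only $2$ at rank-one matrices and so is never surjective there.
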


\begin{proof}
Consider the smooth map
\begin{equation}
    \mu\colon
    (\mathbb{R}^3\setminus\{0\})\times
    (\mathbb{R}^2\setminus\{0\})
    \longrightarrow \mathbb{M},
    \qquad
    \mu(a,b)=a\otimes b.
\end{equation}
Its differential at $(a,b)$ is
\begin{equation}
    D\mu(a,b)[(\alpha,\beta)]
    =\alpha\otimes b+a\otimes\beta.
\end{equation}
The kernel is the one-dimensional space
\begin{equation}
    \ker D\mu(a,b)
    =\{(ta,-tb):t\in\mathbb{R}\},
\end{equation}
which represents the infinitesimal scaling freedom
$(a,b)\mapsto(e^t a,e^{-t}b)$.  Hence $D\mu(a,b)$ has rank
$3+2-1=4$, and its image is precisely the tangent space to
$\mathcal{R}_1$ at $C$.
\end{proof}

A convenient concrete basis is obtained as follows.  Let
$e_1,e_2,e_3$ be the standard basis of $\mathbb{R}^3$ and set
\begin{equation}
    b^\perp:=(-b_2,b_1)^{\mathsf T}.
\end{equation}
Then
\begin{equation}
    e_1\otimes b,
    \quad e_2\otimes b,
    \quad e_3\otimes b,
    \quad a\otimes b^\perp
    \label{eq:rank-one-tangent-basis}
\end{equation}
form a basis of $T_C\mathcal{R}_1$.  Indeed, the first three matrices
span $\{\alpha\otimes b:\alpha\in\mathbb{R}^3\}$, whereas
$b^\perp$ is linearly independent of $b$.

\subsection{The closure constraint and the parameter manifold}

Define the closure map
\begin{equation}
    \Sigma\colon\mathcal{R}_1^N\longrightarrow\mathbb{M},
    \qquad
    \Sigma(C_1,\ldots,C_N):=\sum_{i=1}^N C_i.
\end{equation}
Its differential is
\begin{equation}
    D\Sigma(C_1,\ldots,C_N)
    [(\dot C_1,\ldots,\dot C_N)]
    =\sum_{i=1}^N \dot C_i.
    \label{eq:linearized-closure-map}
\end{equation}

\begin{definition}[Closure regularity]
\label{def:closure-regularity}
A closed family $(C_1,\ldots,C_N)\in\mathcal{R}_1^N$ satisfying
\eqref{eq:TN-closure} is called closure-regular if
\begin{equation}
    \sum_{i=1}^N T_{C_i}\mathcal{R}_1=\mathbb{M}.
    \label{eq:closure-regularity}
\end{equation}
Equivalently, the differential in \eqref{eq:linearized-closure-map} is
surjective.
\end{definition}

\begin{remark}
If the $N$ matrices in the $T_N$ configuration have no rank-one connections between them, then in particular $\rank(C_1-C_2)=2$. If we write $C_i=a_i\otimes b_i$, then $\{b_1,b_2\}$ is a basis of $\mathbb{R}^2$. Hence \eqref{eq:closure-regularity} holds.
\end{remark}

\begin{proposition}[Local parameter manifold]
\label{prop:parameter-manifold}
Suppose $(C_1,\ldots,C_N)$ is closure-regular.  Then, in a neighborhood of
that point,
\begin{equation}
    \mathcal{C}_N
    :=
    \left\{
        (D_1,\ldots,D_N)\in\mathcal{R}_1^N:
        \sum_{i=1}^N D_i=0
    \right\}
\end{equation}
is a smooth manifold of dimension $4N-6$, with tangent space
\begin{equation}
    T_{(C_1,\ldots,C_N)}\mathcal{C}_N
    =
    \left\{
        (\dot C_1,\ldots,\dot C_N):
        \dot C_i\in T_{C_i}\mathcal{R}_1,
        \ \sum_{i=1}^N\dot C_i=0
    \right\}.
    \label{eq:closure-tangent-space}
\end{equation}
Consequently, the ordered parameter space
\begin{equation}
    \mathcal{P}_N
    :=\mathbb{M}\times\mathcal{C}_N\times(1,\infty)^N
    \label{eq:parameter-manifold}
\end{equation}
is locally a smooth manifold of dimension
\begin{equation}
    \dim\mathcal{P}_N
    =6+(4N-6)+N=5N.
    \label{eq:parameter-manifold-dimension}
\end{equation}
\end{proposition}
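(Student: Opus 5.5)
The plan is to obtain $\mathcal{C}_N$ as a regular level set and then take a product. First, I record that $\mathcal{R}_1\subset\mathbb{M}$ is a smooth embedded submanifold of dimension $4$. This follows from \Cref{lem:rank-one-tangent}: the map $\mu(a,b)=a\otimes b$ has constant rank $4$, and its fibres are exactly the orbits $(ta,t^{-1}b)$, $t\neq0$. Equivalently, near any rank-one matrix some $1\times1$ minor is nonzero, and the remaining entries are determined by the vanishing of the $2\times2$ minors containing that entry. This exhibits $\mathcal{R}_1$ locally as a graph over $4$ coordinates. Consequently $\mathcal{R}_1^N$ is a smooth manifold of dimension $4N$, with tangent space $\prod_i T_{C_i}\mathcal{R}_1$ at $(C_1,\ldots,C_N)$.

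Next, I apply the regular value theorem to the restriction of the linear map $\Sigma$ to $\mathcal{R}_1^N$. Its differential is given by \eqref{eq:linearized-closure-map}, so its image is $\sum_i T_{C_i}\mathcal{R}_1$. By closure-regularity \eqref{eq:closure-regularity}, this image is all of $\mathbb{M}$, which has dimension $6$. Surjectivity of the differential is an open condition in $(C_1,\ldots,C_N)$, since it amounts to the nonvanishing of some $6\times6$ minor of a smoothly varying matrix. Hence $\Sigma$ is a submersion on a neighborhood $W$ of the base point. The implicit function theorem, or submersion theorem, then gives that $\Sigma^{-1}(0)\cap W=\mathcal{C}_N\cap W$ is a smooth embedded submanifold of dimension $4N-6$. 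Its tangent space is $\ker D\Sigma$, namely the set of tuples $(\dot C_i)$ with $\dot C_i\in T_{C_i}\mathcal{R}_1$ and $\sum_i\dot C_i=0$, which is exactly \eqref{eq:closure-tangent-space}.

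Finally, $\mathcal{P}_N=\mathbb{M}\times\mathcal{C}_N\times(1,\infty)^N$ is locally a product of smooth manifolds. The factor $\mathbb{M}$ is a $6$-dimensional vector space, $\mathcal{C}_N$ has dimension $4N-6$ near the base point, and $(1,\infty)^N$ is open in $\mathbb{R}^N$. Adding these gives dimension $6+(4N-6)+N=5N$, which is \eqref{eq:parameter-manifold-dimension}.

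The only point requiring care is the first step: $\mathcal{R}_1$ must be embedded, not merely immersed, so that the submersion theorem applies to the restriction of $\Sigma$. The explicit minor-based local graph description settles this, and I would present that argument. The rest is a direct application of the submersion theorem.
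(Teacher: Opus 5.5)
Your proposal is correct and follows the same route as the paper: $\mathcal{C}_N$ is obtained as the regular level set $\Sigma^{-1}(0)$ of the closure map on the $4N$-dimensional manifold $\mathcal{R}_1^N$, closure regularity gives surjectivity of $D\Sigma$, and the dimension count for $\mathcal{P}_N$ then follows by taking the product with $\mathbb{M}$ and $(1,\infty)^N$. Your extra check that $\mathcal{R}_1$ is an embedded $4$-dimensional submanifold, via the local graph description from the $2\times2$ minors, supplies a detail the paper takes for granted when it invokes the rank-one tangent space lemma.
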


\begin{proof}
By Lemma~\ref{lem:rank-one-tangent}, the manifold $\mathcal{R}_1^N$ has
dimension $4N$.  Closure regularity says that $0\in\mathbb{M}$ is a regular
value of $\Sigma$ at the given point.  The implicit-function theorem
therefore gives a codimension-six level set, which yields both
\eqref{eq:closure-tangent-space} and $\dim\mathcal{C}_N=4N-6$.
The remaining assertion follows by taking the product with $\mathbb{M}$ and
$(1,\infty)^N$.
\end{proof}

\subsection{The tangent space in configuration variables}

Define the ordered $T_N$ map
\begin{equation}
    \Phi\colon\mathcal{P}_N\longrightarrow\mathbb{M}^N
\end{equation}
by
\begin{equation}
    \Phi(P,C_1,\ldots,C_N,\kappa_1,\ldots,\kappa_N)
    =(X_1,\ldots,X_N),
\end{equation}
where the $X_i$ are given by \eqref{eq:TN-parametrization}.

\begin{proposition}[Differential of the ordered $T_N$ map]
\label{prop:TN-differential}
Let
\begin{equation}
    (\dot P,\dot C_1,\ldots,\dot C_N,
      \dot\kappa_1,\ldots,\dot\kappa_N)
    \in T_\theta\mathcal{P}_N.
\end{equation}
Thus
\begin{equation}
    \dot P\in\mathbb{M},
    \qquad
    \dot C_i\in T_{C_i}\mathcal{R}_1,
    \qquad
    \sum_{i=1}^N\dot C_i=0,
    \qquad
    \dot\kappa_i\in\mathbb{R}.
    \label{eq:admissible-parameter-variation}
\end{equation}
Then
\begin{equation}
    D\Phi(\theta)
    \bigl[(\dot P,(\dot C_i)_{i=1}^N,(\dot\kappa_i)_{i=1}^N)\bigr]
    =(H_1,\ldots,H_N),
    \label{eq:D-Phi}
\end{equation}
where
\begin{equation}
    H_i
    =\dot P+
      \sum_{j=1}^{i-1}\dot C_j+
      \kappa_i\dot C_i+
      \dot\kappa_i C_i,
    \qquad i=1,\ldots,N.
    \label{eq:tangent-component-formula}
\end{equation}
\end{proposition}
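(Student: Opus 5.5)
The proof is a direct differentiation of the explicit formula \eqref{eq:TN-parametrization}, which is affine in $P$ and bilinear in the pairs $(\kappa_i,C_i)$. Fix $\theta=(P,C_1,\ldots,C_N,\kappa_1,\ldots,\kappa_N)\in\mathcal{P}_N$ and a tangent vector $(\dot P,(\dot C_i),(\dot\kappa_i))\in T_\theta\mathcal{P}_N$ as in \eqref{eq:admissible-parameter-variation}.

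\textbf{Step 1: realize the tangent vector by a curve.} By \Cref{prop:parameter-manifold}, $\mathcal{P}_N=\mathbb{M}\times\mathcal{C}_N\times(1,\infty)^N$ is locally a smooth manifold near $\theta$. Hence there is a smooth curve $s\mapsto\theta(s)=(P(s),C_1(s),\ldots,C_N(s),\kappa_1(s),\ldots,\kappa_N(s))$ in $\mathcal{P}_N$ with $\theta(0)=\theta$ and $\theta'(0)$ equal to the given tangent vector. We can take $P(s)=P+s\dot P$ and $\kappa_i(s)=\kappa_i+s\dot\kappa_i$, which remain admissible for small $|s|$ since $\kappa_i>1$ is an open condition. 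The component $(C_i(s))_i$ is any curve in $\mathcal{C}_N$ with initial velocity $(\dot C_i)$; this exists because, by \eqref{eq:closure-tangent-space}, $(\dot C_i)$ is a genuine tangent vector to $\mathcal{C}_N$. Along such a curve each $C_i(s)$ stays in $\mathcal{R}_1$ and $\sum_i C_i(s)=0$, so $\Phi(\theta(s))$ is defined.

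\textbf{Step 2: differentiate.} Since $\Phi$ extends to the smooth (polynomial) map on $\mathbb{M}\times\mathbb{M}^N\times\mathbb{R}^N$ given by the same formula, we have $D\Phi(\theta)[\theta'(0)]=\frac{d}{ds}\big|_{s=0}\Phi(\theta(s))$. Applying the product rule to the $i$th component
\[
X_i(s)=P(s)+\sum_{j<i}C_j(s)+\kappa_i(s)C_i(s)
\]
gives exactly $\dot P+\sum_{j<i}\dot C_j+\kappa_i\dot C_i+\dot\kappa_i C_i$, which is \eqref{eq:tangent-component-formula}.

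There is no real obstacle. The only point needing care is Step 1: the ambient extension of $\Phi$ must be restricted to $\mathcal{P}_N$, so the constraints $\dot C_i\in T_{C_i}\mathcal{R}_1$ and $\sum_i\dot C_i=0$ are used only to guarantee that the tangent vector is realized by a curve inside $\mathcal{P}_N$. This is supplied by the manifold structure from \Cref{prop:parameter-manifold}, which in turn relies on closure regularity. The closure constraint does not otherwise enter the formula. As a consistency check, it implies $H_{N+1}$ (cyclically) agrees with $H_1$ at the level of the intermediate points $P_i$.
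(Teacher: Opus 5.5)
Your proof is correct and follows the paper's argument. The paper simply differentiates the parametrization \eqref{eq:TN-parametrization} and notes that $\sum_i\dot C_i=0$ is the linearized closure constraint. Your curve-realization step (using the manifold structure from closure regularity) just makes explicit why the ambient derivative of the polynomial formula computes $D\Phi(\theta)$ on $T_\theta\mathcal{P}_N$.
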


\begin{proof}
Differentiate \eqref{eq:TN-parametrization}.  The condition
$\sum_i\dot C_i=0$ is the differential of the closure constraint
\eqref{eq:TN-closure}.
\end{proof}

Let $\mathcal{T}_N^{\mathrm{ord}}$ denote the branch of ordered
$T_N$-configurations generated by $\Phi$ near $\theta$.  The following
formulation separates two distinct regularity conditions: closure regularity
of the parameter constraint and immersion regularity of the map from
parameters to configurations.

\begin{theorem}[Tangent space to a local ordered $T_N$ branch]
\label{thm:TN-tangent-space}
Assume that $(C_i)_{i=1}^N$ is closure-regular.  Suppose also that
$D\Phi(\theta')$ has constant rank $r$ for $\theta'$ in a neighborhood of
$\theta$.  Then the local image of
$\Phi$ is an immersed $r$-dimensional manifold and
\begin{equation}
\boxed{
    T_{\mathbf X}\mathcal{T}_N^{\mathrm{ord}}
    =
    \left\{
    (H_1,\ldots,H_N)\in\mathbb{M}^N:
    \begin{aligned}
      &H_i=\dot P+\sum_{j<i}\dot C_j
             +\kappa_i\dot C_i+\dot\kappa_i C_i,\\
      &\dot P\in\mathbb{M},\quad
       \dot\kappa_i\in\mathbb{R},\quad
       \dot C_i\in T_{C_i}\mathcal{R}_1,\quad
       \sum_{i=1}^N\dot C_i=0
    \end{aligned}
    \right\}.
}
    \label{eq:TN-tangent-space}
\end{equation}
In particular, if $D\Phi(\theta)$ is injective, then
\begin{equation}
    r=5N
    \qquad\text{and}\qquad
    \dim T_{\mathbf X}\mathcal{T}_N^{\mathrm{ord}}=5N.
    \label{eq:TN-tangent-dimension}
\end{equation}
\end{theorem}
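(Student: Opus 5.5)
The argument is a direct application of the constant rank theorem to the map $\Phi$ on the parameter manifold $\mathcal{P}_N$, followed by the explicit differential computed in \Cref{prop:TN-differential}.

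\textbf{Step 1: $\mathcal{P}_N$ is a manifold near $\theta$.} Since $(C_i)_{i=1}^N$ is closure-regular, \Cref{prop:parameter-manifold} shows that $\mathcal{P}_N$ is, near $\theta$, a smooth manifold of dimension $5N$. Its tangent space $T_\theta\mathcal{P}_N$ consists exactly of the tuples $(\dot P,(\dot C_i),(\dot\kappa_i))$ satisfying \eqref{eq:admissible-parameter-variation}.

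The map $\Phi$ is the restriction to $\mathcal{P}_N$ of a polynomial (hence smooth) map on the ambient space $\mathbb{M}\times\mathbb{M}^N\times\mathbb{R}^N$. So $\Phi$ is smooth on $\mathcal{P}_N$, and its differential is the restriction of the ambient differential to $T_\theta\mathcal{P}_N$.

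\textbf{Step 2: apply the constant rank theorem.} By hypothesis, $D\Phi$ has constant rank $r$ near $\theta$. The constant rank theorem then gives local charts around $\theta$ and around $\Phi(\theta)=\mathbf X$ in which $\Phi$ has the form $(x_1,\dots,x_{5N})\mapsto(x_1,\dots,x_r,0,\dots,0)$.

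Consequently, for a sufficiently small neighborhood $W$ of $\theta$, the image $\Phi(W)$ is an embedded, in particular immersed, $r$-dimensional submanifold of $\mathbb{M}^N$. Its tangent space at $\mathbf X$ equals $\operatorname{im}D\Phi(\theta)$. This image is, by definition, the local branch $\mathcal{T}_N^{\mathrm{ord}}$.

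\textbf{Step 3: identify the tangent space.} Substituting the formula for $D\Phi(\theta)$ from \eqref{eq:D-Phi}--\eqref{eq:tangent-component-formula}, and letting the parameter variation range over $T_\theta\mathcal{P}_N$ as described in Step 1, gives precisely the boxed set \eqref{eq:TN-tangent-space}.

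\textbf{Step 4: the injective case.} If $D\Phi(\theta)$ is injective, its rank equals $\dim T_\theta\mathcal{P}_N=5N$. Injectivity is an open condition, because the rank of a linear map is lower semicontinuous. Hence the rank is constantly $5N$ near $\theta$, and the constant-rank hypothesis holds automatically in this case. Therefore $r=5N$, and the image has dimension $5N$, which is \eqref{eq:TN-tangent-dimension}.

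\textbf{Main obstacle.} The only subtle point is that the tangent space of the image must be taken at the point $\mathbf X$ coming from the chosen local branch near $\theta$. Globally, $\Phi$ need not be injective, and other parameters could map to $\mathbf X$; those would belong to different branches. This is why I would restrict to a small neighborhood $W$ of $\theta$ from the outset. Beyond that, every step is standard once \Cref{prop:parameter-manifold} is available.
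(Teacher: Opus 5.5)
Your proposal is correct and follows the paper's own argument. Closure regularity gives the $5N$-dimensional parameter manifold, \Cref{prop:TN-differential} computes the image of its tangent space under $D\Phi(\theta)$, the constant-rank theorem identifies that image with the tangent space of the local image, and injectivity gives $r=\dim\mathcal{P}_N=5N$. Your added remark that injectivity of $D\Phi(\theta)$ is an open condition, so the constant-rank hypothesis holds automatically in that case, is a small but useful point the paper leaves implicit.
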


\begin{proof}
The tangent space to the parameter manifold is given by
\eqref{eq:admissible-parameter-variation}, and
Proposition~\ref{prop:TN-differential} computes its image under $D\Phi(\theta)$.
The constant-rank theorem identifies that image with the tangent space to
the local immersed image.  If $D\Phi(\theta)$ is injective, its rank equals
$\dim\mathcal{P}_N=5N$ by
\eqref{eq:parameter-manifold-dimension}.
\end{proof}

\begin{remark}[Why the two regularity conditions should not be conflated]
The surjectivity condition \eqref{eq:closure-regularity} ensures that the
closure equation cuts out a codimension-six parameter manifold.  It does not,
by itself, imply that two infinitesimally different parameter tuples produce
different infinitesimal configurations.  The latter assertion is the
injectivity of $D\Phi(\theta)$. However, if $\Phi$ is a local parametrization of the set of ordered $T_N$-
configurations near the point under consideration---that is, a
local diffeomorphism from $\mathcal{P}_N$ onto its image---then
\[
D\Phi(\theta)\colon T_\theta \mathcal{P}_N
\longrightarrow
T_{\mathbf{X}} \mathcal{T}_N^{\mathrm{ord}}
\]
is a linear isomorphism. Consequently,
\[
\boxed{
\dim T_{\mathbf{X}} \mathcal{T}_N^{\mathrm{ord}}
=
\dim \mathcal{T}_N^{\mathrm{ord}}
=
5N.
}
\]  The basis construction below therefore first
produces $5N$ candidate directions and then, when necessary, performs one
final column reduction in configuration space. 
\end{remark}

An equivalent polygonal form is sometimes useful.  Define
\begin{equation}
    P_1:=P,
    \qquad
    P_{i+1}:=P_i+C_i,
    \qquad i=1,\ldots,N.
\end{equation}
Then $P_{N+1}=P_1$ and $X_i=P_i+\kappa_iC_i$.  A tangent vector is equivalently
described by
\begin{equation}
    \dot P_{i+1}-\dot P_i=\dot C_i,
    \qquad
    \dot P_{N+1}=\dot P_1,
    \qquad
    H_i=\dot P_i+\kappa_i\dot C_i+\dot\kappa_iC_i,
\end{equation}
with $\dot C_i\in T_{C_i}\mathcal{R}_1$.

\subsection{An explicit basis construction}
\label{subsec:explicit-TN-basis}

Fix factorizations
\begin{equation}
    C_i=a_i\otimes b_i,
    \qquad
    a_i\in\mathbb{R}^3\setminus\{0\},
    \quad
    b_i\in\mathbb{R}^2\setminus\{0\}.
\end{equation}
For each $i$, define
\begin{equation}
\begin{aligned}
    E_{i,1}&:=e_1\otimes b_i,\\
    E_{i,2}&:=e_2\otimes b_i,\\
    E_{i,3}&:=e_3\otimes b_i,\\
    E_{i,4}&:=a_i\otimes b_i^\perp,
    \qquad
    b_i^\perp:=(-b_{i,2},b_{i,1})^{\mathsf T}.
\end{aligned}
    \label{eq:Eia-basis}
\end{equation}
By \eqref{eq:rank-one-tangent-basis}, the matrices
$E_{i,1},\ldots,E_{i,4}$ form a basis of $T_{C_i}\mathcal{R}_1$.

\subsubsection{The linearized closure matrix}

Let
\begin{equation}
    \operatorname{vec}\colon\mathbb{M}\longrightarrow\mathbb{R}^6
\end{equation}
denote column-wise vectorization.  Define
\begin{equation}
    A
    :=
    \begin{bmatrix}
       \operatorname{vec}(E_{1,1}) & \cdots &
       \operatorname{vec}(E_{1,4}) &
       \operatorname{vec}(E_{2,1}) & \cdots &
       \operatorname{vec}(E_{N,4})
    \end{bmatrix}
    \in\mathbb{R}^{6\times 4N}.
    \label{eq:closure-matrix-A}
\end{equation}
If
\begin{equation}
    \dot C_i=\sum_{\alpha=1}^4 z_{i,\alpha}E_{i,\alpha},
\end{equation}
then
\begin{equation}
    \sum_{i=1}^N\dot C_i=0
    \quad\Longleftrightarrow\quad
    Az=0,
    \label{eq:Az-zero}
\end{equation}
where the coefficient vector $z\in\mathbb{R}^{4N}$ is ordered by
$(i,\alpha)$.  Closure regularity is equivalent to
\begin{equation}
    \rank A=6.
    \label{eq:A-rank-six}
\end{equation}

Choose a set $\mathcal{J}\subset\{1,\ldots,4N\}$ of six column indices for
which the square submatrix $A_{\mathcal J}$ is invertible, and let
\begin{equation}
    \mathcal{F}:=\{1,\ldots,4N\}\setminus\mathcal{J}.
\end{equation}
For each $\ell\in\mathcal{F}$, define $z^{(\ell)}\in\mathbb{R}^{4N}$ by
\begin{equation}
\begin{aligned}
    z^{(\ell)}_\ell&=1,\\
    z^{(\ell)}_m&=0,
        &&m\in\mathcal{F}\setminus\{\ell\},\\
    z^{(\ell)}_{\mathcal J}
        &=-A_{\mathcal J}^{-1}A_{(:,\ell)}.
\end{aligned}
    \label{eq:kernel-basis-coefficients}
\end{equation}
Then
\begin{equation}
    \{z^{(\ell)}:\ell\in\mathcal{F}\}
\end{equation}
is a basis of $\ker A$.  In particular, it contains $4N-6$ vectors.
For each free index $\ell$, recover the associated rank-one shape variation
by
\begin{equation}
    \dot C_i^{(\ell)}
    :=\sum_{\alpha=1}^4 z_{i,\alpha}^{(\ell)}E_{i,\alpha}.
    \label{eq:dCi-shape}
\end{equation}

\subsubsection{Translation, radial, and shape directions}

Let $E^{pq}\in\mathbb{M}$ be the matrix unit with a single $1$ in entry
$(p,q)$, where $1\leq p\leq 3$ and $1\leq q\leq 2$.

The six translation directions are
\begin{equation}
    \mathbf{T}_{pq}
    :=(E^{pq},\ldots,E^{pq})\in\mathbb{M}^N.
    \label{eq:translation-directions}
\end{equation}
For each $i$, the variation of $\kappa_i$ gives the direction
\begin{equation}
    \mathbf{K}_i
    :=(0,\ldots,0,C_i,0,\ldots,0),
    \label{eq:kappa-directions}
\end{equation}
where $C_i$ occupies the $i$th component.  Finally, for each
$\ell\in\mathcal{F}$ define the shape direction
\begin{equation}
    \mathbf{S}_\ell
    :=(S_{\ell,1},\ldots,S_{\ell,N}),
\end{equation}
where
\begin{equation}
    S_{\ell,i}
    :=\sum_{j=1}^{i-1}\dot C_j^{(\ell)}
      +\kappa_i\dot C_i^{(\ell)}.
    \label{eq:shape-directions}
\end{equation}

\begin{proposition}[Candidate basis and actual basis]
\label{prop:explicit-TN-basis}
Assume closure regularity.  The family
\begin{equation}
    \mathcal{B}_{\mathrm{cand}}
    :=
    \{\mathbf{T}_{pq}:1\leq p\leq3,\ 1\leq q\leq2\}
    \cup
    \{\mathbf{K}_i:1\leq i\leq N\}
    \cup
    \{\mathbf{S}_\ell:\ell\in\mathcal{F}\}
    \label{eq:candidate-basis}
\end{equation}
spans $T_{\mathbf X}\mathcal{T}_N^{\mathrm{ord}}$.  It has
\begin{equation}
    6+N+(4N-6)=5N
\end{equation}
members.  If $D\Phi(\theta)$ is injective, then
$\mathcal{B}_{\mathrm{cand}}$ is a basis.  Without the injectivity assumption,
an actual basis is obtained by vectorizing the members of
$\mathcal{B}_{\mathrm{cand}}$ and retaining any maximal linearly independent
subfamily.
\end{proposition}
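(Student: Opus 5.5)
The argument is linear algebra built on \Cref{thm:TN-tangent-space}. By \eqref{eq:TN-tangent-space}, $T_{\mathbf X}\mathcal{T}_N^{\mathrm{ord}}$ is the image under $D\Phi(\theta)$ of $T_\theta\mathcal{P}_N$, and \eqref{eq:admissible-parameter-variation} describes that space. So the plan has two parts. First, exhibit a basis of $T_\theta\mathcal{P}_N$ with $5N$ elements. Second, check that $D\Phi(\theta)$ sends this basis exactly onto $\mathcal{B}_{\mathrm{cand}}$. Spanning then follows, because the image of a spanning set under a linear map spans the image.

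\textbf{Step 1: a basis of $T_\theta\mathcal{P}_N$.} By \Cref{prop:parameter-manifold}, $T_\theta\mathcal{P}_N=\mathbb{M}\times T\mathcal{C}_N\times\mathbb{R}^N$. For the factor $\mathbb{M}$ I take the six units $E^{pq}$. For $\mathbb{R}^N$ I take the standard vectors $e_i$. For $T\mathcal{C}_N$, note that the $E_{i,\alpha}$ of \eqref{eq:Eia-basis} form a basis of $T_{C_i}\mathcal{R}_1$ (\Cref{lem:rank-one-tangent}). Hence $z\mapsto(\sum_\alpha z_{i,\alpha}E_{i,\alpha})_i$ is an isomorphism from $\mathbb{R}^{4N}$ onto $\prod_i T_{C_i}\mathcal{R}_1$. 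By \eqref{eq:Az-zero} it maps $\ker A$ onto $T\mathcal{C}_N$. Closure regularity gives $\rank A=6$, which is \eqref{eq:A-rank-six}, so an invertible $A_{\mathcal J}$ exists. The vectors $z^{(\ell)}$ in \eqref{eq:kernel-basis-coefficients} lie in $\ker A$, since $Az^{(\ell)}=A_{(:,\ell)}-A_{\mathcal J}A_{\mathcal J}^{-1}A_{(:,\ell)}=0$. They are linearly independent because their restrictions to $\mathcal F$ are the unit vectors. Since $\dim\ker A=4N-6=|\mathcal F|$, they form a basis. Consequently the tuples $(\dot C_i^{(\ell)})_i$ from \eqref{eq:dCi-shape} form a basis of $T\mathcal{C}_N$.

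\textbf{Step 2: computing the images.} I apply \eqref{eq:tangent-component-formula} to each basis element.
\begin{itemize}
\item For $(E^{pq},0,0)$, every $H_i=E^{pq}$, which gives $\mathbf T_{pq}$.
\item For $(0,0,e_i)$, the only nonzero component is $H_i=C_i$, which gives $\mathbf K_i$.
\item For $(0,(\dot C^{(\ell)}_j)_j,0)$, we get $H_i=\sum_{j<i}\dot C_j^{(\ell)}+\kappa_i\dot C_i^{(\ell)}$, which is exactly $S_{\ell,i}$ in \eqref{eq:shape-directions}.
\end{itemize}
Hence $D\Phi(\theta)$ maps the basis of Step 1 onto $\mathcal{B}_{\mathrm{cand}}$, so $\mathcal{B}_{\mathrm{cand}}$ spans $T_{\mathbf X}\mathcal{T}_N^{\mathrm{ord}}$. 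The count is $6+N+(4N-6)=5N$.

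\textbf{Step 3: the basis claims.} If $D\Phi(\theta)$ is injective, it maps a basis to a linearly independent set. So $\mathcal{B}_{\mathrm{cand}}$ is linearly independent and, being spanning, is a basis; this is consistent with \eqref{eq:TN-tangent-dimension}. Without injectivity, any spanning finite family in a finite-dimensional space contains a maximal linearly independent subfamily, and such a subfamily has the same span, so it is a basis. Vectorizing is only a linear isomorphism $\mathbb{M}^N\cong\mathbb{R}^{6N}$, so it does not change linear independence.

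There is no real obstacle here. The only step that needs care is Step 1, specifically that the $z^{(\ell)}$ form a basis of $\ker A$ and that $\ker A$ corresponds exactly to $T\mathcal{C}_N$. Both facts rest on closure regularity through \Cref{prop:parameter-manifold}. The identification of the span with the tangent space is taken from \Cref{thm:TN-tangent-space}, under its standing constant-rank hypothesis.
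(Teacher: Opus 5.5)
Your proposal is correct and follows the paper's proof. Both take the basis of $T_\theta\mathcal{P}_N$ made of the six $\dot P$ directions, the $N$ $\dot\kappa_i$ directions and the $4N-6$ kernel vectors $z^{(\ell)}$, push it forward by $D\Phi(\theta)$ to obtain $\mathcal{B}_{\mathrm{cand}}$, identify the span with the tangent space via Theorem~\ref{thm:TN-tangent-space}, and use injectivity to get a basis. You additionally supply details the paper leaves implicit: that the $z^{(\ell)}$ form a basis of $\ker A$, and the maximal-independent-subfamily argument.
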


\begin{proof}
The six $\dot P$ directions, the $N$ scalar $\dot\kappa_i$ directions, and
the $4N-6$ kernel vectors in \eqref{eq:kernel-basis-coefficients} form a
basis of $T_\theta\mathcal{P}_N$.  Equations
\eqref{eq:translation-directions}, \eqref{eq:kappa-directions}, and
\eqref{eq:shape-directions} are their images under $D\Phi(\theta)$.
Consequently, their images span $\operatorname{im} D\Phi(\theta)$, which is
the tangent space by Theorem~\ref{thm:TN-tangent-space}.  If $D\Phi(\theta)$
is injective, the image of a basis is a basis.
\end{proof}

For the final reduction, define
\begin{equation}
    \operatorname{Vec}(H_1,\ldots,H_N)
    :=
    \begin{pmatrix}
        \operatorname{vec}(H_1)\\
        \vdots\\
        \operatorname{vec}(H_N)
    \end{pmatrix}
    \in\mathbb{R}^{6N}.
\end{equation}
Place the vectors $\operatorname{Vec}(\mathbf{B})$, with
$\mathbf{B}\in\mathcal{B}_{\mathrm{cand}}$, as the columns of a matrix
$B_{\mathrm{cand}}\in\mathbb{R}^{6N\times 5N}$.  Then
\begin{equation}
    \dim T_{\mathbf X}\mathcal{T}_N^{\mathrm{ord}}
    =\rank B_{\mathrm{cand}},
    \label{eq:tangent-rank-B}
\end{equation}
and the pivot columns of $B_{\mathrm{cand}}$ form an actual tangent-space
basis.  In particular, the immersion condition is equivalent to
\begin{equation}
    \rank B_{\mathrm{cand}}=5N.
\end{equation}

\subsection{Symbolic MATLAB implementation}
\label{subsec:MATLAB-TN-tangent-basis}

We implement the preceding construction
using the MATLAB Symbolic Math Toolbox.  Its inputs are $P$, cell arrays
containing the vectors $a_i$ and $b_i$, and the vector of parameters
$\kappa_i$.  The routine performs the following operations:
\begin{enumerate}
    \item It constructs $C_i=a_i b_i^{\mathsf T}$ and the configuration
          $X_i$.
    \item It builds the four matrices in \eqref{eq:Eia-basis} for each $i$.
    \item It forms the closure matrix $A$ in \eqref{eq:closure-matrix-A},
          computes an RREF (reduced row-echelon form)  basis of $\ker A$, and pushes these shape
          variations forward through \eqref{eq:tangent-component-formula}.
    \item It appends the translation and $\kappa$ directions.

\end{enumerate}
The field \texttt{out.basis} stores each basis vector as an $N$-tuple of
$3\times2$ symbolic matrices, while \texttt{out.basisMatrix} stores the same
basis as columns in $\mathbb{R}^{6N}$.  

We refer to the file 
\path{tn_tangent_basis_symbolic.m}.

It takes \(P\), the cells \(\{a_i\}\), \(\{b_i\}\), and the vector
\(\kappa\), then:
\begin{itemize}
    \item builds
    \[
    C_i=a_ib_i^{\mathsf T};
    \]
    \item uses the local basis
    \[
    E_{i,1}=e_1\otimes b_i,
    \qquad
    E_{i,2}=e_2\otimes b_i,
    \qquad
    E_{i,3}=e_3\otimes b_i,
    \qquad
    E_{i,4}=a_i\otimes b_i^\perp;
    \]
    \item forms the symbolic closure matrix \(A\) for
    \[
    \sum_i\dot C_i=0;
    \]
    \item computes a symbolic kernel basis of \(A\) via \texttt{rref};
    \item pushes those directions forward to
    \[
    (\dot X_1,\ldots,\dot X_N);
    \]
    \item adds the \(6\) translation directions and the \(N\)
    \(\kappa_i\)-directions;
    \item then does one final symbolic column reduction so the returned
    columns are an actual basis of the linearized image.
\end{itemize}

The main output is:
\begin{itemize}
    \item \texttt{out.basis}: a cell array of basis vectors, each one
    an \(N\)-tuple of \(3\times2\) symbolic matrices;
    \item \texttt{out.basisMatrix}: the same basis as a single symbolic
    matrix, with one basis vector per column.
\end{itemize}

A typical call looks like this:

\begin{lstlisting}[language=Matlab]
syms k1 k2 k3 k4 real
P = sym(zeros(3,2));

a = { [1;0;0], [0;1;0], [-1;0;0], [0;-1;0] };
b = { [1;1], [1;-1], [1;1], [1;-1] };

out = tn_tangent_basis_symbolic(P, a, b, [k1 k2 k3 k4]);

% Basis as N-tuples:
H1 = out.basis{1};

% Basis as one symbolic matrix:
B = out.basisMatrix;

% Labels:
out.labels
\end{lstlisting}

One practical note: The routine assumes a regular \(T_N\) point.
    It checks the linearized closure rank and warns if the final
    tangent-space rank is smaller than the expected \(5N\).

\section{The existence of a $T_N$ configuration}\label{T_N_existence_section}

In this section, we show the existence of a suitable $T_N$ configuration. In particular, we prove:

\begin{lemma}[The MATLAB Lemma]\label{matlab_lemma}
For $N=6$, there exists a $T_N$ configuration $(X_1,\ldots,X_N)\in(\mathbb{R}^{3\times2})^N$ verifying the inequalities
\begin{align}\label{inequalities_for_convexity_extension_lemma}
H_j > H_i + (X_i)_{2,2}\Big((X_j)_{1,1}-(X_i)_{1,1}\Big) \mbox{ for all } i\neq j,
\end{align}
where $H_i\coloneqq (X_i)_{3,1}-\frac{1}{2}((X_i)_{2,1})^2$,
\begin{align}\label{symmetry_cond}
(X_i)_{2,1}=(X_i)_{1,2}  \mbox{ for all } i,
\end{align}
and 
\begin{align}\label{q_cond}
(X_i)_{3,2}=(X_i)_{1,2} (X_i)_{2,2}  \mbox{ for all } i.
\end{align}

Furthermore, the $N$ state vectors $((X_1)_{1,1},(X_1)_{2,1})^{\mathsf T},\ldots,((X_N)_{1,1},(X_N)_{2,1})^{\mathsf T}$ are distinct and the $T_N$ configuration contains no rank-one connections, i.e. $\rank(X_i-X_j)=2$ for all $i\neq j$. Lastly, we have the ordering
\begin{align}\label{t6_ordering}
    0<(X_2)_{1,1} < (X_4)_{1,1} <(X_5)_{1,1} < (X_1)_{1,1} < (X_3)_{1,1} < (X_6)_{1,1}.
\end{align}
\end{lemma}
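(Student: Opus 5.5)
The plan is to prove the lemma by exhibiting an explicit $T_6$ configuration with exact rational (or low-degree algebraic) entries, and then checking every required property in exact arithmetic. This is the computational ansatz of \cite{Krupa2026NonUniqueness,KrupaSzekelyhidi2025Contact}.

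\textbf{Unknowns and equality constraints.} I parametrize the configuration through \Cref{def:ordered-TN}: a base point $P\in\mathbb{M}$, rank-one matrices $C_i=a_i\otimes b_i$ with $a_i\in\mathbb{R}^3$ and $b_i\in\mathbb{R}^2$, and scalars $\kappa_i>1$ for $i=1,\ldots,6$. The unknowns are subject to the following equality constraints.
\begin{itemize}
\item The closure relation $\sum_i a_i\otimes b_i=0$, which gives six equations.
\item The twelve polynomial equations \eqref{symmetry_cond} and \eqref{q_cond}, evaluated on $X_i=P+\sum_{j<i}C_j+\kappa_iC_i$.
\end{itemize}
The scaling redundancy $(a_i,b_i)\mapsto(ta_i,t^{-1}b_i)$ I fix by normalizing, for instance $b_{i,1}=1$ or $|b_i|=1$.

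\textbf{Numerical search.} I minimize the sum of squares of the equality residuals together with penalty terms for the strict inequalities:
\begin{itemize}
\item the inequalities \eqref{inequalities_for_convexity_extension_lemma}, imposed with a positive margin;
\item $\kappa_i>1$;
\item the ordering \eqref{t6_ordering}.
\end{itemize}
I run a local optimizer (Levenberg--Marquardt or \texttt{fmincon}) from many random starting points. Two observations guide this step.
\begin{itemize}
\item The conditions \eqref{symmetry_cond} and \eqref{q_cond} say that $X_i$ looks like $G(U)$ for the $p$-system with $U=((X_i)_{1,1},(X_i)_{2,1})$, pressure $p_i=-(X_i)_{2,2}$, and entropy entries $(X_i)_{3,1}$ and $q_i=u_ip_i$.
\item The inequalities \eqref{inequalities_for_convexity_extension_lemma} are a discrete strict-convexity condition for $\Pi$, namely $\Pi(v_j)>\Pi(v_i)-p_i(v_j-v_i)$, with $H_i$ playing the role of $\Pi(v_i)$.
\end{itemize}
The second observation suggests seeding the search with six states placed at the prescribed $v$-ordering and pressures that are roughly decreasing, leaving the convexity violation to be produced by the cyclic geometry. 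Since the equality system has more unknowns than equations, I expect a positive-dimensional solution set, so there should be room to satisfy the open conditions.

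\textbf{Exact certification (main obstacle).} A floating-point solution must be turned into a rigorous one. My plan is as follows.
\begin{enumerate}
\item Solve for as many variables as possible in closed form. The conditions \eqref{symmetry_cond} and \eqref{q_cond} are affine in $P$ and in some of the entries of $a_i$ once the $b_i$ and $\kappa_i$ are fixed. I would therefore round $b_i$, $\kappa_i$ and a few further entries to nearby rationals.
\item Solve the remaining linear system exactly in MATLAB's symbolic toolbox. If the surviving equations are not linear, I use a Newton step in exact arithmetic, or accept quadratic-surd entries and verify symbolically.
\item Because every required inequality holds with a margin at the numerical point, small rational perturbations preserve them. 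I then verify exactly, with simplified symbolic comparisons:
\begin{itemize}
\item each inequality $H_j-H_i-(X_i)_{2,2}((X_j)_{1,1}-(X_i)_{1,1})>0$;
\item $\kappa_i>1$;
\item the ordering \eqref{t6_ordering}, which also gives that the states are distinct;
\item $\rank(X_i-X_j)=2$ for $i\neq j$, checked by exhibiting a nonzero $2\times2$ minor.
\end{itemize}
\end{enumerate}
The hard part is making the solution set of the twelve nonlinear equations together with closure nonempty within the open region cut out by the $30$ convexity inequalities. The discrete convexity conditions fight against the $T_N$ cycle, and this is presumably why $N=6$ rather than $N=4$ or $5$ is needed, given the nonexistence results of \cite{KrupaSzekelyhidi2024T4}. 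If the search stalls, I would first try to enlarge the margin penalties gradually via a homotopy from configurations satisfying only the equalities.
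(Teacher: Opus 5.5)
Your overall plan (an \texttt{fmincon}-type search for an approximate $T_6$ with margins, then exact symbolic repair, then exact verification of the open conditions) is essentially the paper's. However, the repair step, which you rightly call the main obstacle, fails as you describe it.

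\textbf{Where the linear repair breaks.} Suppose you freeze $b_i$, $\kappa_i$ and enough further entries to make \eqref{q_cond} linear, for instance the first two rows of $P$ and of the $a_i$.
\begin{itemize}
\item The remaining unknowns are $P_{3,2}$ and $a_{1,3},\ldots,a_{6,3}$. The entry $P_{3,1}$ appears in no equality constraint.
\item These seven unknowns must satisfy the six equations \eqref{q_cond} and the two third-row closure equations $\sum_i a_{i,3}b_i=0$: eight equations in seven unknowns.
\item In fact \eqref{q_cond} together with the second-column closure already determines $P_{3,2}$ and the $a_{i,3}$. Writing $c_i=a_{i,3}b_{i,2}$, this is a scalar cyclic system, invertible because $\kappa_i>1$ (assuming all $b_{i,2}\neq0$).
\item The first-column closure $\sum_i a_{i,3}b_{i,1}=0$ is then an extra condition that generically fails.
\end{itemize}
So ``the remaining linear system'' is inconsistent. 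A Newton step in exact arithmetic only improves an approximation and certifies nothing. What you are missing is a choice of nonlinear unknowns for which the equations can be solved in closed form.

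\textbf{The paper's device: use the $\kappa_i$.} First make closure and \eqref{symmetry_cond} exact, with $P$ and the $C_i$ frozen. The paper does this in two moves.
\begin{itemize}
\item It symmetrizes the numerical $X_i$ and certifies with Székelyhidi's $2\times2$ criterion \cite{Szekelyhidi2005RankOneHulls} that their upper blocks form an exact $T_6$. This returns symmetric rank-one blocks $a_i\widetilde n_i$.
\item It then corrects the $a_{i,3}$ linearly to restore closure.
\end{itemize}
Your exact linear solve of closure and symmetry would serve equally well. Exact symmetry of every $X_i$, with all $\kappa_i>1$, already forces $P$ and every $C_i$ to have symmetric upper blocks.

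Now move a single $\kappa_i$. This changes only $X_i=P_i+\kappa_iC_i$ and preserves closure and symmetry. The condition \eqref{q_cond} for that index becomes
\[
(P_i)_{3,2}+\kappa_i(C_i)_{3,2}=\bigl((P_i)_{1,2}+\kappa_i(C_i)_{1,2}\bigr)\bigl((P_i)_{2,2}+\kappa_i(C_i)_{2,2}\bigr),
\]
a quadratic in $\kappa_i$ alone. For each $i$, take the root nearest the numerical value; it is real because the numerical point nearly solves the equation. This gives six decoupled quadratic surds, the form of \Cref{T_6_with_ordering}. The margins then carry \eqref{inequalities_for_convexity_extension_lemma}, $\kappa_i>1$, the ordering and the rank conditions through the repair.

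\textbf{A practical point.} The paper keeps the $\kappa_i$ fixed during the numerical search. When they are free, the solver drives them to large values. The later exact corrections are then amplified by the $\kappa_i$ and can destroy the inequalities.
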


\begin{remark}
 We first performed the following numerical search (see proof of \Cref{matlab_lemma}, directly below) to find a strong candidate for a $T_N$ configuration with $N=6$ which verifies \emph{all required properties}. To perturb the numerical search into an exact $T_6$ involves symbolically solving for the roots of high degree polynomials (using the theorem  of Székelyhidi \cite{Szekelyhidi2005RankOneHulls}). We then asked ChatGPT Pro to find another $T_6$ with the same exact properties (including \eqref{inequalities_for_convexity_extension_lemma}, \eqref{symmetry_cond}, \eqref{q_cond}, \eqref{t6_ordering}). With this it succeeded, see the code \path{T6_full_data.m}. See also  \Cref{T_6_with_ordering}, below. The $T_6$ from ChatGPT also verifies the same state space geometry as the $T_6$ from \Cref{matlab_lemma} in the set $\mathcal{V}$ and all other requirements used in the present paper and which are verified by the $T_6$ from \Cref{matlab_lemma}. However, the $T_6$ from ChatGPT has the benefit of being expressed in a simple form with rational numbers or roots of simple quadratic expressions.
\end{remark}

\begin{proof}
In this paper, we follow the computer-assisted proof technique for finding $T_N$ configurations, as introduced in \cite{MR2048569}. However, we expand on this technique and use a novel method of finding $T_N$ configurations for hyperbolic systems with nonlinear algebraic constraints (see \cite{Krupa2026NonUniqueness}). 

In \cite{MR2048569}, the existence of the required $T_5$ configuration was reduced to solving a linear system of inequalities -- using the \emph{simplex algorithm} in Maple V. To get a \emph{linear} system of inequalities, they write the $C_i$, as in the definition of a $T_N$ configuration, as the tensor product of two vectors $C_i=\alpha_i\otimes\beta_i$. However, in order for the constraint $\sum_i C_i=0$ to be linear in the entries of the vectors $\alpha_i$ and $\beta_i$, one of the vectors must be kept fixed. This reduces the flexibility of the $T_N$ configuration and makes it more difficult to impose additional algebraic constraints on the $T_N$ configuration (such as  \eqref{symmetry_cond} and \eqref{q_cond})  and still be able to actually find a $T_N$ configuration verifying those constraints. See \cite[p.~145]{MR2048569} for details.

However, in this paper, we give a method of finding $T_N$ configurations by solving \emph{nonlinear} systems of inequalities. 

We enter into MATLAB the constraints on any $T_N$ configuration (as in the definition of a $T_N$ configuration). In particular, for \emph{fixed} values of the $\widehat\kappa_i$, we solve for column vectors $a_i\in\mathbb{R}^{3\times1}$ and row vectors $n_i\in\mathbb{R}^{1\times2}$, for $i=1,\ldots,N$, with $C_i=a_i n_i$. We impose the closure constraint $\sum_i C_i=0$. Further, we require the matrices $X_i$ to satisfy \eqref{inequalities_for_convexity_extension_lemma}, \eqref{symmetry_cond}, and \eqref{q_cond}.

We choose random values $\widehat\kappa_i>1$ and keep them fixed during the numerical optimization step.

We then use the MATLAB R2019b solver \emph{fmincon} and the \emph{interior-point} algorithm. This gives numeric, \emph{double precision} values for the $a_i$, $n_i$, and $P$. We then convert these values to exact \emph{symbolic values} within MATLAB and perform all further computations symbolically within MATLAB, which allows for performing exact symbolic computations.  Call these symbolic values $\widehat a_i$,  $\widehat n_i$ and $\widehat P$.

Now, due to coming from the approximate, numerical solver, there are three problems with our symbolic values $\widehat a_i$ and  $\widehat n_i$ and the $T_6$ configuration they correspond to:
\begin{enumerate}
    \item We do not have precisely $\sum_i \widehat a_i \widehat n_i =0$ (see \Cref{not_quite_loop}). However, $\sum_i \widehat a_i \widehat n_i$ is very close to zero. \label{issue_no_loop}
    \item We do not have precisely \eqref{symmetry_cond}.\label{issue_sym}
    \item We do not have precisely \eqref{q_cond}.\label{issue_q}
\end{enumerate}

To fix issues \eqref{issue_no_loop} and \eqref{issue_sym} we do the following:

We first calculate values for the points in an ``approximate'' $T_6$ configuration, using the values of $\widehat a_i$, $\widehat n_i$, and $\widehat\kappa_i$, together with the definition of a $T_N$ configuration.  We then change the values of the resulting matrices so that \eqref{symmetry_cond} holds. This gives us matrices $\widehat X_1,\ldots,\widehat X_6$.

Now, we restrict ourselves to the first two rows of the matrices  $\widehat X_1,\ldots,\widehat X_6$, giving us matrices $\widetilde X_1,\ldots,\widetilde X_6\in \mathbb{R}^{2\times2}$.

We use the theory of Székelyhidi, which in the setting of $2\times2$ matrices, can determine when a set of $N$ matrices is in $T_N$ configuration. Moreover, if the matrices are indeed in $T_N$ configuration, then the theory can also provide the parameterization of the $T_N$ (i.e., the $(P,C_i,\kappa_i)$).

We can then, symbolically in MATLAB, use the theorem  of Székelyhidi \cite{Szekelyhidi2005RankOneHulls} to confirm that these matrices $\widetilde X_1,\ldots,\widetilde X_6$ correspond to a true $T_6$ configuration in $\mathbb{R}^{2\times 2}$ and the formulas from \cite{Szekelyhidi2005RankOneHulls} give us the corresponding parametrization
\begin{align}
(\widetilde P,\widetilde a_1,\ldots,\widetilde a_6,\widetilde n_1,\ldots,\widetilde n_6,\widetilde\kappa_1,\ldots,\widetilde\kappa_6),
\end{align}
where in this context the $\widetilde a_i$ are only $2\times 1$ column vectors.

Now, to rebuild the third row of the $T_6$ configuration, we use the values from the third entries of the $\widehat a_i$ as an ansatz for missing third rows of the $\widetilde a_i$: we use the values $(\widehat a_1)_{3,1},\ldots, (\widehat a_6)_{3,1}$ but with a slight linear-algebra correction to ensure that  \eqref{eq:TN-closure} holds. This process gives us $3\times 1$ column vectors which we label the $a_i$, and we then get a parameterization of a true $T_6$ in the set of $3\times 2$ matrices, which we   label
\begin{align}
    (\widetilde P,a_1,\ldots,a_6,\widetilde n_1,\ldots,\widetilde n_6,\widetilde\kappa_1,\ldots,\widetilde\kappa_6).
\end{align}

Finally, to resolve issue \eqref{issue_q} we perturb the values of the $\widetilde\kappa_i$ to new values $\kappa_i$ while keeping the rest of the $T_6$ parametrization fixed. More precisely, the new value $\kappa_i$ is a root of a quadratic polynomial which corresponds exactly to \eqref{q_cond}. This is implemented symbolically in MATLAB, and the root which causes the smallest perturbation in the resulting $T_6$ configuration is chosen as our $\kappa_i$ value.

This gives us the final $T_6$ configuration, $(X_1,\ldots,X_6)$. Remark that we have rank-one matrices $a_i\widetilde n_i$ which are symmetric and thus \eqref{symmetry_cond} is automatically satisfied.

We also see that \eqref{inequalities_for_convexity_extension_lemma} are satisfied. This is because when we find the original numerical approximations for the $T_6$ solution ($\widehat a_i$,  $\widehat n_i$ and $\widehat P$), we  can ask the MATLAB to find solutions which are very far away from satisfying \eqref{inequalities_for_convexity_extension_lemma} as exact equalities, so even with some perturbations to the $T_6$ parametrization  \eqref{inequalities_for_convexity_extension_lemma} will still be satisfied.

\begin{figure}[tb]
      \includegraphics[width=.7\textwidth]{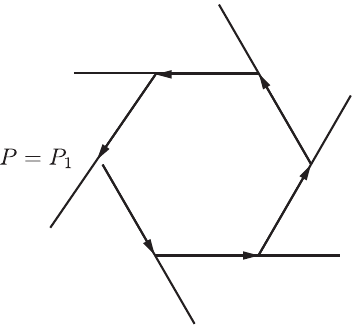}\hspace{1in}
  \caption{The MATLAB does \emph{not} return values for $\widehat a_i$ and $\widehat n_i$ which make a rank-one polygon. I.e. $\sum_i \widehat a_i \widehat n_i \neq 0$.}\label{not_quite_loop}
\end{figure}

We remark that we do not necessarily have to fix the values of the $\widehat\kappa_i$ when we run the approximate numerical solver. The difficulty is that the MATLAB solver can make the $\widehat\kappa_i$ very large. Very small perturbations of the $\widehat a_i$ or $\widehat n_i$, which are needed above to enforce the closure constraint, can then produce very large changes in the matrices $\widehat X_i$ and destroy \eqref{inequalities_for_convexity_extension_lemma}. In fact, when the solver is allowed to vary the $\widehat\kappa_i$, it appears to approach the closure constraint by making the $\widehat\kappa_i$ very large; this permits large variation in the $\widehat X_i$ while keeping $\sum_i \widehat a_i\widehat n_i$ close to zero.

We remark also that solving a nonlinear system of inequalities using MATLAB with fmincon and the interior-point algorithm,  gives different solutions depending on the initial point at which the solver starts. Our MATLAB code picks a random initial point for the solver each time the code is run. We had to try many initial points in order to find a satisfactory $T_N$ with $N=6$.

We provide the approximate $T_N$ configuration in the appendix \Cref{sec:T6-example} (the $\widehat a_i$, $\widehat n_i$, $\widehat\kappa_i$, and $\widehat P$).

We refer to the associated MATLAB scripts \path{Numerical_Search_T_N.m} and \path{Perturb_T_N_Symbolic.m}.
\end{proof}

Finally, we have

\begin{corollary}\label{T_6_with_ordering}
We can find an exact $T_6$ verifying all of the  properties \eqref{inequalities_for_convexity_extension_lemma}, \eqref{symmetry_cond}, \eqref{q_cond} of the $T_6$ from \Cref{matlab_lemma} but in a simple form with rational numbers or roots of simple quadratic expressions (the quadratic expressions come from forcing \eqref{q_cond} to hold).
\end{corollary}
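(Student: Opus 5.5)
The plan is to produce the exact $T_6$ by an explicit construction in the ordered parameterization $(P,C_i,\kappa_i)$ of \Cref{def:ordered-TN}, working from the data of \Cref{matlab_lemma}, and then to certify every required property by exact symbolic arithmetic.

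\textbf{Step 1: rational skeleton of the rank-one polygon.} Starting from the configuration of \Cref{matlab_lemma}, I round the vectors $b_i$ (the row factors $\widetilde n_i$) to nearby rationals. I then choose rational column vectors $a_i\in\mathbb{Q}^3$ so that two structural conditions hold.
\begin{itemize}
\item Closure \eqref{eq:TN-closure}, $\sum_i a_i\otimes b_i=0$, holds exactly. With the $b_i$ fixed this is a linear system over $\mathbb{Q}$, and it is solvable because closure regularity holds (the no-rank-one-connection remark after \Cref{def:closure-regularity}).
\item The upper $2\times2$ block of each $C_i$ is symmetric, so $(a_i)_1 b_{i,2}=(a_i)_2 b_{i,1}$. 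This is also linear in $a_i$.
\end{itemize}
I pick a rational $P$ whose upper $2\times 2$ block is symmetric. Then every $X_i=P_i+\kappa_iC_i$ satisfies \eqref{symmetry_cond} automatically, for any $\kappa_i$. Since the solution set of these linear equations is a rational affine space of positive dimension (a dimension count using \Cref{prop:parameter-manifold}), rational points exist arbitrarily close to the numerical data.

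\textbf{Step 2: enforce \eqref{q_cond} through the $\kappa_i$.} For each $i$, the condition $(X_i)_{3,2}=(X_i)_{1,2}(X_i)_{2,2}$ with $X_i=P_i+\kappa_iC_i$ is a quadratic equation in the single unknown $\kappa_i$. Its coefficients are rational because $P_i$ and $C_i$ are rational. I solve it explicitly and take the root nearest the numerical $\widetilde\kappa_i$. This root has the form $r+s\sqrt{d}$ with $r,s,d$ rational, which gives exactly the ``roots of simple quadratic expressions'' in the statement.

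A real root near the original value must exist. I will ensure this by checking that the discriminant is positive and that the chosen root exceeds $1$. Both hold by continuity, since the rational data are a small perturbation of the \Cref{matlab_lemma} configuration, which already satisfies \eqref{q_cond}.

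\textbf{Step 3: exact verification.} With everything in closed form, I verify the remaining properties symbolically (in the spirit of \path{T6_full_data.m}):
\begin{itemize}
\item the strict inequalities \eqref{inequalities_for_convexity_extension_lemma};
\item distinctness of the states;
\item $\rank(X_i-X_j)=2$, checked as a nonzero $2\times2$ minor;
\item the ordering \eqref{t6_ordering}.
\end{itemize}
Each of these is a finite list of sign conditions on numbers in $\mathbb{Q}(\sqrt{d_1},\dots,\sqrt{d_6})$. They are decided exactly by isolating the square roots and squaring, or by rigorous interval enclosures.

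\textbf{Main obstacle.} I expect the main difficulty in Step 2. Fixing $P_i$ and $C_i$ and solving only for $\kappa_i$ is rigid. The slack left by the strict inequalities in \Cref{matlab_lemma} has to absorb both the rational rounding of Step 1 and the root shifts of Step 2.

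If the margin proves too thin, I would first re-run the rational fit from Step 1 using the extra freedom in the affine solution space and in $P$. The aim is to choose the rational point that minimizes the distance of each $\kappa_i$-root from the numerical value, and then recheck.
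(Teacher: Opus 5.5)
Your proposal is correct and is essentially the paper's argument. The paper's proof exhibits an explicit exact configuration (\path{T6_full_data.m}): the polygon data are rational and each $\kappa_i$ is a root of the quadratic forced by \eqref{q_cond}. It then certifies every required property by exact symbolic checks (\path{check_requirements_T_6_Sol.m}), which is precisely your Steps 2--3.

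One correction to the justification in Step 1. Closure regularity is not what guarantees rational solutions near the numerical data. Your symmetry condition forces $(a_i)_{1:2}=t_ib_i$, and what is needed is that the map $(t_i,(a_i)_3)\mapsto\bigl(\sum_i t_i\, b_i b_i^{\mathsf T},\sum_i (a_i)_3 b_i\bigr)$ have full rank, for instance when three of the $b_i$ are pairwise non-parallel. The same remark applies to your continuity argument for the $\kappa_i$, which needs the original roots to be simple. Neither point affects validity, because the final exact verification is what proves the statement; they only bear on whether the search succeeds.
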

See \Cref{1-shock_case_pic}.
\begin{proof}
This comes from ChatGPT after giving it the $T_6$ from \Cref{matlab_lemma}. See the code \path{T6_full_data.m} and \path{check_requirements_T_6_Sol.m}.
\end{proof}

\section{Extension to $F$, $\eta$, and $q$}\label{extension_section}

We want to find a pressure function $p$ such that the system \eqref{eq:p-system} with this pressure law admits the $T_6$ configuration from  \Cref{matlab_lemma} (or \Cref{T_6_with_ordering}).

With computer assistance we will actually be able to construct a pressure function $p$ for which we have very precise quantitative control and furthermore for this $p$ the set $K_{F,\eta,q}$ will contain the $T_6$ from \Cref{matlab_lemma} (or \Cref{T_6_with_ordering}). 

\begin{lemma}[A construction of the pressure function]\label{pressure_construction}
There exists a smooth pressure function $p\colon\mathbb{R}\to\mathbb{R}$ such that the $T_6$ configuration from \Cref{matlab_lemma} (or \Cref{sec:T6-example} or \Cref{T_6_with_ordering}) is contained in the constitutive set $K_{F,\eta,q}$ for this particular $p$. Moreover,  $p'<0$ on $\mathbb{R}$.

\end{lemma}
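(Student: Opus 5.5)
The plan is to reduce membership of the $T_6$ in $K_{F,\eta,q}$ to a Hermite interpolation problem for the potential $\Pi$, and then to solve that problem with a strictly convex smooth function. For the $p$-system we have $F_1=-u$ and $F_2=p(v)$, with $\eta=u^2/2+\Pi(v)$ and $q=up(v)$. Hence
\[
G(U)=\begin{pmatrix} v & u\\ u & -p(v)\\ \tfrac{u^2}{2}+\Pi(v) & -u\,p(v)\end{pmatrix}.
\]
Set $v_i:=(X_i)_{1,1}$, $u_i:=(X_i)_{2,1}$ and $s_i:=(X_i)_{2,2}$. Then $X_i=G(v_i,u_i)$ holds as soon as four entrywise conditions hold. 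The $(1,2)$ entry is \eqref{symmetry_cond}. The $(3,2)$ entry is \eqref{q_cond} once $-p(v_i)=s_i$. The two remaining conditions are
\[
\Pi'(v_i)=-p(v_i)=s_i,\qquad \Pi(v_i)=(X_i)_{3,1}-\tfrac12 u_i^2=H_i .
\]
The $v_i$ are distinct by the ordering \eqref{t6_ordering}. The requirement $p'<0$ is equivalent to $\Pi''>0$. So it suffices to find a smooth $\Pi\colon\mathbb{R}\to\mathbb{R}$ with $\Pi''>0$, $\Pi(v_i)=H_i$ and $\Pi'(v_i)=s_i$, and then to set $p:=-\Pi'$.

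The key observation is that \eqref{inequalities_for_convexity_extension_lemma} reads $H_j>H_i+s_i(v_j-v_i)$ for $i\neq j$. This is exactly the strict supporting-line condition for convex Hermite data. I would build $\Pi$ in three steps.

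\emph{Step 1.} Using the finitely many strict inequalities, fix $\varepsilon>0$ small enough that the quadratics
\[
Q_i(v):=H_i+s_i(v-v_i)+\varepsilon(v-v_i)^2
\]
still satisfy $Q_i(v_j)<H_j=Q_j(v_j)$ for all $i\neq j$.

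\emph{Step 2.} Set $f:=\max_i Q_i$. This $f$ is convex, and its distributional second derivative satisfies $f''\geq 2\varepsilon$. By continuity and the strict inequalities, there is $r>0$ such that $f=Q_i$ on $(v_i-r,v_i+r)$ for each $i$. Outside the hull of the $v_i$, the maximum eventually grows quadratically.

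\emph{Step 3.} Mollify, taking $\Pi:=f*\rho_\delta-c_\delta$ with $\delta<r$ and an even mollifier $\rho_\delta$. Then $\Pi$ is $C^\infty$ and $\Pi''=f''*\rho_\delta\geq 2\varepsilon$. On $(v_i-r+\delta,v_i+r-\delta)$ we have $f*\rho_\delta=Q_i+\varepsilon\int y^2\rho_\delta(y)\,dy$. The shift $c_\delta:=\varepsilon\int y^2\rho_\delta$ is the same for every $i$, because each $Q_i$ has the same leading coefficient $\varepsilon$. Evenness of $\rho_\delta$ preserves slopes. Hence $\Pi(v_i)=H_i$ and $\Pi'(v_i)=s_i$ exactly.

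Then $p:=-\Pi'$ is smooth on $\mathbb{R}$ with $p'=-\Pi''\leq-2\varepsilon<0$. With this $p$ we get $G(v_i,u_i)=X_i$ for each $i$, so the $T_6$ lies in $K_{F,\eta,q}$.

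The main point to watch is the normalization $\Pi(v)=-\int_{v_\ast}^v p$, which forces $\Pi(v_\ast)=0$. Since $\Pi$ is strictly convex with quadratic growth, it attains every value above its minimum. If $\min\Pi\leq 0$, choose $v_\ast$ with $\Pi(v_\ast)=0$. Otherwise, use the fact that the entropy is only determined up to an additive constant, which leaves $p$, $q$ and the convex-integration structure unchanged. Alternatively, lower $\Pi$ away from the $v_i$ beforehand so that it dips below zero while keeping convexity; I expect this to be possible but have not verified it. I regard this normalization, together with checking that the strict inequalities give a uniform $r$ after $\varepsilon$ is fixed, as the only delicate points. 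Everything else is elementary once one sees that \eqref{inequalities_for_convexity_extension_lemma} is precisely strict convex-interpolability of the data $(v_i,H_i,s_i)$.
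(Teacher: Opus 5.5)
Your proposal is correct and follows the paper's route: the paper likewise reduces membership of the $T_6$ in $K_{F,\eta,q}$ to strictly convex Hermite interpolation of the data $(v_i,H_i,(X_i)_{2,2})$ for $\Pi=-\int p$ via \eqref{inequalities_for_convexity_extension_lemma}, and then cites \Cref{convex_function_lemma}; your max-of-quadratics-plus-even-mollification argument is a self-contained proof of the one-dimensional case of that lemma, and it also gives the quantitative bound $p'\le-2\varepsilon$. The additive-constant freedom in $\eta$ is the right way to handle the normalization $\Pi(v_\ast)=0$, and the paper uses it implicitly; your alternative of pushing $\Pi$ below zero is not always available, since any convex interpolant lies above every tangent line $H_i+s_i(v-v_i)$, but you do not need it.
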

\begin{remark}
Our result gives some control on the convexity of $p$ we do not use in our current arguments in this paper but we give these details here for future work and to better understand the geometry of the situation we are in. In particular, $p$ can be chosen convex  on one of the intervals $((X_2)_{1,1},(X_4)_{1,1})$ or $((X_1)_{1,1},(X_3)_{1,1})$ or concave on one of 
\begin{align}
((X_4)_{1,1},(X_5)_{1,1}), \\((X_5)_{1,1},(X_1)_{1,1}), \\((X_3)_{1,1},(X_6)_{1,1}).
\end{align}

 We can use the result \Cref{thm:increasing-concave-convex-prescribed-integral} to show that we can modify $p$ such that it is convex or concave on the relevant domains.
A symbolic verification in MATLAB shows that \Cref{thm:increasing-concave-convex-prescribed-integral} applies and gives a strictly concave or convex function $f$ (playing the role of $-p$) on the various domains in $v$-space as stated in \Cref{pressure_construction}. The MATLAB scripts \path{test_for_concavity_convexity.m} and \path{checkIncreasingShape.m} check the required inequalities.
\end{remark}
\begin{proof}

By \eqref{inequalities_for_convexity_extension_lemma} and \Cref{convex_function_lemma}, we have the existence of a pressure function $p$ such that the $T_6$ from \Cref{matlab_lemma} (or \Cref{T_6_with_ordering}) is in the constitutive set $K_{F,\eta,q}$ for this  $p$. Moreover, $-\int p$ is strictly convex, which ensures that $p'<0$.
\end{proof}

\section{Stable embedding of $T_N$}\label{perturbative_section}

In this section, we show that if, for a given flux $F$ and entropy pair $(\eta,q)$, the product manifold $\mathcal{K}_{F,\eta,q}$ contains the particular $T_N$ configuration constructed above, then the pressure law can be perturbed so that Condition~(C) holds at the same $T_N$ configuration.

This section is dedicated to the following lemma.

\begin{lemma}\label{transverse_check_lemma}
Let $z^0=(X_1,\ldots,X_N)\in(\mathbb{R}^{3\times2})^N$ be the $T_N$ configuration from \Cref{matlab_lemma} (or \Cref{T_6_with_ordering}), let $\mathcal{M}_N$ be the local manifold given in \Cref{prop:parameter-manifold,thm:TN-tangent-space}, and let $\pi_k(Z_1,\ldots,Z_N)=P_k$ be as in \eqref{projection_maps}. Let $p$ be the pressure law from \Cref{pressure_construction}. Given any $\epsilon>0$ and $n\in\mathbb{N}$, there exists a pressure law $\widetilde p$ such that
\begin{equation}
    \lVert p-\widetilde p\rVert_{C^n(\mathbb{R})}\leq\epsilon,
\end{equation}
and $\widetilde p$ continues to satisfy the properties in \Cref{pressure_construction}. Let $\widetilde F$, $\widetilde\eta$, and $\widetilde q$ denote the flux, entropy, and entropy flux induced by $\widetilde p$ through \eqref{eq:p-system-vector-form} and \eqref{eq:p-system-natural-entropy}. Then for this $\widetilde F$, $\widetilde\eta$, and $\widetilde q$ we will be in the non-degenerate situation given by Condition (C) in the special case of the constitutive set \eqref{eq:constitutive-surface} with the symmetry $X_{2,1}=X_{1,2}$ for all $X\in K_{\widetilde F, \widetilde \eta, \widetilde q}$.

\end{lemma}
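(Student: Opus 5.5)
The plan is to perturb $p$ only by finitely many jets located at the six specific volumes $v_i:=(X_i)_{1,1}$, and to show that these jets can be chosen so that Condition (C) holds.

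\textbf{Step 1: reduction to linear algebra on tangent spaces.} Keep the points $X_i$ inside the perturbed constitutive set. For the $p$-system, $G(U)$ has entries $v$, $u$, $\eta$ in the first column and $u$, $-p(v)$, $-up(v)$ in the second. Membership of $X_i$ therefore only constrains $p(v_i)$ and $\Pi(v_i)$. We perturb $p$ by $\sum_i \phi_i(v)$, where each $\phi_i$ is supported near $v_i$ and has small $C^n$ norm. We require $\phi_i(v_i)=0$ and $\int\phi_i=0$, so that $p(v_i)$ and all the values $\Pi(v_j)$ are preserved. The first derivatives $\phi_i'(v_i)=:s_i$ remain free; they can be taken arbitrarily small while keeping $\|\phi_i\|_{C^n}\lesssim|s_i|$. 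Small $|s_i|$ also preserves $p'<0$.

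The tangent space to $\widetilde K$ at $X_i$ is spanned by $\partial_u G=(0,1,u;\,-1,0,-p)^{\mathsf T}$ and $\partial_v G=(1,0,-p;\,0,-\widetilde p'(v_i),-u\widetilde p'(v_i))^{\mathsf T}$. So the only parameters that affect Condition (C) at $z^0$ are the slopes $\widetilde p'(v_i)=p'(v_i)+s_i$, $i=1,\dots,6$. Transversality of $\mathcal{M}_6$ and $\mathcal{K}$ at $z^0$ means that the $36\times(30+12)$ matrix $[B_{\mathrm{cand}}\mid T\mathcal{K}]$ has rank $36$. Here $B_{\mathrm{cand}}$ comes from \Cref{prop:explicit-TN-basis}, which we apply using closure regularity, a consequence of the absence of rank-one connections. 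Granted transversality, the intersection tangent space is the $6$-dimensional kernel
\[
\{(H,\,W):\ H\in T\mathcal{M}_6,\ H=W\in T\mathcal{K}\},
\]
and submersivity of $\pi_k$ means that $d\pi_k$ maps this space onto $\mathbb{M}$. In terms of the parametrization, $d\pi_k(\dot P,\dot C,\dot\kappa)=\dot P+\sum_{j<k}\dot C_j$.

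\textbf{Step 2: genericity.} Both conditions are full-rank conditions: a $36\times36$ minor for transversality, and, for each $k$, a $6\times6$ determinant built from a kernel basis for surjectivity of $d\pi_k$. Each can be written as polynomials, or rational functions after clearing denominators, in the six slopes $\sigma_i=\widetilde p'(v_i)$, with all other entries fixed exact algebraic numbers from \Cref{T_6_with_ordering}.

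A nonzero polynomial is nonzero on an open dense set. So it suffices to exhibit one choice of $\sigma$, possibly far from $p'(v_i)$ and possibly symbolic, at which the relevant determinants are nonzero. This shows that each polynomial is not identically zero. We can then choose $s_i$ arbitrarily small, with $\sigma_i=p'(v_i)+s_i$ avoiding the finitely many zero sets, and this gives $\|p-\widetilde p\|_{C^n}\le\epsilon$.

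The symmetry $X_{2,1}=X_{1,2}$ is built into $G$ and needs no attention. However, it means we must verify with the actual $G$ and not a generic surface.

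\textbf{Main obstacle.} The hard step is certifying that these determinants do not vanish identically. The symmetry and the $q$-structure restrict $T\mathcal{K}$ heavily, so nondegeneracy could in principle fail structurally. I would first evaluate the full $36$-column matrix and the six projected $6\times6$ matrices symbolically in MATLAB, with the exact data from \texttt{T6\_full\_data.m}. I would substitute a random rational vector $\sigma$ and check the ranks exactly in rational or algebraic arithmetic. If the ranks are full, the polynomial argument above finishes the proof.

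If some determinant vanishes identically, I would enlarge the perturbation class. One option is to allow $\phi_i(v_i)\neq0$, compensated by moving $u_i$ along the fibre. Since $(X_i)_{1,2}=u_i$ is fixed by the configuration, this is not available as stated, so I would instead use the remaining freedom to move the configuration within $\mathcal{M}_6$ while staying in $\widetilde K$, and redo the rank check.
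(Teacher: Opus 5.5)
\textbf{There is a genuine gap in your submersion condition.} You require $d\pi_k$ to map the $6$-dimensional space $T_{z^0}\mathcal{M}_6\cap T_{z^0}\mathcal{K}$ onto all of $\mathbb{M}$, and you plan to certify this with a $6\times 6$ determinant. That determinant vanishes identically, for every choice of the slopes $\widetilde p'(v_i)$.

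Every $X_i$, and every tangent vector to $\widetilde K$ at $X_i$, lies in the $5$-dimensional subspace $\mathbb{M}_{\mathrm{sym}}:=\{A\in\mathbb{M}: A_{2,1}=A_{1,2}\}$. From $X_i=P_i+\kappa_iC_i$ and $P_{i+1}=P_i+C_i$ you get $P_{i+1}=(1-\kappa_i^{-1})P_i+\kappa_i^{-1}X_i$ cyclically. Hence each $P_k$ is a convex combination of $X_1,\ldots,X_N$ with coefficients depending only on $\kappa$; this is the barycentric representation in \Cref{folklore_lemma}. Differentiating, $D\pi_k$ sends $T\mathcal{M}_6\cap T\mathcal{K}$ into $\mathbb{M}_{\mathrm{sym}}$. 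So its rank is at most $5$, and $T_{z^0}\mathcal{K}\cap\ker D\pi_k(z^0)$ has dimension at least $1$.

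This obstruction is structural. No perturbation of $p$ removes it, and neither does your fallback of moving the configuration inside $\mathcal{M}_6\cap\widetilde K$. Your remark that ``the symmetry needs no attention'' is exactly where the argument breaks. Separately, the $(1,2)$ entry of $\partial_uG$ is $+1$, not $-1$. With your sign the tangent vectors would leave $\mathbb{M}_{\mathrm{sym}}$, and a symbolic rank check could return a spurious full rank.

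\textbf{The fix.} This is why the lemma asks for Condition (C) ``in the special case \dots\ with the symmetry $X_{2,1}=X_{1,2}$.'' The correct target of $\pi_k$ is $\mathbb{M}_{\mathrm{sym}}$. The correct rank condition is $\dim\bigl(T_{z^0}\mathcal{K}+\ker D\pi_k(z^0)\bigr)=35$ rather than $36$. Equivalently, some $35\times 35$ minor is nonvanishing in the $36\times 36$ matrix whose columns are the $12$ vectors spanning $T_{z^0}\mathcal{K}$ and the $24$ vectors spanning $\ker D\pi_k(z^0)$.

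That is what the paper checks, and it exhibits a nonzero value at $\widetilde p'(v_i)=-1$. The paper then runs convex integration under the linear constraint via \Cref{kim_theorem} rather than \Cref{main_prop}.

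Once you replace $\mathbb{M}$ by $\mathbb{M}_{\mathrm{sym}}$ (a $5\times5$ minor for each $k$, or the paper's single $35\times35$ minor), the rest of your plan goes through and matches the paper's proof:
\begin{itemize}
\item localized jet perturbations at the $v_i$ that preserve $p(v_i)$ and $\Pi(v_j)$;
\item reduction to the six slopes;
\item the nonzero-polynomial genericity argument.
\end{itemize}
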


Remark that for general systems \eqref{eq:system}, Condition (C) is equivalent to 
\begin{equation}\label{lemma_tranverse_condition}
T_{z^0}\mathcal{K}_{F,\eta,q}
+\ker D\pi_k(z^0)
=(\mathbb{R}^{3\times2})^N
\end{equation}
for $k=1,\ldots,N$.

Let us briefly explain why \eqref{lemma_tranverse_condition} implies Condition~(C) from \Cref{condition_c_def}. We follow the argument in \cite[p.~146]{MR2048569}. At a point $z\in\mathcal{K}_{F,\eta,q}\cap\mathcal{M}_N$, Condition~(C) requires
\begin{equation}
\begin{aligned}\label{condition_c_rewrite919}
T_z\mathcal{K}_{F,\eta,q}+T_z\mathcal{M}_N
&=(\mathbb{R}^{3\times2})^N,\\
\dim D\pi_k(z)\bigl[T_z\mathcal{K}_{F,\eta,q}\cap T_z\mathcal{M}_N\bigr]
&=6,
\qquad k=1,\ldots,N.
\end{aligned}
\end{equation}
Recall that $\dim\mathcal{K}_{F,\eta,q}=2N$, $\dim\mathcal{M}_N=5N$, and $\ker D\pi_k(z)\subset T_z\mathcal{M}_N$ has dimension $5N-6$. Hence
\begin{equation}
T_z\mathcal{K}_{F,\eta,q}+\ker D\pi_k(z)
=(\mathbb{R}^{3\times2})^N
\end{equation}
implies the first line of \eqref{condition_c_rewrite919}. It also gives
\begin{equation}
\dim\bigl(T_z\mathcal{K}_{F,\eta,q}\cap\ker D\pi_k(z)\bigr)
=2N+(5N-6)-6N=N-6.
\end{equation}
Because $\ker D\pi_k(z)\subset T_z\mathcal{M}_N$, rank--nullity on the $N$-dimensional transverse intersection then gives the second line of \eqref{condition_c_rewrite919}.

We remark that due to the symmetry of the constitutive set of the $p$-system, we restrict to the space of matrices $A$ with  have $A_{2,1}=A_{1,2}$ and find
\begin{align}\label{from_symmetry}
\dim \textrm{im} D\pi_k(z) = 5.
\end{align}

We now prove \Cref{transverse_check_lemma}.
\begin{proof}[Proof of \Cref{transverse_check_lemma}]
For $N=6$, the proof gives a candidate basis for $T_{z^0}\mathcal{K}_{\widetilde F,\widetilde\eta,\widetilde q}$ and a candidate basis for $\ker D\pi_k(z^0)$. These provide $36$ elements of $(\mathbb{R}^{3\times2})^N$. We show that a generic perturbation of the pressure law makes 35 of these elements linearly independent and therefore a basis of the ambient space, including  the symmetry $X_{2,1}=X_{1,2}$ for all $X\in K_{\widetilde F, \widetilde \eta, \widetilde q}$.

We first consider $k=1$. Let $C_1^0,\ldots,C_N^0$ be the rank-one matrices corresponding to the $T_N$ configuration $(X_1,\ldots,X_N)$, and let $\kappa_i^0$ be the associated scalars.

\uline{Step 1}

Following the remark in \cite[p.~148]{MR2048569}, as well as \Cref{thm:TN-tangent-space,subsec:explicit-TN-basis}, the space $\ker D\pi_1(z^0)$ is in general for the system \eqref{eq:system} a $(5N-6)$-dimensional vector space. Its elements are $N$-tuples $(\dot X_1,\ldots,\dot X_N)$ of the form
\begin{equation}\label{form_kernel_pi}
\dot X_i
=\sum_{j=1}^{i-1}\bigl(a_j^0\dot n_j+\dot a_j n_j^0\bigr)
+\kappa_i^0 a_i^0\dot n_i
+\kappa_i^0\dot a_i n_i^0
+\dot\kappa_i a_i^0 n_i^0,
\end{equation}
where $C_i^0=a_i^0n_i^0$, with $a_i^0,\dot a_i\in\mathbb{R}^{3\times1}$ and $n_i^0,\dot n_i\in\mathbb{R}^{1\times2}$, and the variations $(\dot a_i,\dot n_i,\dot\kappa_i)$ satisfy the linearized closure constraint
\begin{equation}\label{basis_constraint}
\sum_{i=1}^N\bigl(\dot a_i n_i^0+a_i^0\dot n_i\bigr)=0.
\end{equation}
By convention, $\sum_{j=1}^{0}:=0$. The explicit tangent-space basis constructed in \Cref{subsec:explicit-TN-basis} gives a basis of $\ker D\pi_1(z^0)$ after deleting the six basis vectors corresponding to variations of the base matrix $P$. This leaves $5N-6$ basis vectors.

\uline{Step 2}

For each $i$, define the state
\begin{equation}
U_i:=\bigl((X_i)_{1,1},(X_i)_{2,1}\bigr)^{\mathsf T}\in\mathbb{R}^2.
\end{equation}
For a candidate perturbation $\widetilde p$, let $\widetilde G$ be the constitutive map associated with $\widetilde F$, $\widetilde\eta$, and $\widetilde q$. With the column-gradient convention fixed after \eqref{eq:entropy-compatibility},
\begin{equation}\label{eq:DG-constitutive-map}
D\widetilde G(U_i)[V_i]
=
\begin{pmatrix}
V_i & -D\widetilde F(U_i)V_i\\
\nabla\widetilde\eta(U_i)\cdot V_i & -\nabla\widetilde q(U_i)\cdot V_i
\end{pmatrix},
\qquad V_i\in\mathbb{R}^2.
\end{equation}
Consequently,
\begin{equation}\label{form_tangent922}
T_{z^0}\mathcal{K}_{\widetilde F,\widetilde\eta,\widetilde q}
=
\left\{
\bigl(D\widetilde G(U_1)[V_1],\ldots,D\widetilde G(U_N)[V_N]\bigr)
:
V_1,\ldots,V_N\in\mathbb{R}^2
\right\}.
\end{equation}
Taking successively each standard coordinate vector in $(\mathbb{R}^2)^N$ gives a basis of this $2N$-dimensional tangent space.

For the perturbed $p$-system, write $U=(v,u)^{\mathsf T}$. Then
\begin{equation}
\widetilde F(U)=\begin{pmatrix}-u\\\widetilde p(v)\end{pmatrix},
\qquad
D\widetilde F(U)=\begin{pmatrix}0&-1\\\widetilde p'(v)&0\end{pmatrix},
\end{equation}
and
\begin{equation}
\nabla\widetilde\eta(U)=\begin{pmatrix}-\widetilde p(v)\\u\end{pmatrix},
\qquad
\nabla\widetilde q(U)=D\widetilde F(U)^{\mathsf T}\nabla\widetilde\eta(U).
\end{equation}
Thus all entries in \eqref{eq:DG-constitutive-map} are determined by the values encoded in the $T_N$ configuration together with the numbers $\widetilde p'((X_i)_{1,1})$.

\uline{Step 3}

Identify $(\mathbb{R}^{3\times2})^N$ with $\mathbb{R}^{6N}$. The $5N-6$ potential basis vectors of $\ker D\pi_1(z^0)$ and the $2N$ basis vectors of $T_{z^0}\mathcal{K}_{\widetilde F,\widetilde\eta,\widetilde q}$ form the columns of a $6N\times(7N-6)$ matrix, which is square when $N=6$.

Due to the symmetry of the set \eqref{eq:constitutive-surface} for the $p$-system we expect 
\begin{align}\label{lemma_tranverse_condition_psystem}
\dim\big(T_{z^0}\mathcal{K}_{F,\eta,q}
+\ker D\pi_k(z^0)\big)=35,
\end{align}
which means we expect there to be a $35\times35$ submatrix whose determinant is nonzero (see \eqref{from_symmetry}).

We now show \eqref{lemma_tranverse_condition} for $k=1$.

We construct this $36\times 36$ matrix in MATLAB using exact symbolic computations and the values of $a_i^0$, $n_i^0$, and $\kappa_i^0$ determined in \Cref{matlab_lemma} (or \Cref{T_6_with_ordering}). For a selected $35\times 35$ submatrix, its determinant is a polynomial $Q_1$ in the numbers $\widetilde p'((X_i)_{1,1})$. This polynomial is not identically zero. For example, setting
\begin{equation}
D\widetilde F(U_i)=
\begin{pmatrix}
0&-1\\
-1&0
\end{pmatrix},
\qquad i=1,\ldots,N,
\end{equation}
which corresponds to $\widetilde p'((X_i)_{1,1})=-1$, gives a nonzero symbolic determinant. This polynomial argument follows \cite[p.~733]{MR1983780}.

For $k>1$, any $P_k$ can be used as the first base point in the cyclic parameterization of a $T_N$ configuration. Applying the same MATLAB computation to the corresponding cyclic relabeling produces a nonzero polynomial $Q_k$. Hence $Q:=Q_1\cdots Q_N$ is not identically zero. A generic perturbation of the finitely many derivative values $\widetilde p'((X_i)_{1,1})$, made while preserving the interpolation constraints from \Cref{pressure_construction}, therefore yields \eqref{lemma_tranverse_condition} simultaneously for every $k$.

It follows that one can choose $\widetilde p$ with
\begin{equation}
\lVert p-\widetilde p\rVert_{C^n(\mathbb{R})}\leq\epsilon
\end{equation}
while retaining all the properties in \Cref{pressure_construction}. The exact symbolic computation is implemented in \path{make_big_basis.m}.
\end{proof}

\section{Positive side of Main Theorem: $L^2$ Stability}\label{sec:pos}
In this section, we prove the stability estimate \eqref{eq:main-stability-estimate} which holds under the Strong Trace Property.

First, we develop the ``shift'' function in \Cref{sec:shift} and then we use it to prove \eqref{eq:main-stability-estimate} by way of relative entropy dissipation estimates in \Cref{subsec:two-shock-riemann-specialization}

Before we do that, we first need to define the exact Riemann problem which we are using in \Cref{main_theorem}.

\subsection{Choice of Riemann problem in \Cref{main_theorem}}

The idea is to construct the Riemann problem initial data in \Cref{main_theorem} by finding states $A$ and $B$ very, very far to the right of the $T_6$ points from \Cref{T_6_with_ordering} (or \Cref{matlab_lemma} (``large-volume states''), such that  by choosing a properly decaying pressure law for the far field states, we can derive hyperbolic estimates for shocks which connect between the far field and the ``core'' around which the $T_6$ points are clustered. Our pressure function will not be globally convex. It will be convex in the far field, but not convex in the core near the $T_6$ points (as explained in \Cref{pressure_construction}). Nonetheless, \emph{the pressure function will verify estimates between shocks connecting points in the core to points in the far field that are exactly the same as the estimates in the case the pressure $p$ is globally convex}.

\subsubsection{Choice of pressure function}
For a pressure function $p$, define
\begin{align}
    h\coloneqq -p.
\end{align}
We define $H$ to be a primitive of $h$, i.e. $H'=h$.

We then consider the pressure function $p$ from \Cref{pressure_construction}, and we modify it away from the $T_6$ points such that 
\begin{align}
    h(v)&=h_6+1-\frac{1}{1+v-v_6}, &&v\ge v_6.\label{eq:righttail}
\end{align}
Thus $h'>0$ on $(0,\infty)$ and $h''<0$ on $[v_6,\infty)$. Recall that $X_6$ is the right-most (in state space) of the $T_6$ points from \Cref{T_6_with_ordering} (or also \Cref{matlab_lemma}).

The primitive on the right tail is explicitly
\begin{equation}\label{eq:Htail}
 H(v)=H(v_6)+(h(v_6)+1)(v-v_6)-\log(1+v-v_6),\qquad v\ge v_6.
\end{equation}

\subsection{Construction of Riemann data}

Write the six states of the $T_6$ as $X_i=G(v_i,u_i)$ (see \eqref{eq:constitutive-surface}). Now define the  quantities
\begin{equation}\label{eq:nodal}
 v_i=(X_i)_{11},\quad u_i=(X_i)_{21},\quad
 E_i=(X_i)_{31},\quad h_i=(X_i)_{22},\quad H_i=E_i-u_i^2/2.
\end{equation}
The pressure construction (see \Cref{pressure_construction}) will impose $h(v_i)=h_i$, $H(v_i)=H_i$.

Then $X_i$ belongs to the actual constitutive set  \eqref{eq:constitutive-surface}.

Use coordinates $(v,u,E,h,Q)$ on $\mathbb{M}$
\begin{equation}\label{eq:symmetry}
 (v,u,E,h,Q)\longleftrightarrow
 \begin{pmatrix}v&u\\u&h\\E&Q\end{pmatrix}.
\end{equation}

\subsection{The large-volume states and the relaxed fan}\label{sec:envelope}
Take the $P$ from the parameterization of the $T_N$ from  \Cref{T_6_with_ordering} (or \Cref{matlab_lemma}), and use its coordinates
$(\vbar,\ubar,\Ebar,\hmean,\Qbar)$. For a positive $b$ with very large magnitude, to be chosen later, we define
\begin{align}
 L&=b-\vbar, & G_b&=h(b)-\hmean,\label{eq:macro-parameters}\\
 s&=\sqrt{G_b/L},& d_u&=\sqrt{LG_b}=sL,&
 A&=(b,\ubar+d_u),\quad B=(b,\ubar-d_u).\label{eq:outer-states}
\end{align}
All values of the flux, entropy, and entropy-flux
at $b$ are given by the  far-field tail formulas, so these states do not depend on what is happening near the $T_N$ points.

\subsection{Both conservation laws match exactly}\label{sec:exact_match}

We need to find left and right states for our Riemann problem \eqref{main_theorem_Riemann} which \emph{also admit a subsolution fan}, which in this case, loosely speaking, means a Riemann problem which has ``relaxed'' self-similar solutions in the set of matrices $\mathbb{M}$ which are not in the constitutive set \eqref{eq:constitutive-surface} but are in its rank-one convex hull. See \Cref{sec:neg} below for more details.

The proposed relaxed subsolution fan is $G(A)$ connecting to $P$ and then $P$ connecting to $G(B)$, with interface speeds $-s,s$. Here the constant state $P$ is not corresponding to a solution to the conservation law, but it is a relaxed state which will be used in the convex integration process below.

To construct a Lipschitz function whose derivative is this subsolution, at least at the level of the first two rows corresponding to the conservation of mass, continuity along a line
$x=\sigma t$ means precisely that the difference of the two matrix gradients can be factorized into some two-dimensional vector as well as the two-dimensional vector $(\sigma,1)^{\mathsf T}$ . The upper two rows satisfy
\begin{align*}
 (P-G(A))_{1:2,:}
 &=\begin{pmatrix}-L&-d_u\\-d_u&-G_b\end{pmatrix},\\
 (G(B)-P)_{1:2,:}
 &=\begin{pmatrix}L&-d_u\\-d_u&G_b\end{pmatrix}.
\end{align*}
Since $d_u=sL$ and $G_b=sd_u$, these matrices have a factor of
$(-s,1)^{\mathsf T}$ and $(s,1)^{\mathsf T}$, respectively. These are the
two ``Rankine--Hugoniot conditions'' at each interface. The third rows need not
satisfy this factorization; the entropy-entropy flux row may have a sign as in \eqref{eq:entropy-inequality}.

\subsection{Calculation of the two entropy productions}\label{sec:entropy_calc}
Give the relaxed middle state $P$ the entropy and entropy flux
$(\Ebar,-\Qbar)$. Define
\begin{align}
 \mathscr D&=H(b)-\left(\Ebar-\frac{\ubar^2}{2}\right)
                  -\frac L2\bigl(h(b)+\hmean\bigr),\label{eq:macro-D}\\
 \mathscr J&=\Qbar-\ubar\hmean.\label{eq:macro-J}
\end{align}
With $\mathcal P=[q]-\sigma[\eta]$, the left production is
\begin{align*}
 \mathcal P_L
 &=-\Qbar+(\ubar+d_u)h(b)
        +s\left[\Ebar-\frac{(\ubar+d_u)^2}{2}-H(b)\right]\\
 &=-\mathscr J+\ubar\bigl(h(b)-\hmean-sd_u\bigr)
       +d_u h(b)
       +s\left[\Ebar-\frac{\ubar^2}{2}-H(b)-\frac{d_u^2}{2}\right]\\
 &=-\mathscr J+s\left[Lh(b)+\Ebar-\frac{\ubar^2}{2}
                        -H(b)-\frac{L(h(b)-\hmean)}2\right]\\
 &=-s\mathscr D-\mathscr J.
\end{align*}
At the right interface,
\begin{align*}
 \mathcal P_R
 &=-(\ubar-d_u)h(b)+\Qbar
        -s\left[\frac{(\ubar-d_u)^2}{2}+H(b)-\Ebar\right]\\
 &=\mathscr J+\ubar\bigl(\hmean-h(b)+sd_u\bigr)
       +d_u h(b)
       -s\left[\frac{\ubar^2}{2}+\frac{d_u^2}{2}+H(b)-\Ebar\right]\\
 &=-s\mathscr D+\mathscr J.
\end{align*}
Consequently
\begin{equation}\label{eq:outer-productions}
 \boxed{\mathcal P_L=-s\mathscr D-\mathscr J,
 \qquad \mathcal P_R=-s\mathscr D+\mathscr J.}
\end{equation}
There is no assumption that $P$ itself is constitutive in this calculation.

Then, we can take $b$ large enough to ensure that the discontinuities in our subsolution fan dissipate entropy with the correct sign.

Let us explain this.
\subsubsection{Limits of the quantities $\mathcal P_L$ and $\mathcal P_R$ as \(b\to\infty\)}

Here we keep the $T_6$,  $P$, and the pressure function
fixed, and let the outer volume $b$ increase.

The two quantities are
\begin{equation}
  \mathscr D(b)
  =H(b)-\left(\bar E-\frac{\bar u^2}{2}\right)
  -\frac{b-\bar v}{2}\bigl(h(b)+\bar h\bigr),
\end{equation}
and
\begin{equation}
  \mathscr J=\bar Q-\bar u\bar h.
\end{equation}

More precisely, $\mathscr D(b)$ grows linearly in $b$. We derive
its leading coefficient and then explain the consequence for the two
entropy productions.

\subsubsection{The pressure tail and its primitive}

For $v\ge v_6$,
\[
  h(v)=h_6+1-\frac{1}{1+v-v_6},
\]
\[
  H(v)=H_6+(h_6+1)(v-v_6)-\log(1+v-v_6).
\]
The additive constant of $H$ is fixed by $H(v_6)=H_6$.

Define
\[
  h_\infty:=h_6+1.
\]
Then
\[
  h(b)=h_\infty-\frac{1}{1+b-v_6}
  \longrightarrow h_\infty.
\]
Also,
\[
  \frac{H(b)}{b}
  =\frac{H_6}{b}
   +h_\infty\left(1-\frac{v_6}{b}\right)
   -\frac{\log(1+b-v_6)}{b}.
\]
Since $H_6$, $v_6$, and $h_\infty$ are fixed,
\[
  \frac{H_6}{b}\longrightarrow0,
  \qquad
  \frac{v_6}{b}\longrightarrow0,
  \qquad
  \frac{\log(1+b-v_6)}{b}\longrightarrow0.
\]
Consequently,
\[
  \boxed{\frac{H(b)}{b}\longrightarrow h_\infty.}
\]
Thus $h(b)$ approaches a finite constant, but its primitive $H(b)$
grows linearly.

\subsubsection{The limit of $\mathscr D(b)$}

Divide the expression for $\mathscr D(b)$ by $b$:
\[
  \frac{\mathscr D(b)}{b}
  =\frac{H(b)}{b}
   -\frac{\bar E-\bar u^2/2}{b}
   -\frac12\left(1-\frac{\bar v}{b}\right)
     \bigl(h(b)+\bar h\bigr).
\]
Taking limits term by term, the first term tends to $h_\infty$, the
second tends to zero, and the last tends to
\[
  -\frac12(h_\infty+\bar h).
\]
Therefore,
\begin{equation}\label{eq:D-linear-limit}
  \boxed{
    \lim_{b\to\infty}\frac{\mathscr D(b)}{b}
    =h_\infty-\frac{h_\infty+\bar h}{2}
    =\frac{h_\infty-\bar h}{2}.
  }
\end{equation}

\subsubsection{Why this coefficient is positive}

The matrix $P$ is a convex combination of the six vertices with
strictly positive coefficients:
\[
  P=\sum_{i=1}^6\mu_iX_i,
  \qquad
  \mu_i>0,
  \qquad
  \sum_{i=1}^6\mu_i=1,
\]
see \Cref{folklore_lemma}.
Taking the $(2,2)$ entry gives
\[
  \bar h=\sum_{i=1}^6\mu_i h_i.
\]
Since $h$ is strictly increasing and $v_6$ is the largest
volume in the $T_6$, $h_6$ is the largest volume value in the $T_6$. The other $T_6$ volumes are
strictly smaller, so
\[
  \bar h<h_6.
\]
Indeed, the volume ordering is
\[
  v_2<v_4<v_5<v_1<v_3<v_6.
\]
Hence
\[
  h_\infty-\bar h=h_6+1-\bar h>0.
\]
In fact, for this particular tail it is greater than $1$.

Equation~\eqref{eq:D-linear-limit} therefore implies
\[
  \boxed{
    \mathscr D(b)
    \sim\frac{h_\infty-\bar h}{2}\,b
    \longrightarrow+\infty.
  }
\]

\subsubsection{A sharper expansion}

Substitution of the explicit tail formulas gives
\[
  \mathscr D(b)
  =\frac{h_\infty-\bar h}{2}\,b
   -\log(1+b-v_6)
   +C_0
   +\frac{b-\bar v}{2(1+b-v_6)},
\]
where the $b$-independent constant is
\[
  C_0
  =H_6-h_\infty v_6-\bar E+\frac{\bar u^2}{2}
   +\frac{\bar v}{2}(h_\infty+\bar h).
\]
Because
\[
  \log(1+b-v_6)=\log b+O(b^{-1}),
  \qquad
  \frac{b-\bar v}{2(1+b-v_6)}=\frac12+O(b^{-1}),
\]
we obtain
\begin{equation}\label{eq:D-sharp-expansion}
  \boxed{
    \mathscr D(b)
    =\frac{h_\infty-\bar h}{2}\,b
     -\log b+C_0+\frac12+O(b^{-1}).
  }
\end{equation}
Thus there is a negative logarithmic correction, but it is much
smaller than the positive linear term.

\subsubsection{Consequences for the outer entropy productions}

Consider now
\[
  \mathcal P_L(b)=-s(b)\mathscr D(b)-\mathscr J,
  \qquad
  \mathcal P_R(b)=-s(b)\mathscr D(b)+\mathscr J,
\]
with
\[
  s(b)=\sqrt{\frac{h(b)-\bar h}{b-\bar v}}.
\]
Put
\[
  \Delta:=h_\infty-\bar h>0.
\]
Then
\[
  s(b)\sqrt b
  =\sqrt{\frac{h(b)-\bar h}{1-\bar v/b}}
  \longrightarrow\sqrt\Delta.
\]
Together with $\mathscr D(b)/b\to\Delta/2$, this gives
\[
  \frac{s(b)\mathscr D(b)}{\sqrt b}
  =\bigl(s(b)\sqrt b\bigr)\frac{\mathscr D(b)}{b}
  \longrightarrow\frac{\Delta^{3/2}}{2}.
\]
Thus, the product $s(b)\mathscr D(b)$ tends to
$+\infty$:
\[
  s(b)\mathscr D(b)
  \sim\frac{\Delta^{3/2}}{2}\sqrt b.
\]

Since $\mathscr J$ stays fixed,
\[
  \boxed{
    \mathcal P_L(b)\longrightarrow-\infty,
    \qquad
    \mathcal P_R(b)\longrightarrow-\infty.
  }
\]

This is why choosing $b$ sufficiently large works: eventually
\[
  s(b)\mathscr D(b)>|\mathscr J|,
\]
so both interface productions are strictly negative, regardless of
the sign of the fixed $\mathscr J$.

\subsection{Riemann data}\label{sec:def_Riemann_data}
Thus, we choose for our Riemann data in \eqref{main_theorem_Riemann}
\begin{align}\label{riemann_data_def}
    &U_L\coloneqq A\\
    &U_R \coloneqq B,
\end{align}
and the \emph{classical} Riemann solution with this data will have a uniquely determined middle state which is connected to $U_L$ by a 1-shock and is connected to $U_R$ by a 2-shock. We define this middle state to be $U_M$. Remark that $U_M$
 is uniquely determined due to the monotonicity of the shock curves as well as the fact that we can put a lower bound on $p'$ for $v<(X_6)_{1,1}$ (see \Cref{sec:mono_2shock} for 2-shock curves, the proof is the same for 1-shocks). See \Cref{1-shock_case_pic}.

\subsection{Construction of the shift}\label{sec:shift}

In this section, we prove two propositions which give us the shift functions to shift the 1-shock and 2-shock coming from the solution to the Riemann problem in \Cref{main_theorem}.

\begin{proposition}[Existence of the shift function for a single shock]\label{prop:4.1_one_shock}
Fix \(T>0\). Assume \(U\) is a bounded weak solution to \eqref{eq:p-system}. Assume
\(U\) is entropic for the entropy \(\eta\), and \(U\) has strong traces
(\Cref{strong_trace_def}). We consider two cases:
\begin{itemize}
    \item \((U_L,U_M,\sigma)\) is the 1-shock from \Cref{sec:def_Riemann_data}
\end{itemize}
 \textbf{OR}
    \begin{itemize}
    \item \((U_M,U_R,\sigma)\) is the 2-shock from \Cref{sec:def_Riemann_data}.
\end{itemize}

Then, there exists a constant \(a>0\) and a Lipschitz continuous map
\(h\colon[0,T)\to\R\) with \(h(0)=0\) and such that for almost every \(t\),
\begin{equation}
\begin{aligned}
&a\bigl(q(U_+;U_M)-\dot h(t)\eta(U_+\mid U_M)\bigr)
-q(U_-; U_L)+\dot h(t)\eta(U_-\mid U_L)\\
&\hspace{13em}\leq-c\left|\sigma-\dot h(t)\right|^2,
\end{aligned}
\label{eq:4.2}
\end{equation}
for the 1-shock case,
and 
\begin{equation}
\begin{aligned}
&a\bigl(q(U_+;U_R)-\dot h(t)\eta(U_+\mid U_R)\bigr)
-q(U_-; U_M)+\dot h(t)\eta(U_-\mid U_M)\\
&\hspace{13em}\leq-c\left|\sigma-\dot h(t)\right|^2,
\end{aligned}
\label{eq:4.2_shock}
\end{equation}
for the 2-shock case, where \(U_\pm:=U(h(t)\pm,t)\). Here $c>0$ is a constant.

In the 1-shock case, $0<a<1$ and $\dot{h}<0$. In the 2-shock case, $a>1$ and $\dot{h}>0$.
\end{proposition}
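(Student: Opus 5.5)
We follow the ``$a$-contraction with shifts'' construction of \cite{MR3519973} (see also \cite{move_entire_solution_system}), adapted to the non-convex pressure law. We treat the 1-shock case; the 2-shock case is symmetric under $x\mapsto -x$, $u\mapsto -u$, which swaps the families, sends $a\mapsto 1/a$ after renormalising, and reverses the sign of $\dot h$.

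\textbf{Step 1: the shift.} We define $h$ as a Filippov solution of the ODE
\[
\dot h(t)=V\bigl(U(h(t)+,t),U(h(t)-,t)\bigr),\qquad h(0)=0,
\]
where $V(U_+,U_-)$ is chosen as follows. If $\eta(U_-\mid U_L)\ge a\,\eta(U_+\mid U_M)$, we take $V=\lambda_1(U_-)-C\,\eta(U_-\mid U_L)$. Otherwise we take the analogous expression built from $U_+$ and a large constant $C$. The Strong Trace Property (\Cref{strong_trace_def}) gives the traces $U_\pm(t)$ for Lipschitz curves. Because $U\in L^\infty$, $V$ is bounded, so the Filippov solution exists, is Lipschitz, and its velocity is a convex combination of the limiting values of $V$ at the traces. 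We fix $\dot h<0$ by making $C$ large and using that $\lambda_1=-c(v)<0$ uniformly on the bounded state space.

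\textbf{Step 2: reduction to a pointwise inequality.} For almost every $t$, the Rankine--Hugoniot relation holds across $h$ whenever $U_-\neq U_+$ and the entropy inequality gives $q(U_+)-\dot h\,\eta(U_+)\le q(U_-)-\dot h\,\eta(U_-)$. This reduces \eqref{eq:4.2} to showing that
\[
\Phi(U_-,U_+,\dot h):=a\bigl(q(U_+;U_M)-\dot h\,\eta(U_+\mid U_M)\bigr)-q(U_-;U_L)+\dot h\,\eta(U_-\mid U_L)
\]
satisfies $\Phi\le -c|\sigma-\dot h|^2$. There are two cases.

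\emph{Case (i): $U_-=U_+$ (continuity at $h$).} We use the identity
\[
q(U;V)-\lambda\,\eta(U\mid V)\le \bigl(\lambda_1(U)-\lambda\bigr)\eta(U\mid V)+O\bigl(|U-V|^3\bigr)
\]
together with the choice of $V$ in Step 1. With $a<1$ fixed, taking $C$ large makes the dominant term have the right sign.

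\emph{Case (ii): $U_-\ne U_+$ is an entropic shock of speed $\dot h$.} Here we need the Leger--Vasseur dissipation functional along Hugoniot curves: $\Phi$ is negative with quadratic control, provided the family of shocks $(U_-,U_+,\dot h)$ with $\dot h<0$ emanating from or reaching a neighbourhood of $(U_L,U_M)$ satisfies the Lax and Liu-type properties. The required properties are monotonicity of $\sigma$ and of $\eta(S^1_U(v)\mid U)$ along the curve, as in \cite{MR3519973}.

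\textbf{Main obstacle.} The hard step is case (ii), because $p$ is not convex near the $T_6$ core. The key point is that $U_L=A$ and $U_R=B$ lie at $v=b$ in the far field, where $h''<0$, so that $p$ is convex there. Using the localization of \Cref{size_pi}, we restrict to $a$ close enough to $1$ that only shocks with $U_-$ close to $U_L$, or $U_+$ close to $U_M$, can make $\Phi$ nonnegative. Weaker shocks elsewhere are controlled by the large negative term $-s\mathscr D$-type dissipation, which grows like $\sqrt b$.

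For shocks touching the core, we would verify that the relevant Hugoniot quantities are monotone. We expect to obtain this from the explicit tail formula \eqref{eq:righttail} and the lower bound on $-p'$ for $v<v_6$, as in \Cref{sec:mono_2shock}. The method is to show that the map $v\mapsto \eta(S^1_{U}(v)\mid U)$ and the shock speed are monotone by direct computation with $h$ and $H$, exactly as in the convex case. This computation uses only integrated quantities over $[v,b]$, which are dominated by the convex tail for large $b$.

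Once $\Phi<0$ is established on the compact set of relevant shocks, together with its quadratic vanishing near $\dot h=\sigma$, we obtain the constant $c>0$ by compactness.
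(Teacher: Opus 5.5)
Your reduction of the 2-shock case to the 1-shock case by reflection, and your use of the $a$-contraction framework, are fine. The localization, however, runs in the wrong direction, and that is where the argument breaks. \Cref{size_pi} confines $\{U:\eta(U\mid U_L)\le a\,\eta(U\mid U_M)\}$ to $B_\theta(U_L)$ only when $a$ is \emph{small} ($a<\theta^2/C$). For $a$ close to $1$ this set is roughly a half-plane that reaches into the $T_6$ core, where $p''$ changes sign.

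The paper takes $0<a<1$ small, as \Cref{prop:4.2} requires. This guarantees that every shock it has to estimate has left state $Z=U_-$ within $O(\sqrt a)$ of $U_L$, i.e.\ at volume $\approx b$. For such shocks, the secant ordering $p'(v)<(p(y)-p(v))/(y-v)<p'(y)$ of \Cref{lem:remote-endpoint-control} (with remote endpoint $y$) yields three things with no convexity in the core: a monotone $\sigma^1_Z$, an increasing $\eta(Z\mid S^1_Z)$, and strict Lax inequalities.

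Your plan to prove Hugoniot monotonicity for ``shocks touching the core'' through integrals over $[v,b]$ does not cover shocks with both states in the core. Nothing in your construction excludes such shocks, and for them monotonicity can fail. Separately, $-s\mathscr D$ is the entropy production of the relaxed fan $G(A)\,|\,P\,|\,G(B)$ on the convex-integration side. It has no bearing on the dissipation $\Phi$ of the discontinuities of $U$ at $x=h(t)$.

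The shift also has to be built differently. Your velocity depends on both traces $U_\pm$, which do not exist before $h$ does; \Cref{lem:4.2} needs a bounded, upper semicontinuous field of the single value $U(h(t),t)$. What this field must achieve is a dichotomy: whenever a trace leaves the good set, either $U_+=U_-$ or the configuration is impossible. The paper uses $\mathcal W(U)=\lambda_1(U)-C_*\mathbf 1_{\{a\eta(U\mid U_M)<\eta(U\mid U_L)\}}$ with a large jump $C_*$, and the three cases go as follows.

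If both traces are bad, then $\dot h<\inf\lambda_1$. This lies below every Rankine--Hugoniot speed, since by the mean value theorem $\sigma^2=-[p]/[v]$ is a value of $-p'$. Hence $U_+=U_-$. The margin \eqref{eq:4.103}, namely $\eta(U\mid U_L)-a\eta(U\mid U_M)\ge c_4\gamma_0^2$ at distance $\ge\gamma_0$ from the good set (which follows from $D^2\Gamma=(1-a)D^2\eta$), then gives $\Phi\le-1$.

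If only $U_-$ is bad, then $\dot h\le\lambda_1(U_+)$ with $U_+$ near $U_L$. This contradicts the strict Lax inequality at a remote endpoint.

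If $U_-$ is good, \eqref{eq:4.3} applies with $Z=U_-$.

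Finally, your continuity case is incomplete. Near the boundary of the good set, $a\eta(\bar U\mid U_M)-\eta(\bar U\mid U_L)\approx0$, so no choice of $C$ helps. You need the separate bound \eqref{eq:4.4} at speed $\lambda_1(\bar U)$, extended to a $\gamma_0$-neighbourhood by Lipschitz continuity; this is exactly how $\gamma_0$ is chosen.
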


The proof of \Cref{prop:4.2} is in \Cref{sec:proof4.2}, below. The proof of Proposition~\ref{prop:4.1_one_shock} uses \Cref{prop:4.2}, introduced just below. \Cref{prop:4.2} uses only the controlled shock family based near
a distinguished endpoint; in the case of the $p$-system \eqref{eq:p-system} it does not require global convexity of the
pressure. 

Write
\[
 \mathcal Q(X;Y,s)=q(X;Y)-s\eta(X\mid Y).
\]

\begin{proposition}[Localized algebraic contraction]\label{prop:4.2}
Consider the system \eqref{eq:system}.

Let $\mathcal B$ be a fixed compact allowed state set, contained in an open
convex domain on which $F,\eta,q$ are smooth, $\nabla q(U)
=
DF(U)^{\mathsf T}\nabla\eta(U)$, and
the entropy Hessian is positive definite. Fix $R_*>0$ and a  1-shock
$(U_L,U_R)$ 
with $U_R=S^1_{U_L}(r_0)$, $0<r_0<R_*$, of speed $\sigma_0$.
Suppose $S^1_Z(r),\sigma^1_Z(r)$ are a smooth family issuing from the point $Z$ near $U_L$ and
$0\le r\le R_*$, with all states in a common compact subset of that domain, and 
\begin{align}
 S^1_Z(0)=Z,\quad \sigma^1_Z(0)=\lambda_1(Z),\quad
 F(S^1_Z(r))-F(Z)=\sigma^1_Z(r)(S_Z(r)-Z),\label{need1}
 \end{align}

\begin{align}
 \frac{d}{dr}\sigma^{1}_{Z}(r)<0\quad(0\le r\le R_*),\qquad
 \partial_r\eta(Z\mid S_Z(r))>0\quad(0<r\le R_*).\label{need2}
\end{align}
For all sufficiently small $a>0$, there exist $c_a,\delta_a>0$ such that,
whenever $Z\in\mathcal B$ and
$\eta(Z\mid U_L)\le a\eta(Z\mid U_R)$,
\begin{align}
 a\mathcal Q(S_Z(r);U_R,\sigma^1_Z(r))
 -\mathcal Q(Z;U_L,\sigma^1_Z(r))
 &\le-c_a|\sigma^1_Z(r)-\sigma_0|^2,
 \quad 0\le r\le R_*,\label{eq:4.3}\\
 a\mathcal Q(Z;U_R,\lambda_1(Z))
 -\mathcal Q(Z;U_L,\lambda_1(Z))
 &\le-\delta_a.\label{eq:4.4}
\end{align}
The constants are uniform on sufficiently small compact families of reference shocks $(U_L,U_R)$. The reflected statement applies to incoming
second-family curves, with the reciprocal weight (which will be greater than 1 in this case).
\end{proposition}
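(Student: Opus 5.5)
The plan is to follow the localized version of the Kang–Vasseur/Leger algebraic contraction argument (see \cite{MR3519973,Leger2011}). The key structural identities are the following.

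For any fixed $Y$ and any Rankine--Hugoniot pair $(Z,S_Z(r))$ with speed $\sigma^1_Z(r)$, expand $\mathcal Q(S_Z(r);Y,\sigma)$ about $\mathcal Q(Z;Y,\sigma)$. Using \eqref{need1} together with the relative entropy identities:
\begin{itemize}
\item Differentiating in $r$ gives $\frac{d}{dr}\mathcal Q(S_Z(r);Z,\sigma^1_Z(r))=-\dot\sigma^1_Z(r)\,\eta(Z\mid S_Z(r))$, up to the standard form of Dafermos' entropy-dissipation identity along the Hugoniot curve. By \eqref{need2} this quantity is strictly negative for $r>0$ and of order $r^2$ times a positive constant.
\item The change of base point is affine:
\[
\mathcal Q(X;Y,s)-\mathcal Q(X;Y',s)=\mathcal Q(Y';Y,s)+(\nabla\eta(Y')-\nabla\eta(Y))\cdot\bigl(F(X)-F(Y')-s(X-Y')\bigr).
\]
\end{itemize}
Together these reduce the left side of \eqref{eq:4.3} to an explicit function $\Psi_a(Z,r)$ of finitely many smooth quantities.

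\textbf{Step 1: the endpoint inequality \eqref{eq:4.4}.} At $s=\lambda_1(Z)$ the term $\mathcal Q(Z;U_L,\lambda_1(Z))$ is the classical entropy-lag term. The hypothesis $\eta(Z\mid U_L)\le a\eta(Z\mid U_R)$ forces $Z$ to lie in a small neighborhood of $U_L$, of size $O(\sqrt a)$ by \Cref{l2control}. There,
\[
a\mathcal Q(Z;U_R,\lambda_1(Z))\approx a\,\mathcal Q(U_L;U_R,\lambda_1(U_L)),
\]
which is strictly negative because $(U_L,U_R)$ is an entropic Lax 1-shock with $r_0>0$. Indeed, integrating the $r$-derivative identity from $0$ to $r_0$ gives a negative value bounded away from zero. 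Meanwhile $\mathcal Q(Z;U_L,\lambda_1(Z))=O(|Z-U_L|^2)$ is nonnegative to leading order (as in Leger's computation), or at least $O(a)$ with a small constant. Hence \eqref{eq:4.4} holds with $\delta_a\sim a$.

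\textbf{Step 2: the curve inequality \eqref{eq:4.3}.} Split the range of $r$ into $r$ near $0$, which is handled by continuity from Step 1, and $r$ bounded away from $0$. Write
\[
\Psi_a(Z,r)=a\mathcal Q(S_Z(r);U_R,\sigma)-\mathcal Q(Z;U_L,\sigma),\qquad \sigma=\sigma^1_Z(r).
\]
Then, using the identities above, the problem becomes showing that $-\mathcal Q(Z;U_L,\sigma)\le -c|\sigma-\sigma_0|^2$ up to $O(a)$ errors. This is the heart of the argument. Since $Z$ is within $O(\sqrt a)$ of $U_L$, we have $-\mathcal Q(Z;U_L,\sigma)\approx-\mathcal Q(U_L;U_L,\sigma)=0$ to zeroth order. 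The gain must therefore come from the $a$-weighted term: as a function of $r$, the quantity $a\mathcal Q(S_{U_L}(r);U_R,\sigma^1_{U_L}(r))$ vanishes at $r=r_0$ and, by monotonicity of $\sigma$ \eqref{need2} and the derivative identity, behaves like $-a\,c\,(\sigma-\sigma_0)^2$ near $r_0$. It is negative elsewhere on $[0,R_*]$ by the sign of $\partial_r\eta(Z\mid S_Z(r))$.

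The remaining cross terms between $Z-U_L$ and $r-r_0$ are absorbed by Young's inequality. This uses $|Z-U_L|^2\lesssim a\,\eta(Z\mid U_R)$ and the fact that $\mathcal Q(Z;U_L,\sigma)$ is controlled by $|Z-U_L|^2$ with coefficient of the favorable sign once $\sigma<\lambda_1$ near $U_L$, which holds for the Lax shock. The result is $c_a\sim a$.

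\textbf{Step 3: uniformity and reflection.} All constants depend continuously on $(U_L,U_R)$ through compactness of $\mathcal B$ and smoothness of $S,\sigma$, which gives uniformity on small compact families of reference shocks. The second-family statement follows by the reflection $x\mapsto -x$, which exchanges $U_L$ and $U_R$ and replaces $a$ by $1/a$.

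The main obstacle I expect is Step 2 in the intermediate regime. There $Z$ is near $U_L$ but $r$ is far from both $0$ and $r_0$, and one must ensure that the positive part of $-\mathcal Q(Z;U_L,\sigma)$ does not beat the $a$-sized negative gain. I would first try to show that $\mathcal Q(Z;U_L,\sigma)\ge -C|Z-U_L|^2$ with $C$ independent of $a$. I would then shrink $a$ so that $Ca\,\eta(Z\mid U_R)$ is dominated by $a\,\min_r|\mathcal Q(S_{U_L}(r);U_R,\sigma)|$ away from $r_0$, using \eqref{need2} to keep that minimum positive.
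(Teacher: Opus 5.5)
Your architecture matches the paper's: a dissipation identity along the curve, comparison with $S^1_Z(r_0)$ plus a Young's-inequality change of base point, coercivity of the base term for strengths bounded away from zero, and uniform negativity of the weighted term for small strengths. However, the estimate that actually closes the argument is missing, and the remedy you propose for it fails for a scaling reason.

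Put $e:=|Z-U_L|$, so $e^2\le B_0a$ for some constant $B_0$ under the constraint $\eta(Z\mid U_L)\le a\,\eta(Z\mid U_R)$. Where the coercivity of the base term degenerates, you never get a bound on the positive part of $-\mathcal Q(Z;U_L,\sigma)$ better than $O(e^2)$. In Step 1 you hedge with ``or at least $O(a)$ with a small constant'', but nothing supplies the smallness. In your last paragraph you use $\mathcal Q(Z;U_L,\sigma)\ge -C|Z-U_L|^2$ with $C$ merely independent of $a$. That bound is of order $a$, exactly like the gain $a\min_r|\mathcal Q(S^1_{U_L}(r);U_R,\sigma^1_{U_L}(r))|$. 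Both are linear in $a$, so shrinking $a$ cannot make one dominate the other.

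The missing idea is a cubic bound. By \eqref{need2}, $\sigma^1_Z(r)\le\sigma^1_Z(0)=\lambda_1(Z)=\lambda_1(U_L)+O(e)$ for every $r$. With $\xi:=Z-U_L$ and $H_L:=D^2\eta(U_L)$ (so $H_LDF(U_L)$ is symmetric by entropy compatibility),
\[
\mathcal Q(Z;U_L,s)=\tfrac12\,\xi^{\mathsf T}H_L\bigl(DF(U_L)-sI\bigr)\xi+O(e^3),
\qquad
\xi^{\mathsf T}H_L\bigl(DF(U_L)-sI\bigr)\xi\ge\bigl(\lambda_1(U_L)-s\bigr)\,\xi^{\mathsf T}H_L\xi .
\]
Hence $-\mathcal Q(Z;U_L,\sigma^1_Z(r))\le Ce^3\le C(B_0a)^{3/2}=o(a)$, uniformly in $r$. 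This is what lets the $a$-independent bound $\mathcal Q(S^1_Z(r);U_R,\sigma^1_Z(r))\le-\kappa_1$ win on $0\le r\le r_0/2$; that bound holds by continuity on a compact range away from $r_0$. At $r=0$ the same estimate gives \eqref{eq:4.4}. Your phrase ``nonnegative to leading order'' points at this but never quantifies the correction.

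Conversely, the regime you flag as the main obstacle is not one. For $r\ge r_0/2$, strict monotonicity of $\sigma^1$ and continuity in $Z$ give a uniform gap $\sigma^1_Z(r)\le\lambda_1(U_L)-\kappa$. The same expansion then gives the favorable sign $\mathcal Q(Z;U_L,\sigma^1_Z(r))\ge c_1e^2$ on that whole range, as you already note in Step 2. This absorbs the $aC_0e^2$ base-change error once $aC_0\le c_1/2$, exactly as near $r_0$. The paper splits at $r_0/2$ in this way, so no ``continuity from Step 1'' near $r=0$ (with a neighborhood of unclear $a$-dependence) is needed.

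Two smaller corrections:

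\begin{enumerate}
\item The identity you quote has the wrong sign. It is $\frac{d}{dr}\mathcal Q(S^1_Z(r);Z,\sigma^1_Z(r))=+(\sigma^1_Z)'(r)\,\eta(Z\mid S^1_Z(r))<0$; your version would be positive, contradicting your own conclusion.
\item Integrating that base-$Z$ identity from $0$ to $r_0$ yields $\mathcal Q(U_R;U_L,\sigma_0)$, not the $\mathcal Q(U_L;U_R,\lambda_1(U_L))$ you need. Your base-change identity with $X=S^1_Z(r)$, $Y'=Z$, $s=\sigma^1_Z(r)$ has vanishing correction term by \eqref{need1}. This gives the paper's general formula
\[
\frac{d}{dr}\mathcal Q(S^1_Z(r);Y,\sigma^1_Z(r))=(\sigma^1_Z)'(r)\bigl[\eta(Z\mid S^1_Z(r))-\eta(Z\mid Y)\bigr].
\]
Integrating it with $Y=U_R$ gives $\mathcal Q(U_L;U_R,\lambda_1(U_L))<0$, and with $Y=S^1_Z(r_0)$ it gives the $-c_0(r-r_0)^2$ bound. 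That negativity comes from the relative-entropy monotonicity in \eqref{need2}, not merely from $(U_L,U_R)$ being an entropic Lax shock.
\end{enumerate}
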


\begin{remark}
The point of this proposition is to show that the $a$-contraction theory \cite{MR3519973} works even if we don't have the global properties normally assumed on the hyperbolic system in the $a$-contraction theory, such as genuine nonlinearity. We only need \eqref{need1} and \eqref{need2} locally at the base state $U_L$, which we will have in our case for the Riemann data in \Cref{main_theorem}.
\end{remark}

The proof of \Cref{prop:4.2} is in \Cref{sec:proof4.2}.

\subsubsection{Proof of \Cref{prop:4.1_one_shock}}

We will use the argument from \cite{move_entire_solution_system}.

We will focus on the case of the 1-shock. The 2-shock case is similar. 

We will use \Cref{prop:4.2}. Let us comment on why we can use \Cref{prop:4.2}

\subsubsection{Showing we have \eqref{need1} and \eqref{need2}}

We note that \eqref{need1} follows immediately from the geometry of the Hugoniot locus (see \Cref{Hugoniot_facts}) independent of the sign of $p''$ and \eqref{need2} follows from \Cref{subsec:p-system-relative-entropy} where this is shown for 2-shocks but the calculation is identical for 1-shocks. We also remark that the monotonicity of shock speed follows from \eqref{eq:p-system-speed-first-derivative} and \Cref{lem:remote-endpoint-control}.

We can now begin the proof of \Cref{prop:4.1_one_shock}.

\textbf{Proof of \Cref{prop:4.1_one_shock}}

Pick \(0<a<1\) as in  \Cref{prop:4.2}. 

Throughout this proof, \(c\) denotes a generic constant.

\medskip
\noindent\underline{Step 1}

We now show that for any \(\gamma_0>0\),
\begin{equation}
\inf \eta(U\mid U_L)-a\eta(U\mid U_M)\geq c_4\gamma_0^2
\label{eq:4.103}
\end{equation}
for a constant \(c_4>0\), where the infimum runs over all \((U,U_L,U_M)\)
such that
\[
\dist\bigl(U,\{W\mid \eta(W\mid U_L)\leq a\eta(W\mid U_M)\}\bigr)
\geq\gamma_0
\]
and $U_L,U_M\in\mathcal{V}$. The
distance \(\dist(W,\mathcal A)\) between a point \(W\) and a set \(\mathcal A\) is defined in
the usual way,
\begin{equation}
\dist(W,\mathcal A):=\inf_{Y\in\mathcal A}|W-Y|.
\label{eq:4.104}
\end{equation}

Consider any triple \((U,U_L,U_M)\) such that
\[
\dist\bigl(U,\{W\mid \eta(W\mid U_L)\leq a\eta(W\mid U_M)\}\bigr)
\geq\gamma_0
\]
and $U_L,U_M\in\mathcal{V}$.

By  \Cref{size_pi}, the set
\(\{W\mid\eta(W\mid U_L)\leq a\eta(W\mid U_M)\}\) is compact. Thus, there
exists \(W_0\) in this set such that
\begin{equation}
|U-W_0|=\dist\bigl(U,\{W\mid\eta(W\mid U_L)\leq a\eta(W\mid U_M)\}\bigr).
\label{eq:4.105}
\end{equation}

We Taylor expand the function
\begin{equation}
\Gamma(U):=\eta(U\mid U_L)-a\eta(U\mid U_M)
\label{eq:4.106}
\end{equation}
around the point \(W_0\):
\begin{equation}
\begin{aligned}
\Gamma(U)={}&\Gamma(W_0)+\nabla\Gamma(W_0)\cdot(U-W_0)\\
&+\int_0^1(1-t)(U-W_0)^{\mathsf T}D^2\Gamma(W_0+t(U-W_0))(U-W_0)\,dt.
\end{aligned}
\label{eq:4.107}
\end{equation}

By definition of \(W_0\), we must have \(\Gamma(W_0)=0\) and
\(\nabla\Gamma(W_0)\cdot(U-W_0)\geq0\). Note that
\(D^2\Gamma=(1-a)D^2\eta\). Thus, by strict convexity of \(\eta\)
and because \(0<a<1\), we have \(D^2\Gamma\geq cI_2\) for some constant
\(c>0\).

We then calculate,
\begin{align}
&\int_0^1(1-t)(U-W_0)^{\mathsf T}D^2\Gamma(W_0+t(U-W_0))(U-W_0)\,dt
\label{eq:4.108}\\
&\quad\geq\int_0^{1/2}(1-t)(U-W_0)^{\mathsf T}
D^2\Gamma(W_0+t(U-W_0))(U-W_0)\,dt,
\label{eq:4.109}
\end{align}
where we have changed the limits of integration. Continuing,
\begin{equation}
\geq\frac{3c}{8}|U-W_0|^2\geq\frac{3c}{8}\gamma_0^2,
\label{eq:4.110}
\end{equation}
where the last inequality comes from
\(\dist(U,\{W\mid\eta(W\mid U_L)\leq a\eta(W\mid U_M)\})\geq\gamma_0\).
This proves \eqref{eq:4.103}.

We choose
\begin{equation}
\gamma_0:=\frac{c_1}{2L_*},
\label{eq:4.111}
\end{equation}
where \(c_1\) is from \Cref{prop:4.2} and \(L_*\) is the Lipschitz constant of
the map
\begin{equation}
(U,U_L,U_M)\longmapsto
a\bigl(q(U;U_M)-\lambda_1(U)\eta(U\mid U_M)\bigr)
-q(U;U_L)+\lambda_1(U)\eta(U\mid U_L).
\label{eq:4.112}
\end{equation}

\medskip
\noindent\underline{Step 2}

Define
\begin{equation}
\mathcal{W}(U,t):=\lambda_1(U)-C_*\one_{\{U\mid
a\eta(U\mid U_M)<\eta(U\mid U_L)\}}(U),
\label{eq:4.113}
\end{equation}
where \(C_*>0\) is a large constant, which we can pick to be
\begin{equation}
C_*:=\frac{1}{c_4\gamma_0^2}
\left(\sup_{U,U_L,U_M\in \mathcal{V}}|a q(U;U_M)-q(U;U_L)|+1\right)
+2\sup_{U\in \mathcal{V}}|\lambda_1(U)|,
\label{eq:4.114}
\end{equation}
where \(c_4\) is from \eqref{eq:4.103}.

We solve the following ODE in the sense of Filippov flows,
\begin{equation}
\begin{cases}
\dot h(t)=\mathcal{W}(U(h(t),t),t),\\
h(0)=0,
\end{cases}
\label{eq:4.115}
\end{equation}
The existence of such an \(h\) comes from the following lemma.

\begin{lemma}[Existence of Filippov flows]\label{lem:4.2}
Let \(\mathcal{W}(U,t)\colon\R^2\times[0,\infty)\to\R\) be bounded on
\(\R^2\times[0,\infty)\), upper semi-continuous in \(U\), and measurable in
\(t\). Let \(U\) be a bounded weak solution to \eqref{eq:system}, entropic for the
entropy \(\eta\). Assume also that \(U\) verifies the Strong Trace Property
(\Cref{strong_trace_def}). Let \(x_0\in\R\). Then we can solve
\begin{equation}
\begin{cases}
\dot h(t)=\mathcal{W}(U(h(t),t),t),\\
h(0)=x_0,
\end{cases}
\label{eq:4.116}
\end{equation}
in the Filippov sense. That is, there exists a Lipschitz function
\(h\colon[0,\infty)\to\R\) such that
\begin{align}
\Lip[h]&\leq\lVert\mathcal{W}\rVert_{L^\infty},
\label{eq:4.117}\\
h(0)&=x_0,
\label{eq:4.118}
\end{align}
and
\begin{equation}
\dot h(t)\in\left[\inf\mathcal{W},
\max\{\mathcal{W}(U_+,t),\mathcal{W}(U_-,t)\}\right],
\label{eq:4.119}
\end{equation}
for almost every \(t\), where \(U_\pm:=U(h(t)\pm,t)\), and where the infimum
is taken over the domain of \(\mathcal{W}\). Similarly, for almost every \(t\) such
that for \(t\) fixed \(\mathcal{W}(\cdot,t)\) is continuous at \(U_+\) and \(U_-\),
then
\begin{equation}
\dot h(t)\in I[\mathcal{W}(U_+,t),\mathcal{W}(U_-,t)],
\label{eq:4.120}
\end{equation}
where \(I[a,b]\) denotes the interval with endpoints \(a\) and \(b\).

Moreover, for almost every \(t\),
\begin{align}
F(U_+)-F(U_-)&=\dot h(U_+-U_-),
\label{eq:4.121}\\
q(U_+)-q(U_-)&\leq\dot h(\eta(U_+)-\eta(U_-)),
\label{eq:4.122}
\end{align}
which means that for almost every \(t\), either \((U_+,U_-,\dot h)\) is an
entropic shock (for \(\eta\)) or \(U_+=U_-\).
\end{lemma}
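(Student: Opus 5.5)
The plan follows the construction of Filippov solutions in the $a$-contraction literature (Leger--Vasseur, Krupa--Vasseur). First we regularize the velocity field in $x$. Fix a mollifier $\phi_\epsilon$ and define
\[
V_\epsilon(x,t):=\int \mathcal W(U(x-\epsilon y,t),t)\phi(y)\,dy .
\]
This is measurable in $t$, Lipschitz in $x$ for fixed $\epsilon$, and bounded by $\|\mathcal W\|_{L^\infty}$. Carathéodory's theorem then gives a unique Lipschitz $h_\epsilon$ with $\dot h_\epsilon=V_\epsilon(h_\epsilon,t)$, $h_\epsilon(0)=x_0$ and $\Lip[h_\epsilon]\le\|\mathcal W\|_{L^\infty}$. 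By Arzelà--Ascoli a subsequence converges locally uniformly to a Lipschitz $h$, and $\dot h_\epsilon\rightharpoonup\dot h$ weak-$*$ in $L^\infty$. This already gives \eqref{eq:4.117} and \eqref{eq:4.118}.

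To identify $\dot h$ we use the Strong Trace Property along the Lipschitz curve $h$. Since $h_\epsilon\to h$ uniformly, the set of points $x$ within distance $\epsilon+|h_\epsilon-h|$ of $h(t)$ shrinks to $h(t)$. On each side of $h(t)$, $U$ converges in the $L^1_t$-sup sense of \Cref{strong_trace_def} to $U_\pm(t)$. Upper semicontinuity of $\mathcal W$ in $U$ then gives
\[
\limsup_\epsilon V_\epsilon(h_\epsilon(t),t)\le \max\{\mathcal W(U_+,t),\mathcal W(U_-,t)\}
\]
for a.e. $t$, after passing to a subsequence where the trace convergence holds pointwise a.e. in $t$. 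Combining this with the trivial lower bound $\inf\mathcal W$ and the weak-$*$ convergence (testing against nonnegative functions of $t$, i.e. a Fatou-type argument) yields \eqref{eq:4.119}. Where $\mathcal W(\cdot,t)$ is continuous at $U_\pm$, the same argument applied to $-\mathcal W$ gives the matching lower bound $\min$, which is \eqref{eq:4.120}.

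For the Rankine--Hugoniot relation \eqref{eq:4.121} and the entropy inequality \eqref{eq:4.122}, we test the weak formulation and the entropy inequality against $\psi(t)\chi_n(x-h(t))$, where $\chi_n$ is a cutoff concentrating at $0$ with its jump smeared over $(0,1/n)$ on one side (and then the other). Letting $n\to\infty$ and using the strong traces on both sides produces the jump conditions $[F(U)]=\dot h[U]$ and $[q(U)]\le\dot h[\eta(U)]$ for a.e. $t$. The boundedness of $U$ and the Lipschitz bound on $h$ justify the passage to the limit. By the Lax/entropy characterization, either $U_+=U_-$ or $(U_+,U_-,\dot h)$ is an entropic shock.

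We expect the main obstacle to be the second step: commuting the $\limsup$ through the mollification while $h_\epsilon$ is only close to $h$. One must check that the trace convergence in \Cref{strong_trace_def}, stated with a sup over $y\in(0,1/n)$ in $L^1_t$, controls $U$ on the moving window around $h_\epsilon(t)$, uniformly enough to apply Fatou in $t$. We would handle this by bounding the window by $(h(t)-\delta_\epsilon,h(t)+\delta_\epsilon)$ with $\delta_\epsilon\to0$, which the sup-form of the traces is designed for.
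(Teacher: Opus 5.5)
Your proposal is correct and follows essentially the argument the paper relies on. The paper does not reproduce a proof; it defers to Proposition~1 and Lemma~6 (with its appendix) of \cite{Leger2011}, which proceed as you do: regularize the velocity field, extract a Lipschitz limit by Arzel\`a--Ascoli, and bound $\dot h$ a.e.\ using the strong traces along the limit curve, upper semicontinuity of $\mathcal W$, and a reverse-Fatou/weak-$*$ argument; the Rankine--Hugoniot relation and the entropy inequality then come from testing against cutoffs concentrating at $x=h(t)$.
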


The proof of \eqref{eq:4.117}, \eqref{eq:4.118}, \eqref{eq:4.119}, and
\eqref{eq:4.120} is very similar to the proof of Proposition 1 in \cite{Leger2011}. We do not reproduce the proof here.

It is classical that \eqref{eq:4.121} and \eqref{eq:4.122} are true for any
Lipschitz continuous function \(h\colon[0,\infty)\to\R\) when \(U\) is BV. When
instead \(U\) is only known to verify the  Strong Trace Property (\Cref{strong_trace_def}), then
\eqref{eq:4.121} and \eqref{eq:4.122} are given in Lemma 6 in \cite{Leger2011}. We do not
prove \eqref{eq:4.121} and \eqref{eq:4.122} here; a proof can be found in the 
appendix in \cite{Leger2011}.

Note that \(\mathcal{W}\) (see \eqref{eq:4.113}) is upper semi-continuous in \(U\)
because indicator functions of open sets are lower semi-continuous and the
negative of a lower semi-continuous function is upper semi-continuous.

\medskip
\noindent\underline{Step 3}

Let \(U_\pm:=U(h(t)\pm,t)\). Note that by Lemma~\ref{lem:4.2} and
\eqref{eq:4.114},
\begin{equation}
\begin{aligned}
\dot h(t)\in\Bigl[&-\sup_{U\in \mathcal{V}}|\lambda_1(U)|-C_*,\\
&\max\bigl\{\lambda_1(U_+)-C_*
\one_{\{U\mid a\eta(U\mid U_M)<\eta(U\mid U_L)\}}(U_+),\\
&\hspace{4.3em}\lambda_1(U_-)-C_*
\one_{\{U\mid a\eta(U\mid U_M)<\eta(U\mid U_L)\}}(U_-)
\bigr\}\Bigr].
\end{aligned}
\label{eq:4.123}
\end{equation}

We are now ready to show \eqref{eq:4.2}. For each fixed time \(t\), we have 4
cases to consider to prove \eqref{eq:4.2}:

\medskip
\noindent\textit{Case 1}
\begin{align}
a\eta(U_-\mid U_M)&<\eta(U_-\mid U_L),
\label{eq:4.124}\\
a\eta(U_+\mid U_M)&<\eta(U_+\mid U_L).
\label{eq:4.125}
\end{align}

\noindent\textit{Case 2}
\begin{align}
a\eta(U_-\mid U_M)&<\eta(U_-\mid U_L),
\label{eq:4.126}\\
a\eta(U_+\mid U_M)&\geq\eta(U_+\mid U_L).
\label{eq:4.127}
\end{align}

\noindent\textit{Case 3}
\begin{align}
a\eta(U_-\mid U_M)&\geq\eta(U_-\mid U_L),
\label{eq:4.128}\\
a\eta(U_+\mid U_M)&<\eta(U_+\mid U_L).
\label{eq:4.129}
\end{align}

\noindent\textit{Case 4}
\begin{align}
a\eta(U_-\mid U_M)&\geq\eta(U_-\mid U_L),
\label{eq:4.130}\\
a\eta(U_+\mid U_M)&\geq\eta(U_+\mid U_L).
\label{eq:4.131}
\end{align}

Note that we allow for \(U_+=U_-\).

We start with

\noindent\textit{Case 1}
We consider only the 1-shock case. The 2-shock case is very similar.
Then, in this (Case 1), by \eqref{eq:4.123} and \eqref{eq:4.114} we know that
\begin{equation}
\begin{aligned}
\dot h(t)
&\leq-\frac{1}{c_4\gamma_0^2}
\left(\sup_{U,U_L,U_M\in \mathcal{V}}|a q(U;U_M)-q(U;U_L)|+1\right)
-\sup_{U\in \mathcal{V}}|\lambda_1(U)|\\
&<\inf_{U\in \mathcal{V}}\lambda_1(U).
\end{aligned}
\label{eq:4.132}
\end{equation}

If \(U_+\neq U_-\), then we have \eqref{eq:4.121} and \eqref{eq:4.122}.
But then this is a contradiction because the shock speed $\dot{h}(t)$ is slower (more negative) than any 1-shock (remark also that 2-shocks always have positive shock speed). Thus, we conclude that \(U_+=U_-\).

Let \(\overline U:=U_+=U_-\). If
\[
\dist\bigl(\overline U,\{W\mid\eta(W\mid U_L)
\leq a\eta(W\mid U_M)\}\bigr)\geq\gamma_0,
\]
then
\begin{equation}
\begin{aligned}
&a\bigl(q(U_+; U_M)-\dot h(t)\eta(U_+\mid U_M)\bigr)
-q(U_-; U_L)+\dot h(t)\eta(U_-\mid U_L)\\
&=a\bigl(q(\overline U; U_M)-\dot h(t)\eta(\overline U\mid U_M)\bigr)
-q(\overline U; U_L)+\dot h(t)\eta(\overline U\mid U_L)\\
&=a q(\overline U; U_M)-q(\overline U; U_L)
-\dot h(t)\bigl(a\eta(\overline U\mid U_M)-\eta(\overline U\mid U_L)\bigr)\\
&\leq-1,
\end{aligned}
\label{eq:4.133}
\end{equation}
because of \eqref{eq:4.132} and \eqref{eq:4.103}. Because \(\sigma\) is
fixed and \(\dot h\) is bounded by \eqref{eq:4.117}, the term
\(|\sigma-\dot h(t)|^2\) on the right-hand side of \eqref{eq:4.2} is bounded.
Thus, we have proven
\eqref{eq:4.2} by choosing \(c\) sufficiently small.

If on the other hand,
\[
\dist\bigl(\overline U,\{W\mid\eta(W\mid U_L)
\leq a\eta(W\mid U_M)\}\bigr)<\gamma_0,
\]
then
\begin{align*}
&a\bigl(q(U_+; U_M)-\dot h(t)\eta(U_+\mid U_M)\bigr)
-q(U_-; U_L)+\dot h(t)\eta(U_-\mid U_L)\\
&=a\bigl(q(\overline U; U_M)-\dot h(t)\eta(\overline U\mid U_M)\bigr)
-q(\overline U; U_L)+\dot h(t)\eta(\overline U\mid U_L)\\
&=a q(\overline U; U_M)-q(\overline U; U_L)
-\dot h(t)\bigl(a\eta(\overline U\mid U_M)-\eta(\overline U\mid U_L)\bigr)\\
&\leq a\bigl(q(\overline U; U_M)-\lambda_1(\overline U)\eta(\overline U\mid U_M)\bigr)
-q(\overline U; U_L)+\lambda_1(\overline U)\eta(\overline U\mid U_L),
\end{align*}
because \(\eta(\overline U\mid U_L)-a\eta(\overline U\mid U_M)\geq0\) and
\(\dot h\leq-\sup_{U\in \mathcal{V}}|\lambda_1(U)|\). Continuing, we get
\begin{equation}
\leq-\frac12c_1,
\label{eq:4.134}
\end{equation}
from \eqref{eq:4.4}, the definition of \(\gamma_0\) (see \eqref{eq:4.111}), and  the
assumption that
\begin{equation}
\dist\bigl(\overline U,\{W\mid\eta(W\mid U_L)
\leq a\eta(W\mid U_M)\}\bigr)<\gamma_0.
\label{eq:4.135}
\end{equation}
Again because the term
\(|\sigma-\dot h(t)|^2\) on the right-hand side of \eqref{eq:4.2} is bounded
due to \eqref{eq:4.117}, we have proven
\eqref{eq:4.2} by choosing \(c\) sufficiently small.

\noindent\textit{Case 2}

In this case, we must have \(U_-\neq U_+\).

We have from
\eqref{eq:4.123} that
\begin{equation}\label{contact_question}
\dot h\in\left[-\sup_{U\in \mathcal{V}}|\lambda_1(U)|-C_*,\lambda_1(U_+)\right].
\end{equation}

This implies that $\dot{h}<0$ and thus \((U_+,U_-,\dot h)\) is a 1-shock. But, this is in contradiction  with \Cref{lem:remote-endpoint-control} so this case cannot occur.

\begin{figure}[tb]
      \includegraphics[width=\textwidth]{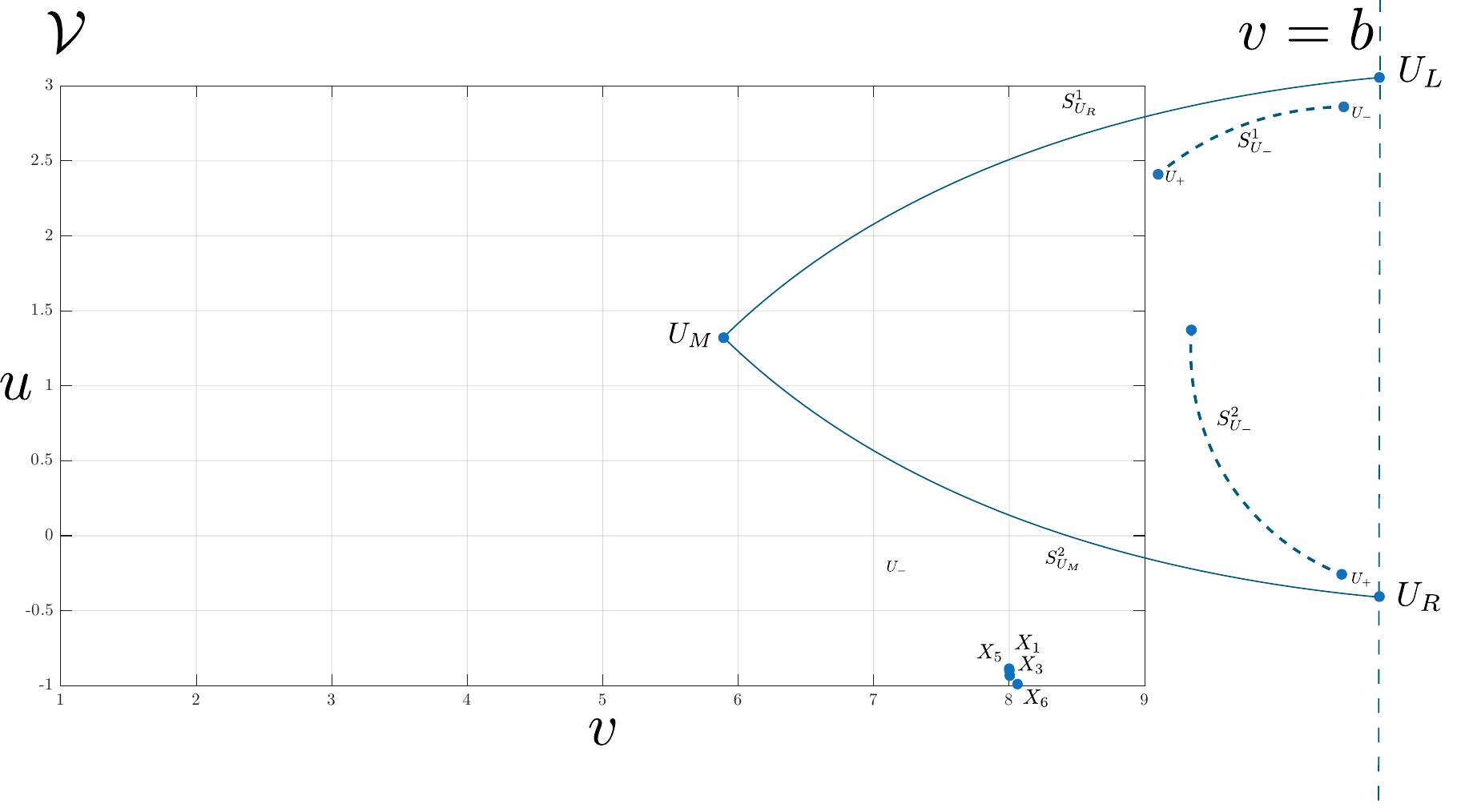}
  \caption{This diagram shows the state space $\mathcal{V}$ and the 6 $T_N$ points $X_i$ from \Cref{T_6_with_ordering} (projected to the $v,u$ plane). Note that the $v$-coordinates of the $X_i$ are ordered (from left to right): $X_2,X_4,X_5,X_1,X_3,X_6$. We also show the Riemann states $U_L, U_R$ and the intermediate state $U_M$ (see \Cref{sec:def_Riemann_data}). The line $v=b$ from \Cref{sec:envelope} is also shown.}\label{1-shock_case_pic}
\end{figure}

\noindent\textit{Case 3}

In this case (Case 3), we must have \(U_-\neq U_+\). Further, in this case, we have
from \eqref{eq:4.119} that
\begin{equation}
\dot h\in\left[-\sup_{U\in \mathcal{V}}|\lambda_1(U)|-C_*,\lambda_1(U_-)\right].
\label{eq:4.136}
\end{equation}
This implies that $\dot{h}<0$ and thus \((U_+,U_-,\dot h)\) is a 1-shock.

Also,
\(U_-\) verifies
\(a\eta(U_-\mid U_M)\geq\eta(U_-\mid U_L)\). Thus, we can
apply \Cref{prop:4.2}. We receive \eqref{eq:4.2}. We refer to \Cref{1-shock_case_pic}.

\noindent\textit{Case 4}

For this Case 4, we then have two subcases.

Consider first when \(U_+\neq U_-\) (so \((U_+,U_-)\) is an entropic shock
by \eqref{eq:4.121} and \eqref{eq:4.122}). In this case, we have from
\eqref{eq:4.119} that
\[
\dot h\in\left[-\sup_{U\in \mathcal{V}}|\lambda_1(U)|-C_*,
\max\{\lambda_1(U_+),\lambda_1(U_-)\}\right].
\]
This implies that $\dot{h}<0$. We conclude that \((U_+,U_-,\dot h)\)
is a 1-shock. Moreover, \(U_-\) verifies
\(a\eta(U_-\mid U_M)\geq\eta(U_-\mid U_L)\). We can now apply
\Cref{prop:4.2}. This gives \eqref{eq:4.2}.

On the other hand, consider when \(U_+=U_-\). Define \(\overline U:=U_+=U_-\). We
then now have two (sub-)subcases. Consider when
\(\eta(\overline U\mid U_L)=a\eta(\overline U\mid U_M)\). Then,
\begin{equation}
\begin{aligned}
&a\bigl(q(U_+; U_M)-\dot h(t)\eta(U_+\mid U_M)\bigr)
-q(U_-; U_L)+\dot h(t)\eta(U_-\mid U_L)\\
&=a\bigl(q(\overline U; U_M)-\dot h(t)\eta(\overline U\mid U_M)\bigr)
-q(\overline U; U_L)+\dot h(t)\eta(\overline U\mid U_L)\\
&=a q(\overline U; U_M)-q(\overline U; U_L)
-\dot h(t)\bigl(a\eta(\overline U\mid U_M)-\eta(\overline U\mid U_L)\bigr)\\
&=a\bigl(q(\overline U; U_M)-\lambda_1(\overline U)\eta(\overline U\mid U_M)\bigr)
-q(\overline U; U_L)+\lambda_1(\overline U)\eta(\overline U\mid U_L),
\end{aligned}
\label{eq:4.138}
\end{equation}
because \(\eta(\overline U\mid U_L)-a\eta(\overline U\mid U_M)=0\). Thus
\eqref{eq:4.2} follows directly from \Cref{prop:4.2} (in particular,
\eqref{eq:4.4}). As in Case 1 above, because the term
\(|\sigma-\dot h(t)|^2\) on the right hand side of \eqref{eq:4.2} is bounded
due to \eqref{eq:4.117}, we have proven
\eqref{eq:4.2} by choosing \(c\) sufficiently small.

In the other (sub)-subcase, consider when
\(\eta(\overline U\mid U_L)<a\eta(\overline U\mid U_M)\) (strict inequality).
Then, \(\mathcal{W}(\cdot,t)\) is continuous at \(\overline U\) and thus \eqref{eq:4.120}
applies, giving us that \(\dot h(t)=\lambda_1(\overline U)\). Again, \eqref{eq:4.2}
follows directly from \Cref{prop:4.2}.

\subsection{Proof of \Cref{prop:4.2}}\label{sec:proof4.2}

This argument follows very similarly to \cite{MR3519973} but we include the details for completeness.

\subsubsection{Why the localized algebraic contraction estimate is true}

For the reference 1-shock, abbreviate
\[
L=U_L,\qquad R=U_R,
\]
and write
\[
R=S^1_L(r_0),\qquad \sigma_0=\sigma^1_L(r_0),\qquad r_0>0.
\]
For any state $X$, comparison state $Y$, and number $s$, set
\begin{align}\label{Qcal_def}
\Qcal(X;Y,s):=q(X;Y)-s\eta(X\mid Y).
\end{align}
The desired estimate is
\begin{equation}
\boxed{
a\Qcal(S^1_Z(r);R,\sigma^1_Z(r))
-\Qcal(Z;L,\sigma^1_Z(r))
\le -c_a|\sigma^1_Z(r)-\sigma_0|^2
}
\tag{11}
\end{equation}
whenever
\begin{equation}
\eta(Z\mid L)\le a\eta(Z\mid R).
\tag{12}
\end{equation}

\subsubsection{Why condition (12) puts $Z$ near $L$}

By \Cref{size_pi}, base-state error
\[
e:=|Z-L|
\]
is $O(\sqrt a)$.

\subsubsection{Differentiate Rankine--Hugoniot}

We now show, for a fixed base $Z$,

\begin{equation}
\boxed{
\frac{d}{dr}\Qcal(S^1_Z(r);Y,\sigma^1_Z(r))
=(\sigma^1_Z)'(r)
\big[\eta(Z\mid S^1_Z(r))-\eta(Z\mid Y)\big].
}
\tag{15}
\end{equation}

This follows immediately from the Lax dissipation formula \Cref{LDF} but we provide a complete proof for the reader:
Consider Rankine-Hugoniot
\[
F(S^1_Z(r))-F(Z)=\sigma^1_Z(r)(S^1_Z(r)-Z).
\]
Differentiation gives
\begin{equation}
(DF(S^1_Z(r))-\sigma^1_Z(r)I)(S^1_Z)'(r)
=(\sigma^1_Z)'(r)(S^1_Z(r)-Z).
\tag{14}
\end{equation}
Entropy compatibility implies
\[
D_Xq(X;Y)=(D\eta(X)-D\eta(Y))DF(X).
\]
Also,
\[
D_X\eta(X\mid Y)=D\eta(X)-D\eta(Y).
\]
Therefore
\[
\begin{aligned}
\frac{d}{dr}\Qcal(S^1_Z(r);Y,\sigma^1_Z(r))
&=(D\eta(S^1_Z(r))-D\eta(Y))
  (DF(S^1_Z(r))-\sigma^1_Z(r)I)(S^1_Z)'(r)\\
&\quad-(\sigma^1_Z)'(r)\eta(S^1_Z(r)\mid Y).
\end{aligned}
\]
Use (14). The result is
\[
(\sigma^1_Z)'(r)
\Big[(D\eta(S^1_Z(r))-D\eta(Y))(S^1_Z(r)-Z)
-\eta(S^1_Z(r)\mid Y)\Big].
\]
Expanding the relative entropies (see also \cite[Lemma 4.1]{MR3519973}) shows that the bracket equals
\[
\eta(Z\mid S^1_Z(r))-\eta(Z\mid Y).
\]
Hence we get (15).

\subsubsection{First compare to the moving state $S^1_Z(r_0)$}

Take
\[
Y=S^1_Z(r_0),
\]
which is fixed while $r$ varies. At $r=r_0$,
\[
\Qcal(S^1_Z(r_0);S^1_Z(r_0),\sigma^1_Z(r_0))=0.
\]
Integrating (15),
\begin{equation}
\begin{aligned}
\Qcal(S^1_Z(r);S^1_Z(r_0),\sigma^1_Z(r))
&=\int_{r_0}^{r}(\sigma^1_Z)'(t)
\big[\eta(Z\mid S^1_Z(t))-\eta(Z\mid S^1_Z(r_0))\big]dt.
\end{aligned}
\tag{16}
\end{equation}
Why is this negative for $r\ne r_0$?

For $r>r_0$, entropy strengthening makes the bracket positive, while $(\sigma^1_Z)'<0$. The integral is negative.

For $r<r_0$, the bracket is negative on $(r,r_0)$, so the integrand is positive there. But the integral runs backwards, from $r_0$ down to $r$. It is again negative.

Near $r=r_0$, the quadratic coefficient is
\[
\frac12(\sigma^1_Z)'(r_0)\partial_r\eta(Z\mid S^1_Z(r_0))<0.
\]
Thus the quotient
\[
-\frac{\Qcal(S^1_Z(r);S^1_Z(r_0),\sigma^1_Z(r))}{(r-r_0)^2}
\]
extends continuously and positively through $r=r_0$.

Bases and strengths range over compact sets. Its minimum is therefore positive:
\begin{equation}
\boxed{
\Qcal(S^1_Z(r);S^1_Z(r_0),\sigma^1_Z(r))
\le -c_0(r-r_0)^2.
}
\tag{17}
\end{equation}

\subsubsection{$S^1_Z(r_0)$ versus $R$}

The reference state is
\[
R=S^1_L(r_0),
\]
not $S^1_Z(r_0)$. They agree only when $Z=L$.

Smooth dependence gives
\[
|S^1_Z(r_0)-R|\le Ce.
\]
Define the error caused by changing comparison state:
\[
\begin{aligned}
D(Z,r)
&:=\Qcal(S^1_Z(r);R,\sigma^1_Z(r))\\
&\quad-\Qcal(S^1_Z(r);S^1_Z(r_0),\sigma^1_Z(r)).
\end{aligned}
\]
At $r=r_0$, this is $O(e^2)$, because relative entropy and relative entropy flux vanish to second order on the diagonal.

Its $r$-derivative is $O(e)$. One can see this directly from (15): subtracting the two differentiated formulas leaves
\[
(\sigma^1_Z)'(r)\big[\eta(Z\mid S^1_Z(r_0))-\eta(Z\mid R)\big],
\]
which is $O(e)$ on the compact family.

Therefore
\[
|D(Z,r)|\le Ce^2+Ce|r-r_0|.
\]
Young's inequality absorbs the mixed term:
\[
Ce|r-r_0|
\le \frac{c_0}{2}|r-r_0|^2+C'e^2.
\]
Together with (17),
\begin{equation}
\boxed{
\Qcal(S^1_Z(r);R,\sigma^1_Z(r))
\le -\frac{c_0}{2}|r-r_0|^2+C_0e^2.
}
\tag{18}
\end{equation}

\subsubsection{Estimate the other term by Taylor expansion}

Let
\[
\xi=Z-L,\qquad H_L=D^2\eta(L).
\]
At $Z=L$, both the value and first derivative of $\Qcal(Z;L,s)$ vanish. Its quadratic term is
\begin{equation}
\boxed{
\Qcal(Z;L,s)
=\frac12\xi^T H_L(DF(L)-sI)\xi+O(e^3).
}
\tag{19}
\end{equation}
Here entropy compatibility makes $H_LDF(L)$ symmetric. In the $H_L$-inner product, the smallest eigenvalue of $DF(L)$ is $\lambda_1(L)$. Consequently,
\begin{equation}
\xi^T H_L(DF(L)-sI)\xi
\ge(\lambda_1(L)-s)\xi^T H_L\xi.
\tag{20}
\end{equation}
We now split the strength range.

\subsubsection{Strengths bounded away from zero}

Suppose
\[
r\ge r_0/2.
\]
Strict speed monotonicity gives
\[
\sigma^1_L(r)\le\sigma^1_L(r_0/2)<\lambda_1(L).
\]
By continuity, after restricting $Z$ to a sufficiently small neighborhood of $L$,
\[
\sigma^1_Z(r)\le\lambda_1(L)-\kappa
\]
for some $\kappa>0$.
Equations (19)--(20) then give
\[
\Qcal(Z;L,\sigma^1_Z(r))\ge c_1e^2,
\]
after making the neighborhood small enough to absorb the cubic remainder.

Using (18),
\[
\begin{aligned}
a\Qcal(S^1_Z(r);R,\sigma^1_Z(r))
-\Qcal(Z;L,\sigma^1_Z(r))
&\le -\frac{ac_0}{2}|r-r_0|^2
-(c_1-aC_0)e^2.
\end{aligned}
\]
Choose $a$ so that
\[
aC_0\le c_1/2.
\]
Both terms are then nonpositive, with quantitative control of both
\[
|r-r_0|^2\qquad\text{and}\qquad e^2.
\]
Finally, smoothness gives
\[
|\sigma^1_Z(r)-\sigma_0|\le C(e+|r-r_0|).
\]
Thus the negative bound controls the squared speed difference, proving (11) in this range.

\subsubsection{Small strengths}

Now suppose
\[
0\le r\le r_0/2.
\]
We cannot use the same fixed spectral gap: at $r=0$,
\[
\sigma^1_Z(0)=\lambda_1(Z).
\]
Instead, observe that at $Z=L$,
\[
\Qcal(S^1_L(r);R,\sigma^1_L(r))<0
\]
throughout this compact interval, by (16). Because the interval stays away from $r_0$, continuity gives
\[
\Qcal(S^1_Z(r);R,\sigma^1_Z(r))\le-\kappa_1<0
\]
for nearby $Z$.

For the base term, monotonicity gives
\[
\sigma^1_Z(r)\le\lambda_1(Z)=\lambda_1(L)+O(e).
\]
Therefore the possible negative part of the quadratic form in (19) is only $O(e^3)$:
\[
-\Qcal(Z;L,\sigma^1_Z(r))\le Ce^3.
\]
Using $e^2\le B_0a$ for some constant $B_0>$,
\[
\begin{aligned}
a\Qcal(S^1_Z(r);R,\sigma^1_Z(r))
-\Qcal(Z;L,\sigma^1_Z(r))
&\le -a\kappa_1+CB_0^{3/2}a^{3/2}.
\end{aligned}
\]
Thus sufficiently small $a$ gives
\[
\le -\frac12a\kappa_1.
\]
The speed difference is bounded on the compact family, so this strictly negative constant also bounds
\[
-c_a|\sigma^1_Z(r)-\sigma_0|^2
\]
after reducing $c_a>0$.

At $r=0$, we have $S^1_Z(0)=Z$, and obtain the second estimate:
\begin{equation}
\boxed{
a\Qcal(Z;R,\lambda_1(Z))
-\Qcal(Z;L,\lambda_1(Z))\le-\delta_a.
}
\tag{21}
\end{equation}
That completes the localized algebraic proposition. 

\subsection{$L^2$ stability for shocks}

From \Cref{l2control}, given an $i$-shock
$(U_L,U_R,\sigma(U_L,U_R))$, where $\sigma(U_L,U_R)$ is the
shock velocity dictated by the Rankine--Hugoniot condition, and a
solution $U\in\mathcal{S}_{\mathrm{weak}}$ we consider the weighted
relative entropy
\begin{equation}
E_t(U;a_1,a_2,h)
:=
\int_{-\infty}^{h(t)}
a_1\eta(U\mid U_L)\,dx
+
\int_{h(t)}^\infty
a_2\eta(U\mid U_R)\,dx
\label{eq:weighted-relative-entropy}
\end{equation}
with the constants $a_1,a_2>0$ and $h$ a Lipschitz curve.
Differentiating and using both \Cref{strong_trace_def} and \eqref{eq:entropy-inequality}
we find the dissipation functional $D_t$,
\begin{align}
\label{dissipation_functional}
\frac{d}{dt}E_t(U;a_1,a_2,h)
\leq {}&
a_2
\underbrace{
\left[
q(U(h(t)+,t);U_R)
-
\dot h(t)\eta(U(h(t)+,t)\mid U_R)
\right]
}_{\text{dissipation from right side of $h$}}
\nonumber\\
&\quad
-
a_1
\underbrace{
\left[
q(U(h(t)-,t);U_L)
-
\dot h(t)\eta(U(h(t)-,t)\mid U_L)
\right]
}_{\text{dissipation from left side of $h$}}
\nonumber\\
={}&
D_t(U;a_1,a_2,h).
\end{align}

Thus, showing the existence of a triple $(a_1,a_2,h)$ which gives
bounds on $D_t$ then gives us bounds on the $L^2$ distance of $U$ and
$(U_L,U_R,h)$, which is no longer a shock but rather a shifted front.

\subsection{Proof of \eqref{eq:main-stability-estimate}}
\label{subsec:two-shock-riemann-specialization}

We consider the Riemann problem with left state $U_L$ and right state $U_R$ (see \Cref{sec:def_Riemann_data}), which admits a classical self-similar solution consisting of a 1-shock from $U_L$ to an intermediate state $U_M$ and a 2-shock from $U_M$ to $U_R$. We denote this classical solution by $U_{\text{self-similar}}$. We also construct a shifted profile, using artificial shifts and an $a$-contraction weight, which remains $L^2$ stable with respect to any perturbation $U\in\mathcal{S}_{\mathrm{weak}}$ (and thus having the Strong Trace Property!). We denote this shifted profile by $U_{\mathrm{shift}}$.

Suppose that
\begin{equation}
    U_{\mathrm{shift}}(x,0)
    =
    \begin{cases}
        U_L, & x<0,\\
        U_R, & x>0,
    \end{cases}
    \label{eq:two-shock-riemann-data}
\end{equation}
and that, for \(t>0\), the corresponding profile is
\begin{equation}
    U_{\mathrm{shift}}(x,t)
    =
    \begin{cases}
        U_L, & x<h_1(t),\\[1mm]
        U_M, & h_1(t)<x<h_2(t),\\[1mm]
        U_R, & x>h_2(t),
    \end{cases}
    \label{eq:two-shock-riemann-profile}
\end{equation}
where the $h_i$ come from \Cref{prop:4.1_one_shock} (and the corresponding 2-shock statement)  and in particular $h_1$ is the shift function corresponding to the 1-shock $(U_L,U_M)$ and $h_2$ is the shift function corresponding to the 2-shock $(U_M,U_R)$.
We have
\begin{equation}
    h_1(0)=h_2(0)=0,
    \end{equation}
    and
 \begin{equation}
    h_1(t)<h_2(t)
    \quad\text{for }t>0,
    \label{eq:two-shock-initial-fronts}
\end{equation}
where this follows because $\dot{h}_1<0$ and $\dot{h}_2>0$ by \Cref{prop:4.1_one_shock}.

Since no interaction occurs after \(t=0\), the associated weight we define  is constant
in each of the three regions.  We write
\begin{equation}
    a(x,t)
    =
    \begin{cases}
        a_L, & x<h_1(t),\\[1mm]
        a_M, & h_1(t)<x<h_2(t),\\[1mm]
        a_R, & x>h_2(t),
    \end{cases}
    \qquad
    a_L,a_M,a_R>0.
    \label{eq:two-shock-weight}
\end{equation}
The ratios \(a_M/a_L\) and \(a_R/a_M\) are chosen according to the $a$-contraction conditions.  In particular, the weight decreases across the \(1\)-shock and
increases across the \(2\)-shock.

Fix \(\tau>0\) and \(R>0\).  The lateral boundaries of the information cone
are
\begin{equation}
    h_0(t):=-R+s(t-\tau),
    \qquad
    h_3(t):=R-s(t-\tau),
    \label{eq:two-shock-information-cone}
\end{equation}
where the information speed \(s>0\) is chosen so that
\begin{equation}
    \lvert q(V;W)\rvert
    \leq
    s\,\eta(V\mid W)
    \label{eq:two-shock-information-speed}
\end{equation}
for every relevant pair of states \(V,W\) and also $s$  is larger than the Lipschitz norms of $h_1,h_2$ (which are themselves uniformly bounded). Notice that
\begin{equation}
    h_0(0)=-R-s\tau,
    \qquad
    h_3(0)=R+s\tau.
\end{equation}
Recall from \eqref{Qcal_def} that
\begin{equation}
    \Qcal(V;W,\xi)
    =
    q(V;W)-\xi\,\eta(V\mid W).
    \label{eq:two-shock-Qcal-definition}
\end{equation}
We use the trace notation
\begin{equation}
    U_i^{\pm}(r):=U(h_i(r)\pm,r),
    \qquad i=1,2,
    \label{eq:two-shock-interior-traces}
\end{equation}
and, at the lateral boundaries of the cone,
\begin{equation}
    U_0^+(r):=U(h_0(r)+,r),
    \qquad
    U_3^-(r):=U(h_3(r)-,r).
    \label{eq:two-shock-exterior-traces}
\end{equation}

For $\tau>0$, in any fixed ``quadrilateral''
\begin{equation}
\square
=
\left\{
(x,r)\mid 0\leq r<t,\ h_i(r)<x<h_{i+1}(r)
\right\},
\end{equation}
the functions $U_{\mathrm{shift}}|_{\square}$ and $a|_{\square}$ are both constant. We then integrate the
entropy inequality over $\square$ with entropy $\eta(\,\cdot\mid U_{\mathrm{shift}}|_{\square})$, entropy-flux
$q(\,\cdot\,;U_{\mathrm{shift}}|_{\square})$, and use that $U$ has the Strong Trace
Property (\Cref{strong_trace_def}).

\subsubsection*{The three regional inequalities}

On the left region, where \(U_{\mathrm{shift}}=U_L\), the quadrilateral estimate gives
\begin{align}
    a_L\int_{h_0(t)}^{h_1(t)}
        \eta\bigl(U(x,t)\mid U_L\bigr)\,dx
    &\leq
    a_L\int_{-R-s\tau}^{0}
        \eta\bigl(U(x,0)\mid U_L\bigr)\,dx
    \notag\\
    &\quad
    +a_L\int_0^t
        \left[
            \Qcal\bigl(U_0^+(r);U_L,s\bigr)
            -
            \Qcal\bigl(U_1^-(r);U_L,\dot h_1(r)\bigr)
        \right]dr.
    \label{eq:two-shock-left-region}
\end{align}
where the various $\Qcal$ terms are the right/left parts of the
dissipation functional \eqref{dissipation_functional} for the fronts bounding the left/right
edges of the quadrilateral.

On the middle region, where \(U_{\mathrm{shift}}=U_M\), one obtains
\begin{align}
    a_M\int_{h_1(t)}^{h_2(t)}
        \eta\bigl(U(x,t)\mid U_M\bigr)\,dx
    &\leq
    a_M\int_0^t
        \left[
            \Qcal\bigl(U_1^+(r);U_M,\dot h_1(r)\bigr)
        \right.
    \notag\\[-1mm]
    &\hspace{37mm}\left.
            -
            \Qcal\bigl(U_2^-(r);U_M,\dot h_2(r)\bigr)
        \right]dr.
    \label{eq:two-shock-middle-region}
\end{align}
There is no initial integral in \eqref{eq:two-shock-middle-region}, since the
middle interval has zero length at \(t=0\).  Equivalently, one may first
apply the estimate on a time interval \([\delta,t]\), with \(\delta>0\),
and then let \(\delta\downarrow0\).

On the right region, where \(U_{\mathrm{shift}}=U_R\), the estimate becomes
\begin{align}
    a_R\int_{h_2(t)}^{h_3(t)}
        \eta\bigl(U(x,t)\mid U_R\bigr)\,dx
    &\leq
    a_R\int_{0}^{R+s\tau}
        \eta\bigl(U(x,0)\mid U_R\bigr)\,dx
    \notag\\
    &\quad
    +a_R\int_0^t
        \left[
            \Qcal\bigl(U_2^+(r);U_R,\dot h_2(r)\bigr)
            -
            \Qcal\bigl(U_3^-(r);U_R,-s\bigr)
        \right]dr.
    \label{eq:two-shock-right-region}
\end{align}

\subsubsection*{Step 1: Summation over the three regions}

Define the weighted relative entropy inside the information cone by
\begin{align}
    \mathcal{E}(t)
    :={}&
    a_L\int_{h_0(t)}^{h_1(t)}
        \eta\bigl(U(x,t)\mid U_L\bigr)\,dx
    \notag\\
    &+
    a_M\int_{h_1(t)}^{h_2(t)}
        \eta\bigl(U(x,t)\mid U_M\bigr)\,dx
    \notag\\
    &+
    a_R\int_{h_2(t)}^{h_3(t)}
        \eta\bigl(U(x,t)\mid U_R\bigr)\,dx,
    \label{eq:two-shock-energy}
\end{align}
and set
\begin{align}
    \mathcal{E}(0)
    :={}&
    a_L\int_{-R-s\tau}^{0}
        \eta\bigl(U(x,0)\mid U_L\bigr)\,dx
    \notag\\
    &+
    a_R\int_{0}^{R+s\tau}
        \eta\bigl(U(x,0)\mid U_R\bigr)\,dx.
    \label{eq:two-shock-initial-energy}
\end{align}
The dissipation contributions at the first and second shocks are,
respectively,
\begin{align}
    D_1(r)
    :={}&
    a_M\left[
        q\bigl(U_1^+(r);U_M\bigr)
        -\dot h_1(r)\,
            \eta\bigl(U_1^+(r)\mid U_M\bigr)
    \right]
    \notag\\
    &-
    a_L\left[
        q\bigl(U_1^-(r);U_L\bigr)
        -\dot h_1(r)\,
            \eta\bigl(U_1^-(r)\mid U_L\bigr)
    \right],
    \label{eq:two-shock-D1}
\end{align}
and
\begin{align}
    D_2(r)
    :={}&
    a_R\left[
        q\bigl(U_2^+(r);U_R\bigr)
        -\dot h_2(r)\,
            \eta\bigl(U_2^+(r)\mid U_R\bigr)
    \right]
    \notag\\
    &-
    a_M\left[
        q\bigl(U_2^-(r);U_M\bigr)
        -\dot h_2(r)\,
            \eta\bigl(U_2^-(r)\mid U_M\bigr)
    \right].
    \label{eq:two-shock-D2}
\end{align}
Adding \eqref{eq:two-shock-left-region},
\eqref{eq:two-shock-middle-region}, and
\eqref{eq:two-shock-right-region} yields
\begin{align}
    \mathcal{E}(t)
    \leq{}&
    \mathcal{E}(0)
    +\int_0^t
        \left[
            a_L\Qcal\bigl(U_0^+(r);U_L,s\bigr)
            -
            a_R\Qcal\bigl(U_3^-(r);U_R,-s\bigr)
        \right]dr
    \notag\\
    &+
    \int_0^t\bigl(D_1(r)+D_2(r)\bigr)\,dr.
    \label{eq:two-shock-summed-before-boundary}
\end{align}
By \eqref{eq:two-shock-information-speed},
\begin{equation}
    \Qcal(V;W,s)
    =q(V;W)-s\eta(V\mid W)
    \leq0,
    \qquad
    \Qcal(V;W,-s)
    =q(V;W)+s\eta(V\mid W)
    \geq0.
    \label{eq:two-shock-boundary-signs}
\end{equation}
Consequently, both lateral-boundary contributions in
\eqref{eq:two-shock-summed-before-boundary} are nonpositive.  We therefore
obtain the two-shock specialization
\begin{equation}
    \boxed{
        \mathcal{E}(t)
        \leq
        \mathcal{E}(0)
        +
        \int_0^t\bigl(D_1(r)+D_2(r)\bigr)\,dr
    }.
    \label{eq:two-shock-specialization}
\end{equation}

Remark that if 
\begin{equation}
    h_1(r)=\sigma_1 r,
    \qquad
    h_2(r)=\sigma_2 r,
    \label{eq:two-shock-self-similar-fronts}
\end{equation}
where \(\sigma_1\) and \(\sigma_2\) are the two Rankine--Hugoniot speeds, for $(U_L,U_M)$ and $(U_M,U_R)$, respectively, then \(U_{\mathrm{shift}}\) is the unshifted self-similar Riemann solution.

\begin{figure}[t]
    \centering
    \includegraphics[
        width=0.78\textwidth
    ]{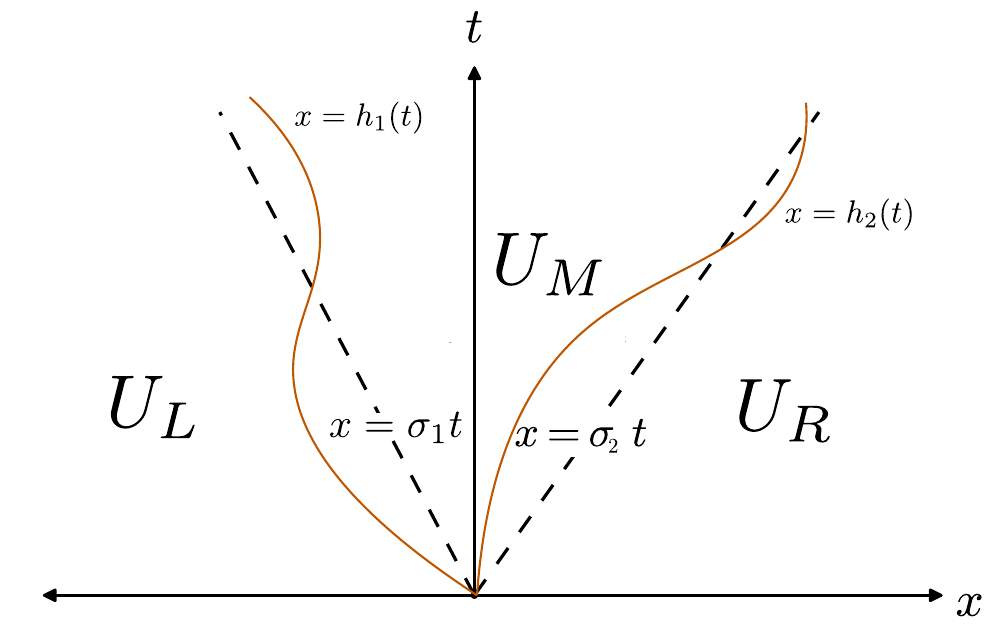}
    \caption{
        This diagram shows the exact speeds $x=\sigma_i t$ for the shocks $(U_L,U_M)$ and $(U_M,U_R)$ given by Rankine-Hugoniot (straight lines), as well as the \emph{shifted positions} $h_i(t)$ (curvy lines).
    }
    \label{fig:shifted_shocks}
\end{figure}

In the shifted
construction, the curves \(h_1\) and \(h_2\) remain Lipschitz, and the
variable velocities \(\dot h_1\) and \(\dot h_2\) are retained in
\eqref{eq:two-shock-D1}--\eqref{eq:two-shock-D2}. See \Cref{fig:shifted_shocks}.

\subsubsection*{Whole-line formulation}

If the weighted relative entropy is integrable on the whole line, the cone
boundaries may be sent to infinity.  In that case,
\begin{align}
    &a_L\int_{-\infty}^{h_1(t)}
        \eta\bigl(U(x,t)\mid U_L\bigr)\,dx
    +a_M\int_{h_1(t)}^{h_2(t)}
        \eta\bigl(U(x,t)\mid U_M\bigr)\,dx
    \notag\\
    &\qquad
    +a_R\int_{h_2(t)}^{\infty}
        \eta\bigl(U(x,t)\mid U_R\bigr)\,dx
    \notag\\
    &\quad\leq
    a_L\int_{-\infty}^{0}
        \eta\bigl(U(x,0)\mid U_L\bigr)\,dx
    +a_R\int_{0}^{\infty}
        \eta\bigl(U(x,0)\mid U_R\bigr)\,dx
    \notag\\
    &\qquad
    +\int_0^t\bigl(D_1(r)+D_2(r)\bigr)\,dr.
    \label{eq:two-shock-whole-line}
\end{align}

\subsubsection*{Quantitative form of the two-shock dissipation estimate}

Let
\begin{equation}
    \delta_1:=\lvert U_M-U_L\rvert,
    \qquad
    \delta_2:=\lvert U_R-U_M\rvert,
    \qquad
    \delta:=\delta_1+\delta_2.
    \label{eq:two-shock-strengths}
\end{equation}
Denote by \(\sigma_1\) and \(\sigma_2\) the Rankine--Hugoniot speeds of
\((U_L,U_M)\) and \((U_M,U_R)\), respectively.  Thus, the unshifted
Riemann solution has discontinuities along \(x=\sigma_1t\) and
\(x=\sigma_2t\). 

The dissipation estimate \eqref{eq:4.2} implies that there exists a
constant \(K_{\mathrm d}\geq1\), depending only on the fixed compact set of
admissible states and on the constants entering the weight construction,
such that, for almost every \(r>0\),
\begin{equation}
    D_1(r)
    \leq
    -\frac{\delta_1}{K_{\mathrm d}}
        \bigl\lvert \dot h_1(r)-\sigma_1\bigr\rvert^2,
    \qquad
    D_2(r)
    \leq
    -\frac{\delta_2}{K_{\mathrm d}}
        \bigl\lvert \dot h_2(r)-\sigma_2\bigr\rvert^2.
    \label{eq:two-shock-pointwise-dissipation}
\end{equation}
Combining \eqref{eq:two-shock-specialization} with
\eqref{eq:two-shock-pointwise-dissipation}, we obtain, for every
\(\tau>0\),
\begin{align}
    \mathcal{E}(\tau)
    &+
    \frac{1}{K_{\mathrm d}}
    \int_0^\tau
    \left[
        \delta_1\bigl\lvert \dot h_1(r)-\sigma_1\bigr\rvert^2
        +
        \delta_2\bigl\lvert \dot h_2(r)-\sigma_2\bigr\rvert^2
    \right]dr
    \notag\\
    &\leq
    \mathcal{E}(0).
    \label{eq:two-shock-quantitative-dissipation}
\end{align}

\subsubsection*{Step 2: Bound on shifting}

Let \(U_{\text{self-similar}}\) denote the unshifted self-similar Riemann solution with the same
initial data as \(U_{\mathrm{shift}}\):
\begin{equation}
    U_{\text{self-similar}}(x,t)
    =
    \begin{cases}
        U_L, & x<\sigma_1t,\\[1mm]
        U_M, & \sigma_1t<x<\sigma_2t,\\[1mm]
        U_R, & x>\sigma_2t.
    \end{cases}
    \label{eq:two-shock-unshifted-riemann-solution}
\end{equation}
In particular,
\begin{equation}
    U_{\text{self-similar}}(\cdot,0)=U_{\mathrm{shift}}(\cdot,0).
    \label{eq:two-shock-same-initial-data}
\end{equation}
In the present two-front setting, the cost of shifting can be computed
directly, without introducing the general weighted \(L^1\) functional (as in the works \cite{ChenFaileKrupa2025Holder,2025arXiv250916432C}).
Indeed, writing the two profiles as sums of Heaviside jumps and using
\begin{equation}
    \left\|
        \boldsymbol{1}_{(a,\infty)}
        -
        \boldsymbol{1}_{(b,\infty)}
    \right\|_{L^1(\mathbb{R})}
    =\lvert a-b\rvert,
\end{equation}
one obtains
\begin{align}
    \bigl\|U_{\text{self-similar}}(\cdot,\tau)-U_{\mathrm{shift}}(\cdot,\tau)\bigr\|_{L^1(\mathbb{R})}
    &\leq
    \delta_1\bigl\lvert \sigma_1\tau-h_1(\tau)\bigr\rvert
    +
    \delta_2\bigl\lvert \sigma_2\tau-h_2(\tau)\bigr\rvert.
    \label{eq:two-shock-shift-position-bound}
\end{align}
Since \(h_1(0)=h_2(0)=0\), we have
\begin{align}
    \bigl\|U_{\text{self-similar}}(\cdot,\tau)-U_{\mathrm{shift}}(\cdot,\tau)\bigr\|_{L^1(\mathbb{R})}
    &\leq
    \int_0^\tau
    \left[
        \delta_1\bigl\lvert \sigma_1-\dot h_1(r)\bigr\rvert
        +
        \delta_2\bigl\lvert \sigma_2-\dot h_2(r)\bigr\rvert
    \right]dr.
    \label{eq:two-shock-shift-integral-bound}
\end{align}
Applying the Cauchy--Schwarz inequality with the shock strengths as weights,
we find
\begin{align}
    \bigl\|U_{\text{self-similar}}(\cdot,\tau)-U_{\mathrm{shift}}(\cdot,\tau)\bigr\|_{L^1(\mathbb{R})}
    &\leq
    \left[
        \int_0^\tau(\delta_1+\delta_2)\,dr
    \right]^{1/2}
    \notag\\
    &\quad\times
    \left[
        \int_0^\tau
        \left(
            \delta_1\bigl\lvert \sigma_1-\dot h_1(r)\bigr\rvert^2
            +
            \delta_2\bigl\lvert \sigma_2-\dot h_2(r)\bigr\rvert^2
        \right)dr
    \right]^{1/2}
    \notag\\
    &=
    \sqrt{\tau\delta}
    \left[
        \int_0^\tau
        \left(
            \delta_1\bigl\lvert \sigma_1-\dot h_1(r)\bigr\rvert^2
            +
            \delta_2\bigl\lvert \sigma_2-\dot h_2(r)\bigr\rvert^2
        \right)dr
    \right]^{1/2}.
    \label{eq:two-shock-shift-cauchy-schwarz}
\end{align}
The dissipation estimate
\eqref{eq:two-shock-quantitative-dissipation} therefore yields
\begin{equation}
    \boxed{
    \bigl\|U_{\text{self-similar}}(\cdot,\tau)-U_{\mathrm{shift}}(\cdot,\tau)\bigr\|_{L^1(\mathbb{R})}
    \leq
    \sqrt{K_{\mathrm d}\tau\delta}\,
    \mathcal{E}(0)^{1/2}
    }.
    \label{eq:two-shock-shift-final}
\end{equation}

\subsubsection*{Step 3: Putting the estimates together}

Set
\begin{equation}
    I_\tau:=(-R-s\tau,R+s\tau),
    \qquad
    J_R:=(-R,R),
    \label{eq:two-shock-space-intervals}
\end{equation}
and define
\begin{equation}
    a_*:=\min\{a_L,a_M,a_R\},
    \qquad
    a^*:=\max\{a_L,a_M,a_R\}.
    \label{eq:two-shock-weight-extrema}
\end{equation}
Let \(C_\eta\geq1\) be such that, on the relevant compact set of states,
\begin{equation}
    \frac{1}{C_\eta}\lvert A-B\rvert^2
    \leq
    \eta(A\mid B)
    \leq
    C_\eta\lvert A-B\rvert^2.
    \label{eq:two-shock-relative-entropy-equivalence}
\end{equation}
It follows from \eqref{eq:two-shock-quantitative-dissipation} and the lower
bound in \eqref{eq:two-shock-relative-entropy-equivalence} that
\begin{equation}
    \bigl\|U(\cdot,\tau)-U_{\mathrm{shift}}(\cdot,\tau)\bigr\|_{L^2(J_R)}
    \leq
    \sqrt{\frac{C_\eta}{a_*}}\,\mathcal{E}(\tau)^{1/2}
    \leq
    \sqrt{\frac{C_\eta}{a_*}}\,\mathcal{E}(0)^{1/2}.
    \label{eq:two-shock-U-shift-L2}
\end{equation}
The triangle inequality, the inclusion \(L^2(J_R)\hookrightarrow L^1(J_R)\),
and \eqref{eq:two-shock-shift-final} give
\begin{align}
    &\bigl\|U(\cdot,\tau)
        -U_{\text{self-similar}}(\cdot,\tau)\bigr\|_{L^1(J_R)}
    \notag\\
    &\quad\leq
    \bigl\|U(\cdot,\tau)-U_{\mathrm{shift}}(\cdot,\tau)\bigr\|_{L^1(J_R)}
    \notag\\
    &\qquad
    +\bigl\|U_{\mathrm{shift}}(\cdot,\tau)
        -U_{\text{self-similar}}(\cdot,\tau)\bigr\|_{L^1(J_R)}
    \notag\\
    &\quad\leq
    \sqrt{2R}\,
    \bigl\|U(\cdot,\tau)-U_{\mathrm{shift}}(\cdot,\tau)\bigr\|_{L^2(J_R)}
    \notag\\
    &\qquad
    +\bigl\|U_{\mathrm{shift}}(\cdot,\tau)
        -U_{\text{self-similar}}(\cdot,\tau)\bigr\|_{L^1(\mathbb{R})}
    \notag\\
    &\quad\leq
    \left[
        \sqrt{\frac{2RC_\eta}{a_*}}
        +
        \sqrt{K_{\mathrm d}\tau\delta}
    \right]
    \mathcal{E}(0)^{1/2}.
    \label{eq:two-shock-final-L1-energy}
\end{align}
Because \(U_{\text{self-similar}}(\cdot,0)=U_{\mathrm{shift}}(\cdot,0)\), the upper bound in
\eqref{eq:two-shock-relative-entropy-equivalence} implies
\begin{align}
    \mathcal{E}(0)
    &=
    \int_{I_\tau}
        a(x,0)\eta\bigl(U(x,0)\mid U_{\text{self-similar}}(x,0)\bigr)\,dx
    \notag\\
    &\leq
    a^*C_\eta
    \bigl\|U(\cdot,0)-U_{\text{self-similar}}(\cdot,0)\bigr\|_{L^2(I_\tau)}^2.
    \label{eq:two-shock-initial-energy-equivalence}
\end{align}
Consequently,
\begin{equation}
    \begin{aligned}
    \bigl\|U(\cdot,\tau)-U_{\text{self-similar}}(\cdot,\tau)\bigr\|_{L^1(J_R)}
    &\leq
    \mathfrak{C}_{R,\tau,\delta}
    \bigl\|U(\cdot,0)-U_{\text{self-similar}}(\cdot,0)\bigr\|_{L^2(I_\tau)},
    \\
    \mathfrak{C}_{R,\tau,\delta}
    &:={}
    \sqrt{a^*C_\eta}
    \left[
        \sqrt{\frac{2RC_\eta}{a_*}}
        +
        \sqrt{K_{\mathrm d}\tau\delta}
    \right].
    \end{aligned}
    \label{eq:two-shock-final-L1}
\end{equation}
Finally, assume that \(U\), \(U_{\text{self-similar}}\), and \(U_{\mathrm{shift}}\) take values in a fixed
bounded set \(\mathcal{V}_0\), and set
\begin{equation}
    M_{\mathcal{V}_0}
    :=
    \sup\bigl\{\lvert A-B\rvert:A,B\in\mathcal{V}_0\bigr\}.
    \label{eq:two-shock-state-diameter}
\end{equation}
Using the simple interpolation estimate for any function $g$
\begin{equation}
    \lVert g\rVert_{L^2(J_R)}^2
    \leq
    \lVert g\rVert_{L^1(J_R)}
    \lVert g\rVert_{L^\infty(J_R)},
\end{equation}
we conclude from \eqref{eq:two-shock-final-L1} that
\begin{equation}
    \begin{aligned}
    \bigl\|U(\cdot,\tau)-U_{\text{self-similar}}(\cdot,\tau)\bigr\|_{L^2(J_R)}
    &\leq
    \bigl(M_{\mathcal{V}_0}\mathfrak{C}_{R,\tau,\delta}\bigr)^{1/2}
    \\
    &\quad\times
    \bigl\|U(\cdot,0)-U_{\text{self-similar}}(\cdot,0)\bigr\|_{L^2(I_\tau)}^{1/2}.
    \end{aligned}
    \label{eq:two-shock-final-holder-estimate}
\end{equation}
In particular, for \(0\leq\tau\leq T\), all
coefficients may be absorbed into a constant \(K>0\), yielding
\begin{equation}
    \boxed{
    \bigl\|U(\cdot,\tau)-U_{\text{self-similar}}(\cdot,\tau)\bigr\|_{L^2((-R,R))}
    \leq
    K
    \bigl\|U(\cdot,0)-U_{\text{self-similar}}(\cdot,0)\bigr\|_{L^2((-R-s\tau,R+s\tau))}^{1/2}
    }.
    \label{eq:two-shock-holder-concise}
\end{equation}

\section{Proof of negative side of \Cref{main_theorem} via convex integration}\label{sec:neg}

In this section, we prove the non-uniqueness stated in \Cref{main_theorem}.

\paragraph{Step 1: The constitutive inclusion with the entropy pair.}

Let
\[
\mathcal{X}:=\{X_1,\ldots,X_N\}\subset \mathbb{R}^{3\times 2}
\]
be the finite configuration constructed above in \Cref{matlab_lemma} or \Cref{T_6_with_ordering}, equipped with the chosen
\(T_N\)-ordering. By the  construction in \Cref{pressure_construction}, there exists a smooth flux
\[
F=(F_1,F_2)^{\mathsf T}\colon\mathcal{V}\to\mathbb{R}^2
\]
and a smooth entropy--entropy flux pair
\[
(\eta,q)\colon\mathcal{V}\to\mathbb{R}\times\mathbb{R},
\qquad
\nabla q(U)=DF(U)^{\mathsf T}\nabla\eta(U),
\]
such that
\[
X_i=G(U_i)
\qquad\text{for some }U_i\in\mathcal{V},\quad i=1,\ldots,N,
\]
where $G(U)$ is from \eqref{eq:constitutive-surface}
Thus,
\[
\mathcal{X}\subset K_{F,\eta,q}.
\]

\paragraph{Step 2: Construction of the fan subsolution.}

\begin{figure}[t]
    \centering

\begin{tikzpicture}[
    x=1.25cm,
    y=1.25cm,
    line cap=butt,
    line join=round,
    every node/.style={font=\large}
]
    \draw[-{Latex[length=2.3mm,width=1.6mm]},line width=0.65pt]
        (-4.65,0) -- (4.65,0)
        node[right=2pt] {$x$};
    \draw[-{Latex[length=2.3mm,width=1.6mm]},line width=0.65pt]
        (0,0) -- (0,4.45)
        node[above=2pt] {$t$};

    \draw[
        line width=0.75pt,
        dash pattern=on 4.2pt off 4.2pt
    ]
        (0,0) -- (-2.48,4.05);
    \draw[
        line width=0.75pt,
        dash pattern=on 4.2pt off 4.2pt
    ]
        (0,0) -- (2.48,4.05);

    \fill (0,0) circle[radius=0.9pt];

    \node[font=\Large] at (-3.15,2.80) {$G(A)$};
    \node[font=\Large] at (0.58,2.95) {$P$};
    \node[font=\large] at (0.58,2.32) {$\Omega$};
    \node[font=\Large] at (3.15,2.80) {$G(B)$};

    \node[fill=white,inner sep=1.5pt] at (-1.64,1.35) {$x=-st$};
    \node[fill=white,inner sep=1.5pt] at (1.64,1.35) {$x=st$};
\end{tikzpicture}

    \caption{
        Values of the matrix field $\overline{M}$.
        The  interior is
        \(\Omega\), where
        $\overline{M}=P$.
        The exterior states are \(U_R\) and \(U_L\).}
    \label{fig:subsolution-fan}
\end{figure}

By \Cref{sec:exact_match}, there exist $A,B$ in the state space $\mathcal{V}$ such that  $G(A)-P$ is rank-one and $G(B)-P$ is rank-one (when we restrict to the first two rows), where $P$ is from the parameterization of the $T_N$ from \Cref{matlab_lemma} or \Cref{T_6_with_ordering}.

We then define a matrix field, 
\[
\overline{M}
=
\begin{cases}
G(A), & x<-st,\\
P, & |x|<st,\\
G(B), & x>st,
\end{cases}
\]
where $s$ is the speed from \Cref{sec:exact_match}. See \Cref{fig:subsolution-fan}.

Suppose an interior convex-integration theorem supplies a Lipschitz
perturbation $\phi$ with $(x,t)\mapsto \phi(x,t)\in\mathbb{R}^3$, and with zero trace on the entire boundary of the wedge $\{\abs{x}<st\}$,
such that
\[
P+D\phi \in K_{F,\eta,q}
\qquad
\text{almost everywhere inside the wedge}.
\]
Extend $\phi$ by zero outside and set
\[
M=\overline{M}+D\phi.
\]

For the row-wise operator
\[
\mathcal{C}(M)
=
\partial_t M_{\cdot,1}
-
\partial_x M_{\cdot,2},
\]
commutation of distributional derivatives gives
\[
\mathcal{C}(D\phi)=0.
\]
Consequently,
\[
\mathcal{C}(M)
=
\begin{pmatrix}
0\\
0\\
\mathcal{P}_L\delta_{x=-st}
+
\mathcal{P}_R\delta_{x=st}
\end{pmatrix}.
\]

Once we have $U$ such that
\[
M=G(U)
\qquad
\text{almost everywhere},
\]
this is exactly the pair of conservation laws and the entropy inequality. The entropy terms $\mathcal P_L$ and $\mathcal P_R$ are negative as we know from  \Cref{sec:entropy_calc}

\paragraph{Step 3: Convex integration}

For general systems \eqref{eq:system}, the construction of the Lipschitz perturbation $\phi$ exists by \Cref{transverse_check_lemma} and the general convex-integration result \Cref{main_prop}. However in the precise case of the $p$-system, due to the symmetry in the constitutive set \eqref{eq:constitutive-surface} we  use a convex integration scheme which allows for linear constraints (in this case, $X_{2,1}=X_{1,2}$ for all $X$). This follows immediately from \cite[Theorem 1.3]{Kim2020ConvexIntegration} and repeating the argument from \cite[p.~8]{MR2048569}. We reproduce \cite[Theorem 1.3]{Kim2020ConvexIntegration} below in \Cref{sec:kim_theorem} as \Cref{kim_theorem}. \Cref{kim_theorem} is formulated for bounded Lipschitz domains, whereas we want the whole infinite wedge. So we apply it on bounded wedge pieces and glue zero-boundary perturbations.

Finally, the fan geometry gives the Riemann initial trace
\[
U^0(x)
=
\begin{cases}
U_L,
&
x<0,
\\[1mm]
U_R,
&
x>0.
\end{cases}
\]

Indeed, for each \(t>0\), the set on which \(U(\cdot,t)\) can differ
from the corresponding Riemann state is contained in an interval of
length \(O(t)\). Since \(U\) is uniformly bounded, it follows that
\[
U(\cdot,t)\longrightarrow U^0
\qquad\text{in }L^1_{\mathrm{loc}}(\mathbb{R})
\quad\text{as }t\to0^+.
\]
The same argument also gives
\[
\eta(U(\cdot,t))
\longrightarrow
\eta(U^0)
\qquad\text{in }L^1_{\mathrm{loc}}(\mathbb{R})
\quad\text{as }t\to0^+.
\]

Remark that we get infinitely many different solutions by varying the parameter $\delta$ in \Cref{main_prop} (or \Cref{kim_theorem}).

\section{Proof of nonexistence of $T_N$ for $p$-system}\label{sec:no-TN-p-system}

We consider the one-dimensional $p$-system in Lagrangian coordinates under the
assumptions $p'<0$ and $p''>0$, augmented by its strictly convex mechanical
entropy and the associated entropy flux. We prove a statement stronger than
the exclusion of ordered $T_N$-configurations: no tuple of length $N\geq2$ in the
constitutive set can satisfy the algebraic ordered $T_N$ parameterization,
even without imposing distinctness or pairwise-rank conditions on the labeled
states. In particular, ordered $T_6$-configurations are impossible. The proof
combines a strictly positive barycentric representation of every intermediate
point of the cyclic construction (see \Cref{folklore_lemma}, above) with a minimal-specific-volume argument, an
exact relative-entropy row reduction, and strict moment inequalities implied
by the convexity of the pressure.

\subsection{The constitutive set and the statement}

Consider
\begin{equation}
    \begin{cases}
        v_t-u_x=0,\\
        u_t+p(v)_x=0,
    \end{cases}
    \qquad v>0,
    \label{eq:p-system-no-TN}
\end{equation}
where $p\in C^2((0,\infty))$ satisfies
\begin{equation}
    p'(v)<0,
    \qquad
    p''(v)>0
    \qquad (v>0).
    \label{eq:pressure-assumptions}
\end{equation}
Let $\Pi$ be any primitive of $-p$,
\begin{equation}
    \Pi'(v)=-p(v),
\end{equation}
For $U=(v,u)^{\mathsf T}$, set
\begin{equation}
    \eta(U)=\frac{u^2}{2}+\Pi(v),
    \qquad
    q(U)=u p(v).
    \label{eq:entropy-pair}
\end{equation}
With $F(U)=(-u,p(v))^{\mathsf T}$, we can consider the map $G$ defined in \eqref{eq:constitutive-surface} and the set $K_{F,\eta,q}$ defined in \eqref{manifold_K}.

\begin{theorem}\label{thm:no-TN}
Let $N\geq2$ and let
\begin{equation}
    U_i=(v_i,u_i)^{\mathsf T}\in(0,\infty)\times\mathbb{R},
    \qquad
    X_i=G(U_i)\in K_{F,\eta,q},
    \qquad i=1,\ldots,N.
\end{equation}
Then $(X_1,\ldots,X_N)$ does not admit an algebraic ordered $T_N$
parameterization. Consequently, for every $N\geq4$, the product set $\mathcal{K}_{F,\eta,q}$ contains no
ordered $T_N$-configuration in the sense of
Definition~\ref{def:ordered-TN}.
\end{theorem}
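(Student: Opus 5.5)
Suppose, for contradiction, that $(X_1,\ldots,X_N)$ with $X_i=G(U_i)$ admits an ordered $T_N$ parameterization $(P,C_i,\kappa_i)$. The plan has four steps: represent every intermediate point as a positive barycenter of the $X_i$, normalize the third row around the state of smallest volume, use the rank-one structure at that state, and derive a sign contradiction from the convexity of $p$.

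\emph{Step 1 (barycentric representation).} By \Cref{folklore_lemma}, each intermediate point $P_k$ is the barycenter of a laminate $\mu^{(k)}=\sum_i\mu^{(k)}_i\delta_{X_i}$ with all $\mu^{(k)}_i>0$. This part of the lemma uses only the algebraic relations \eqref{eq:TN-closure}--\eqref{eq:TN-parametrization}, so it applies even without distinctness or rank conditions. The $2\times2$ minors of $3\times2$ matrices are rank-one affine, hence affine along laminates. Therefore $P_k$ and every minor of $P_k$ equal the $\mu^{(k)}$-averages of the corresponding quantities at the $X_i$. Recall that
\[
G(U)=\begin{pmatrix} v&u\\ u&-p(v)\\ \eta(U)&-q(U)\end{pmatrix}.
\]

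\emph{Step 2 (normalization at the smallest volume).} Choose an index $m$ with $v_m=\min_i v_i$. Apply to all matrices the affine change of variables that subtracts $G(U_m)$ and replaces the third row by (third row) $-\nabla\eta(U_m)\cdot$(first two rows). This map is linear on differences, so it preserves rank-one connections, the closure relation, and the whole $T_N$ parameterization. After it, the third row of the image of $X_i$ becomes $\bigl(\eta(U_i\mid U_m),\,-q(U_i;U_m)\bigr)$ up to an additive constant. This is the ``relative-entropy row reduction''.

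\emph{Step 3 (rank-one structure at $X_m$).} The points $P_m$, $P_{m+1}=P_m+C_m$ and $X_m=P_m+\kappa_mC_m$ lie on a single rank-one line through $X_m$. Hence $P_m-X_m=-\kappa_mC_m$ is rank-one. In the normalized coordinates this means all three $2\times2$ minors of $P_m-X_m$ vanish. Substituting the barycentric formulas from Step 1 turns these vanishing minors into three scalar identities among the weighted moments
\[
\sum_i\mu_i(v_i-v_m),\quad \sum_i\mu_i(u_i-u_m),\quad \sum_i\mu_i\bigl(p(v_i)-p(v_m)\bigr),\quad \sum_i\mu_i\,\eta(U_i\mid U_m),\quad \sum_i\mu_i\,q(U_i;U_m),
\]
where $\mu=\mu^{(m)}$ and the second-order terms come from the averaged minors.

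\emph{Step 4 (contradiction from convexity), the main obstacle.} Since $v_i-v_m\ge0$, with strict inequality for some $i$ carrying positive weight, convexity of $p$ gives strict one-sided bounds. For instance, $p(v_i)-p(v_m)\ge p'(v_m)(v_i-v_m)$, and there are corresponding chord inequalities for $\Pi(v_i)-\Pi(v_m)-\Pi'(v_m)(v_i-v_m)$. Combined with $\eta(U\mid U_m)=\tfrac12(u-u_m)^2+\Pi(v\mid v_m)$ and the explicit formula $q(U;U_m)=(u-u_m)\bigl(p(v)-p(v_m)\bigr)$, these should force one of the minor identities from Step 3 to have a strict sign. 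Concretely, I would first eliminate the $u$-moments using the $(1,2)$-minor identity and then show that the $(1,3)$ or $(2,3)$ minor is strictly negative, using a Cauchy--Schwarz estimate on the quadratic $u$-terms.

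The degenerate case in which all $v_i=v_m$ must be handled separately. There the states lie on a vertical line, and $G$ restricted to that line has rank-one differences only when the $u_i$ coincide. This contradicts either $C_i\neq0$ or the closure relation.

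The delicate point is identifying exactly which combination of minors yields the strict inequality. I expect it to be a weighted Jensen-type moment inequality for $p$ based at the minimum volume $v_m$. The minimality of $v_m$ is what makes all the chord increments one-signed, so this is where the choice of $m$ in Step 2 is essential.
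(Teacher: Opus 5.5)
Your Steps 1--3 match the paper's setup: positive barycentric weights $\mu=\mu^{(m)}$ at a minimal-volume vertex, the row reduction by $\nabla\eta(U_m)$, and rank-one-ness of $P_m-X_m$. However, Step 4 is the entire content of the proof, and it is left as an expectation. The tools you name would not close it.

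Write $x_j=v_j-v_m\ge0$, $y_j=u_j-u_m$, $z_j=\phi(x_j)$ with $\phi(x):=p(v_m)-p(v_m+x)$, and $H(x):=\int_0^x\phi$. The normalized $X_j$ then has rows $(x_j,y_j)$, $(y_j,z_j)$, $(\tfrac12y_j^2+H(x_j),\,y_jz_j)$. Since $A:=\E[x]>0$, rank one of the average means its second column is $s$ times its first:
$\E[y]=sA$, $\E[z]=s^2A$, and $\E[yz]=s\bigl(\tfrac12\E[y^2]+\E[H(x)]\bigr)$.
The velocity enters through $\E[y^2]$ and the mixed moment $\E[yz]$. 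No $(1,2)$-minor relation disposes of $\E[yz]$. What removes all velocity moments at once is expanding $0\le\E[(z-sy)^2]$ with this particular $s$ and using the third relation. This gives
$2\E[z]\E[H(x)]\le A\E[z^2]$.
So the contradiction does not come from one minor acquiring a strict sign. It comes from a clash with a reverse strict inequality.

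That reverse inequality is not a Jensen inequality. The tangent-line bound $p(v_i)-p(v_m)\ge p'(v_m)(v_i-v_m)$ you cite does not help, since $p'(v_m)$ enters none of the identities. What is needed are two consequences of $\phi$ being increasing and strictly concave with $\phi(0)=0$:
\begin{enumerate}
\item $\phi(x)/x$ is decreasing. Under the size-biased probability weights $\nu_j=\mu_jx_j/A$, the oppositely monotone functions $\phi(x)/x$ and $\phi(x)$ therefore have nonpositive covariance. This is exactly $A\E[z^2]\le\E[z]\E[xz]$.
\item The strict chord inequality $x\phi(x)<2H(x)$ holds for $x>0$. Averaging gives $\E[xz]<2\E[H(x)]$.
\end{enumerate}
Multiplying the second by $\E[z]>0$ yields the impossible chain
$2\E[z]\E[H]\le A\E[z^2]\le\E[z]\E[xz]<2\E[z]\E[H]$.
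The averaged-minor identities you extract from the laminate property in Step 1 are true but play no role; only $P_m=\E[X]$ is used.

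Your constant-volume argument also needs tightening. Pairwise rank-two differences among the $X_i$ contradict nothing by themselves; that is the generic $T_N$ situation. You must apply the observation to $P_m-X_m=\E[X_j-X_m]$, with $m$ now chosen to minimize $u$. There the vanishing of $\det\begin{pmatrix}0&\E[y]\\ \E[y]&0\end{pmatrix}$ together with $y_j\ge0$ forces all $U_j$ to be equal. Hence $P_m=X_m$, contradicting $\kappa_mC_m\neq0$.
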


We use the standard notation $P$ for the base matrix of the
$T_N$-configuration. It should not be confused with the scalar primitive
$\Pi$ of the pressure.

\subsection{The extremal-state contradiction}

\begin{proof}[Proof of Theorem~\ref{thm:no-TN}]
Suppose, for contradiction, that the matrices $X_i=G(U_i)$ are in $T_N$ configuration. Choose an index $i$ such that
\begin{equation}
    v_i=v_*:=\min_{1\leq j\leq N}v_j.
    \label{eq:min-v}
\end{equation}
Use the coefficients $\mu_j^{(i)}$ supplied by
Lemma~\ref{folklore_lemma}. For any labeled family
$a=(a_1,\ldots,a_N)$ of scalars, vectors, or matrices, write
\begin{equation}
    \E[a]:=\sum_{j=1}^N\mu_j^{(i)}a_j.
    \label{eq:expectation}
\end{equation}
We use the standard shorthand $\E[x]=\sum_j\mu_j^{(i)}x_j$,
$\E[yz]=\sum_j\mu_j^{(i)}y_jz_j$, and similarly for other expressions.
In particular,
\begin{equation}
    P_i=\E[X].
    \label{eq:Pi-expectation}
\end{equation}

For each $j$, set
\begin{equation}
    x_j:=v_j-v_*\geq0,
    \qquad
    y_j:=u_j-u_i,
    \label{eq:x-y}
\end{equation}
and define
\begin{equation}
    \phi(x):=p(v_*)-p(v_*+x),
    \qquad
    z_j:=\phi(x_j)=p(v_*)-p(v_j).
    \label{eq:phi-z}
\end{equation}
Also define
\begin{align}
    H(x)
    &:=p(v_*)x+\bigl(\Pi(v_*+x)-\Pi(v_*)\bigr)
    \notag\\
    &=p(v_*)x-\int_{v_*}^{v_*+x}p(s)\,ds
    \notag\\
    &=\int_0^x\bigl(p(v_*)-p(v_*+r)\bigr)\,dr
    =\int_0^x\phi(r)\,dr.
    \label{eq:H}
\end{align}

\subsubsection*{Exact normalization of the entropy row}

Write $p_*:=p(v_*)$ and $u_*:=u_i$, and introduce
\begin{equation}
    L_i:=
    \begin{pmatrix}
        1&0&0\\
        0&1&0\\
        p_*&-u_*&1
    \end{pmatrix}.
    \label{eq:Li}
\end{equation}
Since $\det L_i=1$, left multiplication by $L_i$ preserves matrix rank.
We now calculate $L_i\bigl(G(U_j)-G(U_i)\bigr)$ entry by entry.

The first two rows are
\begin{equation}
    \begin{pmatrix}
        x_j&y_j\\
        y_j&z_j
    \end{pmatrix},
    \label{eq:top-two-rows}
\end{equation}
because
\begin{equation}
    -p(v_j)-(-p_*)=p_*-p(v_j)=z_j.
\end{equation}

For the first entry of the transformed third row,
\begin{align}
    \eta(U_j)-\eta(U_i)
    &=
    \frac{(u_*+y_j)^2-u_*^2}{2}
    +\bigl(\Pi(v_*+x_j)-\Pi(v_*)\bigr)
    \notag\\
    &=u_*y_j+\frac{y_j^2}{2}
    +\bigl(\Pi(v_*+x_j)-\Pi(v_*)\bigr).
\end{align}
The row operation adds $p_*x_j-u_*y_j$, so
\begin{align}
    &p_*x_j-u_*y_j+\eta(U_j)-\eta(U_i)
    \notag\\
    &\qquad=
    \frac{y_j^2}{2}
    +p_*x_j+\bigl(\Pi(v_*+x_j)-\Pi(v_*)\bigr)
    \notag\\
    &\qquad=\frac{y_j^2}{2}+H(x_j).
    \label{eq:third-row-first-entry}
\end{align}

For the second entry, use
\begin{equation}
    u_j=u_*+y_j,
    \qquad
    p(v_j)=p_*-z_j.
\end{equation}
The original third-row difference is
\begin{equation}
    -q(U_j)+q(U_i)
    =-(u_*+y_j)(p_*-z_j)+u_*p_*.
\end{equation}
The row operation adds $p_*y_j-u_*z_j$. Therefore
\begin{align}
    &-(u_*+y_j)(p_*-z_j)+u_*p_*+p_*y_j-u_*z_j
    \notag\\
    &\qquad=
    -u_*p_*+u_*z_j-y_jp_*+y_jz_j
    +u_*p_*+p_*y_j-u_*z_j
    \notag\\
    &\qquad=y_jz_j.
    \label{eq:third-row-second-entry}
\end{align}
Combining \eqref{eq:top-two-rows},
\eqref{eq:third-row-first-entry}, and
\eqref{eq:third-row-second-entry}, we obtain the exact identity
\begin{equation}
    L_i\bigl(G(U_j)-G(U_i)\bigr)
    =
    \begin{pmatrix}
        x_j&y_j\\
        y_j&z_j\\
        \dfrac{y_j^2}{2}+H(x_j)&y_jz_j
    \end{pmatrix}.
    \label{eq:transformed-difference}
\end{equation}
Equivalently, for $U_j=(v_j,u_j)^{\mathsf T}$ and $U_i=(v_*,u_*)^{\mathsf T}$, define the relative
entropy and relative entropy flux by
\begin{align}
    \eta(U_j\mid U_i)
    &:=
    \eta(U_j)-\eta(U_i)-\nabla\eta(U_i)\cdot(U_j-U_i),
    \label{eq:relative-entropy-definition}\\
    q(U_j;U_i)
    &:=
    q(U_j)-q(U_i)
    -\nabla\eta(U_i)\cdot\bigl(F(U_j)-F(U_i)\bigr).
    \label{eq:relative-flux-definition}
\end{align}
The calculations above show precisely that
\begin{equation}
    \eta(U_j\mid U_i)=\frac{y_j^2}{2}+H(x_j),
    \qquad
    q(U_j;U_i)=-y_jz_j.
    \label{eq:relative-pair-formulas}
\end{equation}
Thus the last row on the right-hand side of
\eqref{eq:transformed-difference} is
\begin{equation}
    \bigl(\eta(U_j\mid U_i),-q(U_j;U_i)\bigr).
\end{equation}

Average \eqref{eq:transformed-difference} using
\eqref{eq:expectation}. By \eqref{eq:Pi-expectation},
\begin{align}
    M_i
    &:=\E\!\left[
    L_i\bigl(G(U_j)-G(U_i)\bigr)
    \right]
    \notag\\
    &=L_i\bigl(\E[X]-X_i\bigr)
    =L_i(P_i-X_i).
    \label{eq:Mi-definition}
\end{align}
Note $P_i-X_i=-\kappa_iC_i$ is
nonzero and has rank one. Since $L_i$ is invertible, $M_i$ is also nonzero
and has rank one. Explicitly,
\begin{equation}
    M_i=
    \begin{pmatrix}
        \E[x]&\E[y]\\
        \E[y]&\E[z]\\
        \dfrac12\E[y^2]+\E[H(x)]&\E[yz]
    \end{pmatrix}.
    \label{eq:Mi}
\end{equation}

\subsubsection*{The nonconstant-specific-volume case}

Assume first that the numbers $v_1,\ldots,v_N$ are not all equal. By the
minimality of $v_*$, at least one $x_j$ is strictly positive. Full support of
the weights gives
\begin{equation}
    A:=\E[x]>0.
    \label{eq:A-positive}
\end{equation}
Because $p'<0$, the function $p$ is strictly decreasing, so
\begin{equation}
    x_j>0\quad\Longrightarrow\quad
    z_j=p(v_*)-p(v_*+x_j)>0.
\end{equation}
Consequently,
\begin{equation}
    \E[z]>0.
    \label{eq:Ez-positive}
\end{equation}

The first column of $M_i$ is nonzero because its first entry is $A>0$.
Since $M_i$ has rank one, its second column is a scalar multiple of its first:
there exists $s\in\R$ such that
\begin{equation}
    \operatorname{col}_2(M_i)=s\operatorname{col}_1(M_i).
\end{equation}
Reading the three rows of \eqref{eq:Mi} gives
\begin{equation}
    \E[y]=sA,
    \label{eq:rank-one-row1}
\end{equation}
\begin{equation}
    \E[z]=s\E[y]=s^2A,
    \label{eq:rank-one-row2}
\end{equation}
and
\begin{equation}
    \E[yz]
    =s\left(\frac12\E[y^2]+\E[H(x)]\right).
    \label{eq:rank-one-row3}
\end{equation}
In view of \eqref{eq:A-positive}, \eqref{eq:Ez-positive}, and
\eqref{eq:rank-one-row2}, one has $s\neq0$.

We now use the elementary inequality
\begin{equation}
    \E[(z-sy)^2]\geq0.
\end{equation}
Expanding and then using \eqref{eq:rank-one-row3},
\begin{align}
    \E[(z-sy)^2]
    &=\E[z^2]-2s\E[yz]+s^2\E[y^2]
    \notag\\
    &=\E[z^2]
    -2s^2\left(\frac12\E[y^2]+\E[H(x)]\right)
    +s^2\E[y^2]
    \notag\\
    &=\E[z^2]-2s^2\E[H(x)].
\end{align}
It follows that
\begin{equation}
    2s^2\E[H(x)]\leq\E[z^2].
\end{equation}
Substitution of $s^2=\E[z]/A$ from \eqref{eq:rank-one-row2}, followed by
multiplication by $A>0$, gives
\begin{equation}
    2\E[z]\E[H(x)]
    \leq A\E[z^2].
    \label{eq:first-moment-inequality}
\end{equation}

We next derive the strict reverse inequality. From
\eqref{eq:pressure-assumptions} and \eqref{eq:phi-z},
\begin{equation}
    \phi(0)=0,
    \qquad
    \phi'(x)=-p'(v_*+x)>0,
    \qquad
    \phi''(x)=-p''(v_*+x)<0.
    \label{eq:phi-properties}
\end{equation}
Thus $\phi$ is strictly increasing and strictly concave. Define
\begin{equation}
    r(x):=\frac{\phi(x)}{x},
    \qquad x>0.
    \label{eq:r}
\end{equation}
We verify directly that $r$ is strictly decreasing. Differentiation gives
\begin{equation}
    r'(x)=\frac{x\phi'(x)-\phi(x)}{x^2}.
\end{equation}
Since $\phi''<0$, the derivative $\phi'$ is strictly decreasing. Hence, for
$0<t<x$,
\begin{equation}
    \phi'(t)>\phi'(x),
\end{equation}
and therefore
\begin{equation}
    \phi(x)=\int_0^x\phi'(t)\,dt
    >\int_0^x\phi'(x)\,dt
    =x\phi'(x).
\end{equation}
It follows that $r'(x)<0$.

Let
\begin{equation}
    J_+:=\{j:x_j>0\}.
\end{equation}
Define a probability distribution on $J_+$ by
\begin{equation}
    \nu_j:=\frac{\mu_j^{(i)}x_j}{A},
    \qquad j\in J_+.
    \label{eq:nu}
\end{equation}
Indeed,
\begin{equation}
    \sum_{j\in J_+}\nu_j
    =\frac{1}{A}\sum_{j=1}^N\mu_j^{(i)}x_j=1.
\end{equation}
Atoms with $x_j=0$ have zero $\nu$-mass, so $r(0)$ need not be defined in our argument below.

Since $r$ is decreasing and $\phi$ is increasing, every summand in the
following double sum is nonpositive. Therefore
\begin{align}
    &2\Bigl(\E_{\nu}[r\phi]
    -\E_{\nu}[r]\E_{\nu}[\phi]\Bigr)
    \notag\\
    &\qquad=
    \sum_{j,k\in J_+}\nu_j\nu_k
    \bigl(r(x_j)-r(x_k)\bigr)
    \bigl(\phi(x_j)-\phi(x_k)\bigr)
    \leq0.
    \label{eq:covariance-sign}
\end{align}
Consequently,
\begin{equation}
    \E_{\nu}[r\phi]
    \leq\E_{\nu}[r]\E_{\nu}[\phi].
    \label{eq:opposite-Chebyshev}
\end{equation}
Each term has a simple expression in the original weights. Since
$z_j=\phi(x_j)$,
\begin{align}
    \E_{\nu}[r\phi]
    &=
    \sum_{j\in J_+}\frac{\mu_j^{(i)}x_j}{A}
    \frac{\phi(x_j)}{x_j}\phi(x_j)
    =\frac{\E[z^2]}{A},
    \label{eq:nu-rphi}\\
    \E_{\nu}[r]
    &=
    \sum_{j\in J_+}\frac{\mu_j^{(i)}x_j}{A}
    \frac{\phi(x_j)}{x_j}
    =\frac{\E[z]}{A},
    \label{eq:nu-r}\\
    \E_{\nu}[\phi]
    &=
    \sum_{j\in J_+}\frac{\mu_j^{(i)}x_j}{A}\phi(x_j)
    =\frac{\E[xz]}{A}.
    \label{eq:nu-phi}
\end{align}
Substitution of \eqref{eq:nu-rphi}--\eqref{eq:nu-phi} into
\eqref{eq:opposite-Chebyshev}, followed by multiplication by $A^2$, yields
\begin{equation}
    A\E[z^2]
    \leq\E[z]\E[xz].
    \label{eq:second-moment-inequality}
\end{equation}

It remains to obtain strictness. Fix $x>0$. For $0<t<x$, strict concavity of
$\phi$ and $\phi(0)=0$ give
\begin{equation}
    \phi(t)
    =\phi\left(\frac{t}{x}x+\left(1-\frac{t}{x}\right)0\right)
    >\frac{t}{x}\phi(x).
\end{equation}
Integrating from $0$ to $x$ gives
\begin{equation}
    H(x)=\int_0^x\phi(t)\,dt
    >\int_0^x\frac{t}{x}\phi(x)\,dt
    =\frac{x\phi(x)}{2}.
\end{equation}
Thus
\begin{equation}
    x\phi(x)<2H(x),
    \qquad x>0.
    \label{eq:strict-chord}
\end{equation}
For $x_j=0$, both sides of \eqref{eq:strict-chord} vanish. At least one
$x_j$ is positive and its weight is positive, so averaging
\eqref{eq:strict-chord} yields the strict inequality
\begin{equation}
    \E[xz]<2\E[H(x)].
    \label{eq:strict-average}
\end{equation}
Multiplying by $\E[z]>0$ and combining
\eqref{eq:first-moment-inequality},
\eqref{eq:second-moment-inequality}, and
\eqref{eq:strict-average}, we obtain the impossible chain
\begin{equation}
    2\E[z]\E[H(x)]
    \leq A\E[z^2]
    \leq\E[z]\E[xz]
    <2\E[z]\E[H(x)].
    \label{eq:impossible-chain}
\end{equation}
This contradiction rules out the nonconstant-specific-volume case.

\subsubsection*{The constant-specific-volume case}

It remains to consider
\begin{equation}
    v_1=\cdots=v_N=:v_0.
    \label{eq:all-v-equal}
\end{equation}
Choose a new index $i$ satisfying
\begin{equation}
    u_i=\min_{1\leq j\leq N}u_j,
\end{equation}
and use the full-support weights $\mu_j^{(i)}$ associated with this new index.
For a labeled family $a=(a_1,\ldots,a_N)$, define the corresponding average by
\begin{equation}
    \E_i[a]:=\sum_{j=1}^N\mu_j^{(i)}a_j.
    \label{eq:constant-v-expectation}
\end{equation}
Set
\begin{equation}
    y_j:=u_j-u_i\geq0.
\end{equation}
Since all $v_j$ are equal, all $p(v_j)$ are equal. Therefore the upper two
rows of
\begin{equation}
    P_i-X_i
    =\E_i\bigl[G(U_j)-G(U_i)\bigr]
\end{equation}
are
\begin{equation}
    \begin{pmatrix}
        0&\E_i[y]\\
        \E_i[y]&0
    \end{pmatrix}.
    \label{eq:constant-v-minor}
\end{equation}
The matrix $P_i-X_i=-\kappa_iC_i$ has rank one, so every $2\times2$ minor
vanishes. In particular,
\begin{equation}
    0=
    \det
    \begin{pmatrix}
        0&\E_i[y]\\
        \E_i[y]&0
    \end{pmatrix}
    =-\bigl(\E_i[y]\bigr)^2.
\end{equation}
Hence $\E_i[y]=0$. Since every $y_j\geq0$ and every weight
$\mu_j^{(i)}>0$, this forces
\begin{equation}
    y_j=0
    \qquad\text{for every }j.
\end{equation}
Thus all $u_j$ are equal. Together with \eqref{eq:all-v-equal}, this implies
that all $X_j$ coincide, say $X_j=X_*$. The barycentric formula then gives
\begin{equation}
    P_i=\sum_{j=1}^N\mu_j^{(i)}X_j
    =\left(\sum_{j=1}^N\mu_j^{(i)}\right)X_*
    =X_*=X_i.
\end{equation}
This contradicts
\begin{equation}
    X_i-P_i=\kappa_iC_i\neq0.
\end{equation}

Both cases lead to contradictions. The
exclusion of ordered $T_N$-configurations follows immediately.
\end{proof}

\begin{remark}[Where the assumptions enter]\label{rem:assumptions}
The proof uses the hypotheses in sharply separated ways:
\begin{enumerate}[label=\textup{(\roman*)},leftmargin=2.5em]
    \item $C_i\in\mathcal{R}_1$ makes $X_i-P_i$ nonzero and of rank one;
    \item $p'<0$ makes $\phi$ strictly increasing and gives $\E[z]>0$ in the
    nonconstant-specific-volume case;
    \item $p''>0$ makes $\phi$ strictly concave and yields the strict chord
    inequality \eqref{eq:strict-chord};
    \item the mechanical entropy and its entropy flux give exactly the
    transformed third row
    \begin{equation}
        \left(\frac{y^2}{2}+H(x),\,yz\right).
    \end{equation}
\end{enumerate}
No distinctness or pairwise-rank assumption on the matrices $X_i$ is used in
the proof. Thus Theorem~\ref{thm:no-TN} excludes the larger class of ordered $T_N$ configurations which may not verify $\rank(X_i-X_j)>1$.
\end{remark}

\section{Nonexistence of $T_\infty$ configurations}\label{sec:no-Tinfty}

Iqbal \cite{Iqbal2000} starts with a smooth closed curve $\alpha$ in matrix space whose tangent has rank one and perturbs it in its tangent direction. In the notation used below, the resulting evolute has the form
\begin{equation}\label{eq:evolute-intro}
 C(t)=\alpha(t)+\rho(t)\dot\alpha(t),
 \qquad \rho(t)>0.
\end{equation}
Under a smallness hypothesis on the positive tangent displacement, Iqbal constructs a nontrivial homogeneous laminate whose support is the entire image of $C$.

We have the following theorem:

\begin{theorem}[Embedded evolute obstruction]\label{thm:main_t_infty}
Let $\mathcal{V}\subset\mathbb{R}^2$ be open and convex, and let
\[
 F\in C^4(\mathcal{V};\mathbb{R}^2),
 \qquad
 \eta,q\in C^4(\mathcal{V})
\]
satisfy the entropy compatibility relation \eqref{eq:entropy-compatibility}. Assume that
\begin{enumerate}[label=\textup{(\roman*)}]
 \item the system \eqref{eq:system} is strictly hyperbolic on $\mathcal{V}$;
 \item both characteristic families are genuinely nonlinear in the sense of \eqref{eq:genuine-nonlinearity};
 \item $D^2\eta(U)$ is positive definite for every $U\in\mathcal{V}$.
\end{enumerate}
Let
\[
 \alpha\in C^4(\SoneL;\mathbb{R}^{3\times2}),
 \qquad
 \rho\in C^3(\SoneL;(0,\infty))
\]
satisfy
\begin{equation}\label{eq:rank-one-tangent-evolute}
 \rank\dot\alpha(t)=1
 \qquad\text{for every }t\in\SoneL.
\end{equation}
Define
\begin{equation}\label{eq:C-def}
 C(t):=\alpha(t)+\rho(t)\dot\alpha(t).
\end{equation}
Then $C$ cannot simultaneously be a regular embedded closed curve and satisfy
\begin{equation}\label{eq:C-in-K}
 C(t)\in K_{F,\eta,q}
 \qquad\text{for every }t\in\SoneL.
\end{equation}
\end{theorem}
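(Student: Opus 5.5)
The plan is to pull everything back to a closed curve in state space and show that this curve must be an integral curve of one of the eigenvector fields $r_k$. Genuine nonlinearity then forces $\lambda_k$ to be strictly monotone along a closed loop, which is impossible. Since $G\colon\mathcal V\to\mathbb{R}^{3\times2}$ from \eqref{eq:constitutive-surface} is an injective immersion (its first column is $U$ itself), \eqref{eq:C-in-K} gives a unique $U\in C^3(\SoneL;\mathcal V)$ with $C(t)=G(U(t))$. Regularity of $C$ gives $\dot U(t)\neq0$, and embeddedness of $C$ makes $U$ an embedded closed curve. By \eqref{eq:DG-constitutive-map},
\[
\dot C(t)=DG(U(t))[\dot U(t)]=\begin{pmatrix}\dot U & -DF(U)\dot U\\ \nabla\eta(U)\cdot\dot U & -\nabla\eta(U)\cdot DF(U)\dot U\end{pmatrix},
\]
where the last entry uses \eqref{eq:entropy-compatibility}.

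\textbf{Step 1 (the evolute in rank-one form).} Write $\dot\alpha(t)=a(t)\otimes b(t)$ with $a,b$ of class $C^3$ and nonvanishing; after passing to a double cover of $\SoneL$ if the line bundles are nonorientable, this choice can be made smoothly. Then $C-\alpha=\rho\,a\otimes b$. Differentiating and using $\dot\alpha=a\otimes b$ gives
\[
\dot C=(1+\dot\rho)\,a\otimes b+\rho\,(\dot a\otimes b+a\otimes\dot b).
\]
By \Cref{lem:rank-one-tangent}, $\dot C(t)\in T_{a(t)\otimes b(t)}\mathcal R_1$ for every $t$. Equivalently, $c^{\mathsf T}\dot C(t)\,b^\perp(t)=0$ for every $c\perp a(t)$. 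This gives two scalar constraints per $t$ linking $\dot U$ to the pair $(a,b)$, and in addition $a\otimes b$ must itself be recovered as the derivative of $\alpha=C-\rho\,a\otimes b$.

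\textbf{Step 2 (forcing $\dot U$ to be characteristic). This is the main obstacle.} I will show that $\dot U(t)$ is parallel to some $r_k(U(t))$ and that $b(t)\parallel(\lambda_k,1)$ up to the sign convention on the second column. My first attempt is to use the block structure of $DG(U)[V]$: its second column is $-DF(U)$ applied to the first, and its third row is $\nabla\eta\cdot(\text{first two rows})$. Membership of $DG(U)[V]$ in $T_{a\otimes b}\mathcal R_1$ translates into the statement that $DF(U)V$ and $V$ satisfy an $a$-dependent linear relation through $b^\perp$.

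The second ingredient is to differentiate the identity $\alpha'=a\otimes b$ once more. That identity says the rank-one curve $\alpha$ has $C$ as its $\rho$-evolute, so $\ddot\alpha$ lies in the tangent space at $\dot\alpha$. This should supply the extra relation. The target is an identity of the form $(DF(U)-\lambda I)\dot U=0$ with $\lambda=-b_2/b_1$. Strict convexity of $\eta$, which symmetrizes $DF$ via $D^2\eta\,DF$, and strict hyperbolicity are to be used to exclude the degenerate branches: the case $b_1=0$, and the case where $a$ has vanishing top block so that the constraint lives only in the entropy row. If the direct argument stalls, I would instead Taylor-expand at an extremal parameter, for instance where a Riemann invariant of $U$ is minimal, in the spirit of the minimal-$v$ argument of \Cref{thm:no-TN}.

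\textbf{Step 3 (closing the loop).} Once $\dot U(t)=c(t)\,r_{k(t)}(U(t))$ with $c\neq0$ is established, $k$ is locally constant because $\lambda_1<\lambda_2$ strictly and $\dot U$ is continuous, hence $k$ is constant on the connected circle. Then
\[
\tfrac{d}{dt}\lambda_k(U(t))=c(t)\,\nabla\lambda_k(U)\cdot r_k(U).
\]
This has a fixed strict sign by \eqref{eq:genuine-nonlinearity}, using continuity and the fact that $c$ never vanishes. Hence $t\mapsto\lambda_k(U(t))$ is strictly monotone on $\SoneL$, contradicting periodicity.
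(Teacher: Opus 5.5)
Your Step 2 is not proved, and it cannot be proved in the form you state: it aims at the wrong branch. Switch to column notation $\alpha=(a\mid b)$, so that $a,b$ now denote the columns of $\alpha$ and no longer your rank-one factors. Write $a=(a_s,a_3)$, $\xi=\dot a_s$, $n=\nabla\eta(U)$.

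\textbf{Why your target identity is the degenerate case.} Where $\dot a\neq0$, rank one gives $\dot b=\mu\dot a$. So your factorization is $\dot a\otimes(1,\mu)$, and your $\lambda=-b_2/b_1$ equals $-\mu$. Differentiating the second column $B=b+\rho\dot b$ of $C$ gives $\dot B=\mu\dot A+\rho\dot\mu\,\dot a$. Its top two rows read $(DF(U)+\mu I)\dot U=-\rho\dot\mu\,\xi$. Hence your target $(DF(U)-\lambda I)\dot U=0$ is exactly $\dot\mu\,\xi=0$.

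The third row, combined with \eqref{eq:entropy-compatibility}, yields $\rho\dot\mu\,(\dot a_3-n\cdot\xi)=0$. Genuine nonlinearity forbids $\dot\mu\equiv0$ on any interval. That is precisely the characteristic branch, and your Step 3 is just a global restatement of this same observation. So $\dot a_3=n\cdot\xi$ everywhere, i.e.\ $\dot a$ lies in the image of $\Lambda\colon V\mapsto(V,\,n\cdot V)$.

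\textbf{Why Step 1 does not force a characteristic direction.} Since $\dot C=\Lambda\,(\dot U\mid -DF(U)\dot U)$ columnwise, tangency to $\mathcal R_1$ forces $\dot U$ to be an eigenvector only when the left factor lies outside $\operatorname{im}\Lambda$. Inside $\operatorname{im}\Lambda$, which is always the case here, tangency is a single scalar condition.

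\textbf{Why $\dot U$ is generically non-characteristic.} Differentiating $U=a_s+\rho\xi$ and $\eta(U)=a_3+\rho\dot a_3$ gives $\dot n\cdot\xi=0$. Strict convexity then gives $\dot n\cdot\dot U=\rho\,\dot n\cdot\dot\xi>0$. Hence $\xi\neq0$ and $\xi\not\parallel\dot U$ for every $t$. Consequently, on every arc satisfying the hypotheses, closed or not, $\dot U$ is \emph{not} characteristic wherever $\dot a\neq0\neq\dot\mu$, which is a dense open set.

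So Step 2 contradicts the local structure and could only hold vacuously, after the theorem has been proved by other means. No local manipulation, and no Taylor expansion at an extremal parameter, will produce it. The branches you planned to exclude ($b_1=0$, i.e.\ $\dot a=0$, and a left factor with vanishing top block, i.e.\ $\xi=0$) are indeed excluded. But what survives is the non-characteristic branch.

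\textbf{The missing global argument.} It uses exactly the hypotheses your outline leaves idle. Since $\nabla\eta$ is injective on the convex set $\mathcal V$ and $U$ is embedded, $n=\nabla\eta(U)$ is a regular embedded closed plane curve. The relations $\dot n\cdot\xi=0$ and $\dot n\cdot\dot\xi>0$ give it curvature of one strict sign. So it bounds a strictly convex domain, and $\xi$ is a nowhere-vanishing normal of fixed orientation. Hence $\int_0^L(n-z)\cdot\xi\,dt\neq0$ for any interior point $z$. But closedness of $\alpha$ gives $\int_0^L\xi\,dt=0$ and $\int_0^L n\cdot\xi\,dt=\int_0^L\dot a_3\,dt=0$, a contradiction.

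Your outline also never uses embeddedness. If Steps 2--3 worked, they would dispose of self-intersecting immersed evolutes as well. The paper explicitly leaves that case open, because embeddedness is needed precisely to pass from one-signed curvature to a convex oval. This is a further sign that the obstruction is global and lives in the non-characteristic branch, not in the characteristic one your argument targets.
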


\subsection{Proof of \Cref{thm:main_t_infty}}
\subsection{Setup}
Suppose, toward a contradiction, that
\begin{equation}
C(t)=G(U(t))
\label{eq:C-equals-G}
\end{equation}
for every $t\in\SoneL$, where
\[
\rank\dot\alpha(t)=1,
\qquad
\rho(t)>0,
\]
and $C$ is a regular embedded closed curve.

Write the columns of $\alpha$ as
\[
\alpha(t)=\bigl(a(t)\mid b(t)\bigr),
\qquad
 a(t),b(t)\in\R^3,
\]
and decompose
\[
a(t)
=
\begin{pmatrix}
 a_s(t)\\
 a_3(t)
\end{pmatrix},
\qquad
 a_s(t)\in\R^2.
\]
Set
\begin{equation}
\xi(t):=\dot a_s(t),
\qquad
n(t):=\nabla\eta(U(t)).
\label{eq:xi-n-def}
\end{equation}
Denote the two columns of $C$ by
\[
A(t):=a(t)+\rho(t)\dot a(t),
\qquad
B(t):=b(t)+\rho(t)\dot b(t).
\]
Because $C(t)=G(U(t))$, we have
\begin{equation}
A(t)
=
\begin{pmatrix}
 U(t)\\
 \eta(U(t))
\end{pmatrix},
\qquad
B(t)
=
\begin{pmatrix}
 -F(U(t))\\
 -q(U(t))
\end{pmatrix}.
\label{eq:A-B-graph}
\end{equation}

\subsection{Detailed derivation of an identity}

Work first on an interval on which $\dot a(t)\neq0$. Since
\[
\rank\dot\alpha(t)
=
\rank\bigl(\dot a(t)\mid\dot b(t)\bigr)
=1,
\]
the columns $\dot a(t)$ and $\dot b(t)$ are linearly dependent. Therefore, there exists a smooth scalar function $\mu(t)$ such that
\begin{equation}
\dot b(t)=\mu(t)\dot a(t).
\label{eq:b-dot-mu-a-dot}
\end{equation}
Differentiate $B=b+\rho\dot b$. Using \eqref{eq:b-dot-mu-a-dot}, we obtain
\begin{align*}
\dot B
&=\dot b+\dot\rho\,\dot b+\rho\ddot b\\
&=\mu\dot a+\dot\rho\,\mu\dot a
  +\rho\bigl(\dot\mu\,\dot a+\mu\ddot a\bigr)\\
&=\mu\bigl(\dot a+\dot\rho\,\dot a+\rho\ddot a\bigr)
  +\rho\dot\mu\,\dot a.
\end{align*}
But
\[
\dot A=\dot a+\dot\rho\,\dot a+\rho\ddot a.
\]
Consequently,
\begin{equation}
\dot B=\mu\dot A+\rho\dot\mu\,\dot a.
\label{eq:B-dot}
\end{equation}
This identity explains why the factor $\rho\dot\mu$ appears. All the terms involving $\dot\rho$ are already contained in $\mu\dot A$; the only extra term comes from differentiating the varying proportionality factor $\mu(t)$.

Now split \eqref{eq:B-dot} into its first two components and its third component. From \eqref{eq:A-B-graph},
\begin{equation}
\dot A_s=\dot U,
\qquad
\dot A_3=\frac{d}{dt}\eta(U)=n\cdot\dot U.
\label{eq:A-components}
\end{equation}
Likewise,
\begin{equation}
\dot B_s=-DF(U)\dot U,
\qquad
\dot B_3=-\nabla q(U)\cdot\dot U.
\label{eq:B-components}
\end{equation}
Taking the first two components of \eqref{eq:B-dot} gives
\begin{equation}
-DF(U)\dot U=\mu\dot U+\rho\dot\mu\,\xi.
\label{eq:first-components}
\end{equation}
Taking the third component gives
\begin{equation}
-\nabla q(U)\cdot\dot U
=\mu n\cdot\dot U+\rho\dot\mu\,\dot a_3.
\label{eq:third-component}
\end{equation}
On the other hand, the entropy compatibility relation \eqref{eq:entropy-compatibility} yields
\[
-\nabla q(U)\cdot\dot U
=-\bigl(DF(U)^{\mathsf T}\nabla\eta(U)\bigr)\cdot\dot U
=n\cdot\bigl(-DF(U)\dot U\bigr).
\]
Substituting \eqref{eq:first-components} into the right-hand side gives
\begin{equation}
\begin{aligned}
-\nabla q(U)\cdot\dot U
&=n\cdot\bigl(\mu\dot U+\rho\dot\mu\,\xi\bigr)\\
&=\mu n\cdot\dot U+\rho\dot\mu\,n\cdot\xi.
\end{aligned}
\label{eq:compatibility-substitution}
\end{equation}
Comparing \eqref{eq:third-component} and \eqref{eq:compatibility-substitution}, the common term
\[
\mu n\cdot\dot U
\]
cancels, leaving
\[
\rho\dot\mu\,\dot a_3
=
\rho\dot\mu\,n\cdot\xi.
\]
Therefore,
\begin{equation}
\rho(t)\dot\mu(t)
\bigl(\dot a_3(t)-n(t)\cdot \xi(t)\bigr)=0.
\label{eq:key-identity}
\end{equation}

\subsection{Why genuine nonlinearity forces $\dot a_3=n\cdot\xi$}

Since $\rho>0$, equation \eqref{eq:key-identity} gives
\begin{equation}
\dot a_3=n\cdot\xi
\label{eq:a3-local}
\end{equation}
at every point where $\dot\mu\neq0$. We now show that such points are dense.

Suppose that $\dot\mu=0$ on a nonempty open interval $I$. Then $\mu$ is constant on $I$, and \eqref{eq:first-components} becomes
\begin{equation}
DF(U(t))\dot U(t)=-\mu\dot U(t).
\label{eq:eigenvector-equation}
\end{equation}
Because $C(t)=G(U(t))$ is regular, $\dot U(t)\neq0$. Indeed, if $\dot U=0$, then
\[
\frac{d}{dt}G(U(t))=DG(U(t))[\dot U(t)]=0,
\]
contradicting regularity of $C$.

Thus, \eqref{eq:eigenvector-equation} says that $\dot U(t)$ is a right eigenvector of $DF(U(t))$ with eigenvalue $-\mu$. By strict hyperbolicity, on the connected interval $I$ there is a fixed characteristic family $k\in\{1,2\}$ such that
\begin{equation}
\lambda_k(U(t))=-\mu,
\qquad
\dot U(t)=\gamma(t)r_k(U(t)),
\qquad
\gamma(t)\neq0.
\label{eq:eigenfamily}
\end{equation}
Since $\mu$ is constant,
\[
0
=
\frac{d}{dt}\lambda_k(U(t))
=
\nabla\lambda_k(U(t))\cdot\dot U(t).
\]
Using \eqref{eq:eigenfamily},
\begin{equation}
0
=
\gamma(t)\nabla\lambda_k(U(t))\cdot r_k(U(t)).
\label{eq:gnl-contradiction}
\end{equation}
This contradicts genuine nonlinearity,
\[
\nabla\lambda_k(U(t))\cdot r_k(U(t))\neq0.
\]

Therefore, $\dot\mu$ cannot vanish identically on any open interval. Hence
\[
\{t:\dot\mu(t)\neq0\}
\]
is dense on every interval where $\dot a\neq0$. By \eqref{eq:key-identity} and continuity,
\[
\dot a_3=n\cdot\xi
\]
throughout every such interval.

Finally, the set
\[
\Omega:=\{t:\dot a(t)\neq0\}
\]
is itself dense. Indeed, if $\dot a=0$ on an interval, then the first column
\[
A=a+\rho\dot a=a
\]
would be constant there. Since $A_s=U$, this would make $U$ constant, and therefore $C=G(U)$ constant, again contradicting regularity.

It follows by continuity that
\begin{equation}
\dot a_3(t)=n(t)\cdot \xi(t)
\qquad
\text{for every }t.
\label{eq:a3-global}
\end{equation}

\subsection{A second orthogonality identity}

From the first column of \eqref{eq:A-B-graph},
\begin{equation}
U=a_s+\rho\xi,
\qquad
\eta(U)=a_3+\rho\dot a_3.
\label{eq:first-column-identities}
\end{equation}
Differentiating the first identity gives
\begin{equation}
\dot U=(1+\dot\rho)\xi+\rho\dot\xi.
\label{eq:U-dot}
\end{equation}
Differentiating the second identity gives
\begin{equation}
n\cdot\dot U
=(1+\dot\rho)\dot a_3+\rho\ddot a_3.
\label{eq:eta-derivative}
\end{equation}
Using \eqref{eq:a3-global},
\[
\dot a_3=n\cdot\xi,
\]
and therefore
\begin{equation}
\ddot a_3=\dot n\cdot\xi+n\cdot\dot\xi.
\label{eq:a3-second}
\end{equation}
Substituting \eqref{eq:U-dot}, \eqref{eq:a3-global}, and \eqref{eq:a3-second} into \eqref{eq:eta-derivative}, we get
\[
(1+\dot\rho)n\cdot\xi+\rho n\cdot\dot\xi
=
(1+\dot\rho)n\cdot\xi+\rho\dot n\cdot\xi+\rho n\cdot\dot\xi.
\]
After cancellation and using $\rho>0$,
\begin{equation}
\dot n(t)\cdot \xi(t)=0.
\label{eq:second-orthogonality}
\end{equation}
Thus $\xi(t)$ is normal to the plane curve $n(t)$.

\subsection{Strict convexity determines the sign of the curvature}

Use ``strictly convex entropy'' in the standard hyperbolic-systems sense
\[
D^2\eta(U)>0.
\]
Since $n=\nabla\eta(U)$,
\[
\dot n=D^2\eta(U)\dot U.
\]
Consequently,
\begin{equation}
\dot n\cdot\dot U
=
\dot U^{\mathsf T}D^2\eta(U)\dot U
>0,
\label{eq:strict-convexity-positive}
\end{equation}
because $\dot U\neq0$.

Using \eqref{eq:U-dot} and \eqref{eq:second-orthogonality},
\begin{align*}
\dot n\cdot\dot U
&=(1+\dot\rho)\dot n\cdot\xi+\rho\dot n\cdot\dot\xi\\
&=\rho\dot n\cdot\dot\xi.
\end{align*}
Thus
\begin{equation}
\dot n(t)\cdot\dot \xi(t)>0.
\label{eq:n-xi-positive}
\end{equation}

Notice also that $\xi$ can never vanish. If $\xi(t_0)=0$, differentiating \eqref{eq:second-orthogonality} gives
\[
0
=
\frac{d}{dt}(\dot n\cdot\xi)
=
\ddot n\cdot\xi+\dot n\cdot\dot\xi.
\]
At $t=t_0$, this would imply
\[
\dot n(t_0)\cdot\dot\xi(t_0)=0,
\]
contradicting \eqref{eq:n-xi-positive}.
Therefore,
\begin{equation}
\xi(t)\neq0
\qquad
\text{for every }t.
\label{eq:xi-nonzero}
\end{equation}

\subsection{Why $n(\SoneL)$ is a strictly convex oval}

Because the first two entries in the first column of $G(U)$ are exactly $U$, the map $G$ is injective. Thus, if $C=G(U)$ is embedded, then $U$ is an embedded closed plane curve in $\mathcal{V}$.

Moreover, $\nabla\eta$ is injective on the convex state space $\mathcal{V}$. Indeed, for $U_1\neq U_2$,
\begin{equation}
\begin{aligned}
&\bigl(\nabla\eta(U_1)-\nabla\eta(U_2)\bigr)\cdot(U_1-U_2)\\
&\qquad
=
\int_0^1
(U_1-U_2)^{\mathsf T}
D^2\eta\bigl(U_2+s(U_1-U_2)\bigr)
(U_1-U_2)\,ds
>0.
\end{aligned}
\label{eq:gradient-injective}
\end{equation}
Hence $n=\nabla\eta(U)$ is also a regular embedded closed plane curve.

Let
\[
T=\frac{\dot n}{\lvert\dot n\rvert}
\]
be its unit tangent. From \eqref{eq:second-orthogonality} and \eqref{eq:xi-nonzero}, $\xi$ is a nonzero normal vector. After choosing one of the two possible orientations of the unit normal $N$, we may write
\begin{equation}
\xi=\vartheta N,
\qquad
\vartheta>0.
\label{eq:xi-vartheta-N}
\end{equation}
Let $\kappa$ be the signed curvature, defined by
\[
\dot T=\kappa\lvert\dot n\rvert N,
\qquad
\dot N=-\kappa\lvert\dot n\rvert T.
\]
Then
\begin{equation}
\begin{aligned}
\dot n\cdot\dot\xi
&=
\lvert\dot n\rvert T\cdot
\bigl(\dot\vartheta N+\vartheta\dot N\bigr)\\
&=
-\vartheta\kappa\lvert\dot n\rvert^2.
\end{aligned}
\label{eq:curvature-computation}
\end{equation}
Since the left-hand side is strictly positive,
\begin{equation}
\kappa
=
-\frac{\dot n\cdot\dot\xi}{\vartheta\lvert\dot n\rvert^2}
<0.
\label{eq:kappa-negative}
\end{equation}
Thus the signed curvature is everywhere nonzero and has one fixed sign. A regular simple closed plane curve with curvature of one strict sign bounds a strictly convex domain. Equivalently, its tangent angle is strictly monotone and makes exactly one complete turn.

Therefore $n(\SoneL)$ is the boundary of a strictly convex compact set $\mathcal D$, and $\xi$ is everywhere either an inward-pointing normal or everywhere an outward-pointing normal.

Choose $z\in\interior\mathcal D$. Then
\[
(n(t)-z)\cdot \xi(t)
\]
has one strict sign for every $t$. Consequently,
\begin{equation}
\int_0^L(n(t)-z)\cdot \xi(t)\,dt\neq0.
\label{eq:nonzero-integral}
\end{equation}
Because $\alpha$ is closed,
\begin{equation}
\int_0^L \xi(t)\,dt
=
\int_0^L\dot a_s(t)\,dt
=
a_s(L)-a_s(0)
=0.
\label{eq:xi-integral-zero}
\end{equation}
It follows from \eqref{eq:nonzero-integral}--\eqref{eq:xi-integral-zero} that
\begin{equation}
\int_0^L n(t)\cdot \xi(t)\,dt
=
\int_0^L(n(t)-z)\cdot \xi(t)\,dt
\neq0.
\label{eq:n-xi-integral-nonzero}
\end{equation}
But \eqref{eq:a3-global} and the closedness of $a_3$ give
\begin{equation}
\int_0^L n(t)\cdot \xi(t)\,dt
=
\int_0^L\dot a_3(t)\,dt
=
a_3(L)-a_3(0)
=0,
\label{eq:n-xi-integral-zero}
\end{equation}
which contradicts \eqref{eq:n-xi-integral-nonzero}.

Therefore, under the regular embedded interpretation, the constitutive set of a strictly hyperbolic, genuinely nonlinear $2\times2$ system with a strictly convex entropy cannot contain an Iqbal-type closed evolute
\[
C=\alpha+\rho\dot\alpha
\]
with $\rho>0$ and $\rank\dot\alpha=1$.

The laminate property is not needed for the contradiction: the obstruction already rules out the underlying evolute geometry.

\paragraph{Qualification about immersed curves.}
Equations \eqref{eq:key-identity}, \eqref{eq:a3-global}, \eqref{eq:second-orthogonality}, and \eqref{eq:n-xi-positive} remain valid locally for a regular immersed evolute. The embeddedness assumption is used specifically in the implication
\[
\text{one-signed curvature}
\quad\Longrightarrow\quad
n(\SoneL)\text{ bounds a strictly convex domain}.
\]
For a self-intersecting closed immersion, one-signed curvature alone does not automatically give a convex oval. Thus our nonexistence statement should be understood with the embeddedness assumption; the genuinely immersed, self-intersecting case is not settled by this particular global argument.

\appendix

\section{Facts about convex functions}

\subsection{Algebraic inequalities which yield a strictly convex function}

We will take advantage of the following standard fact about convex functions.

\begin{lemma}[Algebraic inequalities which yield a strictly convex function]\label{convex_function_lemma}
Fix $n,N\in\mathbb{N}$. Assume there exists
\[
(x_i,h_i,D_i)\in\mathbb{R}^n\times\mathbb{R}\times\mathbb{R}^n,
\qquad i=1,\ldots,N,
\]
such that the strict inequalities
\begin{equation}
\label{eq:3.10}
    h_j > h_i + D_i\cdot(x_j-x_i)
    \qquad \text{for all } i\neq j
\end{equation}
are verified. Then, there exists a smooth and strictly convex function
$\xi\colon\mathbb{R}^n\to\mathbb{R}$ such that
\[
\xi(x_i)=h_i
\qquad\text{and}\qquad
\nabla\xi(x_i)=D_i,
\qquad\text{for all } i.
\]
\end{lemma}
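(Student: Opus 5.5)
We construct $\xi$ in two stages: first a piecewise-affine convex function with the prescribed values and gradients near the nodes, then a smoothing and a strictly convex correction that leave the interpolation data untouched.

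\textbf{Step 1 (local affine pieces).} Consider the affine functions $\ell_i(x):=h_i+D_i\cdot(x-x_i)$ and the convex function $g(x):=\max_i \ell_i(x)$. By \eqref{eq:3.10}, evaluating at $x=x_j$ gives $\ell_j(x_j)=h_j>\ell_i(x_j)$ for every $i\neq j$. Hence $\ell_j$ is the \emph{unique} maximizer at $x_j$, and by continuity there is $r>0$ such that $g=\ell_j$ on the ball $B_{2r}(x_j)$. In particular $g(x_j)=h_j$, $\nabla g(x_j)=D_j$, and $g$ is affine near each node.

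\textbf{Step 2 (smoothing).} Mollify $g$ by a smooth, maximum-type construction so that convexity is preserved and the function is unchanged near the nodes. The simplest choice is convolution $g_\epsilon=g*\rho_\epsilon$ with a radial mollifier, $\epsilon<r$. Then $g_\epsilon$ is smooth and convex. On $B_r(x_j)$, $g$ is affine, and a symmetric mollifier reproduces affine functions, so $g_\epsilon=\ell_j$ there. The data at the nodes are therefore preserved exactly.

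\textbf{Step 3 (strict convexity).} Add $\delta\,\psi$, where $\psi$ is smooth and strictly convex with $\psi(x_j)=0$ and $\nabla\psi(x_j)=0$ for all $j$. This is the main obstacle: $\psi$ must vanish to second order at finitely many distinct points and still be strictly convex everywhere. Such a $\psi$ cannot be globally strictly convex, since a convex function with zero value and zero gradient at two distinct points is affine on the segment joining them. So we do not require $\psi$ to satisfy both conditions. Instead we rebuild the local pieces as strictly convex ones and redo the maximum:
\begin{itemize}
\item Replace $\ell_i$ by $q_i(x):=\ell_i(x)+\delta|x-x_i|^2$, which is strictly convex.
\item For small $\delta$, the strict inequalities \eqref{eq:3.10} persist in a neighborhood of each node; this is where strictness is used quantitatively. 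Thus $\max_i q_i$ equals $q_j$ near $x_j$ and is strictly convex, since it is a maximum of uniformly convex functions.
\item Smooth it by a regularized maximum rather than by convolution, since convolution no longer preserves quadratics exactly. A standard choice is $\mathrm{smax}_\epsilon(q_1,\dots,q_N)$, which is smooth, convex, nondecreasing in each argument, and equal to $q_j$ wherever $q_j$ exceeds the others by more than $\epsilon$.
\end{itemize}
By the uniform gaps near the nodes, this regularized maximum coincides with $q_j$ near $x_j$, so $\xi(x_j)=h_j$ and $\nabla\xi(x_j)=D_j$. Away from the nodes, the Hessian of a regularized maximum of uniformly convex functions is bounded below by $2\delta I$. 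This gives the required smooth, strictly convex $\xi$.
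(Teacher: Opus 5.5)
Your final construction is correct. Let $\ell_i(x)=h_i+D_i\cdot(x-x_i)$ and choose $0<\delta<\min_{i\neq j}\bigl(h_j-\ell_i(x_j)\bigr)/|x_j-x_i|^2$. This keeps a strict gap $q_j(x_j)>q_i(x_j)$ at every node. A regularized maximum whose threshold $\epsilon$ lies below the smallest such gap then coincides with $q_j$ on a neighborhood of $x_j$, by continuity, which gives $\xi(x_j)=h_j$ and $\nabla\xi(x_j)=D_j$.

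The paper does not prove this lemma itself; it refers to \cite{Krupa2026NonUniqueness}. Your argument is the standard one in slightly different form. Since $\delta|x-x_i|^2-\delta|x|^2$ is affine, you can write $q_i=\tilde\ell_i+\delta|x|^2$, where $\tilde\ell_i$ is the affine function carrying the perturbed data $\tilde h_i=h_i-\delta|x_i|^2$, $\tilde D_i=D_i-2\delta x_i$. These data still satisfy \eqref{eq:3.10}, because each gap drops by exactly $\delta|x_j-x_i|^2$. So you are really interpolating the perturbed data by a smooth convex function and then adding $\delta|x|^2$ back.

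There are three points to tighten.
\begin{enumerate}
\item The bound $D^2\xi\geq 2\delta I$ does not follow from the four properties you list for $\mathrm{smax}_\epsilon$. By the chain rule, $D^2\xi=\sum_i\partial_iM\,D^2q_i+(\text{a positive semidefinite term})$, so you also need $\sum_i\partial_iM\equiv1$. This holds for the standard choice $M=\max*\rho$ on $\mathbb{R}^N$, because $M(y+c\mathbf{1})=M(y)+c$ with $\mathbf{1}=(1,\ldots,1)$. You should state this property explicitly.
\item Your reason for rejecting convolution is mistaken. For a radial mollifier, $|\cdot-x_i|^2*\rho_\epsilon=|\cdot-x_i|^2+m_\epsilon$ with $m_\epsilon=\int|z|^2\rho_\epsilon(z)\,dz$, and this constant does not depend on $i$. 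So $(\max_iq_i)*\rho_\epsilon-\delta m_\epsilon$ works equally well. Steps 1--2 and the discussion of $\psi$ are superseded and can be dropped.
\item Keep the uniform bound $D^2\xi\geq2\delta I$ explicit. The paper uses this lemma to conclude $p'<0$ from the convexity of $\Pi=-\int p$. Mere strict convexity would still allow $\Pi''$ to vanish at some points, so it would not give $p'<0$ everywhere.
\end{enumerate}
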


\begin{remark}
It is well known that such an $\xi$ exists if we only ask for it to be
convex. See e.g.\ \cite[p.~143]{MR2048569}. However, in this  \Cref{convex_function_lemma} $\xi$ is
constructed in such a way to be \emph{strictly convex}.
\end{remark}

The proof of  \Cref{convex_function_lemma} is standard and can be found in e.g.\
\cite[p.~16]{Krupa2026NonUniqueness}.

\subsection{Increasing concave and convex functions with prescribed integral}
\label{sec:increasing-concave-convex-prescribed-integral}

We characterize the endpoint and integral data that can be realized by a
strictly increasing function having a prescribed strict curvature sign.  The
proof also gives an explicit smooth family realizing every admissible set of
data.

\begin{figure}[t]
    \centering
   \includegraphics[width=.9\textwidth]{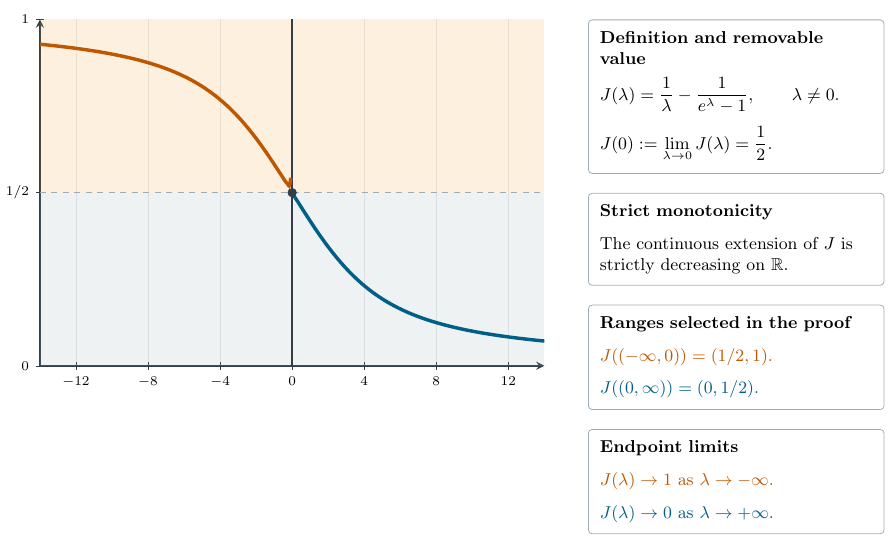}
    \caption{The continuous extension of \(J\) is strictly decreasing,
    maps \(( -\infty,0 )\) onto \((1/2,1)\), and maps
    \((0,\infty)\) onto \((0,1/2)\).}
    \label{fig:auxiliary-function-J}
\end{figure}

\begin{lemma}[Properties of the auxiliary function]\label{lem:properties-of-J}
For \(\lambda\in\mathbb{R}\setminus\{0\}\), define
\begin{equation}\label{eq:def-J}
    J(\lambda)
    :=
    \frac{1}{\lambda}-\frac{1}{e^{\lambda}-1}.
\end{equation}
Then the singularity at \(\lambda=0\) is removable, with
\begin{equation}\label{eq:J-at-zero}
    \lim_{\lambda\to 0}J(\lambda)=\frac12.
\end{equation}
After setting \(J(0):=1/2\), the function \(J\) is continuous and strictly
decreasing on \(\mathbb{R}\).  Moreover,
\begin{equation}\label{eq:J-endpoint-limits}
    \lim_{\lambda\to-\infty}J(\lambda)=1,
    \qquad
    \lim_{\lambda\to+\infty}J(\lambda)=0.
\end{equation}
Consequently,
\begin{equation}\label{eq:J-ranges}
    J\bigl(( -\infty,0)\bigr)=\left(\frac12,1\right),
    \qquad
    J\bigl((0,+\infty)\bigr)=\left(0,\frac12\right).
\end{equation}
\begin{remark}
See \Cref{fig:auxiliary-function-J}.
\end{remark}
\end{lemma}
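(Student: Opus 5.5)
The claims are elementary one-variable calculus, so I will argue directly from the formula \eqref{eq:def-J}.

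\emph{Removable singularity.} I would expand near $\lambda=0$:
\[
e^\lambda-1=\lambda\bigl(1+\tfrac{\lambda}{2}+\tfrac{\lambda^2}{6}+O(\lambda^3)\bigr),
\]
so that
\[
\frac{1}{e^\lambda-1}=\frac{1}{\lambda}\bigl(1-\tfrac{\lambda}{2}+\tfrac{\lambda^2}{12}+O(\lambda^3)\bigr).
\]
Hence
\[
J(\lambda)=\tfrac12-\tfrac{\lambda}{12}+O(\lambda^2),
\]
which gives \eqref{eq:J-at-zero}. It also shows that $J$ extends to a $C^1$ (indeed real-analytic) function with $J'(0)=-1/12$.

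\emph{Endpoint limits.} As $\lambda\to+\infty$, both $1/\lambda$ and $1/(e^\lambda-1)$ tend to $0$, so $J\to0$. As $\lambda\to-\infty$, we have $1/\lambda\to0$ and $1/(e^\lambda-1)\to-1$, so $J\to1$. This is \eqref{eq:J-endpoint-limits}.

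\emph{Strict monotonicity (the main step).} For $\lambda\neq0$,
\[
J'(\lambda)=-\frac{1}{\lambda^2}+\frac{e^\lambda}{(e^\lambda-1)^2}.
\]
Since $(e^\lambda-1)^2/e^\lambda=(e^{\lambda/2}-e^{-\lambda/2})^2=4\sinh^2(\lambda/2)$, this becomes
\[
J'(\lambda)=\frac{1}{4\sinh^2(\lambda/2)}-\frac{1}{\lambda^2}.
\]
Therefore $J'(\lambda)<0$ if and only if $4\sinh^2(\lambda/2)>\lambda^2$, that is, $|\sinh(\lambda/2)|>|\lambda/2|$. This holds for every $\lambda\neq0$, because $\sinh t=\sum_{k\ge0}t^{2k+1}/(2k+1)!$ gives $|\sinh t|>|t|$ for $t\neq0$.

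Combined with $J'(0)=-1/12<0$, we get $J'<0$ on all of $\mathbb{R}$, so $J$ is strictly decreasing on $\mathbb{R}$.

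\emph{Ranges.} Since $J$ is continuous and strictly decreasing, the intermediate value theorem together with the limits above gives \eqref{eq:J-ranges}. Explicitly, $J$ maps $(-\infty,0)$ onto $(J(0),\lim_{-\infty}J)=(1/2,1)$ and $(0,\infty)$ onto $(0,1/2)$, with open endpoints by strict monotonicity.

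The only step that needs any care is the sign of $J'$. The identity $(e^\lambda-1)^2e^{-\lambda}=4\sinh^2(\lambda/2)$ reduces it to the inequality $|\sinh t|>|t|$ for $t\neq0$, so I expect no real obstacle.
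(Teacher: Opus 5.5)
Your proof is correct and follows the paper's argument essentially step for step: the Taylor expansion at $0$, the identity $J'(\lambda)=\frac{1}{4\sinh^2(\lambda/2)}-\frac{1}{\lambda^2}$, the inequality $|\sinh t|>|t|$ for $t\neq0$, and the limit and range arguments are exactly what the paper uses. The only cosmetic differences are that you prove $\sinh t>t$ by the power series rather than by integrating $\cosh s-1>0$. You also handle $\lambda=0$ through $J'(0)=-1/12$ rather than just using continuity at $0$; your expansion does justify differentiability at $0$, and that is all the mean value argument needs, so the $C^1$ claim is unnecessary.
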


\begin{proof}
First rewrite \eqref{eq:def-J} as
\begin{equation}\label{eq:J-common-denominator}
    J(\lambda)
    =
    \frac{e^{\lambda}-1-\lambda}
         {\lambda(e^{\lambda}-1)}.
\end{equation}
The Taylor expansion
\[
    e^{\lambda}
    =1+\lambda+\frac{\lambda^{2}}{2}+O(\lambda^{3})
    \qquad (\lambda\to0)
\]
gives
\[
    e^{\lambda}-1-\lambda
    =\frac{\lambda^{2}}{2}+O(\lambda^{3}),
    \qquad
    \lambda(e^{\lambda}-1)
    =\lambda^{2}+O(\lambda^{3}).
\]
Substitution into \eqref{eq:J-common-denominator} yields
\[
    \lim_{\lambda\to0}J(\lambda)=\frac12.
\]
Thus defining \(J(0)=1/2\) gives a continuous extension at the origin.

For \(\lambda\neq0\), differentiation gives
\begin{align}
    J'(\lambda)
    &=-\frac{1}{\lambda^{2}}
      +\frac{e^{\lambda}}{(e^{\lambda}-1)^{2}} \notag\\
    &=-\frac{1}{\lambda^{2}}
      +\frac{1}{\bigl(e^{\lambda/2}-e^{-\lambda/2}\bigr)^{2}} \notag\\
    &=-\frac{1}{\lambda^{2}}
      +\frac{1}{4\sinh^{2}(\lambda/2)}.
    \label{eq:J-prime}
\end{align}
We claim that
\begin{equation}\label{eq:sinh-strict-bound}
    \sinh t>t
    \qquad\text{for every }t>0.
\end{equation}
Indeed, for \(s>0\),
\begin{equation}\label{eq:cosh-greater-than-one}
    \cosh s-1
    =\frac{e^{s}+e^{-s}-2}{2}
    =\frac{(e^{s}-1)^{2}}{2e^{s}}
    >0.
\end{equation}
Therefore
\[
    \sinh t-t
    =\int_{0}^{t}(\cosh s-1)\,ds
    >0,
\]
which proves \eqref{eq:sinh-strict-bound}.  Since \(\sinh\) is odd, for every
\(\lambda\neq0\) we obtain
\[
    2\left|\sinh\left(\frac{\lambda}{2}\right)\right|
    =2\sinh\left(\frac{|\lambda|}{2}\right)
    >|\lambda|.
\]
Hence
\[
    4\sinh^{2}\left(\frac{\lambda}{2}\right)>\lambda^{2},
\]
and \eqref{eq:J-prime} implies
\[
    J'(\lambda)<0
    \qquad\text{for every }\lambda\neq0.
\]
Together with continuity at \(0\), this shows that the extended function is
strictly decreasing on all of \(\mathbb{R}\).

Finally, the limits in \eqref{eq:J-endpoint-limits} follow directly from
\eqref{eq:def-J}: as \(\lambda\to+\infty\), both terms tend to zero, whereas
as \(\lambda\to-\infty\), one has \(1/\lambda\to0\) and
\(-1/(e^{\lambda}-1)\to1\).  The range statements
\eqref{eq:J-ranges} now follow from continuity and strict monotonicity.
\end{proof}

\begin{theorem}[Prescribed endpoints and prescribed integral]
\label{thm:increasing-concave-convex-prescribed-integral}
Let \(a,b,c,d\in\mathbb{R}\), with \(c>0\).

\begin{enumerate}
    \item There exists a strictly increasing and strictly concave function
    \(f\colon[0,c]\to\mathbb{R}\) satisfying
    \begin{equation}\label{eq:prescribed-data}
        f(0)=a,
        \qquad
        f(c)=b,
        \qquad
        \int_{0}^{c}f(x)\,dx=d,
    \end{equation}
    if and only if
    \begin{equation}\label{eq:concave-conditions}
        b>a,
        \qquad
        \frac{c(a+b)}{2}<d<cb.
    \end{equation}

    \item There exists a strictly increasing and strictly convex function
    \(f\colon[0,c]\to\mathbb{R}\) satisfying \eqref{eq:prescribed-data} if
    and only if
    \begin{equation}\label{eq:convex-conditions}
        b>a,
        \qquad
        ca<d<\frac{c(a+b)}{2}.
    \end{equation}
\end{enumerate}

In both cases, the function may be chosen as the restriction of a function in
\(C^{\infty}(\mathbb{R})\).  More precisely, in the concave case it may be
chosen so that \(f'>0\) and \(f''<0\) on \(\mathbb{R}\), and in the convex
case it may be chosen so that \(f'>0\) and \(f''>0\) on \(\mathbb{R}\).
\end{theorem}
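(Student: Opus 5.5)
Both directions reduce, after normalization, to the interplay between the endpoint data and the average of $f$. Set $g(x):=f(x)-a-\frac{b-a}{c}x$. Then $g(0)=g(c)=0$, and $g$ has the same curvature sign as $f$. Moreover
\[
\int_0^c g\,dx=d-\frac{c(a+b)}{2}.
\]
Throughout, the mean value $d/c$ will be compared with the chord midpoint $(a+b)/2$ and with the extreme values $a$ and $b$.

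\emph{Necessity.} First, strict monotonicity gives $b>a$. It also gives $a<f(x)<b$ on $(0,c)$, so that $ca<d<cb$. Second, strict concavity gives $g>0$ on $(0,c)$, which forces $d>c(a+b)/2$. Symmetrically, strict convexity gives $g<0$ and hence $d<c(a+b)/2$. Together these yield \eqref{eq:concave-conditions} and \eqref{eq:convex-conditions}. This part is routine.

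\emph{Sufficiency.} Here I will use the exponential family
\[
f_\lambda(x):=a+(b-a)\,\frac{e^{\lambda x/c}-1}{e^{\lambda}-1}\quad(\lambda\neq0),\qquad f_0(x):=a+(b-a)\frac{x}{c}.
\]
For every $\lambda$, $f_\lambda$ is $C^\infty$ on $\mathbb{R}$ with $f_\lambda(0)=a$ and $f_\lambda(c)=b$. Its derivatives are
\[
f_\lambda'=(b-a)\frac{\lambda}{c}\,\frac{e^{\lambda x/c}}{e^{\lambda}-1}>0,
\]
since $\lambda/(e^\lambda-1)>0$, and $f_\lambda''=\frac{\lambda}{c}f_\lambda'$. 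Thus $f_\lambda$ is strictly convex for $\lambda>0$ and strictly concave for $\lambda<0$, and these properties hold on all of $\mathbb{R}$. For the integral, compute
\[
\int_0^c\frac{e^{\lambda x/c}-1}{e^\lambda-1}\,dx=c\Bigl(\frac1\lambda-\frac{1}{e^\lambda-1}\Bigr)=cJ(\lambda),
\]
so that
\[
\int_0^c f_\lambda\,dx=ca+c(b-a)\bigl(1-J(\lambda)\bigr).
\]
Hence the integral constraint becomes
\[
1-J(\lambda)=\frac{d-ca}{c(b-a)}=:\theta.
\]
Under \eqref{eq:concave-conditions} we have $\theta\in(1/2,1)$, so we need $J(\lambda)\in(0,1/2)$. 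By \Cref{lem:properties-of-J} (continuity, strict monotonicity and the ranges \eqref{eq:J-ranges}), there is a unique $\lambda$ with $J(\lambda)=1-\theta$, and this $\lambda$ is positive.

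This is where the one genuine check lies: the sign correspondence must come out right. A positive $\lambda$ makes $f_\lambda$ convex, yet a large integral ($\theta>1/2$) should correspond to concavity, so the family as written appears to point the wrong way. The resolution is that a convex increasing function lies below its chord and so has a small integral. Therefore the correct matching is that $\lambda<0$ (concave) gives $J\in(1/2,1)$, i.e. $\theta=1-J\in(0,1/2)$. Checking directly: for concave $f$, $d>c(a+b)/2$ means $\theta>1/2$, which requires $J<1/2$, i.e. $\lambda>0$, i.e. convex. These conflict, so $f_\lambda$ and $J$ must be reconciled by reparametrizing. I will use $f(x)=b-(b-a)\frac{e^{\lambda(c-x)/c}-1}{e^\lambda-1}$ (the reflection $x\mapsto c-x$ combined with $f\mapsto a+b-f$). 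This preserves monotonicity, preserves the endpoint data, swaps convexity with concavity, and sends $\theta$ to $1-\theta$. With this, concave with $\theta>1/2$ corresponds to $\lambda>0$, $J\in(0,1/2)$. The convex case then uses the original $f_\lambda$, where $\theta<1/2$ requires $J>1/2$, i.e. $\lambda<0$; but $\lambda<0$ makes $f_\lambda$ concave, so this must be cross-checked by directly computing the sign of $f''$ in each chosen form before finalizing. Once the signs are verified, the intermediate value property of $J$ from \Cref{lem:properties-of-J} finishes both cases, and the explicit formulas give the stated global $C^\infty$ extensions with $f'>0$ and the correct sign of $f''$ on $\mathbb{R}$.
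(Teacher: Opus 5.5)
Your necessity argument is correct and is essentially the paper's. You use $a<f<b$ pointwise on $(0,c)$ where the paper uses $f(c/2)<b$ and splits the integral; the two are equivalent.

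The sufficiency part does not go through as written, because of a slip in one line. From your own (correct) identity $\int_0^c \frac{e^{\lambda x/c}-1}{e^\lambda-1}\,dx=cJ(\lambda)$ it follows that
\[
\int_0^c f_\lambda\,dx = ca + c(b-a)\,J(\lambda),
\]
not $ca+c(b-a)(1-J(\lambda))$. The ``sign conflict'' you then detect comes entirely from this error, and your proposed repair does not resolve it. Under your own formula, the reflected function $b-(b-a)\frac{e^{\lambda(c-x)/c}-1}{e^\lambda-1}=a+b-f_\lambda(c-x)$ would have integral $ca+c(b-a)J(\lambda)$. For $\lambda>0$, where it is concave, its normalized mean would then be $J(\lambda)\in(0,\tfrac12)$, not $>\tfrac12$ as you assert. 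The convex case is explicitly left with an unresolved contradiction and a deferred cross-check. So, as submitted, neither case of the sufficiency direction is actually established.

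Once the integral is corrected, the constraint reads $J(\lambda)=\theta$ with $\theta=(d-ca)/(c(b-a))$, and everything aligns without any reflection:
\begin{itemize}
\item The concave conditions are equivalent to $\theta\in(\tfrac12,1)$. By the range statement for $J$ there is a unique $\lambda<0$ with $J(\lambda)=\theta$, and for it $f_\lambda''=\frac{\lambda}{c}f_\lambda'<0$.
\item The convex conditions give $\theta\in(0,\tfrac12)$ and a unique $\lambda>0$, for which $f_\lambda''>0$.
\end{itemize}
This is exactly the paper's argument. It also agrees with the geometric intuition you state: a concave function lies above its chord, so $d>c(a+b)/2$, i.e.\ $\theta>\tfrac12$. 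Your reflected family would also work for the concave case with correct bookkeeping, since its integral is $ca+c(b-a)(1-J(\lambda))$, but it is unnecessary.
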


\begin{proof}
We first prove necessity.  If \(f\) is strictly increasing, then
\[
    a=f(0)<f(c)=b,
\]
so necessarily \(b>a\).

\begin{figure}[t]
    \centering
    \resizebox{\linewidth}{!}{

\definecolor{UTBurntOrange}{HTML}{BF5700}
\definecolor{UTCharcoal}{HTML}{333F48}
\definecolor{UTLightOrange}{HTML}{F8971F}
\definecolor{UTBeige}{HTML}{D6D2C4}
\definecolor{UTBlueGray}{HTML}{9CADB7}
\definecolor{UTDeepBlue}{HTML}{005F86}

\begin{tikzpicture}[
    every node/.style={font=\small},
    panel/.style={
        anchor=north west,
        draw=UTBlueGray,
        rounded corners=2pt,
        fill=white,
        text width=5.0cm,
        align=left,
        inner sep=6pt
    }
]
\begin{axis}[
    name=mainplot,
    at={(0,0)},
    anchor=south west,
    width=9.25cm,
    height=6.35cm,
    scale only axis,
    xmin=0, xmax=1,
    ymin=0.12, ymax=0.93,
    axis lines=left,
    axis line style={UTCharcoal, line width=0.9pt},
    xtick={0,1},
    xticklabels={$0$,$c$},
    ytick={0.20,0.86},
    yticklabels={$a$,$b$},
    tick label style={font=\small},
    tick style={UTCharcoal},
    clip=true,
    samples=260,
    domain=0:1,
]
    \def\aval{0.20}
    \def\bval{0.86}
    \def\lam{-2.4}

    \addplot[name path=chord, draw=none]
        {\aval+(\bval-\aval)*x};
    \addplot[name path=curve, draw=none]
        {\aval+(\bval-\aval)*(exp(\lam*x)-1)/(exp(\lam)-1)};
    \addplot[name path=topline, draw=none] {\bval};

    \addplot[UTLightOrange, opacity=0.36]
        fill between[of=curve and chord];
    \addplot[UTBeige, opacity=0.68]
        fill between[of=topline and curve];

    \addplot[UTBlueGray, dashed, line width=0.75pt]
        coordinates {(0,\bval) (1,\bval)};
    \addplot[UTBlueGray, dashed, line width=0.75pt]
        coordinates {(1,0.12) (1,\bval)};
    \addplot[UTCharcoal, dashed, line width=1.15pt]
        {\aval+(\bval-\aval)*x};
    \addplot[UTBurntOrange, line width=1.8pt]
        {\aval+(\bval-\aval)*(exp(\lam*x)-1)/(exp(\lam)-1)};
    \addplot[only marks, mark=*, mark size=2.1pt, UTBurntOrange]
        coordinates {(0,\aval) (1,\bval)};
\end{axis}

\node[panel] (p1) at ([xshift=8mm]mainplot.north east) {
    \textbf{Chord and curve}\par\medskip
    \(\displaystyle L(x)=a+\frac{b-a}{c}x.\)\par\medskip
    \textcolor{UTBurntOrange}{Solid dark-orange curve: \(f\).}\par
    \textcolor{UTCharcoal}{Dashed dark gray segment: \(L\).}
};

\node[panel] (p2) at ([yshift=-3.5mm]p1.south west) {
    \textbf{Pointwise inequalities}\par\medskip
    \textcolor{UTBurntOrange}{\(f(x)>L(x)\) for \(0<x<c\).}\par\medskip
    \textcolor{UTDeepBlue}{\(f(x)<b\) for \(0\leq x<c\).}
};

\node[panel] (p3) at ([yshift=-3.5mm]p2.south west) {
    \textbf{Positive integral gaps}\par\medskip
    \(\displaystyle
      \int_0^c\!\bigl(f-L\bigr)\,dx
      =d-\frac{c(a+b)}{2}>0.
    \)\par\medskip
    \(\displaystyle
      \int_0^c\!\bigl(b-f\bigr)\,dx
      =cb-d>0.
    \)
};

\node[
    anchor=north west,
    draw=UTBurntOrange,
    line width=1pt,
    rounded corners=2pt,
    fill=white,
    text width=5.0cm,
    align=center,
    inner sep=7pt,
    font=\normalsize
] at ([yshift=-3.5mm]p3.south west) {
    \(\displaystyle \frac{c(a+b)}{2}<d<cb.\)
};
\end{tikzpicture}}
    \caption{The two positive integral gaps in the strictly concave case:
    \(d-c(a+b)/2=\int_0^c(f-L)\,dx>0\) and
    \(cb-d=\int_0^c(b-f)\,dx>0\).}
    \label{fig:concave-integral-bounds}
\end{figure}

Suppose first that \(f\) is strictly concave.  For every \(x\in(0,c)\),
strict concavity applied to the endpoints \(0\) and \(c\) gives
\begin{equation}\label{eq:above-chord}
    f(x)
    >
    \left(1-\frac{x}{c}\right)f(0)+\frac{x}{c}f(c)
    =a+\frac{b-a}{c}x.
\end{equation}
A finite concave function is continuous in the interior of its domain; hence
the strict inequality in \eqref{eq:above-chord} is strict on a nontrivial
neighborhood of, for example, \(x=c/2\).  Integrating the chord inequality
therefore yields
\begin{equation}\label{eq:concave-lower-integral}
    d
    =\int_{0}^{c}f(x)\,dx
    >\int_{0}^{c}\left(a+\frac{b-a}{c}x\right)\,dx
    =\frac{c(a+b)}{2}.
\end{equation}

On the other hand, strict monotonicity gives \(f(c/2)<b\).  Therefore
\begin{align*}
    d
    &=\int_{0}^{c/2}f(x)\,dx+
      \int_{c/2}^{c}f(x)\,dx \\
    &\leq \frac{c}{2}f(c/2)+\frac{c}{2}b
    <cb.
\end{align*}
This proves \eqref{eq:concave-conditions}. We refer to \Cref{fig:concave-integral-bounds}.

\begin{figure}[t]
    \centering
    \resizebox{\linewidth}{!}{

\definecolor{UTCharcoal}{HTML}{333F48}
\definecolor{UTBlueGray}{HTML}{9CADB7}
\definecolor{UTDeepBlue}{HTML}{005F86}
\definecolor{UTGreen}{HTML}{579D42}
\definecolor{UTLightGreen}{HTML}{A6CD57}

\begin{tikzpicture}[
    every node/.style={font=\small},
    panel/.style={
        anchor=north west,
        draw=UTBlueGray,
        rounded corners=2pt,
        fill=white,
        text width=5.0cm,
        align=left,
        inner sep=6pt
    }
]
\begin{axis}[
    name=mainplot,
    at={(0,0)},
    anchor=south west,
    width=9.25cm,
    height=6.35cm,
    scale only axis,
    xmin=0, xmax=1,
    ymin=0.12, ymax=0.93,
    axis lines=left,
    axis line style={UTCharcoal, line width=0.9pt},
    xtick={0,1},
    xticklabels={$0$,$c$},
    ytick={0.20,0.86},
    yticklabels={$a$,$b$},
    tick label style={font=\small},
    tick style={UTCharcoal},
    clip=true,
    samples=260,
    domain=0:1,
]
    \def\aval{0.20}
    \def\bval{0.86}
    \def\lam{2.4}

    \addplot[name path=chord, draw=none]
        {\aval+(\bval-\aval)*x};
    \addplot[name path=curve, draw=none]
        {\aval+(\bval-\aval)*(exp(\lam*x)-1)/(exp(\lam)-1)};
    \addplot[name path=baseline, draw=none] {\aval};

    \addplot[UTBlueGray, opacity=0.42]
        fill between[of=chord and curve];
    \addplot[UTLightGreen, opacity=0.42]
        fill between[of=curve and baseline];

    \addplot[UTGreen, dashed, line width=0.75pt]
        coordinates {(0,\aval) (1,\aval)};
    \addplot[UTBlueGray, dashed, line width=0.75pt]
        coordinates {(1,0.12) (1,\bval)};
    \addplot[UTCharcoal, dashed, line width=1.15pt]
        {\aval+(\bval-\aval)*x};
    \addplot[UTDeepBlue, line width=1.8pt]
        {\aval+(\bval-\aval)*(exp(\lam*x)-1)/(exp(\lam)-1)};
    \addplot[only marks, mark=*, mark size=2.1pt, UTDeepBlue]
        coordinates {(0,\aval) (1,\bval)};
\end{axis}

\node[panel] (p1) at ([xshift=8mm]mainplot.north east) {
    \textbf{Chord and curve}\par\medskip
    \(\displaystyle L(x)=a+\frac{b-a}{c}x.\)\par\medskip
    \textcolor{UTDeepBlue}{Solid deep-blue curve: \(f\).}\par
    \textcolor{UTCharcoal}{Dashed dark gray segment: \(L\).}
};

\node[panel] (p2) at ([yshift=-3.5mm]p1.south west) {
    \textbf{Pointwise inequalities}\par\medskip
    \textcolor{UTDeepBlue}{\(f(x)<L(x)\) for \(0<x<c\).}\par\medskip
    \textcolor{UTGreen}{\(f(x)>a\) for \(0<x\leq c\).}
};

\node[panel] (p3) at ([yshift=-3.5mm]p2.south west) {
    \textbf{Positive integral gaps}\par\medskip
    \(\displaystyle
      \int_0^c\!\bigl(L-f\bigr)\,dx
      =\frac{c(a+b)}{2}-d>0.
    \)\par\medskip
    \(\displaystyle
      \int_0^c\!\bigl(f-a\bigr)\,dx
      =d-ca>0.
    \)
};

\node[
    anchor=north west,
    draw=UTDeepBlue,
    line width=1pt,
    rounded corners=2pt,
    fill=white,
    text width=5.0cm,
    align=center,
    inner sep=7pt,
    font=\normalsize
] at ([yshift=-3.5mm]p3.south west) {
    \(\displaystyle ca<d<\frac{c(a+b)}{2}.\)
};
\end{tikzpicture}}
    \caption{The two positive integral gaps in the strictly convex case:
    \(d-ca=\int_0^c(f-a)\,dx>0\) and
    \(c(a+b)/2-d=\int_0^c(L-f)\,dx>0\).}
    \label{fig:convex-integral-bounds}
\end{figure}

Suppose instead that \(f\) is strictly convex.  The graph lies strictly below
its endpoint chord, so
\[
    f(x)<a+\frac{b-a}{c}x,
    \qquad 0<x<c.
\]
As above, integration gives
\begin{equation}\label{eq:convex-upper-integral}
    d<\frac{c(a+b)}{2}.
\end{equation}
Moreover, strict monotonicity gives \(f(c/2)>a\), and hence
\begin{align*}
    d
    &=\int_{0}^{c/2}f(x)\,dx+
      \int_{c/2}^{c}f(x)\,dx \\
    &\geq \frac{c}{2}a+\frac{c}{2}f(c/2)
    >ca.
\end{align*}
This proves \eqref{eq:convex-conditions}. We refer to \Cref{fig:convex-integral-bounds}.

We now prove sufficiency by an explicit construction.  Assume \(b>a\), set
\begin{equation}\label{eq:B-and-theta}
    B:=b-a>0,
    \qquad
    \theta:=\frac{d-ca}{cB},
\end{equation}
and, for \(\lambda\neq0\), define
\begin{equation}\label{eq:exponential-family}
    f_{\lambda}(x)
    :=a+B\frac{e^{\lambda x/c}-1}{e^{\lambda}-1},
    \qquad x\in\mathbb{R}.
\end{equation}
This function satisfies
\[
    f_{\lambda}(0)=a,
    \qquad
    f_{\lambda}(c)=a+B=b.
\]
Its first two derivatives are
\begin{align}
    f_{\lambda}'(x)
    &=\frac{B\lambda}{c(e^{\lambda}-1)}e^{\lambda x/c},
    \label{eq:f-lambda-first-derivative}\\
    f_{\lambda}''(x)
    &=\frac{B\lambda^{2}}{c^{2}(e^{\lambda}-1)}e^{\lambda x/c}.
    \label{eq:f-lambda-second-derivative}
\end{align}
Since \(\lambda\) and \(e^{\lambda}-1\) have the same sign,
\eqref{eq:f-lambda-first-derivative} shows that
\[
    f_{\lambda}'(x)>0
    \qquad\text{for all }x\in\mathbb{R}.
\]
Furthermore, \eqref{eq:f-lambda-second-derivative} shows that
\[
    \lambda<0 \implies f_{\lambda}''<0,
    \qquad
    \lambda>0 \implies f_{\lambda}''>0.
\]
Thus negative values of \(\lambda\) yield strictly concave functions, whereas
positive values yield strictly convex functions. We refer to \Cref{fig:exponential-realizing-family}.

\begin{figure}[t]
    \centering
    \resizebox{\linewidth}{!}{

\definecolor{UTBurntOrange}{HTML}{BF5700}
\definecolor{UTCharcoal}{HTML}{333F48}
\definecolor{UTLightOrange}{HTML}{F8971F}
\definecolor{UTBeige}{HTML}{D6D2C4}
\definecolor{UTDeepBlue}{HTML}{005F86}
\definecolor{UTTeal}{HTML}{00A9B7}

\begin{tikzpicture}[
    every node/.style={font=\small},
    panel/.style={
        anchor=north west,
        draw=UTBeige,
        rounded corners=2pt,
        fill=white,
        text width=5.0cm,
        align=left,
        inner sep=6pt
    }
]
\begin{axis}[
    name=mainplot,
    at={(0,0)},
    anchor=south west,
    width=9.25cm,
    height=6.35cm,
    scale only axis,
    xmin=0, xmax=1,
    ymin=0, ymax=1,
    axis lines=left,
    axis line style={UTCharcoal, line width=0.9pt},
    xtick={0,0.25,0.5,0.75,1},
    ytick={0,0.25,0.5,0.75,1},
    tick label style={font=\scriptsize},
    tick style={UTCharcoal},
    grid=both,
    grid style={UTBeige, opacity=0.55},
    clip=true,
    samples=280,
    domain=0:1,
]
    \addplot[UTBurntOrange, line width=1.7pt]
        {(exp(-4*x)-1)/(exp(-4)-1)};
    \addplot[UTLightOrange, line width=1.4pt]
        {(exp(-1.4*x)-1)/(exp(-1.4)-1)};
    \addplot[UTCharcoal, dashed, line width=1.15pt] {x};
    \addplot[UTTeal, line width=1.4pt]
        {(exp(1.4*x)-1)/(exp(1.4)-1)};
    \addplot[UTDeepBlue, line width=1.7pt]
        {(exp(4*x)-1)/(exp(4)-1)};
    \addplot[only marks, mark=*, mark size=2.0pt, UTCharcoal]
        coordinates {(0,0) (1,1)};
\end{axis}

\node[panel] (p1) at ([xshift=8mm]mainplot.north east) {
    \textbf{Realizing family}\par\medskip
    \(\displaystyle
      f_\lambda(x)=a+(b-a)
      \frac{e^{\lambda x/c}-1}{e^\lambda-1}.
    \)\par\medskip
    Horizontal coordinate: \(x/c\).\par
    Vertical coordinate: \(\bigl(f_\lambda(x)-a\bigr)/(b-a)\).
};

\node[panel] (p2) at ([yshift=-3.5mm]p1.south west) {
    \textbf{Curve key}\par\medskip
    \textcolor{UTBurntOrange}{\(\lambda=-4\): dark orange.}\par
    \textcolor{UTLightOrange}{\(\lambda=-1.4\): light orange.}\par
    \textcolor{UTCharcoal}{\(\lambda=0\): dashed dark gray chord.}\par
    \textcolor{UTTeal}{\(\lambda=1.4\): teal.}\par
    \textcolor{UTDeepBlue}{\(\lambda=4\): deep blue.}
};

\node[panel] (p3) at ([yshift=-3.5mm]p2.south west) {
    \textbf{Shape of the profiles}\par\medskip
    \textcolor{UTBurntOrange}{\(\lambda<0\): strictly increasing and strictly concave.}\par\medskip
    \textcolor{UTCharcoal}{\(\lambda=0\): affine limiting chord.}\par\medskip
    \textcolor{UTDeepBlue}{\(\lambda>0\): strictly increasing and strictly convex.}
};
\end{tikzpicture}}
    \caption{The normalized exponential family used in the sufficiency
    argument. Negative values of \(\lambda\) give increasing concave
    profiles, positive values give increasing convex profiles, and
    \(\lambda=0\) is the affine limiting chord.}
    \label{fig:exponential-realizing-family}
\end{figure}

A direct integration of \eqref{eq:exponential-family} gives
\begin{align}
    \int_{0}^{c}f_{\lambda}(x)\,dx
    &=ca+B\int_{0}^{c}
      \frac{e^{\lambda x/c}-1}{e^{\lambda}-1}\,dx \notag\\
    &=ca+cB\left(\frac{1}{\lambda}
      -\frac{1}{e^{\lambda}-1}\right) \notag\\
    &=ca+cB J(\lambda).
    \label{eq:integral-f-lambda}
\end{align}

In the concave case, \eqref{eq:concave-conditions} is equivalent to
\[
    \frac12<\theta<1.
\]
By Lemma~\ref{lem:properties-of-J}, there is a unique \(\lambda<0\) such that
\(J(\lambda)=\theta\).  Equation \eqref{eq:integral-f-lambda} then gives
\[
    \int_{0}^{c}f_{\lambda}(x)\,dx
    =ca+cB\theta=d,
\]
and \(f_{\lambda}\) is strictly increasing and strictly concave.

In the convex case, \eqref{eq:convex-conditions} is equivalent to
\[
    0<\theta<\frac12.
\]
Again by Lemma~\ref{lem:properties-of-J}, there is a unique \(\lambda>0\) such
that \(J(\lambda)=\theta\).  The same integral identity gives the prescribed
value \(d\), and \(f_{\lambda}\) is strictly increasing and strictly convex.
\end{proof}

\begin{remark}[Why all inequalities are strict]
The value \(d=c(a+b)/2\) is the integral of the affine chord joining
\((0,a)\) to \((c,b)\), and therefore corresponds to the zero-curvature
limit \(\lambda\to0\).  The endpoint values \(d=ca\) and \(d=cb\) arise only
as the limiting values \(\lambda\to+\infty\) and
\(\lambda\to-\infty\), respectively.  None of these boundary values can be
attained under the required strict monotonicity and strict curvature
conditions.
\end{remark}

\section{Geometric results on the Hugoniot locus}\label{Hugoniot_facts}

\subsection{Hugoniot shock curves and relative entropy for the $p$-system}
\label{sec:p-system-hugoniot}

Let $I\subset (0,\infty)$ be an open interval, and let
$p\in C^3(I)$ satisfy
\begin{equation}
    p'(v)<0,
    \qquad
    p''(v)>0,
    \qquad v\in I.
    \label{eq:p-system-hugoniot-assumptions}
\end{equation}
We consider the one-dimensional $p$-system in Lagrangian coordinates \eqref{eq:p-system}.

\subsubsection{Rankine--Hugoniot relations and shock curves}

Fix a state
\begin{equation}
    U=(v_U,u_U)^{\mathsf T}\in I\times\mathbb R.
\end{equation}
We regard $U$ as the left state.  A right state
$V=(v,u)^{\mathsf T}$ is connected to $U$ by a discontinuity of speed $\sigma$ if
and only if the Rankine--Hugoniot relation
\begin{equation}
    \sigma(V-U)=F(V)-F(U)
\end{equation}
holds.  In components, this is
\begin{equation}
    \sigma(v-v_U)=-(u-u_U),
    \qquad
    \sigma(u-u_U)=p(v)-p(v_U).
    \label{eq:p-system-rankine-hugoniot-components}
\end{equation}
Eliminating $\sigma$ gives the Hugoniot-locus equation
\begin{equation}
    (u-u_U)^2
    =-\bigl(p(v)-p(v_U)\bigr)(v-v_U).
    \label{eq:p-system-hugoniot-locus}
\end{equation}
For $v\neq v_U$, introduce the secant sound speed
\begin{equation}
    \widehat c_U(v)
    :=\sqrt{-\frac{p(v)-p(v_U)}{v-v_U}}.
    \label{eq:p-system-secant-sound-speed}
\end{equation}
Because $p'<0$, the quantity under the square root is positive.  We
extend \eqref{eq:p-system-secant-sound-speed} continuously to $v=v_U$
by setting
\begin{equation}
    \widehat c_U(v_U):=c(v_U)=\sqrt{-p'(v_U)}.
\end{equation}

For the convention in which $U$ is the left state, the Lax-admissible
$k$-shock curve issuing from $U$ is denoted by $S_U^k$, and it is
parameterized by the specific-volume coordinate $v$:
\begin{align}
    S_U^1(v)
    &:=
    \begin{pmatrix}
        v\\[1mm]
        u_U+(v-v_U)\widehat c_U(v)
    \end{pmatrix},
    & v&<v_U,
    \label{eq:p-system-1-shock-curve}\\[2mm]
    S_U^2(v)
    &:=
    \begin{pmatrix}
        v\\[1mm]
        u_U-(v-v_U)\widehat c_U(v)
    \end{pmatrix},
    & v&>v_U.
    \label{eq:p-system-2-shock-curve}
\end{align}
Both curves are extended to their base point by
\begin{equation}
    S_U^1(v_U)=S_U^2(v_U)=U.
\end{equation}
The corresponding shock speeds are denoted by $\sigma_U^k(v)$ and are
\begin{align}
    \sigma_U^1(v)
    &:=-\widehat c_U(v)
    =-\sqrt{-\frac{p(v)-p(v_U)}{v-v_U}},
    && v<v_U,
    \label{eq:p-system-1-shock-speed}\\[2mm]
    \sigma_U^2(v)
    &:=\widehat c_U(v)
    =\sqrt{-\frac{p(v)-p(v_U)}{v-v_U}},
    && v>v_U.
    \label{eq:p-system-2-shock-speed}
\end{align}
Equivalently, for either family,
\begin{equation}
    \bigl(\sigma_U^k(v)\bigr)^2
    =-\frac{p(v)-p(v_U)}{v-v_U},
    \qquad
    \sigma_U^k(v)
    =-\frac{\bigl[S_U^k(v)\bigr]_2-u_U}{v-v_U},
    \label{eq:p-system-shock-speed-rh-formulas}
\end{equation}
where $\bigl[S_U^k(v)\bigr]_2$ denotes the second component of $S_U^k(v)$.
The continuous extensions at the base state satisfy
\begin{equation}
    \sigma_U^1(v_U)=\lambda_1(U),
    \qquad
    \sigma_U^2(v_U)=\lambda_2(U).
\end{equation}
Moreover, the convexity assumption $p''>0$ gives the Lax inequalities
\begin{align}
    \lambda_1\bigl(S_U^1(v)\bigr)
    &<\sigma_U^1(v)<\lambda_1(U),
    && v<v_U,
    \label{eq:p-system-lax-1}\\
    \lambda_2\bigl(S_U^2(v)\bigr)
    &<\sigma_U^2(v)<\lambda_2(U),
    && v>v_U.
    \label{eq:p-system-lax-2}
\end{align}

\subsubsection{Derivatives of the $2$-shock speed}
\label{subsec:p-system-shock-speed-derivatives}

Fix $U=(v_U,u_U)^{\mathsf T}$ and let $v>v_U$.  For brevity, write
\begin{equation}
    h:=v-v_U,
    \qquad
    \sigma:=\sigma_U^2(v).
\end{equation}
By \eqref{eq:p-system-2-shock-speed},
\begin{equation}
    h\sigma^2=p(v_U)-p(v).
    \label{eq:p-system-speed-identity}
\end{equation}
Differentiating \eqref{eq:p-system-speed-identity} with respect to $v$
gives
\begin{equation}
    \sigma^2+2h\sigma\frac{d\sigma}{dv}=-p'(v).
    \label{eq:p-system-speed-first-differentiated-identity}
\end{equation}
Consequently,
\begin{equation}
    \frac{d}{dv}\sigma_U^2(v)
    =-\frac{p'(v)+\bigl(\sigma_U^2(v)\bigr)^2}
    {2\sigma_U^2(v)(v-v_U)}
    \label{eq:p-system-speed-first-derivative}
\end{equation}
and, equivalently,
\begin{equation}
    \boxed{
    \frac{d}{dv}\sigma_U^2(v)
    =\frac{p(v)-p(v_U)-p'(v)(v-v_U)}
    {2\sigma_U^2(v)(v-v_U)^2}
    }.
    \label{eq:p-system-speed-first-derivative-pressure-form}
\end{equation}
Since $p''>0$,
\begin{equation}
    p'(v)+\bigl(\sigma_U^2(v)\bigr)^2
    =\frac{1}{v-v_U}
    \int_{v_U}^{v}\bigl(p'(v)-p'(s)\bigr)\,ds>0.
\end{equation}
It follows that
\begin{equation}
    \frac{d}{dv}\sigma_U^2(v)<0,
    \qquad v>v_U.
    \label{eq:p-system-speed-monotonicity}
\end{equation}
Thus the $2$-shock speed is strictly decreasing along the $2$-shock
curve when the curve is parameterized by the right-state specific
volume.

\subsubsection{2-shocks strengthen in the relative entropy ``norm''}
\label{subsec:p-system-relative-entropy}

For states $A=(v_A,u_A)^{\mathsf T}$ and $B=(v_B,u_B)^{\mathsf T}$, the relative entropy is
\begin{equation}
    \eta(A\mid B)
    :=\eta(A)-\eta(B)-\nabla\eta(B)\cdot(A-B).
    \label{eq:p-system-relative-entropy-definition}
\end{equation}
For the entropy \eqref{eq:p-system-natural-entropy}, this becomes
\begin{equation}
    \eta(A\mid B)
    =\frac{1}{2}(u_A-u_B)^2
    +\int_{v_A}^{v_B}\bigl(p(s)-p(v_B)\bigr)\,ds.
    \label{eq:p-system-relative-entropy-explicit}
\end{equation}

Now fix a left state $U_L=(v_L,u_L)^{\mathsf T}$ and consider the $2$-shock curve
$S_{U_L}^2(v)$ for $v>v_L$.  Define
\begin{equation}
    \mathcal E_L(v)
    :=\eta\bigl(U_L\mid S_{U_L}^2(v)\bigr).
    \label{eq:p-system-relative-entropy-along-shock-definition}
\end{equation}
Since
\begin{equation}
    S_{U_L}^2(v)
    =\begin{pmatrix}
        v\\[1mm]
        u_L-(v-v_L)\sigma_{U_L}^2(v)
    \end{pmatrix}
\end{equation}
and
\begin{equation}
    (v-v_L)\bigl(\sigma_{U_L}^2(v)\bigr)^2
    =p(v_L)-p(v),
\end{equation}
formula \eqref{eq:p-system-relative-entropy-explicit} gives
\begin{align}
    \mathcal E_L(v)
    &=
    \frac{1}{2}(v-v_L)^2\bigl(\sigma_{U_L}^2(v)\bigr)^2
    +\int_{v_L}^{v}\bigl(p(s)-p(v)\bigr)\,ds
    \notag\\
    &=\frac{1}{2}(v-v_L)\bigl(p(v_L)-p(v)\bigr)
    +\int_{v_L}^{v}\bigl(p(s)-p(v)\bigr)\,ds.
    \label{eq:p-system-relative-entropy-along-shock}
\end{align}
Differentiating with respect to $v$ yields
\begin{equation}
    \frac{d}{dv}
    \eta\bigl(U_L\mid S_{U_L}^2(v)\bigr)
    =\frac{1}{2}\left[
        p(v_L)-p(v)-3(v-v_L)p'(v)
    \right].
    \label{eq:p-system-relative-entropy-derivative}
\end{equation}
Using
\begin{equation}
    p(v_L)-p(v)
    =(v-v_L)\bigl(\sigma_{U_L}^2(v)\bigr)^2,
\end{equation}
we obtain the equivalent form
\begin{equation}
    \frac{d}{dv}
    \eta\bigl(U_L\mid S_{U_L}^2(v)\bigr)
    =\frac{v-v_L}{2}
    \left[
        \bigl(\sigma_{U_L}^2(v)\bigr)^2-3p'(v)
    \right].
    \label{eq:p-system-relative-entropy-derivative-speed-form}
\end{equation}
Because $v>v_L$, $\bigl(\sigma_{U_L}^2(v)\bigr)^2>0$, and $p'(v)<0$,
we conclude that
\begin{equation}
    \frac{d}{dv}
    \eta\bigl(U_L\mid S_{U_L}^2(v)\bigr)>0,
    \qquad v>v_L.
    \label{eq:p-system-relative-entropy-monotonicity}
\end{equation}
At the base point, the continuous extension satisfies
\begin{equation}
    \left.
    \frac{d}{dv}
    \eta\bigl(U_L\mid S_{U_L}^2(v)\bigr)
    \right|_{v=v_L^+}=0.
\end{equation}

\subsubsection{Monotonicity of the $2$-shock curve}\label{sec:mono_2shock}
\begin{lemma}[Monotonicity of the $2$-shock curve]
\label{lem:p-system-2-shock-curve-monotonicity}
Let $U=(v_U,u_U)^{\mathsf T}$, and suppose that $p'(v)<0$. The $2$-shock
curve issuing from $U$ is given by
\begin{equation}
    S_U^2(v)
    =
    \begin{pmatrix}
        v\\[1mm]
        u_U-\sqrt{(v-v_U)\bigl(p(v_U)-p(v)\bigr)}
    \end{pmatrix},
    \qquad v>v_U.
    \label{eq:p-system-2-shock-curve-monotonicity-form}
\end{equation}
Equivalently,
\begin{equation}
    S_U^2(v)
    =
    \begin{pmatrix}
        v\\[1mm]
        u_U-(v-v_U)\sigma_U^2(v)
    \end{pmatrix},
    \qquad
    \sigma_U^2(v)
    =
    \sqrt{\frac{p(v_U)-p(v)}{v-v_U}}.
    \label{eq:p-system-2-shock-curve-speed-form}
\end{equation}
The velocity component of $S_U^2(v)$ is strictly decreasing as a
function of $v$. More precisely,
\begin{equation}
    \frac{d}{dv}S_U^2(v)
    =
    \begin{pmatrix}
        1\\[3mm]
        \displaystyle
        \frac{p'(v)-\bigl(\sigma_U^2(v)\bigr)^2}
        {2\sigma_U^2(v)}
    \end{pmatrix},
    \qquad v>v_U,
    \label{eq:p-system-2-shock-curve-derivative}
\end{equation}
and hence
\begin{equation}
    \frac{d}{dv}\bigl[S_U^2(v)\bigr]_2<0,
    \qquad v>v_U.
    \label{eq:p-system-2-shock-curve-decreasing}
\end{equation}
Moreover, the right derivative at the base state is
\begin{equation}
    \left.
    \frac{d}{dv}\bigl[S_U^2(v)\bigr]_2
    \right|_{v=v_U^+}
    =
    -\sqrt{-p'(v_U)}
    =
    -\lambda_2(U).
    \label{eq:p-system-2-shock-curve-base-derivative}
\end{equation}
\end{lemma}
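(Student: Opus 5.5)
The plan is to compute directly from the explicit parameterization \eqref{eq:p-system-2-shock-curve-monotonicity-form}. First I check the equivalence of the two forms. For $v>v_U$, $p'<0$ gives $p(v_U)-p(v)>0$, so
\[
\sqrt{(v-v_U)(p(v_U)-p(v))}=(v-v_U)\sqrt{\tfrac{p(v_U)-p(v)}{v-v_U}}=(v-v_U)\sigma_U^2(v).
\]
This matches \eqref{eq:p-system-2-shock-curve} and \eqref{eq:p-system-2-shock-speed}. Note also that $\sigma_U^2(v)>0$ for $v>v_U$.

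Next I differentiate the velocity component $w(v):=-(v-v_U)\sigma_U^2(v)$. I will avoid the derivative of $\sigma$, whose sign would need $p''>0$. Instead I differentiate the identity $w(v)^2=(v-v_U)(p(v_U)-p(v))$ from \eqref{eq:p-system-hugoniot-locus}. This gives
\[
2ww'=p(v_U)-p(v)-(v-v_U)p'(v)=(v-v_U)\bigl[(\sigma_U^2)^2-p'(v)\bigr].
\]
Dividing by $2w=-2(v-v_U)\sigma_U^2(v)\neq0$ yields
\[
w'(v)=\frac{p'(v)-(\sigma_U^2(v))^2}{2\sigma_U^2(v)},
\]
which is \eqref{eq:p-system-2-shock-curve-derivative}. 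Its numerator is strictly negative because $p'<0$ and $(\sigma_U^2)^2>0$, and its denominator is positive. This gives \eqref{eq:p-system-2-shock-curve-decreasing}, using only $p'<0$ as the lemma asserts.

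For the base derivative \eqref{eq:p-system-2-shock-curve-base-derivative}, I use $\sigma_U^2(v)\to c(v_U)=\sqrt{-p'(v_U)}$ as $v\to v_U^+$, which holds by the definition of the difference quotient since $p\in C^1$. The derivative formula then tends to
\[
\frac{-2c(v_U)^2}{2c(v_U)}=-c(v_U)=-\lambda_2(U).
\]
To turn this into a one-sided derivative, I can either compute directly $w(v)/(v-v_U)=-\sigma_U^2(v)\to-c(v_U)$, or apply the mean value theorem to $w$ on $[v_U,v]$, since $w$ is continuous at $v_U$ with $w(v_U)=0$. The direct difference quotient is the cleanest route.

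No step here is a real obstacle. The only mild subtlety is not relying on the sign of $d\sigma/dv$.
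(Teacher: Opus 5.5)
Your proposal is correct and follows essentially the same computation as the paper. The paper differentiates $u_U-\sqrt{(v-v_U)(p(v_U)-p(v))}$ directly by the chain rule, which is equivalent to your implicit differentiation of $w^2$; it then substitutes $p(v_U)-p(v)=(v-v_U)(\sigma_U^2)^2$ and reads off the sign using only $p'<0$. For the base derivative the paper only passes to the limit $v\to v_U^+$ in the derivative formula, so your difference-quotient observation $w(v)/(v-v_U)=-\sigma_U^2(v)\to-\sqrt{-p'(v_U)}$ is a small but useful addition that makes the one-sided-derivative claim rigorous.
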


\begin{proof}
Denote the velocity component of the shock curve by
\begin{equation}
    u_U^2(v)
    :=
    \bigl[S_U^2(v)\bigr]_2
    =
    u_U-\sqrt{(v-v_U)\bigl(p(v_U)-p(v)\bigr)}.
\end{equation}
Differentiating with respect to $v$ gives
\begin{align}
    \frac{d}{dv}u_U^2(v)
    &=
    -\frac{
        p(v_U)-p(v)-(v-v_U)p'(v)
    }{
        2\sqrt{(v-v_U)\bigl(p(v_U)-p(v)\bigr)}
    }.
    \label{eq:p-system-2-shock-curve-derivative-pressure-form}
\end{align}
Using
\begin{equation}
    p(v_U)-p(v)
    =
    (v-v_U)\bigl(\sigma_U^2(v)\bigr)^2
\end{equation}
in \eqref{eq:p-system-2-shock-curve-derivative-pressure-form}, we
obtain
\begin{equation}
    \frac{d}{dv}u_U^2(v)
    =
    \frac{
        p'(v)-\bigl(\sigma_U^2(v)\bigr)^2
    }{
        2\sigma_U^2(v)
    }.
    \label{eq:p-system-2-shock-curve-derivative-speed-form}
\end{equation}
Since $p'(v)<0$ and $\sigma_U^2(v)>0$, it follows immediately that
\begin{equation}
    p'(v)-\bigl(\sigma_U^2(v)\bigr)^2<0.
\end{equation}
Consequently,
\begin{equation}
    \frac{d}{dv}u_U^2(v)<0,
    \qquad v>v_U.
\end{equation}
Thus the velocity component of the $2$-shock curve is strictly
decreasing with respect to the specific-volume parameter.

Finally,
\begin{equation}
    \lim_{v\to v_U^+}\sigma_U^2(v)
    =
    \sqrt{-p'(v_U)}.
\end{equation}
Passing to the limit in
\eqref{eq:p-system-2-shock-curve-derivative-speed-form} yields
\begin{align}
    \lim_{v\to v_U^+}\frac{d}{dv}u_U^2(v)
    &=
    \frac{
        p'(v_U)-\bigl(-p'(v_U)\bigr)
    }{
        2\sqrt{-p'(v_U)}
    }\\
    &=
    \frac{p'(v_U)}{\sqrt{-p'(v_U)}}
    =
    -\sqrt{-p'(v_U)}
    =
    -\lambda_2(U).
\end{align}
\end{proof}
Thus the graph $u=u_U^2(v)$ defining the $2$-shock curve is strictly decreasing.
Notice that this conclusion only requires $p'<0$; the additional assumption
$p''>0$ is not needed for this particular monotonicity statement.

\subsection{Shock control at remote endpoints}
\begin{lemma}[Shock control at remote endpoints]
\label{lem:remote-endpoint-control}

Consider the $p$-system \eqref{eq:system}.

Let $p\in C^3((0,\infty))$ satisfy $p'<0$. Suppose that, for some
$d>0$, $p''>0$ on $[d,\infty)$ and
$p(v)\to p_\infty\in\mathbb R$ as $v\to\infty$.
Fix $0<\underline v<d$ and $\delta>0$, and put
\[
 \mu=\min_{[\underline v,d]}(-p')>0,
 \qquad D=p(\underline v)-p_\infty.
\]
Choose
\begin{equation}\label{eq:remote-size}
 b>d+\delta+D/\mu.
\end{equation}
For $y\in[b-\delta,b+\delta]$ and $\underline v\le v<y$,
\begin{equation}\label{eq:remote-secant}
 p'(v)<\frac{p(y)-p(v)}{y-v}<p'(y)<0.
\end{equation}
Every nontrivial Rankine--Hugoniot discontinuity whose two volume
coordinates are at least $\underline v$, with at least one in
$[b-\delta,b+\delta]$, satisfies the natural entropy inequality if and
only if $\sigma(v_+-v_-)>0$. Every such entropy discontinuity is strict
Lax: negative-speed discontinuities satisfy
\[
 v_+<v_-,\qquad \lambda_1(U_+)<\sigma<\lambda_1(U_-),
\]
and positive-speed discontinuities satisfy
\[
 v_+>v_-,\qquad \lambda_2(U_+)<\sigma<\lambda_2(U_-).
\]
The natural entropy production is strictly negative.
\end{lemma}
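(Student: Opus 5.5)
The plan has three steps: first the secant inequality \eqref{eq:remote-secant}, then the entropy characterization, and finally the Lax inequalities.

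\textbf{Secant inequality.} Fix $y\in[b-\delta,b+\delta]$ and $\underline v\le v<y$, and split into two cases.

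If $v\ge d$, then $p$ is strictly convex on $[v,y]$. The mean value theorem gives the secant slope as $p'(\xi)$ for some $\xi\in(v,y)$, and strict monotonicity of $p'$ gives $p'(v)<p'(\xi)<p'(y)$.

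If $\underline v\le v<d$, I compare the secant with the tail. The numerator satisfies $p(y)-p(v)\ge p_\infty-p(\underline v)=-D$, since $p$ is decreasing to $p_\infty$. The denominator satisfies $y-v>b-\delta-d>D/\mu$ by \eqref{eq:remote-size}. Hence the secant slope is strictly greater than $-\mu\ge p'(v)$, which is the left inequality. For the right inequality, convexity on $[d,\infty)$ together with $p\to p_\infty$ forces $p'(w)\uparrow 0$, so $p'<0$ is increasing on the tail. I then write
\[
p(y)-p(v)=\int_v^d p'+\int_d^y p'.
\]
On $[d,y]$ each $p'(s)<p'(y)$. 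On $[v,d]$ I want $p'(s)\le p'(y)$. This is the main obstacle: it needs $-\mu\le p'(y)$, i.e.\ $|p'(y)|\le\mu$. I expect to get it from the tail decay, because convexity and boundedness of $p$ give $|p'(y)|\le (p(d)-p_\infty)/(y-d)$, which is small for $b$ large; if the stated lower bound on $b$ is insufficient, I would enlarge $b$ accordingly. With $p'(s)\le p'(y)$ on $[v,d]$ and strict inequality on $[d,y]$, the average of $p'$ over $[v,y]$ is strictly less than $p'(y)$, which gives the right inequality.

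\textbf{Entropy characterization.} For a Rankine--Hugoniot pair with $\sigma\ne 0$, I compute the production $[q]-\sigma[\eta]$. The Rankine--Hugoniot relations \eqref{eq:p-system-rankine-hugoniot-components} give $\sigma^2=-[p]/[v]$ and $[u]=-\sigma[v]$. Substituting and simplifying, the production reduces to
\[
\sigma\left(\int_{v_-}^{v_+}p\,ds-\tfrac12(p_-+p_+)[v]\right),
\]
which is $\sigma$ times minus the trapezoid error $\mathcal T$. The pair is entropic if and only if $\operatorname{sign}(\sigma)\operatorname{sign}(\mathcal T)$ is favorable. Writing $v_{\min}<v_{\max}$ with $v_{\max}$ in the remote window, I need the trapezoid error to have the sign it would have for a convex $p$ on $[v_{\min},v_{\max}]$. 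I would derive this from \eqref{eq:remote-secant}: the function $g(v)=$ trapezoid error over $[v,y]$ has $g(y)=0$ and $g'(v)=\tfrac12\bigl[(p(y)-p(v))-p'(v)(y-v)\bigr]$, whose sign is fixed by the left secant inequality. Integrating $g'$ fixes the sign of $g$, and this yields ``entropic $\iff\sigma(v_+-v_-)>0$'' with strictly negative production.

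\textbf{Lax inequalities.} Since $\lambda_{1,2}=\mp\sqrt{-p'}$ and $\sigma^2$ equals minus the secant slope, the strict Lax inequalities are exactly the two strict inequalities in \eqref{eq:remote-secant}, read with the appropriate endpoints. For negative speed the orientation gives $v_+<v_-$; for positive speed it gives $v_+>v_-$.
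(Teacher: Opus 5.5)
Your proof is correct. Its skeleton matches the paper's: the core/tail split for the left secant inequality (your core estimate is essentially the paper's), the same trapezoid formula for $[q]-\sigma[\eta]$, and the Lax inequalities read off from the secant ordering.

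Where you differ is in how you get everything else from the left inequality. The paper uses a single device, $a_y(v)=\frac{p(v)-p(y)}{y-v}$. It satisfies $a_y'(v)=\frac{p'(v)+a_y(v)}{y-v}<0$ precisely because the left inequality holds at every point of $[\underline v,y)$. Since $a_y(v)\to -p'(y)$ as $v\uparrow y$, the right inequality follows with no second look at the tail. Moreover, $a_y(t)<a_y(v)$ for $v<t<y$ says that $p$ lies strictly below its chord, which fixes the trapezoid sign.

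You instead prove the right inequality through a separate tail estimate $|p'(y)|\le\mu$, and get the trapezoid sign through $g'$. Note that $g'(v)=-\tfrac12(y-v)^2a_y'(v)$, so your second device carries exactly the paper's sign information. Monotonicity of the secant slope would therefore have given you the right inequality for free. The paper's route is more economical; yours is more hands-on; both are valid.

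Three small points to tighten.

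\emph{Do not enlarge $b$.} Enlarging it would change the lemma, and it is unnecessary: your own tail bound already closes the gap with the stated $b$. By strict convexity on $[d,y]$ and monotonicity of $p$,
$-p'(y)<\frac{p(d)-p(y)}{y-d}<\frac{p(\underline v)-p_\infty}{b-\delta-d}=\frac{D}{b-\delta-d}<\mu$,
so $p'(s)\le-\mu<p'(y)$ on $[v,d]$, as you need.

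\emph{The case where the smaller volume is remote.} Your entropy and Lax steps tacitly assume the larger volume lies in the window $[b-\delta,b+\delta]$. If instead the smaller one does, both volumes exceed $b-\delta>d$. Then the below-chord property and the strict secant ordering follow from strict convexity on the tail (your mean-value case), as the paper remarks.

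\emph{$\sigma\neq 0$ need not be assumed.} A nontrivial jump has $[v]\neq 0$, so $\sigma^2=-[p]/[v]>0$ because $p$ is strictly decreasing.
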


\subsection{Proof of \Cref{lem:remote-endpoint-control}}\label{sec:proof:lem:remote-endpoint-control}
\subsubsection{Why moving the endpoints far to the right gives the crucial secant inequality}

The pressure has a bounded strictly convex tail:
\[
p''>0\quad\text{on }[d,\infty),
\qquad
p(v)\to p_\infty\in\R.
\]
There is no convexity assumption on the core $[\underline v,d]$.

Set
\[
\mu:=\min_{[\underline v,d]}(-p')>0,
\qquad
D:=p(\underline v)-p_\infty.
\]
Choose an endpoint window $[b-\delta,b+\delta]$ with
\begin{equation}
 b>d+\delta+\frac{D}{\mu}.
\eqref{eq:remote-size}
\end{equation}

The role of each quantity is simple. The pressure can change by at most $D$ between any allowed core volume and infinity. But the distance from the core to a remote endpoint is very large. Therefore the average slope across that long interval is small compared with every $-p'$ inside the core.

\subsubsection{Fix the remote volume and define the positive secant slope}

Fix
\[
y\in[b-\delta,b+\delta].
\]
For $v<y$, define
\[
a_y(v):=\frac{p(v)-p(y)}{y-v}>0.
\]
This is the average value of $-p'$ on $[v,y]$:
\[
a_y(v)=\frac{1}{y-v}\int_v^y[-p'(s)]\,ds.
\]
We first prove
\begin{equation}
a_y(v)<-p'(v).
\label{eq:remote-left-slope-bound}
\end{equation}

\subsubsection{When $v$ lies in the core}

For $\underline v\le v\le d$,
\[
p(v)-p(y)\le p(\underline v)-p_\infty=D,
\]
and
\[
y-v\ge b-\delta-d.
\]
Hence
\[
a_y(v)\le \frac{D}{b-\delta-d}.
\]
By (1),
\[
\frac{D}{b-\delta-d}<\mu.
\]
Finally,
\[
\mu\le -p'(v).
\]
Combining the three inequalities proves \eqref{eq:remote-left-slope-bound} in the core.

\textbf{No sign of $p''$ in the core was used.}

\subsubsection{When $v$ lies in the tail}

For $d<v<y$, strict convexity means $p'$ is strictly increasing, so $-p'$ is strictly decreasing. Therefore its average on $[v,y]$ is strictly below its value at the left endpoint:
\[
\frac{1}{y-v}\int_v^y[-p'(s)]\,ds<-p'(v).
\]
That is again \eqref{eq:remote-left-slope-bound}.

\subsection{Differentiate the secant slope}

Keeping $y$ fixed,
\[
 a_y'(v)
 =\frac{p'(v)(y-v)+p(v)-p(y)}{(y-v)^2}
 =\frac{p'(v)+a_y(v)}{y-v}.
\]
By \eqref{eq:remote-left-slope-bound}, the numerator is negative. Thus
\[
a_y'(v)<0.
\]
On the other hand,
\[
\lim_{v\uparrow y}a_y(v)=-p'(y).
\]
Because $a_y$ is strictly decreasing,
\[
a_y(v)>-p'(y)\qquad(v<y).
\]
We have proved
\begin{equation}
\boxed{
-p'(v)>\frac{p(v)-p(y)}{y-v}>-p'(y)
}.
\label{eq:remote-positive-secant}
\end{equation}
Equivalently,
\[
p'(v)<\frac{p(y)-p(v)}{y-v}<p'(y).
\]

\textbf{This is the replacement for global convexity.} It supplies the required derivative--secant ordering for every interval having a remote endpoint, even though it need not hold for intervals contained in the core.

\subsubsection{Why that inequality identifies every relevant entropy shock}

There are two separate deductions: an entropy-sign calculation, and a characteristic-speed calculation.

\subsection{First obtain the chord inequality}

Fix $v<t<y$. Since $a_y$ is decreasing,
\[
a_y(t)<a_y(v).
\]
Multiply by $y-t>0$:
\[
p(t)-p(y)<a_y(v)(y-t).
\]
Thus
\begin{equation}
p(t)<p(y)+a_y(v)(y-t).
\end{equation}
The right-hand side is the affine function joining $(v,p(v))$ and $(y,p(y))$. Indeed, its value at $y$ is $p(y)$, and its value at $v$ is
\[
p(y)+a_y(v)(y-v)=p(v).
\]
Therefore the pressure graph lies strictly below this chord.

If the other volume is larger than the remote endpoint, both volumes lie in the strictly convex tail, so the same below-chord statement holds directly.

\subsubsection{Write Rankine--Hugoniot with the correct orientation}

Let $U_-$ and $U_+$ be the left and right states of a discontinuity of speed $\sigma$. Write
\[
[g]:=g(U_+)-g(U_-).
\]
Rankine--Hugoniot gives
\[
\sigma[v]=-[u],\qquad \sigma[u]=[p].
\]
For a nontrivial discontinuity, $[v]\ne0$: if $[v]=0$, the first equation forces $[u]=0$.
Eliminating $[u]$,
\begin{equation}
\sigma^2=-\frac{[p]}{[v]},\qquad [u]=-\sigma[v].
\end{equation}

\subsubsection{Compute the entropy production}

Use
\[
[up]=\frac{u_++u_-}{2}[p]
+\frac{p(v_+)+p(v_-)}{2}[u],
\]
and
\[
[\eta]=\frac{u_++u_-}{2}[u]-\int_{v_-}^{v_+}p(s)\,ds.
\]
Therefore
\[
\begin{aligned}
[q]-\sigma[\eta]
&=\frac{u_++u_-}{2}\bigl([p]-\sigma[u]\bigr)\\
&\quad+\frac{p(v_+)+p(v_-)}{2}[u]
+\sigma\int_{v_-}^{v_+}p(s)\,ds.
\end{aligned}
\]
The first line vanishes by Rankine--Hugoniot. Substituting $[u]=-\sigma[v]$ gives
\begin{equation}
\boxed{
[q]-\sigma[\eta]
=\sigma\left[
\int_{v_-}^{v_+}p(s)\,ds
-\frac{v_+-v_-}{2}\bigl(p(v_-)+p(v_+)\bigr)
\right].
}
\end{equation}
When $v_+>v_-$, the expression in brackets is strictly negative by the below-chord property. When $v_+<v_-$, reversing the integral makes it strictly positive.

Thus, for a jump incident on a remote endpoint,
\begin{equation}
[q]-\sigma[\eta]\le0
\quad\Longleftrightarrow\quad
\boxed{\sigma(v_+-v_-)>0}.
\label{eq:remote-admissible-sign}
\end{equation}
In particular, every admissible incident jump is strictly entropy dissipative.

\subsubsection{Derive the strict Lax inequalities}

For two volumes $v<y$, equation \eqref{eq:remote-positive-secant} says
\[
-p'(v)>a_y(v)>-p'(y).
\]
Taking positive square roots,
\[
\sqrt{-p'(v)}>\sqrt{a_y(v)}>\sqrt{-p'(y)}.
\]
If the admissible shock has negative speed, \eqref{eq:remote-admissible-sign} says $v_+<v_-$. Therefore
\[
-\sqrt{-p'(v_+)}<-\sqrt{a}<-\sqrt{-p'(v_-)}.
\]
Using $\sigma=-\sqrt a$,
\[
\boxed{\lambda_1(U_+)<\sigma<\lambda_1(U_-)}.
\]
For positive speed, $v_+>v_-$, and similarly
\[
\boxed{\lambda_2(U_+)<\sigma<\lambda_2(U_-)}.
\]
This proves the complete endpoint-control lemma.

\section{$L^2$ stability, $a$-contraction results}

We will use the following Lemma, which gives an
estimate on the quantity of entropy which is dissipated along a shock.

\begin{lemma}[Lax's entropy dissipation formula]\label{LDF}
For $k=1,2$ and for any shock
\[
\bigl(U_L,S^k_{U_L}(s),\sigma^k_{U_L}(s)\bigr),
\]
\begin{equation}
q\bigl(S^k_{U_L}(s)\bigr)
-\sigma^k_{U_L}(s)\eta\bigl(S^k_{U_L}(s)\bigr)
=
q(U_L)-\sigma^k_{U_L}(s)\eta(U_L)
+
\int_{v_*}^s
\frac{d}{d\tau}\sigma^k_{U_L}(\tau)
\eta\bigl(U_L\mid S^k_{U_L}(\tau)\bigr)\,d\tau,
\end{equation}
where $v_*$ is the base volume such that $S^k_{U_L}(v_*)=U_L$.
\end{lemma}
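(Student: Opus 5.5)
The plan is to apply the fundamental theorem of calculus in the shock parameter, using the derivative identity (15) from the proof of \Cref{prop:4.2} with comparison state $Y=U_L$, the base of the shock curve. Fix $k$ and $U_L$, and parametrize the Hugoniot branch $S^k_{U_L}(\tau)$ by the volume coordinate $\tau$, starting at the base volume $v_*$ where $S^k_{U_L}(v_*)=U_L$. For fixed $s$ I would define
\[
\Psi(\tau):=q\bigl(S^k_{U_L}(\tau)\bigr)-q(U_L)-\sigma^k_{U_L}(\tau)\bigl[\eta\bigl(S^k_{U_L}(\tau)\bigr)-\eta(U_L)\bigr].
\]
The claimed formula is exactly $\Psi(s)=\int_{v_*}^s (\sigma^k_{U_L})'(\tau)\,\eta(U_L\mid S^k_{U_L}(\tau))\,d\tau$. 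Since $\Psi(v_*)=0$ trivially, it suffices to show $\Psi'(\tau)=(\sigma^k_{U_L})'(\tau)\,\eta(U_L\mid S^k_{U_L}(\tau))$.

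To compute $\Psi'$, I would differentiate the Rankine--Hugoniot relation $F(S(\tau))-F(U_L)=\sigma(\tau)(S(\tau)-U_L)$, abbreviating $S=S^k_{U_L}$ and $\sigma=\sigma^k_{U_L}$. This gives $(DF(S)-\sigma I)S'=\sigma'(S-U_L)$. By entropy compatibility \eqref{eq:entropy-compatibility}, $\frac{d}{d\tau}q(S)=\nabla\eta(S)\cdot DF(S)S'$. Hence
\[
\Psi'=\nabla\eta(S)\cdot(DF(S)-\sigma I)S'-\sigma'[\eta(S)-\eta(U_L)]=\sigma'\bigl[\nabla\eta(S)\cdot(S-U_L)-\eta(S)+\eta(U_L)\bigr].
\]
The bracket is precisely $\eta(U_L\mid S)$ by \eqref{eq:relative-quantities}, which proves the derivative identity. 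Integrating from $v_*$ to $s$ then yields the formula. This is the same computation as (15), specialized to $Y=U_L$; there $\eta(Z\mid Y)$ vanishes and the bracket reduces to $\eta(U_L\mid S)$.

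The main obstacle is regularity at the base point $\tau=v_*$. For the $p$-system, $\sigma^k_{U_L}$ is defined through the secant speed $\widehat c_{U_L}$ and involves a square root, so I need $\tau\mapsto\sigma^k_{U_L}(\tau)$ to be $C^1$ on the half-open interval, with an integrable derivative up to $v_*$. The approach I would take is as follows. First write the secant slope as $\int_0^1 -p'(v_*+\theta(\tau-v_*))\,d\theta$. This expression is $C^2$ in $\tau$ and positive because $p'<0$. Its square root is therefore $C^2$, which makes $\sigma$ and $S$ $C^1$ up to and including $v_*$. With that regularity, $\Psi$ is $C^1$ on the closed interval between $v_*$ and $s$, and the fundamental theorem of calculus applies directly. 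The integrand is bounded, since $\eta(U_L\mid S)=O(|\tau-v_*|^2)$ near the base point.
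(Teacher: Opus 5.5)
Your proposal is correct and is essentially the paper's argument. The paper defers the proof to \cite{Leger2011}, but the computation it reproduces as (15) in the proof of \Cref{prop:4.2} is exactly yours: with comparison state $Y=U_L$, the quantity $\mathcal{Q}(S^k_{U_L}(\tau);U_L,\sigma^k_{U_L}(\tau))$ coincides with your $\Psi(\tau)$ by Rankine--Hugoniot, and integrating from $v_*$ gives the formula. Your integral representation of the secant slope makes $\sigma^k_{U_L}$ and $S^k_{U_L}$ of class $C^1$ up to the base volume (using only $p'<0$, not $p''>0$), which correctly supplies the regularity at $\tau=v_*$ that the paper leaves implicit.
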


\begin{remark}
This result can be dated back to Lax \cite{Lax1971ShockWavesEntropy}. A proof can be found in \cite{Leger2011}. 
\end{remark}

The following result tells us how the weight $a$ in $a$-contraction helps us localize around a particular state in state space.

\begin{lemma}\label{size_pi}
Fix $B>0$. Then there exists a constant $C>0$ depending on $B$ such that
the following holds:

If $U_L,U_R\in\mathcal{V}$ with $|U_L|,|U_R|\leq B$, then whenever
$\alpha,\theta\in(0,1)$ verify
\begin{equation}
\label{eq:3.6}
    \alpha < \frac{\theta^2}{C},
\end{equation}
then
\[
R_a := \left\{U \,\middle|\, \eta(U\mid U_L)
    \leq a\eta(U\mid U_R)\right\}
    \subset B_\theta(U_L)
    \qquad \text{for all } 0<a<\alpha.
\]
\end{lemma}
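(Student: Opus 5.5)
The plan is to show that the function $\Gamma_a(U):=\eta(U\mid U_L)-a\,\eta(U\mid U_R)$ is positive outside $B_\theta(U_L)$. The estimate will come from the two-sided bound of \Cref{l2control}, combined with the strict convexity of $\eta$. Throughout, I work on a bounded region of state space containing $\{|U|\le B\}$ and $\mathcal{V}$, and I let $C^*$ denote the constant from \Cref{l2control} on that region. Note first that $R_a$ is exactly the sublevel set $\{\Gamma_a\le 0\}$. Moreover, $D^2\Gamma_a=(1-a)D^2\eta$, which is positive definite for $a<1$, so $R_a$ is convex and closed.

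For $U\in R_a$, \Cref{l2control} gives
\begin{equation*}
\frac{1}{C^*}|U-U_L|^2\le \eta(U\mid U_L)\le a\,\eta(U\mid U_R)\le aC^*|U-U_R|^2 .
\end{equation*}
Applying the triangle inequality $|U-U_R|\le |U-U_L|+|U_L-U_R|\le |U-U_L|+2B$, we obtain
\begin{equation*}
|U-U_L|\le C^*\sqrt{a}\,\bigl(|U-U_L|+2B\bigr).
\end{equation*}
If $C^*\sqrt a\le 1/2$, the first term on the right can be absorbed into the left side. This yields
\begin{equation*}
|U-U_L|\le 4C^*B\sqrt a .
\end{equation*}
Since $a<\alpha<\theta^2/C$, the conclusion $|U-U_L|<\theta$ follows once $C\ge 16(C^*)^2B^2$. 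The condition $C^*\sqrt a\le 1/2$ also holds, because $a<\theta^2/C<1/C$. I would therefore set $C:=\max\{16(C^*)^2B^2,\,4(C^*)^2\}$; this depends only on $B$ and the fixed region.

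The main obstacle is that \Cref{l2control} is only available on a bounded set whose closure avoids vacuum, while $R_a$ is a priori a subset of the whole space. My first attempt to handle this is to restrict attention to $U$ in the bounded state space $\mathcal{V}$, which is where all the applications live. If an unrestricted statement is needed, the approach changes. For $|U-U_L|$ large, convexity of $\eta$ along rays from $U_L$ makes $\eta(U\mid U_L)$ grow at least linearly. Meanwhile $\eta(U\mid U_R)-\eta(U\mid U_L)$ is affine in $U$, so it also grows at most linearly. Hence $\Gamma_a>0$ far away for small $a$, and the quadratic argument above then applies on the remaining bounded region.
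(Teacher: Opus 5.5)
Your argument is correct and is essentially the standard one the paper defers to (the proof of Lemma 4.3 in \cite{MR3519973}). It combines the quadratic lower bound for $\eta(U\mid U_L)$ and the upper bound for $\eta(U\mid U_R)$ from \Cref{l2control} with the triangle inequality $|U-U_R|\le|U-U_L|+2B$ and absorption for small $a$.

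One adjustment: apply \Cref{l2control} on $\mathcal{V}$ itself, where $U$, $U_L$, $U_R$ all lie, rather than on a region containing $\{|U|\le B\}$. Such a region need not stay inside the domain where $\eta$ is defined and uniformly convex; for the $p$-system it reaches $v\le 0$. Your unrestricted-space paragraph is then unnecessary, since the lemma is only ever applied to states in $\mathcal{V}$.
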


\begin{remark}
The set $R_a$ is compact.
\end{remark}

The proof of \Cref{size_pi} is found in the proof of Lemma 4.3 in \cite{MR3519973}.

\section{Convex integration facts}\label{sec:kim_theorem}

Let $m,n\ge 2$ be integers, and let $\Omega\subset\mathbb{R}^n$ be a bounded
domain with Lipschitz boundary. Let
\[
L\in \mathbb{M}^{m\times n}\setminus\{0\},
\]
and its corresponding linear function
\[
\mathcal{L}:\mathbb{M}^{m\times n}\to\mathbb{R}
\]
is given by
\[
\mathcal{L}(\xi)
=
L\cdot \xi
=
\sum_{\substack{1\le i\le m\\1\le j\le n}}
L_{ij}\xi_{ij}
\qquad
\forall\,\xi\in\mathbb{M}^{m\times n}.
\]

We also view $L$ as the linear map
\[
b\mapsto Lb
\]
from $\mathbb{R}^n$ into $\mathbb{R}^m$, which should be distinguished from
$\mathcal{L}$ (this is an abuse of notation). We fix any number $t\in\mathbb{R}$ and write
\[
\Sigma_t
=
\left\{
\xi\in\mathbb{M}^{m\times n}
\,\middle|\,
\mathcal{L}(\xi)=t
\right\},
\]
which is an $(mn-1)$-dimensional flat manifold in $\mathbb{R}^{m\times n}$.

We can then define \emph{in-approximations}:

\begin{definition}
Let $K \subset \Sigma_t$. A sequence $\{U_j\}_{j\in\mathbb{N}}$ of open sets in $\Sigma_t$ is called an \emph{in-approximation} of $K$ in $\Sigma_t$ if the following are satisfied:
\begin{enumerate}
    \item[(i)] $U_j$ $(j\in\mathbb{N})$ are uniformly bounded,
    \item[(ii)] $U_j \subset U_{j+1}^{rc}$ for every $j\in\mathbb{N}$, and
    \item[(iii)] $U_j \to K$ as $j\to\infty$ in the following sense: If $\xi_j\in U_j$ for all $j\in\mathbb{N}$ and
    \[
    \xi_j \to \xi
    \qquad \text{as } j\to\infty
    \]
    for some $\xi\in\Sigma_t$, then $\xi\in K$.
\end{enumerate}
\end{definition}

For completeness, we restate \cite[Theorem 1.3]{Kim2020ConvexIntegration}:

\begin{theorem}\label{kim_theorem}
Assume
\[
Lb \neq 0 \in \mathbb{R}^m
\qquad
\forall\, b \in \mathbb{R}^n \setminus \{0\}.
\] and $K \subset \Sigma_t$. Let $\{U_j\}_{j\in\mathbb{N}}$ be an in-approximation of $K$ in $\Sigma_t$, and let
\[
v \in W^{1,\infty}(\Omega;\mathbb{R}^m)
\]
be a piecewise $C^1$ map satisfying
\[
\nabla v \in U_1
\qquad \text{a.e. in } \Omega.
\]
Then, for each $\delta>0$, there exists a map
\[
u \in W^{1,\infty}(\Omega;\mathbb{R}^m)
\]
such that
\[
\begin{cases}
\nabla u \in K & \text{a.e. in } \Omega,\\
u=v & \text{on } \partial\Omega,\\
\|u-v\|_{L^\infty(\Omega)}<\delta.
\end{cases}
\]
\end{theorem}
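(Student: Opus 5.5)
The plan is to treat the statement (Theorem \ref{kim_theorem}, i.e.\ \cite[Theorem 1.3]{Kim2020ConvexIntegration}) by the standard Baire-category / iterative convex integration scheme of M\"uller--\v{S}ver\'ak, adapted so that every laminate stays inside the hyperplane $\Sigma_t$. First I would record the key algebraic fact behind the hypothesis $Lb\neq0$ for $b\neq0$. If $\xi,\zeta\in\Sigma_t$ and $\xi-\zeta=a\otimes b$ is rank one, then $\mathcal{L}(a\otimes b)=a\cdot Lb=0$. The hypothesis then guarantees that for every direction $b$ the admissible amplitudes $a$ form a nontrivial hyperplane $(Lb)^\perp$ in $\mathbb{R}^m$, since $m\ge2$. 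So rank-one segments inside $\Sigma_t$ exist in every normal direction $b$. This lets us build simple laminations whose gradients stay in $\Sigma_t$, and it gives the nondegeneracy needed to construct localized one-dimensional oscillations with cutoffs.

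The core step is a localized rank-one perturbation lemma inside $\Sigma_t$. Given $A=\lambda B+(1-\lambda)C$ with $B,C\in\Sigma_t$, $B-C=a\otimes b$, and $a\cdot Lb=0$, and given a small ball $Q$, I want a piecewise affine $\phi$ with $\phi=0$ on $\partial Q$, $\|\phi\|_\infty<\varepsilon$, and $A+\nabla\phi$ lying in $\Sigma_t$ near $\{B,C\}$ outside a set of small measure. The usual construction uses a sawtooth profile in $x\cdot b$ plus a cutoff, and the cutoff error generally leaves $\Sigma_t$. To correct this I would use the second direction provided by the hypothesis: near the boundary layer, replace the cutoff by a finite sequence of further rank-one splittings with normals $b'$ and amplitudes in $(Lb')^\perp$. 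Concretely, I would build the perturbation as a piecewise affine map on a simplicial (Lipschitz) decomposition, and check on each simplex that the gradient lies in $\Sigma_t$ by verifying $\mathcal{L}=t$ along each gradient jump. By induction on the order of the laminate, any laminate of finite order supported in an open set $U\subset\Sigma_t$ can then be realized approximately by such gradients, using (ii) $U_j\subset U_{j+1}^{rc}$ together with the fact that, for open sets in $\Sigma_t$, the rank-one convex hull is attained by laminates within $\Sigma_t$.

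With the lemma in hand, I would iterate. Starting from $v$ with $\nabla v\in U_1$, I would construct piecewise $C^1$ maps $u_j$ with $u_j=v$ on $\partial\Omega$, $\nabla u_j\in U_j$ a.e., and $\|u_{j+1}-u_j\|_\infty<\delta 2^{-j}$. I would choose the perturbations so that $\|\nabla u_{j+1}-\nabla u_j\|_{L^1}$ is controlled from below via Jensen-type gaps, forcing strong $L^1$ convergence of the gradients. The latter is obtained either by the standard mollification trick or by working in the Baire space given by the closure of the admissible maps. By (i) the gradients are uniformly bounded, so $u_j\to u$ in $W^{1,1}$ with a.e. convergence of gradients along a subsequence, and (iii) gives $\nabla u\in K$ a.e.

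The main obstacle is the boundary cutoff in the localized lemma: keeping gradients exactly on the hyperplane $\Sigma_t$ while gluing to zero boundary values. This is precisely where $Lb\neq0$ (and $m,n\ge2$) must enter, through the availability of in-plane rank-one directions for every normal.
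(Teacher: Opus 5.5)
The paper gives no proof of this statement; it is quoted from Kim. So the only question is whether your outline amounts to a proof. Its architecture is the natural one: a M\"uller--\v{S}ver\'ak in-approximation iteration driven by a rank-one lemma whose gradients stay exactly in $\Sigma_t$. However, the one ingredient specific to this theorem, the localized rank-one lemma under the constraint, is never supplied.

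Checking that $\mathcal{L}$ is unchanged across every gradient jump is automatic once all jumps lie in $\Lambda_L:=\{a\otimes\nu:\ a\cdot L\nu=0\}$. The real question is whether a compactly supported map with such jumps, small sup norm, and gradients near $[A,B]$ exists at all. Invoking ``a finite sequence of further rank-one splittings with normals $b'$'' in a boundary layer does not answer this. The naive attempt fails: with a single amplitude, $\varphi=a\,f(x)$, the constraint reads $(L^{\mathsf T}a)\cdot\nabla f=0$. Then $f$ is constant along $L^{\mathsf T}a$ and cannot have compact support unless $L^{\mathsf T}a=0$.

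Your account of where the hypothesis enters is also wrong. In-plane rank-one directions with any normal $b$ exist with or without it: if $Lb=0$, every amplitude is admissible. Take $m=n=2$ and $L=e_1\otimes e_1$. The direction $e_1\otimes e_2$ lies in $\Sigma_0$, yet every Lipschitz $u$ with $\partial_1u^1=t$ a.e.\ and $u=v$ on $\partial\Omega$ satisfies $u^1=v^1$, since also $\partial_1v^1=t$. So oscillations in that direction cannot be localized. The conclusion then fails for $K=\{\xi\in\Sigma_t:\ |\xi_{12}|=1,\ |\xi_{21}|,|\xi_{22}|\le 1\}$ and $v=(tx_1,0)$, even though $K$ has an obvious in-approximation obtained by laminating along $e_1\otimes e_2$.

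The missing idea is that $\mathcal{L}(\nabla\varphi)=\operatorname{div}(L^{\mathsf T}\varphi)$. One must therefore cut off a \emph{potential} of $L^{\mathsf T}\varphi$, not $\varphi$ itself.

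Set up as follows:
\begin{itemize}
\item Let $C=\lambda A+(1-\lambda)B$ and $B-A=a\otimes b$.
\item Put $c:=L^{\mathsf T}a$, so $c\cdot b=a\cdot Lb=0$, and $e:=L^{\mathsf T}Lb$. Then $e\cdot b=|Lb|^2>0$; this is exactly where $Lb\neq0$ is used.
\item Take a zero-mean periodic sawtooth $g$ with $g'\in\{-(1-\lambda),\lambda\}$ and a bounded primitive $\gamma$. Put $g_k(s)=k^{-1}g(ks)$ and $\gamma_k(s)=k^{-2}\gamma(ks)$.
\item Fix a cutoff $\chi\in C^\infty_c$ with values in $[0,1]$.
\end{itemize}
Define
\[
\varphi=\chi\,g_k(x\cdot b)\,a+\frac{\gamma_k(x\cdot b)}{|Lb|^2}\Bigl((e\cdot\nabla\chi)\,a-(c\cdot\nabla\chi)\,Lb\Bigr).
\]
Consider the antisymmetric field $S=|Lb|^{-2}\gamma_k(x\cdot b)\,(c\otimes e-e\otimes c)$, whose divergence is $g_k(x\cdot b)\,c$. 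One checks that $L^{\mathsf T}\varphi=\operatorname{div}(\chi S)$. Consequently:
\begin{itemize}
\item $\mathcal{L}(\nabla\varphi)=0$ exactly, and $\operatorname{supp}\varphi\subset\operatorname{supp}\chi$;
\item $\|\varphi\|_\infty=O(1/k)$;
\item $\nabla\varphi=\chi\,g'(kx\cdot b)\,a\otimes b+O(1/k)$ uniformly.
\end{itemize}
Hence $C+\nabla\varphi\in\Sigma_t$, it stays $O(1/k)$-close to $[A,B]$, and it is $O(1/k)$-close to $\{A,B\}$ on $\{\chi=1\}$. Since $\nabla\varphi\in\ker\mathcal{L}$, it can be added to a non-affine piecewise $C^1$ map without leaving $\Sigma_t$.

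With this lemma your iteration can be completed, subject to two further points.
\begin{itemize}
\item You must prove, rather than assert, that for relatively open $U\subset\Sigma_t$ every point of $U^{\text{rc}}$ is the barycenter of a finite-order laminate supported in $U$. The standard result $U^{\text{rc}}=U^{\text{lc}}$ concerns open subsets of $\mathbb{M}^{m\times n}$, and relatively open subsets of $\Sigma_t$ are not open there.
\item A lower bound on $\|\nabla u_{j+1}-\nabla u_j\|_{L^1}$ forces nothing. It is the mollification estimate that makes $(\nabla u_j)$ Cauchy in $L^1$.
\end{itemize}
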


\section{An example of an approximate $T_6$ from proof of \Cref{matlab_lemma}}\label{sec:T6-example}

Generated using \verb|export_solution_latex.m|

\begin{equation*}
\widehat P=
\begin{pmatrix}
-\dfrac{8207794446755627}{2199023255552}
& -\dfrac{4602094026561529}{35184372088832}\\[10pt]
-\dfrac{4602094026561529}{35184372088832}
& -\dfrac{5207541431105289}{281474976710656}\\[10pt]
-\dfrac{8995103167499449}{1099511627776}
& \phantom{-}\dfrac{8318704555107407}{4398046511104}
\end{pmatrix}.
\end{equation*}

\begin{equation*}
\begin{aligned}
\widehat\kappa_{1} &= \frac{6763823737350437}{562949953421312},\\
\widehat a_{1} &= \begin{pmatrix}
-\frac{6348406838565697}{36028797018963968}\\[4pt]
\frac{7983664226036559}{288230376151711744}\\[4pt]
\frac{7162198692843247}{35184372088832}
\end{pmatrix},\\
\widehat n_{1} &= \begin{pmatrix}-\frac{6178714499117911}{4503599627370496} & \frac{7770261606622485}{36028797018963968}\end{pmatrix}.
\end{aligned}
\end{equation*}

\begin{equation*}
\begin{aligned}
\widehat\kappa_{2} &= \frac{6520014986661195}{70368744177664},\\
\widehat a_{2} &= \begin{pmatrix}
-\frac{1526800909617329}{140737488355328}\\[4pt]
\frac{8282651816847125}{1125899906842624}\\[4pt]
\frac{6544241863525297}{8796093022208}
\end{pmatrix},\\
\widehat n_{2} &= \begin{pmatrix}\frac{5500683242223889}{18014398509481984} & -\frac{3730041337006263}{18014398509481984}\end{pmatrix}.
\end{aligned}
\end{equation*}

\begin{equation*}
\begin{aligned}
\widehat\kappa_{3} &= \frac{6083303638755}{137438953472},\\
\widehat a_{3} &= \begin{pmatrix}
-\frac{8906576701310669}{18014398509481984}\\[4pt]
-\frac{2770473510069383}{9007199254740992}\\[4pt]
\frac{6638703438417975}{562949953421312}
\end{pmatrix},\\
\widehat n_{3} &= \begin{pmatrix}-\frac{1146856117493235}{140737488355328} & -\frac{5707844170108083}{1125899906842624}\end{pmatrix}.
\end{aligned}
\end{equation*}

\begin{equation*}
\begin{aligned}
\widehat\kappa_{4} &= \frac{3075101860078263}{70368744177664},\\
\widehat a_{4} &= \begin{pmatrix}
-\frac{4969460337195129}{18014398509481984}\\[4pt]
-\frac{7756081206164689}{36028797018963968}\\[4pt]
\frac{1970727903418487}{35184372088832}
\end{pmatrix},\\
\widehat n_{4} &= \begin{pmatrix}\frac{6157812183636213}{562949953421312} & \frac{1201350048800457}{140737488355328}\end{pmatrix}.
\end{aligned}
\end{equation*}

\begin{equation*}
\begin{aligned}
\widehat\kappa_{5} &= \frac{8846450761394073}{2251799813685248},\\
\widehat a_{5} &= \begin{pmatrix}
-\frac{2481889280075591}{1152921504606846976}\\[4pt]
\frac{3949343957805623}{18446744073709551616}\\[4pt]
-\frac{3597532152031909}{2251799813685248}
\end{pmatrix},\\
\widehat n_{5} &= \begin{pmatrix}\frac{710037460610907}{1099511627776} & -\frac{564928938961197}{8796093022208}\end{pmatrix}.
\end{aligned}
\end{equation*}

\begin{equation*}
\begin{aligned}
\widehat\kappa_{6} &= \frac{7324349891446651}{140737488355328},\\
\widehat a_{6} &= \begin{pmatrix}
-\frac{7377531271569983}{2305843009213693952}\\[4pt]
\frac{5342950352776515}{2305843009213693952}\\[4pt]
-\frac{4738704101426039}{9007199254740992}
\end{pmatrix},\\
\widehat n_{6} &= \begin{pmatrix}-\frac{1185452820758119}{1099511627776} & \frac{3434111132151093}{4398046511104}\end{pmatrix}.
\end{aligned}
\end{equation*}

The supplied approximate configuration has negative volume coordinates. This is fixed by volume translation and the accompanying pressure reparametrization used to move the configuration into \(v>0\).

\bibliographystyle{plain}
\bibliography{references}

\begin{center} 
\includegraphics[width=.4\linewidth]{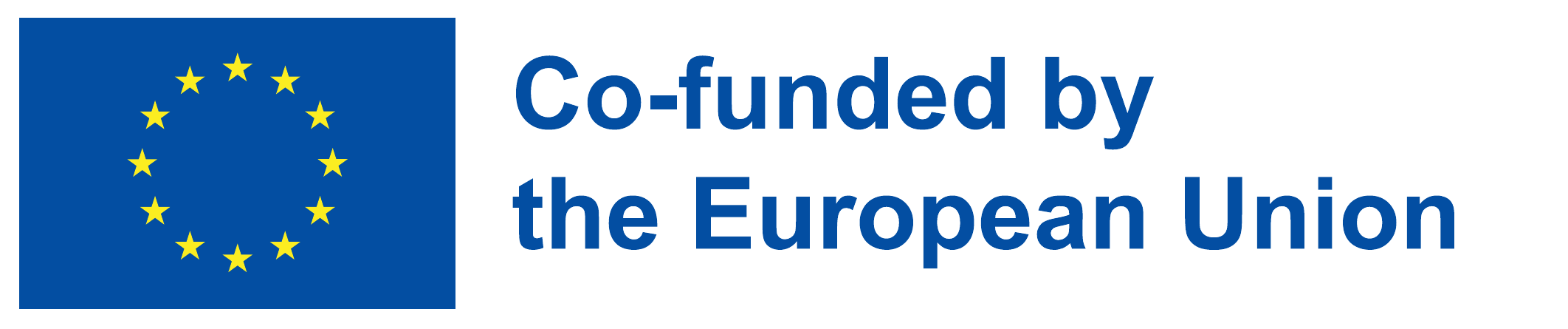}
\end{center}
\end{document}